\numberwithin{equation}{section}
\def\tt{\mathbf{t}}
\def\HH{\mathbf H}
\def\Y{\mathcal Y}
\def\H{\mathcal H}
\def\K{\mathcal K}
\def\MM{\mathbf M}
\def\R{\mathbb R}
\def\N{\mathbb N}
\newcommand{\dist}{\mathop{\mathrm{dist}}}
\def\e{\varepsilon}
\def\s{\sigma}
\def\S{\Sigma}
\def\vphi{\varphi}
\def\om{\omega}
\def\l{\lambda}
\def\g{\gamma}
\def\Om{\Omega}
\def\de{\delta}
\def\Id{{\rm Id}}
\def\spt{{\rm spt}}
\def\pa{\partial}
\def\trace{{\rm tr}}
\def\kk{{\bf k}}
\def\jj{{\bf j}}
\def\00{{\bf 0}}
\def\SS{\mathbb S}
\def\bd{{\rm bd}\,}
\def\INT{{\rm int}\,}
\def\cl{{\rm cl}\,}
\def\E{\mathcal{E}}
\def\F{\mathcal{F}}
\newcommand{\vol}{\mathrm{vol}\,}
\renewcommand{\a}{\alpha}
\renewcommand{\b}{\beta}
\renewcommand{\d}{\mathrm{d}}
\newcommand{\hd}{\mathrm{hd}}
\newcommand{\D}{\Delta}
\renewcommand{\l}{\lambda}
\renewcommand{\om}{\omega}
\newcommand{\Div}{{\mathop{\rm div}}}
\newcommand{\ov}{\overline}
\newcommand{\diam}{\mathrm{diam}}
\newcommand{\cc}{\subset\subset}
\def\Lip{{\rm Lip}\,}
\def\weak{\stackrel{*}{\rightharpoonup}}
\def\ttau{\boldsymbol{\tau}}
\def\MM{\mathbf{M}}
\def\exc{\mathbf{exc}}
\def\C{\mathbf{C}}
\def\D{\mathbf{D}}
\def\tt{\mathbf{t}}
\newtheorem*{theorem*}{Theorem}
\newtheorem{theorem}{Theorem}[section]
\newtheorem{lemma}[theorem]{Lemma}
\newtheorem{proposition}[theorem]{Proposition}
\newtheorem{corollary}[theorem]{Corollary}
\newtheorem{remark}[theorem]{Remark}
\newtheorem{definition}[theorem]{Definition}
\definecolor{grey}{rgb}{.7,.7,.7}
\title[Improved convergence for planar clusters]{Improved convergence theorems for bubble clusters. \\ I. The planar case}
\author{M. Cicalese}
\address{Department of Mathematics, Technische Universit\"at M\"unchen, Boltzmannstrasse 3, 85747 Garching, GERMANY}
\email{cicalese@ma.tum.de}
\author{G. P. Leonardi}
\address{Dipartimento di Science Fisiche, Informatiche e Matematiche, Universit{\`a} degli Studi di Modena e Reggio Emilia, Via Campi 213/b, I-41100
    Modena, ITALY}
\email{gianpaolo.leonardi@unimore.it}
\author{F. Maggi}
\address{Department of Mathematics, University of Texas at Austin, Austin, TX, USA}
\email{maggi@math.utexas.edu}
\begin{document}

\begin{abstract}
{\rm We describe a quantitative construction of almost-normal diffeomorphisms between embedded orientable manifolds with boundary to be used in the study of geometric variational problems with stratified singular sets. We then apply this construction to isoperimetric problems for planar bubble clusters. In this setting we develop an {\it improved convergence theorem}, showing that a sequence of almost-minimizing planar clusters converging in $L^1$ to a limit cluster has actually to converge in a strong $C^{1,\a}$-sense. Applications of this improved convergence result to the classification of isoperimetric clusters and the qualitative description of perturbed isoperimetric clusters are also discussed. Analogous results for three-dimensional clusters are presented in part two, while further applications are discussed in some companion papers.}
\end{abstract}

\maketitle

\tableofcontents

\section{Introduction} \subsection{Overview} The aim of this two-part paper is developing a basic technique in the Calculus of Variations, that we call {\it improved convergence}, in the case of geometric variational problems where minimizers can exhibit stratified singularities. Here we think in particular to variational problems where the minimization takes place over families of generalized surfaces.

Stratified singularities appear in many problems of physical and geometrical interest. The term stratified indicates the possibility of decomposing minimizing surfaces into a hierarchy of manifolds with boundary meeting in specific optimal ways along lower dimensional manifolds of singular points. Although this behavior is well documented from the experimental point of view, its mathematical description is a quite challenging problem, which has been satisfactorily addressed only in a few specific cases. The most celebrated  example of this is probably the isoperimetric problem for bubble clusters (and, more generally, any other variational problem whose minimizers can be shown to be $(\MM,\xi,\de)$-minimal sets in the sense of Almgren \cite{Almgren76}). Indeed, Taylor \cite{taylor76} has shown that two-dimensional $(\MM,\xi,\de)$-minimal sets in the physical space $\R^3$ satisfy {\it Plateau's laws}, that is to say, they consist of regular surfaces meeting in threes at 120 degrees angles along regular curves, which in turn meet in fours at common end-points forming tetrahedral singularities.

By improved convergence we mean the principle -- usually exploited in the Calculus of Variations when showing that strict stability (positivity of the second variation) implies local minimality (in some suitable topology) -- according to which a sequence of almost-minimizing surfaces converging to some limit in a rough sense has actually to converge to that same limit in a smoother sense. This is a very familiar idea in PDE theory: for a sequence of, say, harmonic functions, $L^1$-convergence always improves to smooth convergence. In the context of geometric variational problems this kind of result is known to hold (and has been extensively used, see section \ref{section improved convergence and applications}) under the assumption that the limit surface is smooth. Our main goal here is discussing improved convergence theorems when the limit surface has stratified singularities. In this setting, by smooth convergence one means the existence of diffeomorphisms between the involved surfaces which converge in $C^1$ to the identity map, and are almost-normal (in the sense that, at fixed distance from the singularities, the displacement happens in the normal direction to the limit surface only), stratified (in the sense that singular points of a kind are mapped to singular points of the same kind), and whose tangential displacements (which cannot be zero if the singular sets do not coincide) are quantitatively controlled by their normal displacements. Obtaining this precise structure is fundamental in order to use these maps in applications: in other words, the matter here is not just constructing global diffeomorphisms between singular surfaces, but also doing it in a quite specific way, and with quantitative bounds.

From the technical point of view, our main result is Theorem \ref{thm main diffeo}, see section \ref{section curves}, which provides one with a quantitative method to construct almost-normal diffeomorphisms between embedded orientable manifolds with boundary. This result is proved in arbitrary dimension and codimension, and should have enough flexibility to be applied to different variational problems. Given a specific variational problem, the starting point for deducing an improved convergence theorem from Theorem \ref{thm main diffeo} is having at disposal a satisfactory local regularity theory around singular points. Bridging between such a local description of singularities and the global assumptions of Theorem \ref{thm main diffeo} is in general a non-trivial problem, which needs to be addressed by some {\it ad hoc} arguments.

Our two-part paper, in addition to Theorem \ref{thm main diffeo}, contains exactly this kind of analysis for those variational problems involving isoperimetric clusters. After a review of what is known about isoperimetric clusters in arbitrary dimension, see section \ref{section what is true in Rn}, we devote section \ref{section planar stuff} to address this problem in two-dimensions, thus obtaining an improved convergence theorem for almost-minimizing clusters in $\R^2$, see Theorem \ref{thm main planar} below. In part two \cite{CiLeMaIC2} we address this very same problem for isoperimetric clusters in $\R^3$.

The improved convergence theorem for planar clusters has various applications. Some are discussed in section \ref{section applications}, where we obtain structural results for planar isoperimetric clusters, see Theorem \ref{thm finitely many types} and Theorem \ref{thm potenziale}. Improve convergence is also used as the starting point to obtain quantitative stability inequalities for planar double bubbles \cite{CiLeMaFUGLEDE} and for periodic honeycombs \cite{carocciamaggi}. These companion papers provide one with a clear illustration of why it is so important to formulate improved convergence to a singular limit in terms of the existence of almost-normal, stratified diffeomorphisms converging to the identity map, with a quantitative control between the tangential and normal components of the displacements.

It is also interesting to note that the applicability of Theorem \ref{thm main diffeo} is definitely not limited to the problem of isoperimetric clusters. For example, in \cite{maggimihaila} we use Theorem \ref{thm main diffeo} and the free boundary regularity theory from \cite{dephilippismaggiCAP-ARMA} to obtain an improved convergence theorem for capillarity droplets in containers.

This introduction is organized as follows. In section \ref{section improved convergence and applications} we review some of the applications of improved convergence to smooth limit surfaces, and discuss which form an improved convergence theorem should take on singular limit surfaces. In section \ref{section perimeter minimizing clusters intro} we state our improved convergence theorem for planar minimizing clusters, Theorem \ref{thm main planar}, while section \ref{section some applications intro} presents the applications of Theorem \ref{thm main planar} discussed in this paper.

\subsection{Improved convergence to a regular limit and applications}\label{section improved convergence and applications} A basic fact about sequences of perimeter almost-minimizing sets, which comes as a direct consequence of the classical De Giorgi's regularity theory \cite{DeGiorgiREG}, is that $L^1$-convergence improves to $C^1$-convergence whenever the limiting set has smooth boundary, that is to say
\begin{equation}
  \label{improved convergence sets smooth limit}
  \left\{
  \begin{array}{l}
  \mbox{$\{E_k\}_{k\in\N}$ are perimeter almost-minimizing sets}
  \\
  \mbox{$E_k\to E$ in $L^1$ with $\pa E$ smooth}
  \end{array}
  \right .\qquad
  \Rightarrow\qquad\mbox{$\pa E_k\to\pa E$ in $C^1$}.
\end{equation}
Referring to section \ref{section sofp} for the (standard) notation and terminology about sets of finite perimeter used here and in the rest of the paper, let us recall that given $\Lambda\ge0$, $r_0>0$, and an open set $A\subset\R^n$ ($n\ge 2$), a set $E$ of locally finite perimeter in $A$  is a {\it $(\Lambda,r_0)$-minimizing set in $A$} if
\begin{equation}
  \label{almost-minimizer set}
  P(E;B_{x,r})\le P(F;B_{x,r})+\Lambda\,|E\Delta F|\,,
\end{equation}
whenever $E\Delta F\cc B_{x,r}=\{y\in\R^n:|y-x|<r\}\cc A$ and $r<r_0$. In this way, \eqref{improved convergence sets smooth limit} means that if $\{E_k\}_{k\in\N}$ is a sequence of $(\Lambda,r_0)$-minimizing sets in $\R^n$ with $|E_k\Delta E|\to 0$ as $k\to\infty$ and if $\pa E$ is a smooth hypersurface, then for every $\a\in(0,1)$ there exist $k_0\in\N$ and $\{\psi_k\}_{k\ge k_0}\subset C^{1,\a}(\pa E)$ such that, denoting by $\nu_E$ the outer unit normal to $E$ and for $k\ge k_0$,
\begin{equation}
  \label{improved convergence sets smooth limit 1}
  \pa E_k=(\Id+\psi_k\nu_E)(\pa E)\,,\qquad \sup_{k\ge k_0}\|\psi_k\|_{C^{1,\a}(\pa E)}<\infty\,,
\qquad \lim_{k\to\infty}\|\psi_k\|_{C^1(\pa E)}=0\,.
\end{equation}
(Here we have set $(\Id+\psi_k\nu_E)(\pa E)=\{x+\psi_k(x)\nu_E(x):x\in\pa E\}$.) A local version of this improved convergence result is found in \cite{mirandasucc} in the case $\Lambda=0$, but actually holds even for more general notions of almost-minimality than the one considered here; see \cite[Theorem 1.9]{tamanini}. It immediately implies a regularizing property of the sets $E_k$, in the sense that $\pa E_k$ must be a $C^1$-hypersurface as a consequence of \eqref{improved convergence sets smooth limit 1}. Improved convergence finds numerous applications to geometric variational problems. These include:

\medskip

\noindent {\it (A) Sharp quantitative inequalities}: In \cite{CicaleseLeonardi}, \eqref{improved convergence sets smooth limit} was used (with $E=B=B_{0,1}$) in combination with a selection principle and a result by Fuglede on nearly spherical sets \cite{Fuglede} to give an alternative proof of the sharp quantitative isoperimetric inequality of \cite{fuscomaggipratelli}, namely
\[
  P(E)\ge P(B)\,\Big\{1+c(n)\,\min_{x\in\R^n}|E\Delta (x+B)|^2\Big\}\,,\qquad\forall E\subset\R^n\,,|E|=|B|\,.
\]
This strategy of proof has been subsequently adopted to prove many other geometric inequalities in sharp quantitative form. Examples are
the Euclidean isoperimetric inequality in higher codimension \cite{bogelainduzaarfusco}, the isoperimetric inequalities on spheres and hyperbolic spaces \cite{bogelainduzaarfusco,bogelainduzaarfusco2}, isoperimetric inequalities for eigenvalues \cite{brascoguidovelichov}, minimality inequalities of area minimizing hypersurfaces \cite{dephilippismaggi}, and non-local isoperimetric inequalities \cite{F2M3}; moreover, in \cite{fuscojulin} the same strategy is used to control by $P(E)-P(B)$ a more precise distance from the family of balls (see also \cite{neumayer} for the case of the Wulff inequality).

\medskip

\noindent {\it (B) Qualitative properties (and characterization) of minimizers}: Given a potential $g:\R^n\to\R$ with $g(x)\to+\infty$ as $|x|\to\infty$ and a one-homogeneous and convex integrand $\Phi:\R^n\to[0,\infty)$, in \cite{FigalliMaggiARMA} the variational problems (parameterized by $m>0$)
\begin{equation}
  \label{figallimaggi}
\inf\Big\{\int_{\pa^*E}\Phi(\nu_E)\,d\H^{n-1}+\int_{\R^n}g(x)\,dx:|E|=m\Big\}\,,
\end{equation}
are considered in the small volume regime $m\to 0^+$. Denoting by $E_m$ a minimizer with volume $m$, one expects $m^{-1/n}\,E_m$ to converge to $K$, the unit volume Wulff shape of $\Phi$. One of the main results proved in \cite{FigalliMaggiARMA} is that if $\Phi$ is a smooth elliptic integrand and $g$ is smooth, then $m^{-1/n}\,E_m\to K$ as $m\to0^+$ in every $C^{k,\a}$, with explicit rates of convergence in terms of $m$. The improved convergence theorem \eqref{improved convergence sets smooth limit}, applied with $E=K$ and on $(\Phi,\Lambda,r_0)$-minimizing sets, plays of course a basic role in this kind of analysis. The same circle of ideas has been exploited in the qualitative description of minimizers of the Ohta-Kawasaki energy for diblock copolymers \cite{cicalesespadaro}, and to characterize balls as minimizers in isoperimetric problems with competing nonlocal terms \cite{knupfermuratov,knupfermuratov2,bonacinicristo,F2M3}, and in isoperimetric problems with log-convex densities \cite{FigalliMaggiLOGCONV}.

\medskip

\noindent {\it (C) Stability and $L^1$-local minimality}: A classical problem in the Calculus of Variations is that of understanding whether stable critical points of a given functional are also local minimizers. This question was addressed in the case of the Plateau's problem by White \cite{whiteminimax}, who has proved that a {\it smooth} surface that is a strictly stable critical point of the area functional is automatically locally area minimizing  in $L^\infty$ (see \cite{MorganRos,dephilippismaggi} for the $L^1$-case). A key step in his argument is again an improved convergence theorem (for area almost-minimizing currents) towards a smooth limit. Similarly, in the case of the Ohta-Kawasaki energy, volume-constrained stable critical points with smooth boundary turn out to be volume-constrained $L^1$-local minimizers, see \cite{acerbifuscomorini}. Once again, \eqref{improved convergence sets smooth limit} is the starting point of the analysis.

We now try to address the question of the precise meaning one should give to an assertion like
\begin{equation}
  \label{improved convergence sets general}
  \left\{
  \begin{array}{l}
  \mbox{$\{E_k\}_{k\in\N}$ are perimeter almost-minimizing sets}
  \\
  \mbox{$E_k\to E$ in $L^1$}
  \end{array}
  \right .\qquad
  \Rightarrow\qquad\mbox{$\pa E_k\to\pa E$ in $C^1$}\,,
\end{equation}
when $\pa E$ is possibly singular. To this end we split $\pa E$ into its regular and singular parts: precisely, recalling that the reduced boundary $\pa^*E$ of a $(\Lambda,r_0)$-minimizing set in $\R^n$ is a $C^{1,\a}$-hypersurface for every $\a\in(0,1)$ (relatively open into $\pa E$), we define the singular set $\S(E)$ of $\pa E$ as the closed subset of $\pa E$ given by
\[
\S(E)=\pa E\setminus\pa^*E\,.
\]
It turns out that $\S(E)$ is empty if $2\le n\le 7$, discrete if $n=8$, and $\H^s$-negligible for every $s>n-8$ if $n\ge 9$; see, for example, \cite[Theorem 21.8, Theorem 28.1]{maggiBOOK}. The regularity theory behind these results also leads to a weak form of \eqref{improved convergence sets smooth limit 1}, which in turn reduces to \eqref{improved convergence sets smooth limit 1} when $\S(E)=\emptyset$. More precisely, given a sequence $\{E_k\}_{k\in\N}$ of $(\Lambda,r_0)$-minimizing sets with $E_k\to E$ in $L^1$, denoting by $I_\rho(S)$ the $\rho$-neighborhood of $S\subset\R^n$, and setting
\begin{equation}
  \label{def [paE]delta}
  [\pa E]_\rho=\pa E\setminus I_\rho(\S(E))\subset\pa^*E\,,\qquad\rho>0\,,
\end{equation}
one finds (see, e.g. Theorem \ref{thm normal representation part one} below) that, for every $\a\in(0,1)$ and $\rho$ small enough there exist $k_0\in\N$ and $\{\psi_k\}_{k\ge k_0}\subset C^{1,\a}([\pa E]_\rho)$  such that
\begin{eqnarray}
\label{improved convergence sets general 1}
    \pa E_k\setminus I_{2\rho}(\S(E))\subset (\Id+\psi_k\nu_E)([\pa E]_\rho)\subset\pa^* E_k\,,\qquad\forall k\ge k_0\,,
\end{eqnarray}
\begin{equation}
  \label{improved convergence sets general 2}
  \sup_{k\ge k_0}\|\psi_k\|_{C^{1,\a}([\pa E]_\rho)}\le C\,,\qquad    \lim_{k\to\infty}\|\psi_k\|_{C^1([\pa E]_\rho)}=0\,.
\end{equation}
Of course, if $\S(E)=\emptyset$, then \eqref{improved convergence sets general 1} and \eqref{improved convergence sets general 2} coincide with \eqref{improved convergence sets smooth limit 1}. Moreover, we notice that to replace $\pa E_k\setminus I_{2\rho}(\S(E))$ with, say, $[\pa E_k]_{3\rho}$ in the first inclusion in \eqref{improved convergence sets general 1}, one would need to prove Hausdorff convergence of $\S(E_k)$ to $\S(E)$. However, in this generality, one just knows that $\S(E_k)\subset I_\rho(\S(E))$ provided $k\ge k_0$, and actually $\S(E_k)$ may not converge in Hausdorff distance to $\S(E)$. Indeed, by a classical result of Bombieri, De Giorgi and Giusti \cite{bombieridegiorgigiusti}, the Simons's cone in $\R^8$ is the limit of perimeter minimizing sets with smooth boundary.

Even though \eqref{improved convergence sets general 1} and \eqref{improved convergence sets general 2} seem to contain all the information we can extract from the classical regularity theory, this is however not sufficient, for several reasons, to address any of the above mentioned applications. The first evident gap is that we do not parameterize the whole $\pa E_k$ on $\pa E$. Of course, in presence of singularities we cannot expect to do this by means of a normal diffeomorphism of $\pa E$;
\begin{figure}
  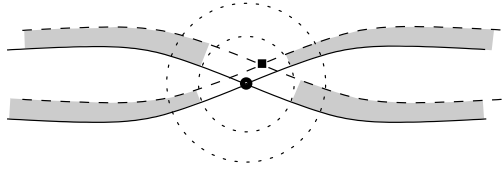\caption{{\small The limit boundary $\pa E$ is depicted with continuous lines, the approximating boundaries $\pa E_k$ by dashed lines, the singular set $\S(E)$ by a black disk, and its $\rho$ and $2\rho$-neighborhoods $I_\rho(\S(E))$ and $I_{2\rho}(\S(E))$ by concentric balls: $I_{\rho}(\S(E))$ contains the singular set of $\pa E_k$ (depicted by a black square), while \eqref{improved convergence sets general 1} says that $\pa E_k\setminus I_{2\rho}(\S(E))$ can be covered by a normal deformation of $[\pa E]_\rho=\pa E\setminus I_\rho(\S(E))$ (depicted as a grey region) which is $C^1$-close to the identity thanks to \eqref{improved convergence sets general 2}. Of course, we cannot describe $\pa E_k$ by a normal deformation of the four components of $\pa^* E$ unless $\S(E_k)=\S(E)$.}}\label{fig brutti}
\end{figure}
see Figure \ref{fig brutti}. Therefore, the best we can hope for is to find a sequence $\{f_k\}_{k\in\N}$ of $C^{1,\a}$-diffeomorphisms between $\pa E$ and $\pa E_k$ such that
\begin{equation}
  \label{improved convergence sets general 3}
  \sup_{k\in\N}\|f_k\|_{C^{1,\a}(\pa E)}<\infty\,,
\qquad \lim_{k\to\infty}\|f_k-\Id\|_{C^1(\pa E)}=0\,.
\end{equation}
A difficulty here is to specify what is meant by a $C^{1,\a}$-diffeomorphism between $\pa E$ and $\pa E_k$, since these are singular hypersurfaces. Moreover, in passing from \eqref{improved convergence sets general 1}--\eqref{improved convergence sets general 2} to \eqref{improved convergence sets general 3} we may lose the useful information that $\pa E_k$ is actually a $C^1$-small normal deformation of $\pa E$ away from the singular sets. It is therefore natural to require that
\begin{equation}
  \label{improved convergence sets general 4}
  f_k=\Id+\psi_k\,\nu_E\qquad\mbox{on $[\pa E]_\rho$}\,,
\end{equation}
with $\psi_k$ as in \eqref{improved convergence sets general 1}--\eqref{improved convergence sets general 2}. The maps $f_k$ must have a nontrivial tangential displacement
\[
u_k=(f_k-\Id)-\big((f_k-\Id)\cdot\nu_E\big)\,\nu_E\,,
\]
on $[\pa E]_\rho$ if $\S(E_k)\ne\S(E)$: and, actually, in order for the maps $f_k$ to be usable in addressing problems (A) and (C), it is crucial to have control of the $C^1$-norm of $u_k$ in terms of the distance between $\S(E_k)$ and $\S(E)$. A possibility here is requiring that $f_k(\S(E))=\S(E_k)$ (and this is something that makes sense only if, in the situation at hand, one has already shown the Hausdorff convergence of $\S(E_k)$ to $\S(E)$), with $f_k=\Id$ on $\S(E)$ if $\S(E_k)=\S(E)$, and, for some constant $C$ depending on $\pa E$,
\begin{equation}
  \label{improved convergence sets general 5}
  \|u_k\|_{C^1(\pa E)}\le C\,\|f_k-\Id\|_{C^1(\Sigma(E))}\,.
\end{equation}
Due to our limited understanding of singular sets, proving \eqref{improved convergence sets general 1}--\eqref{improved convergence sets general 5} seems a goal out of reach, and so the possibility of understanding improved convergence to singular limit sets. The theory of bubble clusters (partitions of the space into sets of finite perimeter) provides us with a (more complex) setting where singularities appear even in dimension $n=2$. However, at least when $n=2,3$, these singularities have been classified and understood, and the corresponding local regularity theory enables one to show the Hausdorff convergence of the singular sets (see Theorem \ref{thm singular hausdorff} in the case $n=2$ and \cite[Theorem 3.2]{CiLeMaIC2} in the case $n=3$). It thus makes sense to look for improved convergence theorems in this setting, and this problem is indeed addressed in this paper and in \cite{CiLeMaIC2}.

\subsection{An improved convergence theorem for planar clusters}\label{section perimeter minimizing clusters intro} Following the ideas discussed in the previous section, we now formulate our improved convergence theorem for sequences of almost-minimizing planar clusters. Given $n,N\in\N$ with $n,N\ge 2$ and an open set $A\subset\R^n$, we let $\E=\{\E(h)\}_{h=1}^N$ be a family of Lebesgue-measurable sets in $\R^n$ with
$|\E(h)\cap\E(k)|=0$ for $1\le h<k\le N$, and say that $\E$ is an {\it $N$-cluster in $A$} if $\E(h)$ is a set of locally finite perimeter in $A$ with $|\E(h)\cap A|>0$ for every $h=1,...,N$. The sets $\E(h)$ are called the {\it chambers} of $\E$, while $\E(0)=\R^n\setminus\bigcup_{h=1}^N\E(h)$ is called the {\it exterior chamber} of $\E$.
The {\it perimeter} of $\E$ relative to some $F\subset\R^n$ is defined by setting
\begin{equation}
  \label{cluster perimeter relative to A}
P(\E;F)=\frac12\sum_{h=0}^NP(\E(h);F)\,,\qquad P(\E)=P(\E;\R^n)\,.
\end{equation}
Setting $\vol(\E)=(|\E(1)|,...,|\E(N)|)$, a minimizer in the partitioning problem
\begin{equation}
  \label{partitioning problem}
  \inf\big\{P(\E):\vol(\E)=m\big\}\,,\qquad\mbox{$m\in\R^N_+$ given}\,,
\end{equation}
where $\R^N_+=\{m\in\R^N:m_h>0\,\forall h=1,...,N\}$, is called an {\it isoperimetric cluster}. It is of course natural to study partitioning problems in the presence of a potential energy term, like
\begin{equation}
  \label{partitioning problem with potential}
  \inf\Big\{P(\E)+\sum_{h=1}^N\int_{\E(h)}g(x)\,dx:\vol(\E)=m\Big\}\,,
\end{equation}
where, say, $g:\R^n\to\R$ with $g(x)\to+\infty$ as $|x|\to\infty$. The existence of minimizers in these two problems can be proved by a careful restoration of compactness argument due to Almgren, see \cite[Chapter 29]{maggiBOOK}.
It turns out that if $\E$ is a minimizer either in  \eqref{partitioning problem} or in \eqref{partitioning problem with potential}, then there exist positive constants $\Lambda$ and $r_0$ such that $\E$ is {\it a $(\Lambda,r_0)$-minimizing cluster} in $\R^n$, that is (in analogy with \eqref{almost-minimizer set})
\begin{equation}
  \label{almost-minimizer cluster}
  P(\E;B_{x,r})\le P(\F;B_{x,r})+\Lambda\,\d(\E,\F)\,,
\end{equation}
whenever $x\in\R^n$, $r<r_0$ and $\E(h)\Delta\F(h)\cc B_{x,r}$ for every $h=1,...,N$, and where we have set
\begin{equation}
  \label{distance clusters}
  \d_F(\E,\F)=\frac12\sum_{h=0}^N\,\Big|F\cap\big(\E(h)\Delta\F(h)\big)\Big|\,,\qquad\d(\E,\F)=\d_{\R^n}(\E,\F)\,.
\end{equation}
In this case, as a consequence of the results obtained in \cite{Almgren76} (see also \cite[Chapter 30]{maggiBOOK} for the case $\Lambda=0$, and section \ref{section what is true in Rn} below otherwise), $\pa^*\E$ is a $C^{1,\a}$-hypersurface for every $\a\in(0,1)$ ($C^{1,1}$ if $n=2$) which is relatively open into $\pa\E$ and $\H^{n-1}(\S(\E))=0$, where
\begin{equation}
  \label{def bordi vari}
  \pa\E=\bigcup_{h=1}^N\pa\E(h)\,,\quad\pa^*\E=\bigcup_{h=1}^N\pa^*\E(h)\,,\quad \S_F(\E)=F\cap(\pa\E\setminus\pa^*\E)\,,\quad\S(\E)=\S_{\R^n}(\E)\,.
\end{equation}
One does not expect this almost-everywhere regularity result to be optimal in any dimension $n$, although the situation is clear only when $n=2$ (by elementary arguments) and when $n=3$  by \cite{taylor76}. We now review the structure of singular sets when $n=2$, and then exploit this description to formulate an improved convergence result for planar clusters. With the notation introduced in section \ref{section sets}, if $\E$ is a $(\Lambda,r_0)$-minimizing cluster in $\R^2$, then one has
\begin{equation}
  \label{structure planar cluster 1}
  \left\{\begin{array}
    {l}
    \pa\E=\bigcup_{i\in I}\g_i\,,
    \\
    \pa^*\E=\bigcup_{i\in I}\INT(\g_i)\,,
  \end{array}\right.
  \qquad
\begin{array}
  {l}
  \mbox{where $I$ is at most countable}\,,
  \\
  \mbox{$\g_i$ is a closed connected $C^{1,1}$-curve with boundary}\,,
  \\
  \mbox{$\{\g_i\}_{i\in I}$ is locally finite}\,,
\end{array}
\end{equation}
(see \cite{bleicher}, \cite{MorganPlanar},  or \cite[Section 30.3]{maggiBOOK} in the case $\Lambda=0$, and Theorem \ref{thm planar clusters} below in the general case -- which is a simple variant of the $\Lambda=0$ case). Moreover,
\begin{equation}
  \label{structure planar cluster 2}
\S(\E)=\bigcup_{j\in J}\{p_j\}=\bigcup_{i\in I}\bd(\g_i)\,,\qquad
\begin{array}
  {l}
  \mbox{where $J$ is at most countable}\,,
  \\
  \mbox{$\{p_j\}_{j\in J}$ is locally finite}\,.
\end{array}
\end{equation}
Finally, each $p_j\in\S(\E)$ is a common end-point to three different curves from $\{\g_i\}_{i\in I}$, which form three 120 degree angles at $p_j$.

\begin{remark}
  {\rm As already noticed, if $\E$ is an isoperimetric cluster in $\R^2$, or if $\E$ is a minimizer in \eqref{partitioning problem with potential} with $n=2$ and $g$ is smooth, then $\E$ is a $(\Lambda,r_0)$-minimizing cluster in $\R^2$ for some $\Lambda$ and $r_0$, with the additional property of being bounded, so that $I$ and $J$ are finite. Moreover, if $\E$ is an isoperimetric cluster, then each $\g_i$ is either a circular arc or a segment; if $\E$ is a minimizer in \eqref{partitioning problem with potential}, then $\g_i$ is a closed connected smooth curve with boundary, whose curvature is equal to (the restriction to $\g_i$ of) $g$ up to an additive constant.}
\end{remark}

Motivated by these examples, we now give the following definitions, and then state our improved convergence theorem for planar clusters.

\begin{definition}\label{cluster cka}
  {\rm Let $\E$ be a cluster in $\R^2$. One says that $\E$ is a {\it $C^{k,\a}$-cluster in $\R^2$} if there exists a family of $C^{k,\a}$-curves with boundary  $\{\g_i\}_{i\in I}$ such that \eqref{structure planar cluster 1} and \eqref{structure planar cluster 2} hold.}
\end{definition}

\begin{definition}\label{def 1}
  {\rm Let $\E$ be a $C^{1,\a}$-cluster in $\R^2$. Given a map $f:\pa\E\to\R^2$ one says that $f\in C^{1,\a}(\pa\E;\R^2)$ if $f$ is continuous on $\pa\E$, $f\in C^{1,\a}(\g_i;\R^2)$ for every $i\in I$, and
  \[
  \|f\|_{C^{1,\a}(\pa\E)}:=\sup_{i\in I}\|f\|_{C^{1,\a}(\g_i)}<\infty\,.
  \]
  If $\E$ and $\E'$ are $C^{1,\a}$-clusters in $\R^2$, then one says that $f$ is a $C^{1,\a}$-diffeomorphism between $\pa\E$ and $\pa\E'$ provided $f$ is an homeomorphism between $\pa\E$ and $\pa\E'$ with $f\in C^{1,\a}(\pa\E;\R^2)$, $f^{-1}\in C^{1,\a}(\pa\E';\R^2)$, and $f(\S(\E))=\S(\E')$.}
\end{definition}

\begin{definition}
  {\rm Given a map $f:\R^2\to\R^2$ and a cluster $\E$ in $\R^2$, the tangential component of $f$ with respect to $\E$ is the map $\ttau_\E f:\pa^*\E\to\R^2$ defined by
\[
\ttau_\E f(x)=f(x)-(f(x)\cdot\nu_\E(x))\nu_\E(x)\,,\qquad x\in\pa^*\E\,,
\]
where $\nu_\E:\pa^*\E\to\SS^1$ is any Borel function such that either $\nu(x)=\nu_{\E(h)}(x)$ or $\nu(x)=\nu_{\E(k)}(x)$ for every $x\in\pa^*\E(h)\cap \pa^*\E(k)$, $h\ne k$.}
\end{definition}

\begin{theorem}[Improved convergence for planar almost-minimizing clusters]\label{thm main planar}
  Given $\Lambda\ge0$, $r_0>0$ and a bounded $C^{2,1}$-cluster $\E$ in $\R^2$, there exist positive constants $\mu_0$ and $C_0$ (depending on $\Lambda$ and $\E$) with the following property.

  If $\{\E_k\}_{k\in\N}$ is a sequence of perimeter $(\Lambda,r_0)$--minimizing clusters in $\R^2$ such that $\d(\E_k,\E)\to 0$ as $k\to\infty$, then  for every $\mu<\mu_0$ there exist $k(\mu)\in\N$ and a sequence of maps $\{f_k\}_{k\ge k(\mu)}$ such that each $f_k$ is a $C^{1,1}$-diffeomorphism between $\pa\E$ and $\pa\E_k$ with
  \begin{eqnarray*}
  \|f_k\|_{C^{1,1}(\pa\E)}&\le&C_0\,,
  \\
  \lim_{k\to\infty}\|f_k-\Id\|_{C^1(\pa\E)}&=&0\,,
  \\
  \|\ttau_\E(f_k-\Id)\|_{C^1(\pa^*\E)}&\le&\frac{C_0}\mu\,\|f_k-\Id\|_{C^0(\S(\E))}\,,
  \\
  \ttau_\E(f_k-\Id)&=&0\,,\qquad\mbox{on $\pa\E\setminus I_\mu(\S(\E))$}\,.
  \end{eqnarray*}
\end{theorem}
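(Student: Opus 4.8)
The plan is to deduce Theorem \ref{thm main planar} from the general diffeomorphism construction of Theorem \ref{thm main diffeo}, applied separately to the $C^{2,1}$-arcs $\{\g_i\}_{i\in I}$ composing $\pa\E$, and then to glue the pieces at the triple junctions. Since $\E$ is bounded, by \eqref{structure planar cluster 1}--\eqref{structure planar cluster 2} the index sets $I,J$ are finite, $\pa\E$ is a finite union of closed connected $C^{2,1}$-curves with boundary $\g_i$ (possibly with a few components without boundary), and $\S(\E)=\{p_j\}_{j\in J}$ is a finite set, each $p_j$ being a common endpoint of three of the $\g_i$ forming $120^\circ$ angles there. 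First I would record the corresponding structure for the approximating clusters: since each $\E_k$ is $(\Lambda,r_0)$-minimizing in $\R^2$, Theorem \ref{thm planar clusters} gives that $\pa\E_k$ is a locally finite union of $C^{1,1}$-arcs with boundary meeting in $120^\circ$ triple junctions along the (discrete) set $\S(\E_k)$.

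The second step is the combinatorial matching between $\E$ and $\E_k$ for $k$ large. From $\d(\E_k,\E)\to0$ and $(\Lambda,r_0)$-minimality, the classical improved convergence away from singularities (Theorem \ref{thm normal representation part one}, i.e. \eqref{improved convergence sets general 1}--\eqref{improved convergence sets general 2} for clusters) shows that, outside any fixed $I_\rho(\S(\E))$, the boundary $\pa\E_k$ is for $k$ large a normal graph $(\Id+\psi_k\nu_\E)([\pa\E]_\rho)$ over $[\pa\E]_\rho$ with $\|\psi_k\|_{C^{1,1}([\pa\E]_\rho)}\le C$ and $\|\psi_k\|_{C^1([\pa\E]_\rho)}\to0$; in particular $\pa\E_k\to\pa\E$ in Hausdorff distance and each $\g_i$ acquires, away from its endpoints, a well-defined partner arc $\g_i^k\subset\pa\E_k$. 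The local regularity theory at $120^\circ$ triple junctions, combined with Theorem \ref{thm singular hausdorff}, then yields $\S(\E_k)\to\S(\E)$ in Hausdorff distance, the existence for $k$ large of a bijection $p_j\mapsto p_j^k$ between $\S(\E)$ and $\S(\E_k)$ with $\max_j|p_j^k-p_j|\to0$, and the compatibility of this bijection with the junction structure (the three arcs at $p_j$ are the partners of the three arcs at $p_j^k$). Fixing $\rho=\rho(\mu)$ slightly below $\mu$ and $k\ge k(\mu)$ accordingly, I then have, for each arc $\g_i$ with boundary: the purely normal graph $\Id+\psi_k\nu_\E$ on $\g_i\cap[\pa\E]_\mu$, and, near each endpoint $p_j$, a quantitative $C^{1,1}$-description of $\g_i^k$ as a curve issuing from $p_j^k$ whose unit tangent at $p_j^k$ converges to that of $\g_i$ at $p_j$. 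This is precisely the boundary datum required by Theorem \ref{thm main diffeo}.

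Third, I would apply Theorem \ref{thm main diffeo} to each pair $(\g_i,\g_i^k)$, with the prescribed endpoint correspondence $p_j\mapsto p_j^k$ and the prescribed normal graph outside $I_\mu(\S(\E))$ (for the finitely many components without boundary, the output is simply the pure normal graph). This produces, for $k\ge k(\mu)$, a $C^{1,1}$-diffeomorphism $f_k^{(i)}\colon\g_i\to\g_i^k$ which equals $\Id+\psi_k\nu_\E$ on $\g_i\setminus I_\mu(\S(\E))$, sends each endpoint of $\g_i$ to the matching point of $\g_i^k$, satisfies $\|f_k^{(i)}\|_{C^{1,1}(\g_i)}\le C_0$ with $C_0=C_0(\Lambda,\E)$ — here the $C^{2,1}$-regularity of $\g_i$ is used, one order being lost in building $C^{1,1}$ normal coordinates — and obeys $\|\ttau_\E(f_k^{(i)}-\Id)\|_{C^1(\INT(\g_i))}\le(C_0/\mu)\max_j|p_j^k-p_j|$, the factor $1/\mu$ being the cost of interpolating, across a collar of width $\mu$, between the endpoint displacement and the purely normal graph outside $I_\mu(\S(\E))$. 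Since the three arcs at $p_j$ are all sent to $p_j^k$, the maps $f_k^{(i)}$ glue to a single homeomorphism $f_k\colon\pa\E\to\pa\E_k$ with $f_k(\S(\E))=\S(\E_k)$, hence a $C^{1,1}$-diffeomorphism in the sense of Definition \ref{def 1}; the four displayed estimates of Theorem \ref{thm main planar} then follow by taking $C_0$ to be the maximum of the finitely many constants above and using $\|f_k-\Id\|_{C^0(\S(\E))}=\max_j|p_j^k-p_j|$, while the $C^1$-convergence to the identity comes from $\|\psi_k\|_{C^1}\to0$ and $p_j^k\to p_j$.

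The main obstacle is the bridging step of the second paragraph: converting the qualitative local regularity and Hausdorff convergence at the $120^\circ$ junctions into the precise quantitative hypotheses of Theorem \ref{thm main diffeo}. Concretely, one must show that near each $p_j$, after translating $p_j^k$ onto $p_j$, the three arcs of $\pa\E_k$ are $C^{1,1}$-graphs over the three half-lines of the limiting $Y$-configuration with graph norms that are uniformly bounded and tend to zero, and that the combinatorial type of $\E_k$ (numbers of chambers, arcs and junctions, together with their incidences) stabilizes as $k\to\infty$. This requires combining the density and monotonicity estimates available for $(\Lambda,r_0)$-minimizing planar clusters with the $\e$-regularity theorem at triple junctions; everything else is bookkeeping of the conclusions of Theorem \ref{thm main diffeo} over the finite families $\{\g_i\}_{i\in I}$ and $\{p_j\}_{j\in J}$.
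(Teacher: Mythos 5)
Your plan is essentially the paper's own proof: decompose $\pa\E$ into the finitely many curves $\{\g_i\}_{i\in I}$, match them with arcs of $\pa\E_k$, apply the almost-normal diffeomorphism construction (Theorem \ref{thm main diffeo}, in the paper invoked through its sequential reformulation, Theorem \ref{thm main diffeo k}) to each pair of arcs with matched endpoints and the prescribed normal graph away from $I_\mu(\S(\E))$, use the pure normal graph on boundaryless components, and glue at the triple junctions. The ``main obstacle'' you defer is exactly what the paper isolates as Theorem \ref{thm key planar}; be aware that the paper resolves it not via an $\e$-regularity theorem at triple junctions (no such tool is developed there) but by combining Theorem \ref{thm singular hausdorff} with a connectedness/counting argument (using $\#I''=\tfrac32\,\H^0(\S(\E))$ and the diameter lower bound of Theorem \ref{thm planar clusters} to rule out spurious closed components), a chord-arc estimate plus the curvature bound $\Lambda$ to get the uniform $\|\g_i^k\|_{C^{1,1}}\le L$ hypothesis, and a tangent-ray blow-up at $p_j^k\to p_j$ for the conormal convergence required in hypothesis (i) of Theorem \ref{thm main diffeo}; with that substitution your outline matches the paper's argument.
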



\begin{remark}
  {\rm A natural question is of course whether the maps $f_k$ in Theorem \ref{thm main planar} can be extended to $C^{1,1}$-diffeomorphisms $g_k$ of $\R^2$ with $\|g_k\|_{C^{1,1}(\R^2)}\le C_0$ and $\|g_k-\Id\|_{C^1(\R^2)}\to 0$ as $k\to\infty$. The answer is yes, but at the cost of a longer proof which only employs ideas similar to the ones already used in the proof of Theorem \ref{thm main planar}. At the same time, in the applications of Theorem \ref{thm main planar} presented in \cite{CiLeMaFUGLEDE,carocciamaggi} there seems to be no real advantage in working with the maps $g_k$ in place of the maps $f_k$.}
\end{remark}

\begin{remark}\label{remark proof}
  {\rm We briefly comment on the proof of Theorem \ref{thm main planar}. The first step consists in exploiting the interior regularity theory to show (much in the spirit of \eqref{improved convergence sets general 1}--\eqref{improved convergence sets general 2}) the existence of normal diffeomorphisms between those parts of $\pa\E$ and $\pa\E_k$ that are at a fixed small distance from the singular sets $\S(\E)$ and $\S(\E_k)$. This step of the proof can be carried out in arbitrary dimension, and it is addressed in Theorem \ref{thm normal representation part one}. Next, one exploits the description of singular sets of planar clusters in order to prove the Hausdorff convergence of $\S(\E_k)$ to $\S(\E)$ (Theorem \ref{thm singular hausdorff}), and to prove that actually if $x_k\in\S(\E_k)$, $x\in\S(\E)$ and $x_k\to x$, then the tangent cones to $\pa\E_k$ at $x_k$ converge locally uniformly to the tangent cone to $\pa\E$, see step four in the proof of Theorem \ref{thm key planar}. In Theorem \ref{thm key planar} we actually show various other preliminary convergence properties of $\pa\E_k$ towards $\pa\E$, including the fact that for $k$ large enough, $\pa\E_k$ and $\pa\E$ share the same topological structure. Given all these preparatory facts, one is ready to extend the normal diffeomorphisms defined away from $\S(\E)$ to the whole $\pa\E$ by exploiting the construction of almost-normal diffeomorphism described in Theorem \ref{thm main diffeo}.}
\end{remark}

\begin{remark}
  {\rm The delicate extension of Theorem \ref{thm main planar} to clusters in $\R^3$ is discussed in \cite{CiLeMaIC2}.}
\end{remark}

\subsection{Some applications of Theorem \ref{thm main planar}}\label{section some applications intro} As explained in section \ref{section improved convergence and applications}, a result like Theorem \ref{thm main planar} opens the way to several applications. The ones given below, see Theorem \ref{thm finitely many types} and Theorem \ref{thm potenziale}, are inspired by a list of questions concerning partitioning problems proposed by Almgren in \cite[VI.1(6)]{Almgren76}, precisely ``to classify in some reasonable way the different minimizing clusters corresponding to different choices of $m\in\R^N_+$''. In this direction, let us consider the equivalence relation $\approx$ on the family of planar $C^{1,1}$-clusters such that $\E\approx\F$ if there exists a $C^{1,1}$-diffeomorphism between $\pa\E$ and $\pa\F$. Theorem \ref{thm finitely many types} shows that isoperimetric clusters of a given volume (or with volume sufficiently close to a given one) generate only finitely many $\approx$-equivalence classes.

\begin{theorem}\label{thm finitely many types}
  For every $m_0\in\R^N_+$ there exists $\de>0$ with the following property. If $\Omega$ is the family of all the isoperimetric $N$-clusters $\E$ in $\R^2$ with $|\vol(\E)-m_0|<\de$, then $\Omega/_{\approx}$ is a finite set.
\end{theorem}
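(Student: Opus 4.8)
The plan is to argue by contradiction and reduce the statement to a single application of Theorem~\ref{thm main planar}. Since $\R^N_+$ is open, fix $\de_0>0$ so small that $K:=\{m\in\R^N:|m-m_0|\le\de_0\}$ is a compact subset of $\R^N_+$, and for $\de\in(0,\de_0]$ let $\Omega_\de$ denote the family of isoperimetric $N$-clusters $\E$ in $\R^2$ with $|\vol(\E)-m_0|<\de$. If the conclusion failed for $m_0$, then $\Omega_\de/_{\approx}$ would be infinite for every such $\de$; in particular we could choose $\{\E_k\}_{k\in\N}\subset\Omega_{\de_0}$ with $\E_j\not\approx\E_k$ whenever $j\ne k$. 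The goal is then to contradict this by proving that, along a subsequence, every $\E_k$ is $\approx$-equivalent to one and the same limit cluster.

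Next I would set up the compactness step. As $\vol(\E_k)\in K$ with $K$ compact, after passing to a subsequence $\vol(\E_k)\to m_1$ for some $m_1\in K\subset\R^N_+$. By the uniform lower density estimates and boundedness arguments underlying Almgren's existence theory for partitioning problems (see \cite[Chapters 29 and 30]{maggiBOOK}), isoperimetric clusters with volume in the compact set $K$ have uniformly bounded diameter; after translating each $\E_k$ — which changes neither its being an isoperimetric cluster nor its $\approx$-class — we may assume $\E_k(h)\subset B_{0,R}$ for all $h=1,\dots,N$ and all $k$, with $R$ fixed. Then $\sup_k P(\E_k)<\infty$, so $BV$-compactness and a diagonal argument over $h$ give a further subsequence with $\d(\E_k,\E)\to0$ for some $N$-cluster $\E$ of volume $m_1$. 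Writing $\phi(m)=\inf\{P(\F):\vol(\F)=m\}$, we have $P(\E_k)=\phi(\vol(\E_k))\to\phi(m_1)$ by continuity of the isoperimetric profile, while $P(\E)\le\liminf_kP(\E_k)$ by lower semicontinuity; hence $P(\E)=\phi(m_1)$ and $\E$ is itself isoperimetric. Finally, any isoperimetric cluster with volume in a neighbourhood of $m_1$ is a $(\Lambda,r_0)$-minimizing cluster for $\Lambda,r_0>0$ chosen uniformly on that neighbourhood (the Lagrange multipliers being locally bounded, see section~\ref{section what is true in Rn}), so we may fix $\Lambda\ge0$ and $r_0>0$ such that $\E$ and all $\E_k$ with $k$ large are $(\Lambda,r_0)$-minimizing; and since $\E$ is isoperimetric, each of its boundary curves $\g_i$ is a circular arc or a segment, so $\E$ is a bounded $C^{2,1}$-cluster in the sense of Definition~\ref{cluster cka}.

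To conclude, apply Theorem~\ref{thm main planar} to these $\Lambda$, $r_0$ and this bounded $C^{2,1}$-cluster $\E$, obtaining $\mu_0,C_0>0$; fix any $\mu<\mu_0$. Since the $\E_k$ (for $k$ large) form a sequence of $(\Lambda,r_0)$-minimizing clusters with $\d(\E_k,\E)\to0$, Theorem~\ref{thm main planar} produces $k(\mu)\in\N$ and, for each $k\ge k(\mu)$, a $C^{1,1}$-diffeomorphism $f_k$ between $\pa\E$ and $\pa\E_k$. By Definition~\ref{def 1} this is exactly the statement that $\E\approx\E_k$ for every $k\ge k(\mu)$, so transitivity of $\approx$ gives $\E_j\approx\E_k$ for all $j,k\ge k(\mu)$, contradicting the choice of the $\E_k$; this yields the desired $\de$. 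I expect the only genuinely non-formal point to be the extraction of the \emph{uniform} geometric bounds used in the compactness step — a diameter bound and uniform $(\Lambda,r_0)$-minimality for isoperimetric clusters with volume in the compact set $K$ — which rely on Almgren's compactness machinery; everything else ($BV$-compactness, lower semicontinuity, continuity of the isoperimetric profile, the elementary structure of planar isoperimetric clusters, transitivity of $\approx$) is routine, with Theorem~\ref{thm main planar} doing the substantive work.
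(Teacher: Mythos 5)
Your overall strategy coincides with the paper's: argue by contradiction, translate and extract an $L^1$-convergent subsequence of pairwise $\approx$-inequivalent isoperimetric clusters, show that the clusters in the tail are $(\Lambda,r_0)$-minimizing with constants independent of $k$, and then invoke Theorem \ref{thm main planar} to conclude that they are eventually all equivalent to the limit. Some of your side points are even more explicit than the paper's text: you verify that the limit cluster is itself isoperimetric, hence made of circular arcs and segments and therefore a bounded $C^{2,1}$-cluster as Theorem \ref{thm main planar} requires, and allowing the limit volume $m_1\ne m_0$ is harmless. (For the uniform diameter bound the paper uses the planar remark $2\,\diam(E)\le P(E)$ for indecomposable sets together with the indecomposability of $\R^2\setminus\E(0)$, rather than generic ``Almgren machinery'', but the fact you need is stated right before the proof.)

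The one step that is genuinely under-justified is the uniform almost-minimality. You claim that every isoperimetric cluster with volume in a neighbourhood of $m_1$ is $(\Lambda,r_0)$-minimizing with $\Lambda,r_0$ uniform over that neighbourhood, ``the Lagrange multipliers being locally bounded, see section \ref{section what is true in Rn}''. No such uniform statement appears in section \ref{section what is true in Rn}: what is known a priori is only that each individual minimizer is $(\Lambda,r_0)$-minimizing for constants depending on that minimizer, and the constants produced by volume-fixing variations depend on the reference cluster, so uniformity over a volume-neighbourhood is not an off-the-shelf fact (its natural proof is precisely the compactness argument you are in the middle of running). The paper instead gets the uniformity from the convergence you have already established: it applies Theorem \ref{thm volume fixing} with reference cluster the limit $\E$ (constants $\e_0$, $r_0$, $C_0$ depending on $\E$ only), notes that $\d(\E_k,\E)<\e_0$ for $k$ large, and then, for any competitor $\F$ with $\F(h)\Delta\E_k(h)\cc B_{x,r_0}$, restores the volume to obtain $\F'$ with $\vol(\F')=\vol(\E_k)$ and $P(\F')\le P(\F)+C_0\,P(\E_k)\,|\vol(\F)-\vol(\E_k)|\le P(\F)+\Lambda\,\d(\E_k,\F)$, with $\Lambda$ independent of $k$ since $\sup_kP(\E_k)<\infty$; minimality of $\E_k$ then yields \eqref{almost-minimizer cluster} with uniform $\Lambda,r_0$. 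Substituting this argument for your asserted uniformity (and noting that the upper semicontinuity of the profile you use to get $P(\E)=\phi(m_1)$ is itself obtained by the volume-fixing device of \cite[Theorem 29.14]{maggiBOOK}, as in the proof of Theorem \ref{thm potenziale}) makes your proof complete and essentially identical to the paper's.
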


By an entirely analogous principle, we can describe qualitatively minimizers in \eqref{partitioning problem with potential} when the potential energy is small enough. (In the case of planar double bubbles $N=n=2$ we can upgrade this description to a quantitative one in the spirit of \cite{FigalliMaggiARMA}, see \cite{CiLeMaFUGLEDE}.)

\begin{theorem}
  \label{thm potenziale}
  Let $m_0\in\R^N_+$ be such that there exists a unique (modulo isometries) isoperimetric cluster $\E_0$ in $\R^2$ with $\vol(\E_0)=m_0$, and let $g:\R^2\to[0,\infty)$ be a continuous function with $g(x)\to\infty$ as $|x|\to\infty$. Then there exists $\de_0>0$ (depending on $\E_0$ and $g$ only) such that for every $\de<\de_0$ and $|m-m_0|<\de_0$ there exist minimizers of
  \begin{equation}
  \label{partitioning problem with potential delta}
  \inf\Big\{P(\E)+\de\,\sum_{h=1}^N\int_{\E(h)}g(x)\,dx:\vol(\E)=m\Big\}\,.
  \end{equation}
  If $\E$ is a minimizer in \eqref{partitioning problem with potential delta}, then $\E\approx\E_0$. Moreover, if $H_{\E(h,k)}$ denotes the scalar mean curvature of the interface $\E(h,k)$ with respect to $\nu_{\E(h)}$, then $H_{\E(h,k)}$ is continuous on $\E(h,k)$, with
  \begin{equation}
    \label{convessita}
      \max_{0\le h<k\le N}\|H_{\E(h,k)}-H_{\E_0(h,k)}\|_{C^0(\E(h,k))}\le C_0\,\de\,,
  \end{equation}
  for a constant $C_0$ depending on $\E_0$ and $g$ only. (Notice that $H_{\E_0(h,k)}$ is a constant for every $0\le h<k\le N$.)
\end{theorem}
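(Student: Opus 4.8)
The plan is to obtain Theorem \ref{thm potenziale} from Theorem \ref{thm main planar} by a standard compactness-and-contradiction scheme. First I would dispatch existence of minimizers in \eqref{partitioning problem with potential delta}: for $\de<\de_0$ and $|m-m_0|<\de_0$ this follows from Almgren's restoration-of-compactness argument (cited in the excerpt, \cite[Chapter 29]{maggiBOOK}), using that $g\ge0$ is continuous and coercive. The key quantitative input is the uniform $(\Lambda,r_0)$-minimality: one checks that any minimizer $\E$ of \eqref{partitioning problem with potential delta} with $\de<\de_0$ satisfies \eqref{almost-minimizer cluster} with $\Lambda=\Lambda_0$ and $r_0=r_0$ independent of $\de$ and $m$ (in the relevant range), by the usual volume-fixing variations that trade a small perimeter change against a controlled volume change, the potential term contributing only a $\de\,\sup_{B_{x,r_0}}g$-type correction that is absorbed into $\Lambda_0$. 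One also needs a uniform a priori diameter bound on these minimizers (coercivity of $g$ plus an isoperimetric comparison), so that the $\E$'s live in a fixed ball; this makes $I$ and $J$ finite and uniformly bounded.

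Next I would prove the convergence statement by contradiction. Suppose there are $\de_j\to0$, $m_j\to m_0$, and minimizers $\E_j$ of \eqref{partitioning problem with potential delta} with parameter $\de_j$ and volume $m_j$, such that $\E_j\not\approx\E_0$, or such that \eqref{convessita} fails with constant $j$. By the uniform diameter bound and uniform $(\Lambda_0,r_0)$-minimality, a subsequence of $\E_j$ converges in $L^1$ (i.e. $\d(\E_j,\E)\to0$) to some cluster $\E$; by lower semicontinuity of perimeter, continuity of $g$, and $\de_j\to0$, $\E$ is a minimizer of \eqref{partitioning problem}, hence an isoperimetric cluster with $\vol(\E)=m_0$, hence (by the uniqueness hypothesis, modulo isometries) $\E=\E_0$ up to an isometry, which we absorb. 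A minor point to handle is that $\vol(\E_j)=m_j\to m_0$ rather than exactly $m_0$: rescaling each $\E_j$ by the factor $(m_{0,1}/m_{j,1})^{1/n},\dots$ — more precisely by a single dilation bringing the volumes into the correct proportion and then noting the remaining mismatch is $O(|m_j-m_0|)$ — reduces to the fixed-volume case, or one simply invokes the version of Theorem \ref{thm main planar} that only requires $\d(\E_j,\E)\to0$, which is exactly what we have. Since $\E_0$ is a bounded $C^{2,1}$-cluster (each interface is a circular arc or segment, so smooth), Theorem \ref{thm main planar} applies: for $j$ large there are $C^{1,1}$-diffeomorphisms $f_j$ between $\pa\E_0$ and $\pa\E_j$ with $f_j(\S(\E_0))=\S(\E_j)$ and $\|f_j-\Id\|_{C^1(\pa\E_0)}\to0$. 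In particular $\E_j\approx\E_0$ for $j$ large, contradicting the first alternative.

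For the curvature estimate \eqref{convessita}, I would argue as follows. Continuity of $H_{\E(h,k)}$ on each interface $\E(h,k)$: the Euler--Lagrange equation for \eqref{partitioning problem with potential delta} says that along each interface the scalar mean curvature equals $\de\,g$ restricted to that interface plus a Lagrange-multiplier constant $\lambda_{h,k}$ (the multipliers associated with the $N$ volume constraints, entering each interface with the appropriate sign), so $H_{\E(h,k)}=\de\,g|_{\E(h,k)}+\lambda_{h,k}$ is continuous because $g$ is. Using the diffeomorphism $f_j$ one transplants the interfaces of $\E_j$ onto those of $\E_0$; from $\|f_j-\Id\|_{C^1}\to0$ one gets that the curvatures of $\E_j$, read off as functions on the interfaces of $\E_0$, converge uniformly to the curvatures of $\E_0$. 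Since $H_{\E_0(h,k)}$ is constant and equals $\lambda^0_{h,k}$ (the multipliers of the pure isoperimetric problem at $m_0$), and since $H_{\E_j(h,k)}=\de_j\,g+\lambda^j_{h,k}$, one must show $\lambda^j_{h,k}\to\lambda^0_{h,k}$ and then make the rate explicit. The multipliers are determined by $H$ via integrating the Euler--Lagrange system against a fixed set of volume-changing vector fields adapted to $\E_0$ (whose Gram-type matrix is invertible by the strict-stability/nondegeneracy implicit in the uniqueness of $\E_0$, or simply because this linear system is the one that defines the multipliers and is solvable); plugging $f_j$-pushed-forward quantities in and using $\|f_j-\Id\|_{C^1}\le C\,\de_j$ would give $|\lambda^j_{h,k}-\lambda^0_{h,k}|\le C\,\de_j$ and hence \eqref{convessita}. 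Here the genuinely delicate point, and the main obstacle, is upgrading the qualitative $\|f_j-\Id\|_{C^1(\pa\E_0)}\to0$ to the linear rate $\|f_j-\Id\|_{C^1(\pa\E_0)}\le C\,\de_j$: this is where one needs a quantitative stability statement for the isoperimetric cluster $\E_0$ — essentially that $\d(\E,\E_0)$ (or the $C^1$-distance of the interfaces, via improved convergence) is controlled by the perimeter deficit $P(\E)-P(\E_0)$, which here is $O(\de_j)$ since $\E_j$ beats $\E_0$ (suitably volume-adjusted) in \eqref{partitioning problem with potential delta} up to a $\de_j$-error. In the double-bubble case $N=n=2$ this quantitative stability is precisely the content of the companion paper \cite{CiLeMaFUGLEDE} (hence the parenthetical remark in the statement); in general one may invoke the Hölder-type quantitative isoperimetric inequality for clusters together with the regularity of $\E_0$, and the nondegeneracy encoded in the uniqueness hypothesis, to close the estimate. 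Apart from this rate issue, the remaining steps — existence, uniform minimality, compactness, identification of the limit, and the Euler--Lagrange bookkeeping for the multipliers — are routine.
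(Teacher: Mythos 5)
Your first half (existence, uniform $(\Lambda,r_0)$-minimality of the minimizers, compactness with tightness coming from the coercivity of $g$, identification of the limit as the unique isoperimetric cluster, and the application of Theorem \ref{thm main planar} to get $\E\approx\E_0$ for small $\de$) is essentially the paper's step one. The only point you state too quickly is the identification of the limit: besides lower semicontinuity one needs the matching upper bound $\limsup_k P(\E_k)\le P(\E_0)$, which the paper obtains by producing competitors with volume exactly $m_k$ through Almgren's volume-fixing variations of $\E_0$ (\cite[Theorem 29.14]{maggiBOOK}) rather than by rescaling; this is a repairable detail, not a gap.

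The genuine gap is in your argument for \eqref{convessita}. Your route needs $|\lambda^j_{h,k}-\lambda^0_{h,k}|\le C\,\de_j$, which you propose to extract from a linear rate $\|f_j-\Id\|_{C^1(\pa\E_0)}\le C\,\de_j$; but Theorem \ref{thm main planar} only gives $\|f_j-\Id\|_{C^1(\pa\E_0)}\to0$ with no rate, and the quantitative stability you invoke to upgrade it is not available at this level of generality: uniqueness of $\E_0$ modulo isometries does not encode any strict stability or nondegeneracy, and the sharp stability inequalities for clusters exist (as companion results that themselves build on the present theorem) only for planar double bubbles \cite{CiLeMaFUGLEDE} and the hexagonal honeycomb \cite{carocciamaggi}. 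So, as written, this step fails. The paper avoids the issue entirely: it first shows that any minimizer $\E$ of \eqref{partitioning problem with potential delta} satisfies the volume-constrained almost-minimality $P(\E)\le P(\F)+C_0\,\de\,\d(\E,\F)$ of \eqref{tensorsss} for competitors confined to a fixed ball, and then compares first variations generated by a two-point volume-preserving deformation localized at $x_1,x_2\in\E(h,k)$; letting the test fields concentrate and using the mean value theorem yields directly $|H_{\E(h,k)}(x_1)-H_{\E(h,k)}(x_2)|\le 2C_0\,\de$, i.e. $H_{\E(h,k)}$ is within $C\,\de$ of a constant $H^\de_{h,k}$, and this constant is identified with $H_{\E_0(h,k)}$ by the soft distributional convergence of the curvatures as $\de\to0^+$ (as in \cite[Lemma 3.7(ii)]{CicaleseLeonardi}). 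No rate on $f_k-\Id$ and no stability inequality for $\E_0$ ever enters; to salvage your multiplier computation you would in any case have to prove precisely such a first-variation oscillation bound, so the detour through the diffeomorphisms is both insufficient and unnecessary for this part of the statement.
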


Of course, thanks to Theorem \ref{thm finitely many types}, if the uniqueness assumption on $m_0$ in Theorem \ref{thm potenziale} is dropped, then one can still infer that minimizers in \eqref{partitioning problem with potential delta} with $\de<\de_0$ and $|m-m_0|<\de_0$ generate only finitely many $\approx$-equivalence classes. Moreover, we explicitly notice that the novelty of Theorem \ref{thm potenziale} is not the existence part, which follows by standard arguments, but the fact that $\E\approx\E_0$.

Further applications of Theorem \ref{thm main planar} are discussed elsewhere. In \cite{CiLeMaFUGLEDE}, Theorem \ref{thm main planar} is the starting point for obtaining a sharp stability inequality for planar double-bubbles, while in \cite{carocciamaggi} we address a sharp quantitative version of Hales's isoperimetric theorem for the regular hexagonal tiling \cite{hales}.

\subsection{Organization of the paper}\label{section organization} The paper is essentially divided in two parts. The first part consists of sections \ref{section not and prel}--\ref{section curves}. The goal here is to provide in a reasonable generality the construction of almost-normal diffeomorphisms between manifolds with boundary. As said, this is the key result in constructing the maps appearing in Theorem \ref{thm main planar}. It is considered in arbitrary dimension and co-dimension (and not just for curves in the plane) in view of its applications to the improved convergence of clusters in $\R^3$ \cite{CiLeMaIC2} and to the description of capillarity droplets in containers  \cite{maggimihaila}. We provide two statements of this result, see Theorem \ref{thm main diffeo} and Theorem \ref{thm main diffeo k}, the second one being more practical in applications. These results are proved in section \ref{section curves}, after some preliminary facts concerning the implicit function theorem and Whitney's extension theorem are gathered in section \ref{section not and prel}. In the second part of the paper, which consists of sections \ref{section what is true in Rn}--\ref{section planar stuff} we gather the various ingredients needed to deduce Theorem \ref{thm main planar} from Theorem \ref{thm main diffeo}, as described in Remark \ref{remark proof}.  Finally, in section \ref{section applications} we give the (closely related proofs of) Theorem \ref{thm finitely many types} and Theorem \ref{thm potenziale}.

\medskip

\noindent {\bf Acknowledgement}: We thank Frank Morgan for improving the presentation of our paper with some useful comments. The work of FM was supported by NSF Grants DMS-1265910 and DMS-1361122 The work of GPL has been supported by GNAMPA (INdAM).

\section{Notation and preliminaries}\label{section not and prel} We gather here some basic notation and classical facts to be used here and in \cite{CiLeMaIC2}.

\subsection{Sets in $\R^n$}\label{section sets} Given $x\in\R^n$ and $r>0$ we set $B(x,r)=B_{x,r}=\{y\in\R^n:|y-x|<r\}$ and $B(0,r)=B_{0,r}=B_r$,
where $|v|^2=v\cdot v$ and $v\cdot w$ is the scalar product of $v,w\in\R^n$. We set $\SS^{n-1}=\{x\in\R^n:|x|=1\}$. Given $S\subset\R^n$, we denote by $\mathring{S}$, $\pa S$, and $\cl(S)$ the interior, the boundary, and the closure of $S$ respectively, while $I_\e(S)$ denotes the tubular $\e$-neighborhood of $S$ in $\R^n$, that is $I_\e(S)=\{x\in\R^n:\dist(x,S)<\e\}$, $\e>0$. Given $S,T\subset\R^n$ we define the Hausdorff distance between  $S$ and $T$ localized in $K\subset\R^n$ as
\begin{equation}
  \label{hausdorff distance localized def}
  \hd_K(S,T)=\max\big\{\sup\{\dist(y,S):y\in T\cap K\},\sup\{\dist(y,T):y\in S\cap K\}\big\}\,,
\end{equation}
so that $\hd_K(S,T)<\e$ if and only if $S\cap K\subset I_\e(T)$ and $T\cap K\subset I_\e(S)$, while
\[
\hd_{x,r}(S,T)=\hd_{B_{x,r}}(S,T)\,,\qquad \hd(S,T)=\hd_{\R^n}(S,T)\,.
\]
If $S$ is a $k$-dimensional $C^1$-manifold in $\R^n$ (we always work with {\it embedded} manifolds), then the geodesic distance on $S$ is given by
\[
\textstyle\dist_{S}(x,y)\displaystyle=\inf\big\{\int_0^1|\g'(t)|\,dt:\g\in C^1([0,1];S)\,,\g(0)=x\,,\g(1)=y\big\}\,,\qquad x,y\in S\,.
\]
We also define the normal $\e$-neighborhood of $S$ as
\begin{equation}
  \label{Nespilon}
N_\e(S)=\big\{x+\sum_{i=1}^{n-k}t_i\,\nu^{(i)}(x):x\in S\,,\sum_{i=1}^{n-k}t_i^2<\e^2\big\}\,,
\end{equation}
provided $\{\nu^{(i)}(x)\}_{i=1}^{n-k}$ denotes an orthonormal basis to $(T_xS)^\perp$. If $S$ is a $k$-dimensional $C^1$-manifold with boundary in $\R^n$, then $\INT(S)$ and $\bd(S)$ denote, respectively, the interior and the boundary points of $S$. If $x\in\bd(S)$, then we define $T_xS$ as a $k$-dimensional space (thus, not as an half-space), and we denote by $\nu^{co}_{S}(x)\in T_xS$ the outer unit normal to $\bd(S)$ with respect to $S$. Moreover, we set
\begin{equation}
  \label{S rho}
[S]_\rho=S\setminus I_\rho(\bd(S))\,,\qquad\forall \rho>0\,.
\end{equation}
Denoting by $\pi^S_x$ the projection of $\R^n$ onto $T_xS$, for every $f:S\to\R^n$ we define $\pi^Sf:S\to\R^n$ by taking
\[
(\pi^Sf)(x)=\pi^S_x[f(x)]\,,\qquad x\in S\,.
\]
The terms curve, surface and hypersurface are used in place of $1$-dimensional manifold, $2$-dimensional manifold and $(n-1)$-dimensional manifold in $\R^n$.

\subsection{Uniform inverse and implicit function theorems}\label{section maps on curves} If $S$ is a $k$-dimensional $C^{1,\a}$-manifold in $\R^n$ ($\a\in(0,1]$), $x\in S$, and $f:S\to\R^m$, then we say that $f$ is differentiable at $x$ with respect to $S$ if we can define a linear map $\nabla^Sf(x):\R^n\to\R^m$ by setting
\[
\nabla^Sf(x)[v]=
\begin{cases}
\lim\limits_{t\to 0}\frac{f(\g(t))-f(x)}{t}\,&\mbox{if $v\in T_xS$, $\g\in C^1((-\e,\e);S)$, $\g(0)=x$, $\g'(0)=v$}\,,
\\
0&\mbox{if $v\in (T_xS)^{\perp}$}\,.
\end{cases}
\]
Denoting by $\|L\|=\sup\{|L[v]|:|v|= 1\}$ the operator norm of a linear map $L:\R^n\to\R^m$, we set
\[
\|f\|_{C^1(S)}=\sup_{x\in S}|f(x)|+\|\nabla^S f(x)\|\,.
\]
Of course, if $f$ is differentiable on an open neighborhood of $S$, then $\nabla^Sf(x)$ is just the restriction of the differential $\nabla f(x)$ of $f$ at $x$ to $T_xS$, extended to $0$ on $(T_xS)^\perp$. For $\a\in(0,1]$ we set
\begin{eqnarray*}
  [\nabla^Sf]_{C^{0,\a}(S)}&=&\sup_{x,y\in S,\,x\ne y}\frac{\|\nabla^Sf(x)-\nabla^Sf(y)\|}{|x-y|^\a}\,,
  \\
  \|\nabla^Sf\|_{C^{0,\a}(S)}&=&\sup_{x\in S}\|\nabla^Sf(x)\|+[\nabla^Sf]_{C^{0,\a}(S)}\,,
  \\
  \|f\|_{C^{1,\a}(S)}&=&\sup_{x\in S}|f(x)|+\|\nabla^Sf\|_{C^{0,\a}(S)}\,,
\end{eqnarray*}
(note the use of the Euclidean distance in the definition of $[\cdot]_{C^{0,\a}(S)}$). If $\{\tau_i(x)\}_{i=1}^k$ is an orthonormal basis of $T_xS$, then we define the tangential Jacobian of $f$ as
\[
J^Sf(x)=\Big|\bigwedge_{i=1}^k\,\nabla^Sf(x)[\tau_i(x)]\Big|\,,\qquad x\in S\,.
\]
The following theorems are uniform versions of the inverse and implicit function theorems. The proof of the first result is included in Appendix \ref{section implicit} for the sake of clarity.

\begin{theorem}[Uniform inverse function theorem]\label{thm inverse fnct uniform}
Given $n\ge 2$, $1\le k\le n-1$, $\a\in(0,1]$, $L>0$, and $S_0$ a $k$-dimensional $C^{1,\a}$-manifold in $\R^n$ with $\diam(S_0)\le L$ and
\begin{eqnarray}
  \label{geo-euc}
  \textstyle \dist_{S_{0}}(x,y)\leq L\,|x-y|\,,&&\qquad\forall x,y\in S_0\,,
  \\
  \label{vai ganzo}
  |y-x|\le 2\,|\pi^{S_0}_x(y-x)|\,,&&\qquad\forall x\in S_0\,, y\in B_{x,1/L}\cap S_0\,,
  \\
    \label{vai ganzo1}
      \|\pi^{S_0}_x-\pi^{S_0}_y\|\le L\,|x-y|^\a\,,&&\qquad \forall x,y\in S_0\,,
\end{eqnarray}
there exist positive constants $\e_0$, $\rho_0$ and $C_0$, depending on $n$, $k$, $\a$, and $L$ only, with the following properties. If $f\in C^{1,\a}(S_0;\R^n)$ is such that
  \begin{equation}
    \label{inverse function1}
      \inf_{S_0}\,J^{S_0}f\,\ge \frac1{L}\,,\qquad \|\nabla^{S_0}f\|_{C^{0,\a}(S_0)}\le L\,,
  \end{equation}
  then $f$ is injective on $B_{x,\e_0}\cap S_0$ for every $x\in S_0$. If, moreover,
  \begin{equation}
    \label{inverse function2}
      \|f-\Id\|_{C^{0}(S_{0})}\le\rho_0\,,
  \end{equation}
then $S=f(S_0)$ is a $k$-dimensional $C^{1,\a}$-manifold in $\R^n$ and $f:S_{0}\to S$ is a $C^{1,\a}$-diffeomorphism satisfying $\|f^{-1}\|_{C^{1,\a}(S)}\le C_0$.
\end{theorem}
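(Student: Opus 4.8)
\emph{Proof strategy.} The plan is to prove the theorem in four steps, keeping track of every constant so that it depends only on $n$, $k$, $\a$, and $L$. First I would record two quantitative consequences of the hypotheses. On the one hand, \eqref{inverse function1} forces a uniform lower bound on the smallest singular value of $\nabla^{S_0}f(x)$ restricted to $T_xS_0$: the product of its singular values is $J^{S_0}f(x)\ge 1/L$ while the largest one is at most $\|\nabla^{S_0}f(x)\|\le L$, so the smallest is at least $L^{-k}$, i.e. $|\nabla^{S_0}f(x)[v]|\ge L^{-k}|v|$ for every $v\in T_xS_0$. On the other hand, \eqref{vai ganzo} and \eqref{vai ganzo1} yield a uniform graphical description of $S_0$: there is a scale $r_1=r_1(L,\a)>0$ such that for every $x\in S_0$ the orthogonal projection onto $T_xS_0$ is a bi-Lipschitz homeomorphism of $S_0\cap B_{x,r_1}$ onto an open subset of $T_xS_0$, with inverse of the form $w\mapsto x+w+\phi_x(w)$, where $\phi_x\in C^{1,\a}$ satisfies $\phi_x(0)=0$, $\nabla\phi_x(0)=0$ and $\|\nabla\phi_x\|_{C^{0,\a}}\le C(L)$ (the $C^{1,\a}$-regularity of this graph coming from the $\a$-Hölder continuity of $x\mapsto T_xS_0=\pi^{S_0}_x(\R^n)$ encoded in \eqref{vai ganzo1}).

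\emph{Local, then global, injectivity.} Next I would prove injectivity of $f$ on $B_{x_0,\e_0}\cap S_0$ for every $x_0$. Given $x,y$ there with $f(x)=f(y)$, by \eqref{geo-euc} I would join them by a $C^1$ curve $\g$ in $S_0$ of length at most $2L|x-y|$, which therefore stays in a ball of radius $O(\e_0)$ about $x_0$. Writing $0=f(y)-f(x)=\int_0^1\nabla^{S_0}f(\g(t))[\g'(t)]\,dt$ and freezing $\nabla^{S_0}f$ at $x_0$ inside the integral, the principal term is $\nabla^{S_0}f(x_0)[y-x]=\nabla^{S_0}f(x_0)[\pi^{S_0}_{x_0}(y-x)]$, whose norm is at least $\tfrac12 L^{-k}|y-x|$ by the first fact of Step 1 and by \eqref{vai ganzo}, while the error is bounded by $C(L)\,\e_0^{\a}\,|y-x|$ using the Hölder bound on $\nabla^{S_0}f$ from \eqref{inverse function1}; choosing $\e_0=\e_0(n,k,\a,L)$ small enough forces $x=y$. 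Then, assuming in addition \eqref{inverse function2} with $\rho_0<\e_0/2$, if $f(x)=f(y)$ we get $|x-y|\le|x-f(x)|+|y-f(y)|\le 2\rho_0<\e_0$, so $y\in B_{x,\e_0}$ and local injectivity (with base point $x$) gives $x=y$; hence $f$ is globally injective.

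\emph{The image is a $C^{1,\a}$-manifold and $f^{-1}$ has a uniform bound.} Finally I would fix $x\in S_0$, work in the graph chart from Step 1, and set $F(w)=f(x+w+\phi_x(w))$, so that $\nabla F(w)=\nabla^{S_0}f(y)\circ(\Id+\nabla\phi_x(w))$ with $y=x+w+\phi_x(w)$; since $\nabla\phi_x(w)[v]\perp v$, Step 1 gives $|\nabla F(w)[v]|\ge L^{-k}|v|$, while $\|\nabla F\|\le 2L$ for $|w|$ small and $[\nabla F]_{C^{0,\a}}\le C(L)$. Composing with the orthogonal projection $P_V$ onto the $k$-plane $V=\nabla^{S_0}f(x)[T_xS_0]$ gives a map $\Psi=P_V\circ F$ with $\nabla\Psi(0)=\nabla F(0)$ invertible from $T_xS_0$ onto $V$, $\|\nabla\Psi(0)^{-1}\|\le L^k$, and $[\nabla\Psi]_{C^{0,\a}}\le C(L)$; the standard quantitative inverse function theorem in $\R^k$ (contraction-mapping form) then provides uniform $r',c>0$ and a $C^{1,\a}$-inverse $\Psi^{-1}$ on a ball $B_c^V$, with $\|\Psi^{-1}\|_{C^{1,\a}}\le C(L,\a)$. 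Consequently, near $f(x)$ the set $S$ is the $C^{1,\a}$-graph over $V$ of $P_{V^\perp}\circ F\circ\Psi^{-1}$; using \eqref{inverse function2} once more to guarantee that points of $S$ within a uniform distance $\delta$ of $f(x)$ have preimages inside $B_{x,r_1}$, this graph is exactly $S\cap B(f(x),\delta)$, so $S$ is an embedded $k$-dimensional $C^{1,\a}$-manifold. On $S\cap B(f(x),\delta)$ one reads off $f^{-1}(z)=x+\Psi^{-1}(P_Vz)+\phi_x(\Psi^{-1}(P_Vz))$, a composition of $C^{1,\a}$-maps with norms bounded by $C_0=C_0(n,k,\a,L)$; combining these local bounds with the global bound $\|\nabla^Sf^{-1}\|_{C^0(S)}\le L^k$ (which controls the Hölder seminorm of $\nabla^Sf^{-1}$ for pairs of points at distance $\ge\delta$) yields $\|f^{-1}\|_{C^{1,\a}(S)}\le C_0$. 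Since $f$ is a continuous bijection onto $S$ admitting these continuous local inverses, it is a $C^{1,\a}$-diffeomorphism.

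\emph{Main obstacle.} The delicate part is the uniformity bookkeeping in the last step: one must check that the graph scale $r_1$, the neighborhood sizes $r',c,\delta$ arising from the Euclidean inverse function theorem, and the resulting $C^{1,\a}$-bounds all depend only on $n,k,\a,L$ — not on $x$, nor on the particular $S_0$ — and that the local graph charts for $S$ patch together into a single global estimate on $\|f^{-1}\|_{C^{1,\a}(S)}$. Hypotheses \eqref{geo-euc}--\eqref{vai ganzo1} are exactly what makes all of this uniform.
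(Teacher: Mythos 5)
Your first two steps are essentially the paper's own argument: the singular-value bound $\lambda_{\min}(\nabla^{S_0}f)\ge L^{-k}$, the curve argument using \eqref{geo-euc} with the gradient frozen at a base point and the error absorbed via the H\"older bound in \eqref{inverse function1}, and the passage from local to global injectivity via $\rho_0$ and $\diam(S_0)\le L$ all match (freezing at $x_0$ rather than at one of the two points just costs one extra use of \eqref{vai ganzo1}). The divergence, and the problem, is in your last step.

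There is a genuine uniformity gap there. Your quantitative conclusion rests on the assumption that around \emph{every} $x\in S_0$ the projection chart of Step 1 covers a ball of \emph{uniform} radius in $T_xS_0$: this is used when you apply the contraction-mapping inverse function theorem to $\Psi=P_V\circ F$ ``on a ball $B_c^V$'' with ``uniform $r',c>0$'', and again when you identify $S\cap B(f(x),\delta)$ with a graph over $V$, and once more when you split the H\"older seminorm of $\nabla^Sf^{-1}$ into pairs at distance $<\delta$ (chart estimate) and $\ge\delta$ (sup bound). Hypotheses \eqref{geo-euc}--\eqref{vai ganzo1} do not give such a uniform interior scale: $S_0$ need not be complete. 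A flat open disc of tiny radius satisfies all the assumptions with a fixed $L$, yet for $x$ near its rim the projection of $S_0\cap B_{x,r_1}$ onto $T_xS_0$ contains no ball of radius bounded below; and this is not a pedantic corner case, since in this paper the theorem is applied precisely in such situations (to $\INT(S_0)$ for a manifold with boundary in the proof of Theorem \ref{thm main diffeo}, and to $S_0\times(-1,1)^{n-k}$ and open cubes in the proof of Theorem \ref{lemma implicit function}). At such points your IFT step and the ``graph over $B(f(x),\delta)$'' identification fail, the uniform $\delta$ disappears, and the bound $[\nabla^Sf^{-1}]_{C^{0,\a}(S)}\le C_0$ — the heart of the ``uniform'' statement — is not established. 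The paper avoids charts altogether: after the global Lipschitz bound for $f^{-1}$ it proves the H\"older estimate by a two-point algebraic identity expressing $\nabla^Sf^{-1}(p_1)-\nabla^Sf^{-1}(p_2)$ in terms of $\nabla^{S_0}f(x_1)-\nabla^{S_0}f(x_2)$, $\pi^{S_0}_{x_1}-\pi^{S_0}_{x_2}$ and $\pi^{S}_{p_1}-\pi^{S}_{p_2}$, the last difference being controlled by pushing \eqref{vai ganzo1} forward with the Lipschitz inverse. Your approach can be repaired in the same spirit: observe that $\nabla^Sf^{-1}(p)=\big(\nabla^{S_0}f(x)|_{T_xS_0}\big)^{-1}\circ\pi^S_p$ with $x=f^{-1}(p)$, and estimate differences of these operators directly for $|p_1-p_2|$ small, using only the lower singular-value bound, the H\"older continuity of $\nabla^{S_0}f$ and of the projections; no interior ball is needed, and your Step 1 graph lemma (which itself would require a short proof from \eqref{vai ganzo}--\eqref{vai ganzo1}) then becomes unnecessary for the quantitative part.
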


\begin{theorem}[Uniform implicit function theorem]\label{lemma implicit function}
  Let $n$, $k$, $\a$, $L$ and $S_0$ be as in Theorem \ref{thm inverse fnct uniform}. Then there exist positive constants $C_0$ and $\eta_0$ depending on $n$, $k$, $\a$, and $L$ only with the following property. If $x_0\in S_0$ and $u\in C^{1,\a}(S_0\times(-1,1)^{n-k};\R^{n-k})$ is such that
  \begin{equation}
    \label{hp implicit function}
      u(x_0,\00)=\00\,,\qquad \Big|\bigwedge_{i=1}^{n-k}\frac{\pa u}{\pa t_i}(x_0,\00)\Big|\ge\frac1{L}\,,\qquad \|u\|_{C^{1,\a}(S_0\times(-1,1)^{n-k})}\le L\,,
  \end{equation}
  where $\00=(0,...,0)\in\R^{n-k}$, then there exists a function $\zeta\in C^{1,\a}(S_0\cap B_{x_0,\eta_0};\R^{n-k})$ such that
  \begin{equation}
    \label{zeta properties}
      \zeta(x_0)=\00\,,\qquad u(z,\zeta(z))=\00\,,\qquad \forall z\in S_0\cap B_{x_0,\eta_0}\,,\qquad\|\zeta\|_{C^{1,\a}(S_0\cap B(x_0,\eta_0))}\le C_0\,.
  \end{equation}
\end{theorem}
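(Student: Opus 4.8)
The plan is to run the classical contraction-mapping proof of the implicit function theorem while keeping every constant explicit and uniform in $x_0$ and in $u$ (subject only to the stated bounds). Let $A_0$ be the $(n-k)\times(n-k)$ matrix whose columns are $\pa_{t_1}u(x_0,\00),\dots,\pa_{t_{n-k}}u(x_0,\00)$. The hypotheses say $|\det A_0|=\big|\bigwedge_{i=1}^{n-k}\pa_{t_i}u(x_0,\00)\big|\ge 1/L$ and, since $\|u\|_{C^1}\le L$, $\|A_0\|\le\sqrt{n-k}\,L$; hence Cramer's rule gives $\|A_0^{-1}\|\le c(n,k)\,L^{n-k}=:M$. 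For $z\in S_0$ I would then study the map $\Phi_z(t)=t-A_0^{-1}u(z,t)$ on a small ball of $\R^{n-k}$ centered at $\00$, noting that $t$ solves $u(z,t)=\00$ precisely when $\Phi_z(t)=t$.

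The existence of $\zeta$ is a quantitative fixed-point argument. Since $u(x_0,\00)=\00$, $\|u\|_{C^1}\le L$ and, by \eqref{geo-euc}, Euclidean distances on $S_0$ are comparable to geodesic ones, one gets $|u(z,\00)|\le L^2|z-x_0|$ and so $|\Phi_z(\00)|\le ML^2|z-x_0|$; moreover $\nabla_t\Phi_z(t)=A_0^{-1}\big(A_0-\nabla_t u(z,t)\big)$, and since $\nabla_t u$ has $\a$-Hölder seminorm at most $\|u\|_{C^{1,\a}}\le L$ one gets $\|\nabla_t\Phi_z(t)\|\le ML\big(|z-x_0|^\a+|t|^\a\big)$. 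I would fix $r=r(n,k,\a,L)\in(0,1)$ so small that the last quantity is $\le\tfrac12$ whenever $|z-x_0|\le r$ and $|t|\le r$, and then $\eta_0=\eta_0(n,k,\a,L)\le r$ so small that $2ML^2\eta_0\le r$. For each $z\in S_0\cap B_{x_0,\eta_0}$ the map $\Phi_z$ is then a $\tfrac12$-contraction of $\overline{B}_r(\00)$ into itself, hence has a unique fixed point $\zeta(z)$, which solves $u(z,\zeta(z))=\00$; uniqueness forces $\zeta(x_0)=\00$, and $|\zeta(z)|\le 2|\Phi_z(\00)|\le 2ML^2|z-x_0|$.

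For the $C^{1,\a}$ bound I would proceed in the usual order. The Lipschitz estimate $|\zeta(z)-\zeta(z')|\le 2ML^2|z-z'|$ follows from $\zeta(z)-\zeta(z')=\big(\Phi_z(\zeta(z))-\Phi_z(\zeta(z'))\big)+\big(\Phi_z-\Phi_{z'}\big)(\zeta(z'))$ by bounding the first term by $\tfrac12|\zeta(z)-\zeta(z')|$ and the second, using $(\Phi_z-\Phi_{z'})(t)=-A_0^{-1}(u(z,t)-u(z',t))$ together with \eqref{geo-euc}, by $ML^2|z-z'|$. Next, shrinking $\eta_0$ a little more so that $\nabla_t u(z,\zeta(z))$ stays within $\tfrac1{2M}$ of $A_0$ (and so is invertible with inverse bounded by $2M$), the classical implicit function theorem applied in a chart around each point, together with local uniqueness, shows that $\zeta$ is differentiable with $\nabla^{S_0}\zeta(z)=-\big(\nabla_t u(z,\zeta(z))\big)^{-1}\circ\nabla^{S_0}u(z,\zeta(z))|_{T_zS_0}$, whence a $C^0$ bound on $\nabla^{S_0}\zeta$. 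Finally, both $z\mapsto\nabla_t u(z,\zeta(z))$ and $z\mapsto\nabla^{S_0}u(z,\zeta(z))|_{T_zS_0}$ are $\a$-Hölder, being compositions of the $C^{0,\a}$ components of $\nabla^{S_0\times(-1,1)^{n-k}}u$ with the Lipschitz graph map $z\mapsto(z,\zeta(z))$ --- here \eqref{geo-euc} converts geodesic to Euclidean distances and \eqref{vai ganzo1} controls the rotation of the planes $T_zS_0$ --- and matrix inversion is Lipschitz on the set $\{\|B^{-1}\|\le 2M\}$; since products of bounded $\a$-Hölder maps are $\a$-Hölder, $\nabla^{S_0}\zeta$ is $\a$-Hölder with constant depending only on $n,k,\a,L$, and $\|\zeta\|_{C^{1,\a}(S_0\cap B_{x_0,\eta_0})}\le C_0$ follows.

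I expect the main difficulty to be bookkeeping rather than conceptual: one must verify at each step that the constants ($M$, $r$, $\eta_0$, $C_0$) depend only on $n,k,\a,L$, and one must treat carefully the two ways the intrinsic geometry of $S_0$ enters --- turning the $\dist_{S_0}$-Lipschitz information carried by the $C^1$-norm on $S_0\times(-1,1)^{n-k}$ into Euclidean-distance estimates through \eqref{geo-euc}, and making sense of the operator-norm Hölder continuity of $z\mapsto\nabla^{S_0}u(z,\zeta(z))|_{T_zS_0}$ as the tangent planes move, which is where \eqref{vai ganzo1} is used. A slightly slicker alternative would be to apply the already-proved Theorem \ref{thm inverse fnct uniform} to $F(x,t)=(x,A_0^{-1}u(x,t))$ on the $n$-dimensional manifold $(S_0\cap B_{x_0,r})\times(-\delta,\delta)^{n-k}\subset\R^{2n-k}$ --- which for $r,\delta$ small inherits \eqref{geo-euc}--\eqref{vai ganzo1} with a constant $L'(n,k,L)$, while $F$ is $C^0$-close to the identity and uniformly non-degenerate --- and to read $\zeta$ off from the last $n-k$ components of $F^{-1}(\cdot,\00)$; this trades the fixed-point estimates above for the verification that the restricted product meets the geometric hypotheses.
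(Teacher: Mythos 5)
Your argument is correct, but it takes a genuinely different route from the paper. The paper's proof is a one-line reduction: it applies the first conclusion of Theorem \ref{thm inverse fnct uniform} to the product manifold $S_0\times(-1,1)^{n-k}$ and the map $f(x,\tt)=(x,u(x,\tt))$, citing the classical derivation of the implicit function theorem from the inverse function theorem --- which is essentially the ``slicker alternative'' you sketch at the end (the normalization by $A_0^{-1}$ is immaterial, and the paper works on the full product rather than a small product neighborhood). Your main proof instead redoes the quantitative Banach fixed-point argument from scratch: the contraction estimate $\|\nabla_t\Phi_z\|\le ML(|z-x_0|^\a+|t|^\a)$, the self-map condition $2ML^2\eta_0\le r$, the Lipschitz bound on $\zeta$, and then the chain-rule identity $\nabla^{S_0}\zeta=-(\nabla_t u(\cdot,\zeta))^{-1}\nabla^{S_0}_x u(\cdot,\zeta)$ together with Lipschitz continuity of matrix inversion on $\{\|B^{-1}\|\le 2M\}$ to get the uniform $C^{1,\a}$ bound; the places where \eqref{geo-euc} (geodesic-to-Euclidean conversion of the $C^1$ information) and \eqref{vai ganzo1} (H\"older motion of the projections, needed for the extension-by-zero convention in the operator-norm H\"older seminorm) enter are correctly identified, and all constants visibly depend only on $n,k,\a,L$. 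What your route buys is a self-contained proof with explicit constants that does not need to verify that a product (or a truncated product) manifold satisfies \eqref{geo-euc}--\eqref{vai ganzo1}; what the paper's route buys is brevity, since Theorem \ref{thm inverse fnct uniform} already encapsulates the H\"older bookkeeping. One caveat only for your alternative version: inheriting the chord--arc condition \eqref{geo-euc} for $S_0\cap B_{x_0,r}$ is not automatic under intersection with a ball and would need a short argument (e.g.\ via \eqref{vai ganzo}--\eqref{vai ganzo1}, showing the piece is a graph with small Lipschitz constant over $T_{x_0}S_0$); the paper avoids this by using the whole product $S_0\times(-1,1)^{n-k}$, for which the hypotheses pass with a constant $c(n,k)L$. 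Since your primary argument does not rely on that alternative, this is a remark, not a gap.
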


\begin{proof}
 One applies the first conclusion of Theorem \ref{thm inverse fnct uniform} to the manifold $S_0\times(-1,1)^{n-k}$ and the function $f:S_0\times(-1,1)^{n-k}\to\R^n$ defined by $f(x,\tt)=(x,u(x,\tt))$; see, e.g. \cite{spivak}.
\end{proof}

\subsection{Whitney's extension theorem}\label{section whitney} Here we review some basic facts concerning Whitney's extension theorem. By $\kk=(k_1,...,k_n)$ we denote the generic element of $\N^n$, and set
\[
|\kk|=\sum_{i=1}^nk_i\,,\qquad \kk!=\prod_{i=1}^nk_i\,,\qquad z^\kk=\prod_{i=1}^n\,z_i^{k_i}\,,
\]
for every $\kk\in\N^n$ and $z\in\R^n$. If $f$ is $|\kk|$-times differentiable at $x\in\R^n$, we let
\[
D^\kk\,f(x)=\frac{\pa^{|\kk|}f}{\pa x_1^{k_1}...\pa x_n^{k_n}}(x)=\frac{\pa^{|\kk|}f}{\pa x^\kk}(x)\,,
\]
denote the $\kk$-partial derivative of $f$, with the convention that $D^\00 f=f$ (here, $\00=(0,...,0)\in\N^n$).

Let now $X$ be a compact set in $\R^n$. A {\it jet of order $h$ on $X$} is simply a family $\F=\{F^{\kk}\}_{|\kk|\le h}$ of continuous functions on $X$, see \cite{bierstone}. We denote by $J^h(X)$ the vector space of jets of order $h$ on $X$, and set
\[
\|\F\|_{J^h(X)}=\max_{|\kk|\le h}\|F^\kk\|_{C^0(X)}\,.
\]
One says that $\F\in J^h(X)$ is a {\it Whitney's jet of order $h$ on $X$} if, for every $|\kk|\le h$,
\[
\sup_{x,y\in X\,,0<|x-y|<r}\Big|F^\kk(y)-F^\kk(x)-\sum_{|j|=1}^{h-|\kk|}F^{\kk+\jj}(x)(y-x)^{\kk+\jj}\Big|=o(r^{h-|\kk|})\,.
\]
Given $\a\in[0,1]$,  we denote by $WJ^{h,\a}(X)$ the space of Whitney's jets of order $h$ on $X$ such that
\begin{eqnarray*}
\|\F\|_{WJ^{h,\a}(X)}&=&\max_{|\kk|\le h}\|F^\kk\|_{C^0(X)}
\\
&&+\max_{|\kk|\le h}\,\sup_{x\,,y\in X\,,x\ne y}\frac{|F^\kk(y)-F^\kk(x)-\sum_{|j|=1}^{h-|\kk|}F^{\kk+\jj}(x)(y-x)^{\kk+\jj}|}{|x-y|^{h-|\kk|+\a}}\,,
\end{eqnarray*}
is finite. We set $WJ^h(X)=WJ^{h,0}(X)$, and notice that $WJ^{h+1}(X)\subset WJ^{h,\a}(X)\subset WJ^h(X)$ for every $h\in\N$ and $\a\in(0,1]$.

\begin{theorem}
  [Whitney's extension theorem]\label{thm whitney} For every $n,h\ge 1$, $\a\in[0,1]$ and $L>0$ there exists a constant $C_0$ depending on $n$, $\a$ and $L$ only with the following property. If $X$ is a compact set in $\R^n$ with $X\subset B_L$ and $\F\in WJ^{h,\a}(X)$, then there exists $f\in C^\infty(\R^n\setminus X)\cap C^{h,\a}(\R^n)$ such that
  \begin{equation}
    \label{whitney extension}
    \mbox{$D^\kk f=F^\kk$ on $X$ for every $|\kk|\le h$}\,,
  \end{equation}
  \begin{equation}
    \label{whitney bounded}
      \|f\|_{C^{h,\a}(\R^n)}\le C_0\,\|\F\|_{WJ^{h,\a}(X)}\,,
      \qquad
      \|f\|_{C^h(\R^n)}\le C_0\,\|\F\|_{WJ^h(X)}\,.
  \end{equation}
  If, moreover, $X$ is connected by rectifiable arcs and its geodesic distance ${\rm \dist}_X$ satisfies
  \begin{equation}
  \label{k regular set}
  {\rm \dist}_X(x,y)\le \om\,|x-y|\,,\qquad \forall x,y\in X\,,
  \end{equation}
  for some $\om>0$, then $\|\F\|_{WJ^h(X)}\le 2\,\om\,\|\F\|_{J^h(X)}$, and thus, in particular,
  \begin{equation}
    \label{whitney bounded regular case}
      \|f\|_{C^h(\R^n)}\le 2\,\om\,C_0\,\|\F\|_{J^h(X)}\,.
  \end{equation}
\end{theorem}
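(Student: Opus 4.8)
The plan is to obtain the statement from the classical Whitney extension theorem (see \cite{bierstone} and the references therein), supplementing it with two ingredients: a bookkeeping of constants to the effect that they depend only on $n$, $\a$ and $L$, and an elementary ``chord-arc'' lemma for the last assertion. Recall the construction. One takes a Whitney decomposition of $\R^n\setminus X$ into dyadic cubes $\{Q_i\}$ with $\diam(Q_i)$ comparable to $\dist(Q_i,X)$, a smooth partition of unity $\{\vphi_i\}$ subordinate to a mild dilation $\{Q_i^*\}$ of $\{Q_i\}$ with bounded overlap and $\|D^\b\vphi_i\|_{C^0}\le c(n,|\b|)\,\diam(Q_i)^{-|\b|}$, and for each $i$ a point $a_i\in X$ with $|z-a_i|\le c(n)\,\dist(z,X)$ for $z\in Q_i^*$. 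Setting, for $z\in\R^n\setminus X$,
\[
P_i(z)=\sum_{|\kk|\le h}\frac{F^\kk(a_i)}{\kk!}(z-a_i)^\kk\,,\qquad\widetilde f(z)=\sum_i\vphi_i(z)\,P_i(z)\,,
\]
and $\widetilde f=F^\00$ on $X$, the classical theorem gives $\widetilde f\in C^\infty(\R^n\setminus X)\cap C^{h,\a}(\R^n)$ satisfying \eqref{whitney extension}; moreover $\widetilde f$ depends linearly on $\F$.

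The two points requiring quantitative attention are the continuity of $D^\kk\widetilde f$ up to $X$ with boundary values $F^\kk$, and the bound on $\|\widetilde f\|_{C^{h,\a}}$. The standard mechanism is to compare, on a cube $Q_i$ with $\dist(Q_i,X)\le 2$, the local polynomial $P_i$ with the jet-Taylor polynomial $T^h_x\F(z)=\sum_{|\kk|\le h}\frac{F^\kk(x)}{\kk!}(z-x)^\kk$ at a point $x\in X$ close to $Q_i$: the definition of $WJ^{h,\a}(X)$ yields $|D^\kk(P_i-T^h_x\F)(z)|\le C(n)\,\|\F\|_{WJ^{h,\a}(X)}\,\dist(z,X)^{h-|\kk|+\a}$, and inserting this into the expression for $D^\kk\widetilde f$ while using $\sum_i D^\b\vphi_i\equiv 0$ for $|\b|\ge 1$ to telescope against a fixed reference polynomial gives $\|\widetilde f\|_{C^{h,\a}(\{\dist(\cdot,X)\le 1\})}\le C(n,\a)\,\|\F\|_{WJ^{h,\a}(X)}$, and the analogous $C^h$-bound in terms of $\|\F\|_{WJ^h(X)}$ only. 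On $\{\dist(\cdot,X)>1\}\cap B_{2L}$ all cubes involved have $\diam(Q_i)\gtrsim 1$ and $|z-a_i|\le 3L$, so a crude estimate of the polynomials $P_i$ and their derivatives gives $\|\widetilde f\|_{C^{h,\a}(\{\dist(\cdot,X)>1\}\cap B_{2L})}\le C(n,\a,L)\,\|\F\|_{J^h(X)}$. To globalize, fix once and for all $\chi\in C^\infty_c(B_{2L})$ with $\chi\equiv1$ on $B_{3L/2}$ and set $f=\chi\,\widetilde f$: since $X\subset B_L$, $\chi\equiv1$ on a neighborhood of $X$, so \eqref{whitney extension} still holds, while $f\equiv0$ outside $B_{2L}$ and the two displayed bounds give \eqref{whitney bounded} with $C_0=C_0(n,\a,L)$. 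This proves the first part.

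For the last assertion, suppose $X$ is connected by rectifiable arcs with $\dist_X(x,y)\le\om\,|x-y|$. It suffices to bound, for every $|\kk|\le h$ and $x,y\in X$, the Taylor remainder
\[
R_\kk(x,y):=F^\kk(y)-\sum_{|\jj|\le h-|\kk|}\frac{F^{\kk+\jj}(x)}{\jj!}(y-x)^\jj
\]
by $2\,\om\,\|\F\|_{J^h(X)}\,|x-y|^{h-|\kk|}$. For $|\kk|=h$ this is immediate, since $R_\kk(x,y)=F^\kk(y)-F^\kk(x)$. For $|\kk|<h$, argue by downward induction on $|\kk|$: take a rectifiable arc $\g$ in $X$ joining $x$ to $y$, of length at most $\om\,|x-y|$, parametrized by arc length; the Whitney-jet condition (of order $h\ge|\kk|+1$) shows that $s\mapsto F^\kk(\g(s))$ is absolutely continuous with derivative $\sum_l F^{\kk+\mathbf{e}_l}(\g(s))\,\g_l'(s)$, where $\mathbf{e}_l$ is the $l$-th coordinate multi-index. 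Expanding each $F^{\kk+\mathbf{e}_l}(\g(s))$ around $x$ to jet-Taylor order $h-|\kk|-1$, integrating over $\g$ and reorganizing, the polynomial contributions reassemble into $\sum_{1\le|\jj|\le h-|\kk|}\frac{F^{\kk+\jj}(x)}{\jj!}(y-x)^\jj$, while what is left is the integral of $\sum_l R_{\kk+\mathbf{e}_l}(x,\g(s))\,\g_l'(s)$; estimating this by the inductive hypothesis together with $\mathrm{length}(\g)\le\om\,|x-y|$ closes the induction. This gives $\|\F\|_{WJ^h(X)}\le 2\,\om\,\|\F\|_{J^h(X)}$, hence \eqref{whitney bounded regular case}.

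The one genuinely non-trivial ingredient is the quantitative Hölder control of the top-order derivatives of $\widetilde f$ near $X$ — that is, the polynomial comparison estimate together with the telescoping over neighboring cubes, which is the true content of Whitney's theorem. Everything else reduces to routine bookkeeping of the dimensional constants, to the cutoff trick (which uses $X\subset B_L$ to localize $\widetilde f$), and to the essentially one-dimensional computation behind the chord-arc lemma.
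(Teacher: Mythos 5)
Your proposal is correct and follows the same overall route as the paper: the first part is obtained from the classical Whitney construction plus the cutoff trick (the paper simply cites \cite[Theorem 4, Chapter VI]{steinbook}/\cite{bierstone} for a bounded extension $g$ on $B_{2L}$ and multiplies by $\eta\in C^\infty_c(B_{2L})$ with $\eta=1$ on $B_L$, exactly as you do, whereas you additionally sketch the dyadic decomposition, partition of unity and polynomial-comparison estimates behind that citation). The main difference is in the second part: the paper cites \cite[Proposition 2.13]{bierstone} and only illustrates the case $h=1$, integrating $\nabla f$ (the already-constructed extension) along a rectifiable arc, while you give a self-contained proof for all $h$ by downward induction on $|\kk|$, working at the level of the jet itself via the identity $R_\kk(x,y)=\int_0^\ell\sum_l R_{\kk+\mathbf{e}_l}(x,\g(s))\,\g_l'(s)\,ds$; this is a legitimate (and arguably cleaner) proof of the cited fact, and the reassembly of the Taylor polynomials is correct since $\pa_l$ of the order-$(h-|\kk|)$ jet-Taylor polynomial of $F^\kk$ at $x$ is the order-$(h-|\kk|-1)$ jet-Taylor polynomial of $F^{\kk+\mathbf{e}_l}$ at $x$. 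Two small caveats: since $\dist_X$ is an infimum you should take an arc of length $\le(1+\e)\,\om\,|x-y|$ and let $\e\to0$, as the paper does; and your induction does not literally produce the factor $2\,\om$ for $h\ge2$ --- each step contributes a factor of order $\sqrt{n}\,\om$ (and a $1/(h-|\kk|)$ from $\int_0^\ell s^{h-|\kk|-1}ds$), so what you actually get is $\|\F\|_{WJ^h(X)}\le C(n,h)\,\om^h\,\|\F\|_{J^h(X)}$. This is harmless for \eqref{whitney bounded regular case} and for every use made of it in the paper (the paper itself glosses the dimensional factor even in its $h=1$ computation, and for $h\ge2$ relies on the citation), but if you state the inequality with the explicit constant $2\,\om$ you should either restrict to $h=1$ or adjust the constant.
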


\begin{proof}
  The classical construction introduced by Whitney (see \cite[Theorem 4, Chapter VI]{steinbook} or \cite[Theorem 2.3]{bierstone}) gives a function $g\in C^\infty(\R^n\setminus X)\cap C^{h,\a}(B_{2L})$ with
  \begin{gather}
    \label{whitney extension g}
    \mbox{$D^\kk g=F^\kk$ on $X$ for every $|\kk|\le h$}\,,
    \\
    \label{whitney bounded g}
    \|g\|_{C^{h,\a}(B_{2L})}\le C\,\|\F\|_{WJ^{h,\a}(X)}\,,\qquad \|g\|_{C^h(B_{2L})}\le C\,\|\F\|_{WJ^h(X)}\,,
  \end{gather}
  where the constant $C$ depends on $n$, $h$, $\a$ and $L$. If we now pick $\eta\in C^\infty_c(B_{2L};[0,1])$ with $\eta=1$ on $B_L$, then by setting $f=g\,\eta$ we prove the first part of the statement. 
  The second part of the statement is \cite[Proposition 2.13]{bierstone}. For the sake of clarity, let us explain this point in the case $h=1$. If $X$ is connected by rectifiable arcs and $x,y\in X$ with $x\ne y$, then for every $\e>0$ there exists $\phi\in C^0([0,1];X)$ such that
  \begin{equation}
    \label{glaeser1}
    \ell(\phi)\le (1+\e)\,{\rm dist}_X(x,y)\le (1+\e)\,\om\,|x-y|\,,\qquad\phi(0)=x\,,\qquad\phi(1)=y\,,
  \end{equation}
  where $\ell(\phi)$ is the total variation of $\phi$. We can re-parameterize $\phi$ on $[0,1]$ so to have $\phi\in \Lip([0,1];X)$ with $|\phi'(t)|=\ell(\phi)$ for every $t\in[0,1]$. By \eqref{glaeser1} we thus find
  \begin{eqnarray*}
  &&\Big|F^\00(y)-F^\00(x)-\sum_{i=1}^n F^{e_i}(x)(y-x)_i\Big|
  \\&=& |f(y)-f(x)-\nabla f(x)\cdot(y-x)|
  =\Big|\int_0^1\,(\nabla f(\phi(t))-\nabla f(x))\cdot\,\phi'(t)\,dt \Big|
  \\
  &\le&2\,\|\nabla f\|_{C^0(X)}\,\int_0^1|\phi'(t)|\,dt
  \le2\,(1+\e)\,\|\F\|_{J^1(X)}\,\om\,|x-y|\,.\hspace{120pt}\qedhere
  \end{eqnarray*}
\end{proof}

The following two propositions are used in the proof of Theorem \ref{thm main diffeo}.

\begin{proposition}\label{proposition costruzione extbyfol}
If $n\ge 2$, $1\le k\le n-1$, $\a\in(0,1]$ and $L>0$, then there exist positive constants $C$ and $\e$ depending on $n$, $k$, $\a$ and $L$ only, with the following property. Let $X$ be a compact set in $\R^n$ with $\diam(X)\le L$, and assume that for every $x\in X$ one can define an orthonormal system of vectors $\{\nu^{(j)}(x)\}_{j=1}^{n-k}$ in such a way that for every $x,y\in X$ and $1\le j\le n-k$,
\begin{equation}
  \label{bound whitney X}
|\nu^{(j)}(x)\cdot(y-x)|\le L\,|x-y|^{1+\a}\,,\qquad|\nu^{(j)}(x)-\nu^{(j)}(y)|\le L\,|x-y|^\a\,.
\end{equation}
Then there exists $d\in C^\infty(\R^n\setminus X;\R^{n-k})\cap C^{1,\a}(\R^n;\R^{n-k})$ with
\begin{equation}\label{bound whitney X tesi}
  \begin{split}
    &\mbox{$d(x)=0$ and $\nabla d(x)=\sum_{j=1}^{n-1}e_j\otimes\nu^{(j)}(x)$ for every $x\in X$}\,,
    \\
    &\mbox{$I_\e(X)\cap\{d=0\}$ is a $k$-dimensional $C^{1,\a}$-manifold in $\R^n$}\,,
    \\
    &\max\big\{\e^{-1}\,,\|d\|_{C^{1,\a}(\R^n)}\big\}\le C\,.
  \end{split}
\end{equation}
\end{proposition}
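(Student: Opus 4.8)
The plan is to construct the map $d$ via Whitney's extension theorem, Theorem \ref{thm whitney}, applied to a suitable jet on $X$, and then to recover the local manifold structure of $\{d=0\}$ from the uniform implicit function theorem, Theorem \ref{lemma implicit function}. First I would define the $1$-jet $\F=\{F^{\kk}\}_{|\kk|\le 1}$ on $X$ by prescribing, for each $x\in X$,
\[
F^{\00}(x)=0\,,\qquad F^{e_i}(x)=\sum_{j=1}^{n-k}(e_j\otimes\nu^{(j)}(x))_{\cdot,i}=\big(\nu^{(1)}(x)_i,\dots,\nu^{(n-k)}(x)_i\big)\,,
\]
so that the linear part of the jet at $x$ is exactly the matrix $\sum_{j=1}^{n-k}e_j\otimes\nu^{(j)}(x)\in\R^{(n-k)\times n}$. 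The two hypotheses in \eqref{bound whitney X} are precisely what is needed to check that $\F\in WJ^{1,\a}(X;\R^{n-k})$ with $\|\F\|_{WJ^{1,\a}(X)}$ bounded by a constant depending only on $n,k,\a,L$: the second bound in \eqref{bound whitney X} controls the H\"older seminorm of $x\mapsto F^{e_i}(x)$, while the first bound controls the Whitney compatibility condition for $F^{\00}$, namely $|F^{\00}(y)-F^{\00}(x)-\sum_i F^{e_i}(x)(y-x)_i| = |\sum_j \nu^{(j)}(x)\cdot(y-x)\,e_j|\le (n-k)^{1/2}L\,|x-y|^{1+\a}$. Applying Theorem \ref{thm whitney} componentwise (or to the $\R^{n-k}$-valued jet, which is the same thing) yields $d\in C^\infty(\R^n\setminus X;\R^{n-k})\cap C^{1,\a}(\R^n;\R^{n-k})$ with $D^{\kk}d=F^{\kk}$ on $X$ for $|\kk|\le 1$ and $\|d\|_{C^{1,\a}(\R^n)}\le C_0$; this gives the first and third lines of \eqref{bound whitney X tesi}.

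It remains to prove the second line, namely that $I_\e(X)\cap\{d=0\}$ is a $k$-dimensional $C^{1,\a}$-manifold for $\e$ small, depending only on $n,k,\a,L$. The key observation is that for each $x\in X$ the differential $\nabla d(x)=\sum_j e_j\otimes\nu^{(j)}(x)$ is surjective onto $\R^{n-k}$ (since $\{\nu^{(j)}(x)\}$ is orthonormal, $\nabla d(x)$ restricted to $\mathrm{span}\{\nu^{(j)}(x)\}$ is an isometry onto $\R^{n-k}$). By continuity of $\nabla d$ and the uniform $C^{1,\a}$ bound, there is $\e_1=\e_1(n,k,\a,L)>0$ such that $\nabla d$ is still uniformly surjective with uniform lower bound on the relevant $(n-k)$-dimensional minor throughout $I_{\e_1}(X)$. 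Then I would invoke Theorem \ref{lemma implicit function} locally: around each $x\in X$, after choosing coordinates so that the last $n-k$ directions correspond to an orthonormal basis of $(T_xS)^\perp$ where $S$ is the formal tangent plane $\ker\nabla d(x)$, the function $u(z,\tt)=d(z+\sum_i t_i\,\nu^{(i)}(x))$ satisfies the hypotheses \eqref{hp implicit function} with a uniform constant, so $\{d=0\}$ agrees near $x$ with a graph $\{(z,\zeta(z))\}$ for $\zeta\in C^{1,\a}$ with $\|\zeta\|_{C^{1,\a}}\le C_0$; these local graphs are $k$-dimensional $C^{1,\a}$-manifolds, and they patch together (the zero set is globally well-defined), so $I_\e(X)\cap\{d=0\}$ is a $k$-dimensional $C^{1,\a}$-manifold for $\e<\e_1$ sufficiently small that the local graph neighborhoods cover $I_\e(X)\cap\{d=0\}$. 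Taking $\e$ to be the minimum of these thresholds and $C$ the maximum of the constants produced completes the proof.

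The main obstacle I expect is the passage from the pointwise facts "$d=0$ on $X$ and $\nabla d$ surjective near $X$" to the clean statement "$I_\e(X)\cap\{d=0\}$ is a manifold", uniformly in $\e$. The subtlety is that $X$ is merely a compact set — not a manifold — so one cannot directly quote the implicit function theorem "on $X$"; one has to fix, for each point of $X$, a reference splitting of $\R^n$ coming from the $\nu^{(j)}$'s, verify that the nondegeneracy constants in \eqref{hp implicit function} are uniform (this uses \eqref{bound whitney X} together with $\|d\|_{C^{1,\a}}\le C_0$), and then check that the radius $\eta_0$ of validity in Theorem \ref{lemma implicit function} can be taken uniform, which it is since $\eta_0$ there depends only on $n,k,\a$ and an upper bound $L$ for the data — here supplied by $\max\{\diam(X),C_0\}$. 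A second, more routine point requiring care is that $\{d=0\}$ could a priori have components entirely inside $I_\e(X)$ but far from $X$ itself; shrinking $\e$ rules this out, because any zero of $d$ at distance $<\e$ from $X$ lies in one of the graph charts centered at a nearby point of $X$, and within such a chart the zero set is exactly the graph, hence connected to $X$ appropriately and with no spurious sheets.
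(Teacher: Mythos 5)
Your proposal is correct and follows essentially the same route as the paper: Whitney's extension theorem applied to the first-order jet $F^{\00}=0$, $F^{e_i}=\nu^{(j)}\cdot e_i$ (whose $WJ^{1,\a}$ bound is exactly \eqref{bound whitney X}), followed by propagating the rank-$(n-k)$ condition from $X$ to $I_\e(X)$ using the uniform $C^{0,\a}$ bound on $\nabla d$. The only cosmetic difference is in the last step: the paper concludes directly from the full-rank condition on the open set $I_\e(X)$ (regular level set), whereas you route through the uniform implicit function theorem with charts centered at points of $X$ --- slightly heavier, and cleanest if you instead apply the implicit function theorem at each zero point of $I_\e(X)$ itself (where your uniform rank bound already holds), which removes the need to argue that the graph charts based at $X$ capture every nearby zero.
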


\begin{proof}
By \eqref{bound whitney X}, if one sets $F_j^{\00}(x)=0$ and $F_j^{e_i}(x)=\nu^{(j)}(x)\cdot e_i$ for $x\in X$ and $1\le i\le n$, then $\F_j\in WJ^{1,\a}(X)$ with $\|\F_j\|_{WJ^{1,\a}(X)}\le C$. Since $\diam(X)\le L$, by Theorem \ref{thm whitney} one finds $d_j\in C^\infty(\R^n\setminus X)\cap C^{1,\a}(\R^n)$ with $d_j=0$ and $\nabla d_j=\nu^{(j)}$ on $X$. The function $d=\sum_{j=1}^{n-k}d_j\,e_j$ satisfies the first property in \eqref{bound whitney X tesi}. If now $x\in I_\e(X)$, then there exists $y\in X$ such that $|y-x|<\e$, thus $\|d\|_{C^{1,\a}(\R^n)}\le C$ gives
\[
\big|\bigwedge_{j=1}^{n-k}\nabla d(x)[\nu^{(j)}(y)]\big|\ge\big|\bigwedge_{j=1}^{n-k}\nabla d(y)[\nu^{(j)}(y)]\big|-C\,\e=1-C\,\e\ge\frac12\,,
\]
provided $\e$ is small enough (depending only on $n$, $k$, $\a$ and $L$). In particular, $\nabla d(x)$ has rank $n-k$ for every $x\in I_\e(X)$, thus $I_\e(X)\cap\{d=0\}$ is a $k$-dimensional $C^{1,\a}$-manifold in $\R^n$.
\end{proof}

\begin{proposition}\label{proposition estensione bordo di S}
  If $n\ge 2$, $1\le k\le n-1$, $\a\in(0,1]$ and $L>0$, then there exists a constant $C$ depending on $n$, $k$, $\a$ and $L$ only, with the following property. If $S$ is a compact connected $k$-dimensional $C^{2,1}$-manifold with boundary in $\R^n$ with $\diam(S)\le L$ and
  \[
  {\rm \dist}_{\bd(S)}(x,y)\le L\,|x-y|\,,\qquad \forall x,y\in \bd(S)\,,
  \]
  and $\bar{a}\in C^{1,\a}(\bd(S))$, then there exist $a\in C^{1,\a}(\R^n)$ with $a=\bar{a}$ on $\bd(S)$  and
  \[
  \|a\|_{C^{1,\a}(\R^n)}\le C\,\|\bar{a}\|_{C^{1,\a}(\bd(S))}\,,\qquad \|a\|_{C^1(\R^n)}\le C\,\|\bar{a}\|_{C^1(\bd(S))}\,.
  \]
\end{proposition}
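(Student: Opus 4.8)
The plan is to produce $a$ as a Whitney extension (Theorem \ref{thm whitney}, applied with $h=1$) of a suitable $1$-jet carried by the compact set $X:=\bd(S)$, which is the natural tool here. Note first that $X$ is a compact $(k-1)$-dimensional $C^1$-manifold without boundary, the regularity of $S$ entering only to guarantee that the tangential gradient $\nabla^X\bar a$ is well defined. I would then check that $X$ meets the hypotheses of Theorem \ref{thm whitney}: after a translation, which leaves every norm below unchanged, $X\subset B_{2L}$; and $X$ is connected, because the geodesic distance between two points lying in distinct connected components of a $C^1$-manifold is infinite, so the assumption $\dist_{\bd(S)}(x,y)\le L\,|x-y|$ forces $X$ to be connected, hence connected by rectifiable arcs, with $\dist_X(x,y)\le L\,|x-y|$.

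Next I would prescribe the jet $\F=\{F^{\00},F^{e_1},\dots,F^{e_n}\}$ on $X$, taking for its first–order part the tangential gradient of $\bar a$ extended by zero in the normal directions: for $x\in X$ set $F^{\00}(x)=\bar a(x)$ and $F^{e_i}(x)=\nabla^X\bar a(x)[e_i]$, so that $\sum_{i=1}^n F^{e_i}(x)\,(y-x)_i=\nabla^X\bar a(x)[y-x]$ for all $x,y\in X$, since $\nabla^X\bar a(x)$ annihilates $(T_xX)^\perp$. The target is $\F\in WJ^{1,\a}(X)$ with $\|\F\|_{WJ^{1,\a}(X)}\le C\,\|\bar a\|_{C^{1,\a}(\bd(S))}$ and $\|\F\|_{J^1(X)}\le\|\bar a\|_{C^1(\bd(S))}$. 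The bound on $\|\F\|_{J^1(X)}$ and the order-one part of the Whitney seminorm are immediate, because $|F^{e_i}(x)|\le\|\nabla^X\bar a(x)\|$ and $|F^{e_i}(y)-F^{e_i}(x)|\le\|\nabla^X\bar a(y)-\nabla^X\bar a(x)\|\le[\nabla^X\bar a]_{C^{0,\a}(X)}\,|x-y|^\a$.

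The one point that needs an argument — and the only (mild) obstacle — is the order-zero Whitney inequality
\[
\big|\bar a(y)-\bar a(x)-\nabla^X\bar a(x)[y-x]\big|\le C\,\|\bar a\|_{C^{1,\a}(\bd(S))}\,|x-y|^{1+\a}\,,\qquad x\ne y\in X\,.
\]
I would establish it exactly as in the second part of the proof of Theorem \ref{thm whitney}, but exploiting the $C^{0,\a}$-continuity of $\nabla^X\bar a$ rather than its mere boundedness: for $\e>0$ pick a constant-speed rectifiable path $\phi:[0,1]\to X$ with $\phi(0)=x$, $\phi(1)=y$ and $\ell(\phi)\le(1+\e)L\,|x-y|$; then $\bar a(y)-\bar a(x)=\int_0^1\nabla^X\bar a(\phi(t))[\phi'(t)]\,dt$ and $y-x=\int_0^1\phi'(t)\,dt$, so the left-hand side equals $\int_0^1\big(\nabla^X\bar a(\phi(t))-\nabla^X\bar a(x)\big)[\phi'(t)]\,dt$, whose modulus is at most $[\nabla^X\bar a]_{C^{0,\a}(X)}\int_0^1|\phi(t)-x|^\a\,|\phi'(t)|\,dt\le[\nabla^X\bar a]_{C^{0,\a}(X)}\,\ell(\phi)^{1+\a}$ (using $|\phi(t)-x|\le\ell(\phi)$); letting $\e\to0^+$ gives the inequality with $C=L^{1+\a}$.

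Finally I would invoke Theorem \ref{thm whitney}: it yields $a\in C^\infty(\R^n\setminus X)\cap C^{1,\a}(\R^n)$ with $D^\kk a=F^\kk$ on $X$ for $|\kk|\le1$ — in particular $a=\bar a$ on $\bd(S)$ — and $\|a\|_{C^{1,\a}(\R^n)}\le C_0\,\|\F\|_{WJ^{1,\a}(X)}\le C\,\|\bar a\|_{C^{1,\a}(\bd(S))}$ by \eqref{whitney bounded}; moreover, since $X$ is connected by rectifiable arcs with $\dist_X(x,y)\le L\,|x-y|$, the regular-case estimate \eqref{whitney bounded regular case} gives $\|a\|_{C^1(\R^n)}\le 2L\,C_0\,\|\F\|_{J^1(X)}\le C\,\|\bar a\|_{C^1(\bd(S))}$. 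All the constants depend only on $n$, $k$, $\a$ and $L$, as required. The whole argument thus rests on the curve-integration estimate above, together with the observation that the geodesic–Euclidean bound on $\bd(S)$ automatically makes $X$ connected and $k$-regular, so that Theorem \ref{thm whitney} is applicable with $\om=L$.
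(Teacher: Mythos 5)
Your proposal is correct and follows essentially the same route as the paper: one packages $\bar a$ and its tangential gradient into a $1$-jet on $\bd(S)$, checks it is a Whitney jet of class $WJ^{1,\alpha}$ with the two norm bounds, and concludes by Theorem \ref{thm whitney} (using its second part, via the geodesic bound on $\bd(S)$, for the $C^1$ estimate). Your curve-integration argument for the order-zero Whitney inequality simply makes explicit a verification the paper leaves implicit, so there is nothing to correct.
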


\begin{proof}
  We note that, by definition of tangential gradient, $\nabla^{\bd S}\bar{a}(x)\in T_x(\bd(S))$ for every $x\in\bd(S)$. We then define $\F\in J^1(\bd(S))$ by setting $F^\00(x)=\bar{a}(x)$ and $F^{e_i}(x)=e_i\cdot\nabla^{\bd S}\bar{a}(x)$ for $x\in\bd(S)$, and note that $\F\in WJ^{1,\a}(\bd(S))$ with
  \[
  \|\F\|_{WJ^{1,\a}(\bd(S))}\le\|\bar a\|_{C^{1,\a}(\bd(S))}\,,
  \qquad
  \|\F\|_{WJ^1(\bd(S))}\le\|\bar a\|_{C^1(\bd(S))}\,.
  \]
  We conclude by Theorem \ref{thm whitney}.
\end{proof}

\section{Almost-normal diffeomorphisms between manifolds with boundary}\label{section curves} The main result of this section is Theorem \ref{thm main diffeo}, where we address the following problem. We are given two $k$-dimensional manifolds with boundary $S_0$ and $S$, which are known to be close in Hausdorff distance. Moreover, we are given a diffeomorphism $f_0$ (close to the identity map) between the boundaries of $S_0$ and $S$, and we know that $S$ is a small normal deformation of $S_0$ up to some small distance from its boundary. (The motivation for considering this scenario is that -- by interior and boundary/free-boundary regularity theorems -- this is the typical starting point in addressing the improved convergence problem in presence of singularities). Then we would like to extend $f_0$ into a diffeomorphism $f$ between $S_0$ and $S$ while keeping the size of the tangential displacement $\pi_{S_0}(f-\Id)$ of $f$ as small as possible.

In section \ref{section diffeo construction theorem 3.1} we state and prove Theorem \ref{thm main diffeo}, while in section \ref{subsection application of theorem 3.1} we provide an alternative formulation of this result in terms of sequences of manifolds converging to a limit manifold $S_0$ which is more natural to invoke when addressing applications.

\subsection{Construction of the diffeomorphisms}\label{section diffeo construction theorem 3.1} Before stating the theorem we premise the following definition, which in turn is motivated by Proposition \ref{proposition costruzione extbyfol}. Given an orientable $k$-dimensional $C^{1,\a}$-manifold $S$ in $\R^n$ which admits a global normal frame of class $C^{1,\a}$ (i.e., such that for every $x\in S$ there exists an orthonormal basis $\{\nu^{(i)}_S(x)\}_{i=1}^{n-k}$ of $(T_xS)^\perp$ with the property $\nu^{(i)}\in C^{1,\a}(S)$ for each $i$) then one writes
\[
\|S\|_{C^{1,\a}}\le L\,,
\]
if
\begin{equation}\label{basta S C1alpha}
\left\{
\begin{split}
  &|\nu^{(i)}_S(x)-\nu^{(i)}_S(y)|\le L\,|x-y|^\a\,,
  \\
  &|\nu^{(i)}_S(x)\cdot(y-x)|\le L|y-x|^{1+\a}\,,
\end{split}\right .
    \qquad\forall x,y\in S\,,i=1,...,n-k\,.
\end{equation}
We are now ready to state the main result of this section (see Remark \ref{remark assumption a} below for some clarifications about the cumbersome assumption (a)).

\begin{theorem}\label{thm main diffeo}
  If $n\ge 2$, $1\le k\le n-1$, $\a\in(0,1]$, and $L>0$, then there exist $\mu_0\in(0,1)$ and $C_0>0$ (depending on $n$, $k$, $\a$, and $L$ only) with the following property.

  \noindent (a) Let $S_0$ be a compact connected $k$-dimensional $C^{2,1}$-manifold with boundary in $\R^n$, let $\widetilde{S}_0$ be a $k$-dimensional $C^{2,1}$-manifold in $\R^n$, and assume that
  \begin{eqnarray}\label{basta S0tilde limitata}
    \bd(S_0)\ne\emptyset\,,\qquad S_0\subset \widetilde{S}_0\,,&&\hspace{0.2cm}\diam(\widetilde{S}_0)\le L\,,
    \\
    \label{basta geodetica bordo S0}
    {\rm \dist}_{\bd(S_0)}(x,y)\le L\,|x-y|\,,&&\qquad \forall x,y\in \bd(S_0)\,\quad\mbox{(if $k\ge 2$)}\,,
    \\
    \label{basta geodetica S0}
    {\rm \dist}_{S_0}(x,y)\le L\,|x-y|\,,&&\qquad\forall x,y\in S_0\,,
    \\
    \label{basta geodetica S0tilde}
    {\rm \dist}_{\widetilde{S}_0}(x,y)\le L\,|x-y|\,,&&\qquad\forall x,y\in\widetilde{S}_0\,.
  \end{eqnarray}
  Moreover, let $\{\nu_0^{(i)}\}_{i=1}^{n-k}\subset C^{1,1}(\widetilde{S}_0;\SS^{n-1})$ be such that $\{\nu_0^{(i)}(x)\}_{i=1}^{n-k}$ is an orthonormal basis of $(T_x\widetilde{S}_0)^\perp$ for every $x\in \widetilde{S}_0$, and
  \begin{equation}
    \label{basta nu0 lipschitz}
    \max_{1\le i\le n-k}\|\nu^{(i)}_0\|_{C^{1,1}(\widetilde{S}_0)}\le L\,.
  \end{equation}

  \noindent (b) Let $S$ be a compact connected $k$-dimensional $C^{1,\a}$-manifold with boundary such that, for some $\rho\in(0,\mu_0^2)$, one has
  \begin{equation}
    \label{basta S C1alphaL and hd rho}
      \bd(S)\ne\emptyset\,,\qquad \|S\|_{C^{1,\a}}\le L\,,\qquad \hd(S,S_0)\le\rho\,.
  \end{equation}
  In addition:

  \medskip

  \noindent (i) if $k=1$, assume that, setting $\bd(S_0)=\{p_0,q_0\}$, $\bd(S)=\{p,q\}$, $f_0(p_0)=p$ and $f_0(q_0)=q$,
  \begin{equation}
    \label{basta ii k=1}
    \begin{split}
      &\hspace{3cm}\frac1{L}\le |p_0-q_0|\,,
      \\
      &\|f_0-\Id\|_{C^0(\bd(S_0))}+\|\nu^{co}_S(f_0)-\nu^{co}_{S_0}\|_{C^0(\bd(S_0))}\le\rho\,;
    \end{split}
  \end{equation}
  if $k\ge 2$, assume that there exists a $C^{1,\a}$-diffeomorphism $f_0$ between $\bd(S_0)$ and $\bd(S)$ with
  \begin{equation}
    \label{basta ii k maggiore 1}
    \begin{split}
      \|f_0\|_{C^{1,\a}(\bd(S_0))}\le L\,,
      \\
      \|f_0-\Id\|_{C^1(\bd(S_0))}\le \rho\,,
      \\
      \max_{1\le i\le n-k}\|\nu_S^{(i)}(f_0)-\nu_0^{(i)}\|_{C^0(\bd(S_0))}\le\rho\,,
      \\
      \|\nu_S^{co}(f_0)-\nu_{S_0}^{co}\|_{C^0(\bd(S_0))}\le\rho\,,
    \end{split}
  \end{equation}
  where $\{\nu_S^{(i)}\}_{i=1}^{n-k}$ is as in \eqref{basta S C1alpha}.

  \medskip

  \noindent (ii) there exists $\{\psi_i\}_{i=1}^{n-k}\subset C^{1,\a}([S_0]_\rho)$ such that, setting $\psi=\sum_{i=1}^{n-k}\psi_i\,\nu_{S_0}^{(i)}$, one has
  \begin{equation}\label{basta iii}
    \begin{split}
      &\hspace{0.9cm}[S]_{3\rho}\subset(\Id+\psi)([S_0]_\rho)\subset S\,,
      \\
      &\|\psi\|_{C^{1,\a}([S_0]_\rho)}\le L\,,\qquad \|\psi\|_{C^1([S_0]_\rho)}\le \rho\,.
    \end{split}
  \end{equation}
  Then, for every $\mu\in(\sqrt{\rho},\mu_0)$ there exists a $C^{1,\a}$-diffeomorphism $f$ between $S_0$ and $S$ such that
  \begin{eqnarray}
    \label{basta f uguale f0}
    f&=&f_0\,,\hspace{0.7cm}\qquad\mbox{on $\bd(S_0)$}\,,
    \\
    \label{basta f uguale id + psi}
    f&=&\Id+\psi\,,\qquad\mbox{on $[S_0]_\mu$}\,,
    \\
    \label{basta C1alpha}
    \|f\|_{C^{1,\a}(S_0)}&\le&C_0\,,
    \\
    \label{basta C0}
    \|f-\Id\|_{C^0(S_0)}&\le&C_0\,\big(\hd(S,S_0)+\|f_0-\Id\|_{C^1(\bd(S_0))}+\|\psi\|_{C^0([S_0]_\rho)}\big)\,,
    \\\label{basta C1}
    \|f-\Id\|_{C^1(S_0)}&\le&\frac{C_0}\mu\,\rho^\a\,,
    \\
    \label{basta f-Id tangenziale}
    \|\pi^{S_0}(f-\Id)\|_{C^1(S_0)}&\le&\frac{C_0}\mu\,\left\{
    \begin{split}
    \|(f-\Id)\cdot\nu_{S_0}^{co}\|_{C^0(\bd(S_0))}\,,\qquad\mbox{if $k=1$}\,,
    \\
    \|f_0-\Id\|_{C^1(\bd(S_0))}\,,\qquad\mbox{if $k\ge 2$}\,.
    \end{split}\right .
  \end{eqnarray}
\end{theorem}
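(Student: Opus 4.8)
The strategy is to split the construction into two independent pieces: first, realizing $S$ as a normal graph over an open subset $U$ of the boundaryless ambient manifold $\widetilde S_0$; second, producing a $C^{1,\a}$-diffeomorphism $\Phi$ of $S_0$ onto $U$ which equals $\Id$ on $[S_0]_\mu$ and equals $\pi\circ f_0$ on $\bd(S_0)$, where $\pi$ is the nearest-point projection onto $\widetilde S_0$; then one sets $f:=\Gamma\circ\Phi$, with $\Gamma$ the graph map parametrizing $S$ over $U$. First I would fix tubular coordinates: using the global $C^{1,1}$ normal frame $\{\nu_0^{(i)}\}$ of \eqref{basta nu0 lipschitz} together with \eqref{basta S0tilde limitata}--\eqref{basta geodetica S0tilde}, Proposition \ref{proposition costruzione extbyfol} and Theorem \ref{thm inverse fnct uniform} give $\e_0=\e_0(n,k,\a,L)>0$ such that $(z,\tt)\mapsto z+\sum_i t_i\,\nu_0^{(i)}(z)$ is a $C^{1,1}$-diffeomorphism of $\widetilde S_0\times(-\e_0,\e_0)^{n-k}$ onto $N_{\e_0}(\widetilde S_0)$, with $C^{1,1}$-inverse $y\mapsto(\pi(y),d(y))$ of norm bounded in terms of $n,k,\a,L$ only. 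Choosing $\mu_0\le\sqrt{\e_0}$, the bounds $\rho<\mu_0^2$ and $\hd(S,S_0)\le\rho$ from \eqref{basta S C1alphaL and hd rho} give $S\subset N_{\e_0}(\widetilde S_0)$, so $\pi$ and $d$ are defined and $C^{1,1}$-bounded on $S$.

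Next I would establish the graph representation. Since $\|S\|_{C^{1,\a}}\le L$ and $\hd(S,S_0)\le\rho$, the interior datum \eqref{basta iii} (with $\|\psi\|_{C^1([S_0]_\rho)}\le\rho$) together with the $C^{0,\a}$-control on the normal frame of $S$ force the tangent planes of $S$ to be $C\rho^\a$-close to those of $\widetilde S_0$, so $\pi|_S$ is locally injective by Theorem \ref{thm inverse fnct uniform}; global injectivity follows from connectedness of $S$, from $\pi\circ(\Id+\psi)=\Id$ on $[S_0]_\rho$ (the $\nu_0^{(i)}$ being normal to $\widetilde S_0$), and from the conormal/boundary hypotheses \eqref{basta ii k=1}--\eqref{basta ii k maggiore 1}, which force $\bd(S)=f_0(\bd(S_0))$ to project injectively onto a set $C\rho$-close to $\bd(S_0)$ and lying on the same side of it as $S_0$. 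Setting $U:=\pi(S)$ — open in $\widetilde S_0$, with $[S_0]_\rho\subset U$ and $\bd U=\pi(\bd(S))\subset I_{C\rho}(\bd(S_0))$ — and $\Gamma:=(\pi|_S)^{-1}$, one obtains $\Gamma(z)=z+\sum_i(\psi_S)_i(z)\,\nu_0^{(i)}(z)$ with $\psi_S:=d\circ\Gamma\in C^{1,\a}(U;\R^{n-k})$ extending $\psi$ from $[S_0]_\rho$ (both graph $S$ there), and $\|\Gamma\|_{C^{1,\a}(U)}+\|\psi_S\|_{C^{1,\a}(U)}\le C$, $\|\Gamma-\Id\|_{C^1(U)}+\|\psi_S\|_{C^1(U)}\le C\rho^\a$, and $\|\psi_S\|_{C^0(U)}\le\|\psi\|_{C^0([S_0]_\rho)}+C\,\hd(S,S_0)$. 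Converting \eqref{basta iii} and the Hausdorff closeness into this bona fide global normal-graph structure with controlled boundary is the hard part; it is exactly where assumption (a) and the conormal bounds are really used.

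Then I would build $\Phi$. Put $\phi_0:=\pi\circ f_0:\bd(S_0)\to\bd U$ (for $k=1$ just the endpoint assignment); from \eqref{basta ii k=1}--\eqref{basta ii k maggiore 1} and the $C^{1,1}$-bound on $\pi$ one gets $\|\phi_0\|_{C^{1,\a}(\bd(S_0))}\le C$ and $\|\phi_0-\Id\|_{C^1(\bd(S_0))}\le C\rho$, while when $k=1$ and $f_0(p_0)-p_0$ is purely normal to $S_0$ at $p_0$ one has $\pi(f_0(p_0))=p_0$ exactly, whence $|\phi_0-\Id|\le C\,|(f_0-\Id)\cdot\nu^{co}_{S_0}|$ on $\bd(S_0)$ in general. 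Extend $\bar v:=\phi_0-\Id$ to $\hat v\in C^{1,\a}(\R^n;\R^n)$ by Proposition \ref{proposition estensione bordo di S} (by hand if $k=1$; with $\hat v\equiv\00$ near $p_0$ if $\bar v(p_0)=\00$), with $\|\hat v\|_{C^{1,\a}(\R^n)}\le C$ and $\|\hat v\|_{C^1(\R^n)}\le C\,\|\bar v\|_{C^1(\bd(S_0))}$. With a cutoff $\chi$ equal to $1$ on $I_{\mu/2}(\bd(S_0))$, vanishing off $I_\mu(\bd(S_0))$, $\|\n\chi\|_{C^0}\le C/\mu$ and $[\n\chi]_{C^{0,\a}}\le C\mu^{-1-\a}$, set $v:=\chi\,\hat v$ and
\[
\Phi:=\pi\circ(\Id+v):S_0\to\widetilde S_0\,.
\]
Then $\Phi=\Id$ on $[S_0]_\mu$ and $\Phi=\pi\circ\phi_0=\phi_0$ on $\bd(S_0)$; since $\mu>\sqrt\rho$, $\|v\|_{C^1(S_0)}\le(C/\mu)\|\bar v\|_{C^1(\bd(S_0))}\le C\sqrt\rho$ and $\|v\|_{C^{1,\a}(S_0)}\le C(\mu^{-1-\a}\rho+1)\le C$, so Theorem \ref{thm inverse fnct uniform} makes $\Phi$ a $C^{1,\a}$-diffeomorphism onto its image with $\|\Phi\|_{C^{1,\a}(S_0)}\le C$; a degree/side argument (using $\Phi(\bd(S_0))=\bd U$, $\Phi=\Id$ near $\bd U$, and that $\Phi(S_0)$ and $U$ lie on the same side of $\bd U$) gives $\Phi(S_0)=U$.

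Finally, set $f:=\Gamma\circ\Phi:S_0\to S$, a $C^{1,\a}$-diffeomorphism with $f(\bd(S_0))=\bd(S)$; since $\Gamma\circ\pi=\Id$ on $S$, $f=\Gamma\circ\pi\circ f_0=f_0$ on $\bd(S_0)$, and since $\Phi=\Id$ on $[S_0]_\mu$, $f=\Gamma=\Id+\psi_S=\Id+\psi$ there, giving \eqref{basta f uguale f0}--\eqref{basta f uguale id + psi}; the bound \eqref{basta C1alpha} is then immediate from the estimates on $\Gamma$ and $\Phi$. Writing $f-\Id=(\Gamma\circ\Phi-\Phi)+(\Phi-\Id)$ with $(\Gamma\circ\Phi-\Phi)(x)=\sum_i(\psi_S)_i(\Phi(x))\,\nu_0^{(i)}(\Phi(x))$, the bounds \eqref{basta C0}--\eqref{basta C1} follow from the collected estimates on $\psi_S$ and $\Phi-\Id$ and from $\mu,\rho<1$. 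For the tangential estimate \eqref{basta f-Id tangenziale}, note that $\Gamma\circ\Phi-\Phi$ points along the $\nu_0^{(i)}(\Phi(\cdot))$, which are normal to $\widetilde S_0$; since $T_xS_0=T_x\widetilde S_0$ for $x\in S_0$, $\pi^{S_0}_x[\nu_0^{(i)}(\Phi(x))]=\pi^{S_0}_x[\nu_0^{(i)}(\Phi(x))-\nu_0^{(i)}(x)]$ has size $\le C|\Phi(x)-x|$ by \eqref{basta nu0 lipschitz}, so $\|\pi^{S_0}(\Gamma\circ\Phi-\Phi)\|_{C^1(S_0)}\le C\big(\|\psi_S\|_{C^1(U)}\|\Phi-\Id\|_{C^0}+\|\psi_S\|_{C^0(U)}\|\Phi-\Id\|_{C^1}\big)$; meanwhile $\pi^{S_0}(\Phi-\Id)$ differs from $\Phi-\Id$ only by the chord-versus-tangent defect $(\Id-\pi^{S_0}_x)(\Phi(x)-x)$, of size $\le C|\Phi(x)-x|^2$ (using $\widetilde S_0\in C^{2,1}$) and $C^1$-norm $\le C\|v\|_{C^0}\|v\|_{C^1}$. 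Since $\|\Phi-\Id\|_{C^0}\le C\|\bar v\|_{C^0(\bd(S_0))}$, $\|\Phi-\Id\|_{C^1}\le(C/\mu)\|\bar v\|_{C^1(\bd(S_0))}$ and $\|\bar v\|_{C^1(\bd(S_0))}\le\rho$ (so its square is $\le$ itself), collecting terms and bounding $\|\bar v\|$ by $\|f_0-\Id\|_{C^1(\bd(S_0))}$ when $k\ge2$ and by $\|(f_0-\Id)\cdot\nu^{co}_{S_0}\|_{C^0(\bd(S_0))}$ when $k=1$ yields \eqref{basta f-Id tangenziale}. The one genuinely subtle point here is the observation that the purely normal displacement $\Gamma\circ\Phi-\Phi$ contributes to $\pi^{S_0}(f-\Id)$ only through the variation of the normal frame, so that the tangential displacement of $f$ is, up to higher-order terms, that of $\Phi$, hence governed by the boundary mismatch alone.
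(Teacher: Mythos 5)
Your architecture (realize $S$ as a global normal graph $\Gamma=\Id+\sum_i(\psi_S)_i\nu_0^{(i)}$ over an open set $U=\pi(S)\subset\widetilde S_0$, then precompose with a boundary-adapted diffeomorphism $\Phi=\pi\circ(\Id+\chi\hat v)$ of $S_0$ onto $U$) is coherent, and your key observation for \eqref{basta f-Id tangenziale} — that the normal piece $\Gamma\circ\Phi-\Phi$ contributes to $\pi^{S_0}(f-\Id)$ only through the variation $\nu_0^{(i)}(\Phi(x))-\nu_0^{(i)}(x)$, hence at order $\|\psi_S\|_{C^0}\|\Phi-\Id\|_{C^1}$ — is sound and parallels the mechanism in the paper, where the tangential displacement is exactly the cut-off Whitney extension $\phi_\mu b$ of the tangential boundary mismatch. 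But there is a genuine gap, and it sits exactly where you flag ``the hard part'': you never prove that $\pi|_S$ is a global $C^{1,\a}$-diffeomorphism onto an open subset $U$ of $\widetilde S_0$ with $[S_0]_\rho\subset U$, $\bd U=\pi(\bd(S))$, with $\Gamma$ bounded in $C^{1,\a}$ up to $\bd U$ and $\psi_S$ extending $\psi$. The justification you offer (local injectivity from tangent-plane closeness, plus ``connectedness of $S$'' and the conormal hypotheses) does not yield this: local injectivity of a projection plus connectedness does not exclude multi-sheeted behavior or bad boundary overlap without a genuine monodromy/one-sidedness argument near $\bd(S)$, and the uniform regularity of $\Gamma$ up to $\bd U$ needs a quantitative inverse/implicit function argument there. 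Moreover, you cannot invoke Theorem \ref{thm inverse fnct uniform} with $S$ as the source manifold as stated, because its hypotheses \eqref{geo-euc}--\eqref{vai ganzo1} (in particular the chord-arc bound $\dist_S(x,y)\le L|x-y|$) are not among the assumptions on $S$ — only $\|S\|_{C^{1,\a}}\le L$ and Hausdorff closeness are given — so the parametrization must be driven from the $\widetilde S_0$ side. This missing step is essentially the entire content of the theorem: the paper devotes the bulk of its proof to it, by building a defining map $d_S$ for an extension of $S$ via Whitney's theorem (Proposition \ref{proposition costruzione extbyfol}), forming $F(x,\tt)=x+\phi_\mu b+\sum_i(a_i+t_i)\nu_0^{(i)}$ from Whitney extensions of the boundary data, solving $d_S(F(x,\zeta(x)))=\00$ on a collar $K_{\eta_0}$ of $\bd(S_0)$ by the uniform implicit function theorem on $\widetilde S_0$, proving the inclusion $f(K_{\eta_0}^+)\subset S$ and the matching $f=\Id+\psi$ outside $K_\mu$ via estimates like \eqref{adamallert}--\eqref{lanalisiarmonica}, and closing with the connectedness argument $S=S^*=f(S_0)$. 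If you want to keep your two-step decomposition, you would still have to carry out an analysis of this type near $\bd(S)$ (e.g.\ apply the uniform implicit function theorem to $(x,\tt)\mapsto d_S(x+\sum_i t_i\nu_0^{(i)}(x))$ on a collar in $\widetilde S_0$, plus a covering/one-sidedness argument using the conormal closeness) to legitimize $U$, $\Gamma$, and $\psi_S$; as written, the proposal assumes the conclusion of that analysis.
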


\begin{remark}\label{remark bello C0}
{\rm One would expect the $C^0$ norm of $f_0-\Id$, and not its $C^1$-norm, to appear in \eqref{basta C0}. When $k=1$ we indeed prove this, as in that case $\bd(S_0)$ consists of two points and thus $\|f_0-\Id\|_{C^1(\bd(S_0))}=\|f_0-\Id\|_{C^0(\bd(S_0))}$. However, when $k\ge 2$, our construction of $f$ requires a preliminary rough extension of $f_0$ from $\bd(S_0)$ to $\R^n$ by means of Whitney's theorem. Although the $C^{1,\a}(\R^n)$ and $C^1(\R^n)$-norms of this rough extension will be controlled by the $C^{1,\a}(\bd(S_0))$ and $C^1(\bd(S_0))$-norms of $f_0$, because of how Whitney's extension procedure works, the $C^0(\R^n)$-norm will only be controlled by the full $C^1(\bd(S_0))$-norm of $f_0$.}
\end{remark}

\begin{remark}
  {\rm In order to obtain (in the spirit of \eqref{basta C0}) a more precise estimate than \eqref{basta C1}, that is, in order to replace $\rho^\a$  by some function of $\hd(S,S_0)$, $\|f_0-\Id\|_{C^1(\bd(S_0))}$, $\|\psi\|_{C^1([S_0]_\rho)}$ etc., one would need to relate to these quantities the smallest value of $\rho$ which makes the inclusion $[S]_{3\rho}\subset(\Id+\psi)([S_0]_\rho)$ in \eqref{basta iii} hold. More precisely, with such a control at hand one could prove such a strengthened form of \eqref{basta C1} by the same argument used below.}
\end{remark}

\begin{remark}\label{remark assumption a}
  {\rm We claim that assumption (a) can be replaced by
  \begin{equation}\label{assumption a}
    \begin{split}
      &\mbox{$S_0$ is a compact connected $k$-dimensional $C^{2,1}$-manifold with boundary in $\R^n$}
      \\
      &\mbox{and there exists $\{\nu_{S_0}^{(i)}\}_{i=1}^{n-k}\subset C^{1,1}(S_0;\SS^{n-1})$ such that}
      \\
      &\mbox{$\{\nu_{S_0}^{(i)}\}_{i=1}^{n-k}$ is an orthonormal basis of $(T_xS_0)^\perp$ for every $x\in S_0$}\,.
    \end{split}
  \end{equation}
  (In the case $k=1$, \eqref{assumption a} simply amounts in requiring that $S_0$ is a compact connected $C^{2,1}$-curve with boundary in $\R^n$.) More precisely, we claim that \eqref{assumption a} implies the existence of an extension $\widetilde{S}_0$ of $S_0$ and of a normal frame $\{\nu_0^{(i)}\}_{i=1}^{n-k}$ to $\widetilde{S}_0$ such that assumption (a) holds for a suitable value of $L$: correspondingly, the constants $C_0$ and $\mu_0$ given by the theorem will depend on the particular extension $\widetilde{S}_0$ we have considered. We now prove the claim. By compactness of $S_0$ one immediately finds a constant $L'$ such that \eqref{basta geodetica bordo S0} and \eqref{basta geodetica S0} hold with $L'$ in place of $L$, $\diam(S_0)\le L'$, and $\|\nu_{S_0}^{(i)}\|_{C^{1,1}(S_0)}\le L'$. Now let us fix $\ell=1,...,n-k$, and for $x\in S_0$ set
  \[
  F^\00(x)=0\,,\qquad F^{e_i}(x)=\nu_{S_0}^{(\ell)}(x)\cdot e_i\,,\qquad F^{e_i+e_j}(x)=e_i\cdot \nabla^{S_0}\nu_{S_0}^{(\ell)}(x)[e_j]\,.
  \]
  By compactness of $S_0$ we find that $\F=\{F^\kk\}_{|\kk|\le2}\in WJ^{2,1}(S_0)$. Hence, by arguing as in the proof of Proposition \ref{proposition costruzione extbyfol}, there exist $d_{S_0}\in C^{2,1}(\R^n;\R^{n-k})$ and $\e_0>0$ such that
  \begin{equation}\label{basta dS0}
    \begin{split}
    &\mbox{$d_{S_0}(x)=\00$ and $\nabla d_{S_0}(x)=\sum_{i=1}^{n-k}e_i\otimes\nu_{S_0}^{(i)}(x)$ for every $x\in S_0$}\,,
    \\
    &\mbox{$I_{\e_0}(S_0)\cap\{d_{S_0}=\00\}$ is a $k$-dimensional $C^{2,1}$-manifold in $\R^n$}\,,
    \\
    &\max\big\{\e_0^{-1}\,,\|d_{S_0}\|_{C^{2,1}(\R^n)}\big\}\le C\,,
  \end{split}
  \end{equation}
  where $C$ depends on $n$, $k$ and $S_0$ only. Let us set $\widetilde{S}_0=I_{\e_0}(S_0)\cap\{d_{S_0}=\00\}$. Up to further decreasing the value of $\e_0$ one immediately deduces \eqref{basta S0tilde limitata} and \eqref{basta geodetica S0tilde} for some value of $L$. Moreover, by construction, for every $i=1,...,n-k$ there exists $\{h_{i,j}\}_{j=1}^n\subset C^{1,1}(\R^n)$ such that
  \[
  \nabla d_{S_0}(x)=\sum_{i=1}^{n-k}e_i\otimes\big(\sum_{j=1}^nh_{i,j}(x)\,e_j\big)\,,\qquad\forall x\in\R^n\,.
  \]
  Up to further decreasing the value of $\e_0$ we can define $\{\nu_0^{(i)}\}_{i=1}^{n-k}\in C^{1,1}(\widetilde{S}_0;\SS^{n-1})$ in such a way that \eqref{basta nu0 lipschitz} holds by simply applying the Gram-Schmidt orthogonalization process to the vectors $\{\sum_{j=1}^nh_{i,j}(x)\,e_j\}_{i=1}^{n-k}$.}
\end{remark}

\begin{proof}[Proof of Theorem \ref{thm main diffeo}]
  In the following we denote by $C$ a constant which may depend on $n$, $k$, $\a$ and $L$ only. We start our argument by extending $S$ into a larger manifold $\widetilde{S}$. More precisely, by $\|S\|_{C^{1,\a}}\le L$ and Proposition \ref{proposition costruzione extbyfol}, there exist $d_S\in C^{1,\a}(\R^n;\R^{n-k})$ and $\e>0$ such that
  \begin{equation}\label{basta dS}
    \begin{split}
    &\mbox{$d_S(x)=\00$ and $\nabla d_S(x)=\sum_{i=1}^{n-k}e_i\otimes\nu_S^{(i)}(x)$ for every $x\in S$}\,,
    \\
    &\mbox{$I_{\e}(S)\cap\{d_S=\00\}$ is a $k$-dimensional $C^{1,\a}$-manifold in $\R^n$}\,,
    \\
    &\max\big\{\e^{-1}\,,\|d_S\|_{C^{1,\a}(\R^n)}\big\}\le C\,,
  \end{split}
  \end{equation}
  where $\00=(0,...,0)\in\R^{n-k}$. We shall use $d_S$ to locate the position of $S$ in $\R^n$ (see the proof of the claim below). We set
  \[
  \widetilde{S}=I_{\e}(S)\cap\{d_S=\00\}\,,
  \]
  and we record for future use that, by \eqref{basta dS}, if $v\in \SS^{n-1}$, $\de>0$, and $x\in S$, then
  \begin{eqnarray}
    \label{dg 1}
    |\nabla d_S(x)v|\le C\,\de\,,&&\qquad \mbox{if $|\pi^{S}_x(v)|\ge 1-\de$}\,,
    \\\label{dg 2}
    |\nabla d_S(x)v|\ge 1-C\,\de\,,&&\qquad \mbox{if $|\pi^{S}_x(v)|\le \de$}\,,
  \end{eqnarray}
  Next, we note that
  \begin{eqnarray}
  \label{basta stima quadratica normali}
    \max_{1\le i\le n-k}|\nu^{(i)}_0(x)\cdot(y-x)|\le C\,|\pi_x^{\widetilde{S}_0}(y-x)|^2\,,\qquad\forall x\in\widetilde{S}_0\,, y\in B_{x,1/C}\cap\widetilde{S}_0\,.
  \\
  \label{basta condizioni su widetilde S0}
    \begin{split}
    &|y-x|\le 2\,|\pi^{\widetilde{S}_0}_x(y-x)|\,,\qquad\forall x\in \widetilde{S}_0\,, y\in B_{x,1/C}\cap \widetilde{S}_0\,,
    \\
    &\|\pi^{\widetilde{S}_0}_x-\pi^{\widetilde{S}_0}_y\|\le C\,\,|x-y|\,,\qquad \forall x,y\in \widetilde{S}_0\,.
    \end{split}
  \end{eqnarray}
  Indeed, \eqref{basta stima quadratica normali} follows from \eqref{basta nu0 lipschitz} and the fact that $\{\nu^{(i)}(x)\}_{i=1}^{n-k}$ is an orthonormal basis of $(T_x\widetilde{S}_0)^\perp$, the first condition in \eqref{basta condizioni su widetilde S0} follows from \eqref{basta stima quadratica normali}, and the second condition in \eqref{basta condizioni su widetilde S0} is an immediate consequence of $[\nu^{(i)}]_{C^{0,1}(\widetilde{S}_0)}\le L$. We now set
  \[
  U_{x,\de}=\widetilde{S}_0\cap B_{x,\de}\,,
  \quad K_\de=I_{\de}(\bd(S_0))\cap \widetilde{S}_0\,,\quad K_\de^+=I_{\de}(\bd(S_0))\cap S_0\,,
  \qquad x\in\widetilde{S}_0\,,\de>0\,,
  \]
  and then we make the following claim:
%
%
%

  \medskip

  \noindent {\it Claim}: There exists $\eta_0$ depending on $n$, $k$, $\a$ and $L$ only such that, if $\mu_0$ is small enough with respect to $\eta_0$, then one can construct $f:K_{\eta_0}\to \widetilde{S}$ with
  \begin{eqnarray}
    \label{f uguale f0}
    f&=&f_0\,,\hspace{0.7cm}\qquad\mbox{on $\bd(S_0)$}\,,
    \\
    \label{f uguale a psi}
    f&=&\Id+\psi\,,\qquad\mbox{on $K_{\eta_0}^+\setminus K_\mu$}\,,
    \\
    \label{curvette f C11}
    \|f\|_{C^{1,\a}(K_{\eta_0})}&\le& C\,,
    \\
    \label{curvette f va a zero in C0}
    \|f-\Id\|_{C^0(K_{\eta_0}^+)}&\le& C\,\big(\hd(S,S_0)+\|f_0-\Id\|_{C^1(\bd(S_0))}\big)\,,
    \\
    \label{curvette f va a zero in C1}
    \|f-\Id\|_{C^1(K_{\eta_0}^+)}&\le& \frac{C}\mu\,\rho^\a\,,
    \\
    \label{curvette f tangenziale displacement}
    \|\pi^{\widetilde{S}_0}(f-\Id)\|_{C^1(K_{\eta_0})}&\le& \frac{C}\mu\,
    \left\{\begin{split}
    \|(f-\Id)\cdot\nu_{S_0}^{co}\|_{C^0(\bd(S_0))}\,,\qquad\mbox{if $k=1$}\,,
    \\
    \|f_0-\Id\|_{C^1(\bd(S_0))}\,,\qquad\mbox{if $k\ge 2$}\,,
    \end{split}\right .
    \\
    \label{curvette f jacobiano}
    J^{\widetilde{S}_0}f&\ge&\frac12\,,\qquad\mbox{on $K_{\eta_0}$}\,,
    \\
    \label{curvette attacco normale}
    \pi^{\widetilde{S}_0}(f-\Id)&=&0\,,\hspace{0.1cm}\qquad \mbox{on $K_{\eta_0}\setminus K_{\mu}$}\,,
    \\
    \label{curvette inclusione}
    f(K_{\eta_0}^+)&\subset&S\,.
  \end{eqnarray}
  {\it Given the claim, the theorem follows}: Indeed, if one extends $f$ from $K_{\eta_0}$ to $K_{\eta_0}\cup S_0$ by setting $f=\Id+\psi$ on $S_0\setminus K_{\eta_0}$, then thanks to \eqref{f uguale a psi}, \eqref{basta iii} and \eqref{curvette f C11} we find that $f\in C^{1,\a}(K_{\eta_0}\cup S_0;\R^n)$ and that \eqref{basta f uguale f0}, \eqref{basta f uguale id + psi} and \eqref{basta C1alpha} hold; similarly, \eqref{basta C0} and \eqref{basta C1} follow by \eqref{curvette f va a zero in C0} and \eqref{curvette f va a zero in C1}, while \eqref{curvette f tangenziale displacement} and \eqref{curvette attacco normale} imply \eqref{basta f-Id tangenziale}. By Theorem \ref{thm inverse fnct uniform}, \eqref{basta geodetica S0}, \eqref{basta condizioni su widetilde S0}, \eqref{curvette f C11} and \eqref{curvette f jacobiano} there exists $r_0>0$ (depending on $n$, $k$, $\a$ and $L$ only) such that if $\|f-\Id\|_{C^0(S_0)}\le r_0$ (as we can entail thanks to \eqref{curvette f va a zero in C0}, \eqref{basta S C1alphaL and hd rho}, \eqref{basta ii k maggiore 1}, and \eqref{basta iii} provided we take $\mu_0^2\le r_0$), then $f$ is a $C^{1,\a}$-diffeomorphism between $\INT(S_0)$ and $f(\INT(S_0))$. Let us set
  \[
  S^*=\cl(f(\INT(S_0)))\,,
  \]
  so that $S^*\subset S$ by \eqref{basta iii} and \eqref{curvette inclusione}. Moreover, $S^*$ is a compact connected $k$-dimensional $C^{1,\a}$-manifold with boundary in $\R^n$ with
  \[
  \INT(S^*)=f(\INT(S_0))\,,\qquad \bd(S^*)=S^*\setminus f(\INT(S_0))=f(\bd(S_0))=\bd(S)\,,
  \]
  thus, by connectedness of $S$, one has $S=S^*=f(S_0)$. Indeed, in order to obtain a contradiction it suffices to consider $y\in \INT(S)\setminus S^*$, together with a curve $\g$ with $\INT(\g)\subset \INT(S)\setminus S^*$, i.e. which lives in the connected component of $\INT(S)\setminus S^*$ determined by $y$, such that $\bd(\g)=\{y,x\}$ with $x\in\bd(S)$.

  \medskip

  \noindent {\it Proof of the claim}: We first describe the case $k\ge 2$, and then explain the minor variants needed when $k=1$. We fix $\phi\in C^\infty(\R^n\times(0,\infty);[0,1])$ such that, setting $\phi_\mu=\phi(\cdot,\mu)$ for $\mu>0$,
  \begin{eqnarray}
  \label{cutoff phis}
  &&\phi_\mu\in C^\infty_c(I_\mu(\bd(S_0)))\,,\qquad\mbox{$\phi_{\mu}=1$ on $I_{\mu/2}(\bd(S_0))$}\,,
  \\
  \label{nabla phis}
  &&|\nabla\phi_\mu(x)|\le\frac{C}\mu\,,\qquad |\nabla^2\phi_\mu(x)|\le\frac{C}{\mu^2}\,,\qquad\forall (x,\mu)\in\R^n\times(0,\infty)\,.
  \end{eqnarray}
  Let us define $\bar{a}_i:\bd(S_0)\to\R$, $i=1,...,n-k$, and $\bar{b}:\bd(S_0)\to\R^n$ by setting
  \begin{equation}
    \label{sxtx}
      \bar{a}_i(x)=(f_0(x)-x)\cdot\nu_0^{(i)}(x)\,,\qquad \bar{b}(x)=f_0(x)-x-\sum_{i=1}^{n-k}a_i(x)\,\nu_0^{(i)}(x)\,,\qquad x\in\bd(S_0)\,,
  \end{equation}
  so that, trivially,
  \begin{equation}
    \label{f0 decompo 1}
      f_0(x)=x+\bar{b}(x)+\sum_{i=1}^{n-k}\bar{a}_i(x)\,\nu^{(i)}_0(x)\,,\qquad\forall x\in\bd(S_0)\,.
  \end{equation}
  By \eqref{basta ii k maggiore 1} one has
  \begin{equation}\label{ai b k maggiore 2}
  \begin{split}
    &\|\bar{a}_i\|_{C^{1,\a}(\bd(S_0))}+\|\bar{b}\|_{C^{1,\a}(\bd(S_0))}\le C\,,
    \\
    &\|\bar{a}_i\|_{C^1(\bd(S_0))}+\|\bar{b}\|_{C^1(\bd(S_0))}\le C\,\|f_0-\Id\|_{C^1(\bd(S_0))}\le C\,\rho\,,
  \end{split}
  \end{equation}
  By Proposition \ref{proposition estensione bordo di S} and by \eqref{basta geodetica bordo S0} we find $a_i\in C^{1,\a}(\R^n)$, $i=1,...,n-k$, and $b\in C^{1,\a}(\R^n;\R^n)$ such that
  \begin{equation}
    \begin{split}
      \label{ai bi estesi e stime}
      &\mbox{$a_i=\bar{a}_i$ and $b=\bar{b}$}\,,\qquad\mbox{on $\bd(S_0)$}\,,
      \\
      \|a_i\|_{C^{1,\a}(\R^n)}+\|b\|_{C^{1,\a}(\R^n)}\le C\,,&\qquad\|a_i\|_{C^1(\R^n)}+\|b\|_{C^1(\R^n)}\le C\,\|f_0-\Id\|_{C^1(\bd(S_0))}\,.
    \end{split}
  \end{equation}
  Correspondingly we define $G\in C^{1,\a}(\widetilde{S}_0;\R^n)$ by setting
  \begin{equation}
    \label{defi G}
      G(x)=\phi_\mu(x)\,b(x)+\sum_{i=1}^{n-k}a_i(x)\,\nu_0^{(i)}(x)\,,\qquad x\in \widetilde{S}_0\,.
  \end{equation}
  By \eqref{cutoff phis} and \eqref{f0 decompo 1},
  \begin{equation}
    \label{f0 decompo 2}
      f_0(x)=x+G(x)\,,\qquad\forall x\in\bd(S_0)\,,
  \end{equation}
  while \eqref{nabla phis}, \eqref{ai bi estesi e stime} and $\rho\le\mu^2$ give
  \begin{equation}
    \label{stime G}
    \|G\|_{C^{1,\a}(\widetilde{S}_0)}\le C\,,\qquad
    \left\{
    \begin{split}
          &\|G\|_{C^0(\widetilde{S}_0)}\le C\,\|f_0-\Id\|_{C^1(\bd(S_0))}\,,
          \\
          &\|G\|_{C^1(\widetilde{S}_0)}\le\frac{C}\mu\,\|f_0-\Id\|_{C^1(\bd(S_0))}\le \frac{C}{\mu}\,\rho\le C\,\mu_0\,.
    \end{split}
    \right .
  \end{equation}
  We now define $F\in C^{1,\a}(\widetilde{S}_0\times(-1,1)^{n-k};\R^n)$ by setting, for $(x,\tt)\in \widetilde{S}_0\times(-1,1)^{n-k}$,
  \begin{equation}
  \label{definition of F}
  \begin{split}
  F(x,\tt)&=x+\phi_{\mu}(x)\,b(x)+\sum_{i=1}^{n-k}(a_i(x)+t_i)\,\nu_0^{(i)}(x)
  \\
  &=x+G(x)+\sum_{i=1}^{n-k}t_i\,\nu_0^{(i)}(x)\,,
  \end{split}
  \end{equation}
  and then exploit $d_S\in C^{1,\a}(\R^n;\R^{n-k})$ to define $u\in C^{1,\a}(\widetilde{S}_0\times(-1,1)^{n-k};\R^{n-k})$ as
  \[
  u(x,\tt)=d_S(F(x,\tt))\,,\qquad (x,\tt)\in \widetilde{S}_0\times(-1,1)^{n-k}\,.
  \]
  By \eqref{f0 decompo 2},
  \begin{equation}
    \label{F fa zero su bdS0}
      F(x,\00)=f_0(x)\,,\qquad\forall x\in\bd(S_0)\,,
  \end{equation}
  which combined with $S\subset\{d_S=\00\}$ implies
  \begin{equation} \label{frilli1}
  u(x,\00)=\00\,,\qquad\forall x\in\bd(S_0)\,.
  \end{equation}
  By \eqref{stime G} and by \eqref{basta dS} one has
    \begin{equation}
    \label{F ed u C1alpha}
    \|F\|_{C^{1,\a}(\widetilde{S}_0\times(-1,1)^{n-k})}\le C\,,
    \qquad
    \|u\|_{C^{1,\a}(\widetilde{S}_0\times(-1,1)^{n-k})}\le C\,.
  \end{equation}
  We claim that if $\mu_0$ is small enough (and up to identify $(n-k)$-vectors in $\R^{n-k}$ with real numbers, with the convention that $e_1\wedge\dots\wedge e_{n-k}=1$), then
  \begin{equation}
    \label{frilli2}
     \bigwedge_{i=1}^{n-k}\frac{\pa u}{\pa t_i}(x,\00)\ge\frac12\,,\qquad\forall x\in\bd(S_0)\,.
  \end{equation}
  Indeed, by \eqref{F fa zero su bdS0}, \eqref{basta dS}, and by $\pa F/\pa t_i(x,t)=\nu_0^{(i)}(x)$ we find that
  \begin{equation}
    \label{jacobiano}
    \bigwedge_{i=1}^{n-k}\frac{\pa u}{\pa t_i}(x,\00)=  \bigwedge_{i=1}^{n-k} \nabla d_S(f_0(x))[\nu_0^{(i)}(x)]
    =\prod_{i=1}^{n-k}\nu^{(i)}(f_0(x))\cdot\nu_0^{(i)}(x)\,,\qquad\forall x\in\bd(S_0)\,,
  \end{equation}
  so that \eqref{frilli2} follows by \eqref{basta ii k maggiore 1} provided $\mu_0$ is small enough (recall that $\rho<\mu_0^2$). By \eqref{frilli1}, \eqref{F ed u C1alpha}, \eqref{frilli2} and Theorem \ref{lemma implicit function} (that can be applied thanks to \eqref{basta geodetica S0tilde} and \eqref{basta condizioni su widetilde S0}) there exists a positive constant $\eta_0>0$ (depending on $n$, $k$, $\a$, and $L$) such that for each $x_0\in\bd(S_0)$ one can find $\zeta_{x_0}\in C^{1,\a}(U_{x_0,\eta_0};\R^{n-k})$ with
  \begin{eqnarray}  \label{cazzi e mazzi20}
  &&u(x,\zeta_{x_0}(x))=\00\,,\qquad \forall x\in U_{x_0,\eta_0}\,,
  \\
  \label{zeta0 C1alpha}
  &&\zeta_{x_0}(x_0)=\00\,,
  \qquad  \|\zeta_{x_0}\|_{C^{1,\a}(U_{x_0,\eta_0})}\le C\,.
  \end{eqnarray}
  Note that we had to put constraint on the smallness of $\mu_0$ to assert the existence of $\eta_0$. We are of course free to decrease the value of $\mu_0$ without affecting the value of $\eta_0$. We shall require that $\mu_0$ is suitably smaller than $\eta_0$, precisely that $\mu_0\le\eta_0/C_*$ for some suitable $C_*=C_*(n,k,\a,L)$, and we shall further decrease the value of $\eta_0$ depending on $n$, $k$, $\a$ and $L$ only.

  Let us now prove that if $x_0,x_1\in\bd(S_0)$, then
  \begin{equation}
    \label{zeta0 zeta1}
    \zeta_{x_0}(x)=\zeta_{x_1}(x)\,,\qquad\forall x\in U_{x_0,\eta_0}\cap U_{x_1,\eta_0}\,.
  \end{equation}
  Indeed, by $[\zeta_{x_0}]_{C^{0,1}(U_{x_0,\eta_0})}\le C$ and $\zeta_{x_0}(x_0)=\00$ one has
  \begin{equation}
    \label{zeta eta0}
      \|\zeta_{x_0}\|_{C^0(U_{x_0,\eta_0})}\le C_1\,\eta_0\,,
  \end{equation}
  for some constant $C_1$ depending on $n$, $k$, $\a$ and $L$ only. In particular, up to further decreasing the value of $\eta_0$ in dependence of the $C^{1,\a}$-bound on $u$ in \eqref{F ed u C1alpha} and of $C_1$, we can entail
  \begin{equation}
    \label{frilli2x}
     \bigwedge_{i=1}^{n-k}\frac{\pa u}{\pa t_i}(x,\tt)\ge\frac13\,,\qquad\forall (x,\tt)\in U_{x_0,\eta_0}\times(-C_1\eta_0,C_1\,\eta_0)^{n-k}\,.
  \end{equation}
  Now, if $x\in U_{x_0,\eta_0}\cap U_{x_1,\eta_0}$ and we set $A_0=(-C_1\eta_0,C_1\,\eta_0)^{n-k}$, then by \eqref{F ed u C1alpha} and \eqref{frilli2x} one has $u(x,\cdot)\in C^{1,\a}(A_0;\R^{n-k})$ with
  \[
  \|u(x,\cdot)\|_{C^{1,\a}(A_0)}\le C\,,\qquad J^{A_0}u(x,\cdot)\ge\frac13\qquad\mbox{on $A_0$}\,.
  \]
  By Theorem \ref{thm inverse fnct uniform}, there exists $\e_0$ (depending on $n$, $k$, $\a$ and $L$ only) such that $u(x,\cdot)$ is invertible on $A_0^*=(-\e_0,\e_0)^{n-k}$. By requiring that $C_1\,\eta_0<\e_0$, we thus find that $u(x,\cdot)$ is invertible on $A_0$, and since $\zeta_{x_0}(x),\zeta_{x_1}(x)\in A_0$ with $u(x,\zeta_{x_0}(x))=u(x,\zeta_{x_1}(x))$ by \eqref{cazzi e mazzi20}, we deduce \eqref{zeta0 zeta1}. Moreover, by an entirely analogous argument, we deduce from \eqref{frilli1} and \eqref{cazzi e mazzi20} that
  \begin{equation}
    \label{zeta fa zero}
    \zeta_{x_0}(x)=\00\,,\qquad\forall x\in\bd(S_0)\cap U_{x_0,\eta_0}\,.
  \end{equation}

  By \eqref{cazzi e mazzi20}, \eqref{zeta0 C1alpha} \eqref{zeta0 zeta1}, and \eqref{zeta fa zero}, if we define $\zeta\in C^{1,\a}(K_{\eta_0};\R^{n-k})$ (recall that $K_{\eta_0}=I_{\eta_0}(\bd(S_0))\cap\widetilde{S}_0$) by setting $\zeta=\zeta_{x_0}$ on $U_{x_0,\eta_0}$ for each $x_0\in\bd(S_0)$, then
  \begin{eqnarray}
  \label{cazzi e mazzi2}
  &&u(x,\zeta(x))=\00\quad \forall x\in K_{\eta_0}\,,\qquad \zeta(x)=\00\quad\forall x\in\bd(S_0)\,,
  \\
  \label{zeta C0 e C1alpha}
  &&\|\zeta\|_{C^0(K_{\eta_0})}\le C_1\,\eta_0\,,\qquad \|\zeta\|_{C^{1,\a}(K_{\eta_0})}\le C\,.
  \end{eqnarray}
  We finally set
  \begin{eqnarray}\label{formula fgamma}
    f(x)=F(x,\zeta(x))=x+G(x)+\sum_{i=1}^{n-k}\zeta_i(x)\,\nu_0^{(i)}(x)\,,\qquad x\in K_{\eta_0}\,,
  \end{eqnarray}
  where $\zeta_i=e_i\cdot\zeta$, and show that $f$ has the required properties. By \eqref{F fa zero su bdS0} and \eqref{cazzi e mazzi2} we prove \eqref{f uguale f0}, while \eqref{curvette f C11} follows from \eqref{F ed u C1alpha} and \eqref{zeta C0 e C1alpha}. Similarly, \eqref{cazzi e mazzi2} and the definition of $u$ give
  \begin{equation}
    \label{curvette inclusione tilde p}
      f(K_{\eta_0})\subset\{d_S=\00\}\,.
  \end{equation}
  By \eqref{f uguale f0}, \eqref{curvette f C11}, and $f(\bd(S_0))=f_0(\bd(S_0))=\bd(S)$, we find that $f(K_{\eta_0})\subset I_{C\,\eta_0}(\bd(S))$, so that, up to decrease $\eta_0$ and thanks to $\e>C^{-1}$ (recall \eqref{basta dS}), we can entail $f(K_{\eta_0})\subset I_\e(S)$. In particular \eqref{curvette inclusione tilde p} gives
  \begin{equation}
    \label{curvette inclusione tilde}
    f(K_{\eta_0})\subset\widetilde{S}\,.
  \end{equation}
  By \eqref{formula fgamma} and \eqref{defi G},
  \begin{equation}
    \label{your mom1}
      \pi^{\widetilde{S}_0}(f-\Id)(x)=\phi_{\mu}(x)\,b(x)\,,\qquad\forall x\in K_{\eta_0}\,,
  \end{equation}
  so that \eqref{curvette attacco normale} follows by $\spt\,\phi_{\mu}\cc I_{\mu}(\bd(S_0))$. By differentiating \eqref{your mom1} along $\tau\in T_x\widetilde{S}_0$ we find
  \[
  \nabla^{\widetilde{S}_0}[\pi^{\widetilde{S}_0}(f-\Id)](x)\,[\tau]=\Big(\nabla\phi_{\mu}(x)\cdot\tau\Big)\,b(x)
  +\phi_\mu(x)\nabla^{\widetilde{S}_0}b(x)[\tau]\,,
  \]
  which implies \eqref{curvette f tangenziale displacement} (recall we are addressing the case $k\ge 2$) once combined with \eqref{ai bi estesi e stime} and \eqref{your mom1}. By differentiating \eqref{formula fgamma} along $\tau\in T_x\widetilde{S}_0$ we find that
  \begin{eqnarray}
  \label{curvette grad f}
  \nabla^{\widetilde{S}_0}f(x)[\tau]&=&\tau+\sum_{i=1}^{n-k}  \nabla^{\widetilde{S}_0}\zeta_i(x)[\tau]\,\nu^{(i)}_0(x)
  \\\nonumber
  &&+  \nabla^{\widetilde{S}_0}G(x)[\tau]+\sum_{i=1}^{n-k}\zeta_i(x)  \nabla^{\widetilde{S}_0}\nu^{(i)}_0(x)[\tau]\,.
  \end{eqnarray}
  The first term on the second line is bounded by $C\mu_0$ thanks to \eqref{stime G}, while the second term on the second line is bounded by $C\,\eta_0$ thanks to \eqref{basta nu0 lipschitz} and \eqref{zeta C0 e C1alpha}, so that, as we are requiring $\mu_0\le \eta_0/C_*\le\eta_0$,
  \begin{equation}
    \label{diavolo}
      \Big|\nabla^{\widetilde{S}_0}f(x)[\tau]-\Big(\tau+\sum_{i=1}^{n-k}  \nabla^{\widetilde{S}_0}\zeta_i(x)[\tau]\,\nu^{(i)}_0(x)\Big)\Big|\le\,C\,\eta_0\,.
  \end{equation}
  Thus, if $\{\tau_i\}_{i=1}^k$ is an orthonormal basis of $T_x\widetilde{S}_0$, then
  \begin{eqnarray}\nonumber
      J^{\widetilde{S}_0}f(x)\ge\Big|\bigwedge_{i=1}^k \Big(\tau_i+\sum_{j=1}^{n-k}  \nabla^{\widetilde{S}_0}\zeta_j(x)[\tau_i]\,\nu^{(j)}_0(x)\Big)\Big|-C\,\eta_0
  \end{eqnarray}
  Since $\bigwedge_{i=1}^k\tau_i$ is orthogonal to $\bigwedge_{i\in I}\tau_i\wedge\bigwedge_{j\in J}\nu_0^{(j)}(x)$ for every $I\subset\{1,...,k\}$ and $J\subset\{1,...,n-k\}$ with $\#I+\#J=k$ and $\#I<k$, by projecting over $\bigwedge_{i=1}^k\tau_i$ one finds
  \begin{eqnarray}\label{nuota2}
      J^{\widetilde{S}_0}f(x)&\ge&\Big|\bigwedge_{i=1}^k \Big(\tau_i+\sum_{j=1}^{n-k}  \nabla^{\widetilde{S}_0}\zeta_j(x)[\tau_i]\,\nu^{(j)}_0(x)\Big)\cdot\bigwedge_{i=1}^k\tau_i\Big|-C\,\eta_0= 1-C\,\eta_0\ge\frac12\,,
  \end{eqnarray}
  provided $\eta_0$ is small enough; this proves \eqref{curvette f jacobiano}. Again by \eqref{diavolo} we find that if $x\in\bd(S_0)$, then
  \[
  \nabla^{\widetilde{S}_0}f(x)[\nu^{co}_{S_0}(x)]\cdot\nu^{co}_S(f(x))\ge
  \nu^{co}_{S_0}(x)\cdot\nu^{co}_S(f(x))-C\,\max_{1\le i\le n-k}|\nu^{(i)}_0(x)\cdot\nu^{co}_S(f(x))|-C\,\eta_0\,.
  \]
  By \eqref{basta ii k maggiore 1}, $\nu^{co}_{S_0}(x)\cdot\nu^{co}_S(f(x))\ge 1-C\,\rho$ and $|\nu^{(i)}_0(x)\cdot\nu^{co}_S(f(x))|\le C\,\rho$, so that
  \begin{eqnarray}\label{nuota3}
  \nabla^{\widetilde{S}_0}f(x)[\nu^{co}_{S_0}(x)]\cdot\nu^{co}_S(f(x))\ge\frac12\,,\qquad\forall x\in\bd(S_0)\,,
  \end{eqnarray}
  provided $\eta_0$ (thus $\rho\le\mu_0^2$) is small enough. By \eqref{f uguale f0}, \eqref{curvette inclusione tilde}, and \eqref{curvette f jacobiano}, for every $x\in\bd(S_0)$ one has
  \[
  \nabla^{\widetilde{S}_0}f(x)[T_x\widetilde{S}_0]=T_{f(x)}\widetilde{S}\,,\qquad
  \nabla^{\widetilde{S}_0}f(x)[T_x(\bd(S_0))]=T_{f(x)}(\bd(S))\,,
  \]
  so that \eqref{nuota3} gives
  \[
  \nabla^{\widetilde{S}_0}f(x)\Big[\big\{v\in T_x\widetilde{S}_0:v\cdot\nu_{S_0}^{co}(x)\le0\big\}\Big]
  =\big\{w\in T_{f(x)}\widetilde{S}:w\cdot\nu_{S}^{co}(f(x))\le0\big\}\,.
  \]
  By combining this fact with \eqref{curvette inclusione tilde} we deduce \eqref{curvette inclusione} (up to possibly further decreasing $\eta_0$ in dependence of the bound in \eqref{curvette f C11}). We are thus left to prove \eqref{f uguale a psi}, \eqref{curvette f va a zero in C0} and \eqref{curvette f va a zero in C1}.

  We first prove \eqref{curvette f va a zero in C0}. By \eqref{curvette inclusione} one has
  \begin{equation}
    \label{rhorho}
  \hd(S,S_0)\ge\dist(f(x),S_0)\,,\qquad \forall x\in K_{\eta_0}^+\,.
  \end{equation}
  Let $\e_0>0$ be the inverse of the maximum of the largest principal curvature of $S_0$, so that, by \eqref{basta nu0 lipschitz}, $\e_0$ depends on $L$ only. Then
  \begin{equation}
    \label{intorno normale curvette}
      \dist\Big(x+\sum_{i=1}^{n-k}t_i\,\nu_0^{(i)}(x),S_0\Big)=|\tt|\,,\qquad\forall x\in S_0\,,|\tt|<\e_0\,.
  \end{equation}
  By $\spt\phi_\mu\cc I_\mu(\bd(S_0))$ and by \eqref{formula fgamma}
  \[
  f(x)=x+\sum_{i=1}^{n-k}(a_i(x)+\zeta_i(x))\,\nu_0^{(i)}(x)\,,\qquad\forall x\in K_{\eta_0}\setminus K_{\mu}\,,
  \]
  where $\|a_i+\zeta_i\|_{C^0(K_{\eta_0})}\le C\,\eta_0$ by \eqref{ai bi estesi e stime} and \eqref{zeta C0 e C1alpha}. Up to decrease $\eta_0$ in order to obtain $\|a_i+\zeta_i\|_{C^0(K_{\eta_0})}\le \e_0$, we can apply \eqref{intorno normale curvette}, \eqref{rhorho} and $\|a_i\|_{C^0(\R^n)}\le C\|f_0-\Id\|_{C^1(\bd(S_0))}$ to find
  \begin{equation}
    \label{zetaman}
      \|\zeta\|_{C^0(K_{\eta_0}^+\setminus K_{\mu})}
      \le C\,\big(\hd(S,S_0)+\|f_0-\Id\|_{C^1(\bd(S_0))}\big)\,.
  \end{equation}
  In order to estimate $\|\zeta\|_{C^0(K_\mu^+)}$ we consider, for every $x\in K_{\eta_0}$, a point $g(x)\in S_0$ such that $|f(x)-g(x)|=\dist(f(x),S_0)$: we claim that then one must have
  \begin{equation}
    \label{nervous2}
    |g(x)-x|\le \hd(S,S_0)+C\,\mu\,,\qquad\forall x\in K_\mu^+\,.
  \end{equation}
  Indeed, let $x\in K_\mu^+$ so that there exists $y\in\bd(S_0)$ with $|x-y|\le\mu$: since $f(x)\in S$ implies $|f(x)-g(x)|=\dist(f(x),S_0)\le\hd(S,S_0)$, by \eqref{curvette f C11} we find
  \begin{eqnarray*}
    |g(x)-x|\le|g(x)-f(x)|+|f(x)-f(y)|+|x-y|\le \hd(S,S_0)+C\,|x-y|
  \end{eqnarray*}
  that is \eqref{nervous2}. By \eqref{nervous2}, provided $\mu_0$ is small enough with respect to the constant $1/C$ appearing in \eqref{basta stima quadratica normali}, we find that
  \begin{equation}
    \label{caaaaaaaaaaaaz}
      \max_{1\le i\le n-k}|(g(x)-x)\cdot\nu_0^{(i)}(x)|\le C |\pi_x^{S_0}(g(x)-x)|^2\,,\qquad\forall x\in K_\mu^+\,.
  \end{equation}
  Now, by \eqref{rhorho} and \eqref{your mom1} we find that, if $x\in K_{\mu}^+$, then
  \begin{eqnarray*}
    \hd(S,S_0)&\ge&\dist(f(x),S_0)=|f(x)-g(x)|\ge |\pi_x^{S_0}(f(x)-g(x))|
    \\
    &=&|\pi_x^{S_0}(x-g(x))|-|b(x)|\,\phi_\mu(x)
  \end{eqnarray*}
  so that \eqref{caaaaaaaaaaaaz} and \eqref{ai bi estesi e stime} give
  \[
  \max_{1\le i\le n-k}|(g(x)-x)\cdot\nu_0^{(i)}(x)| \le C\,\big(\hd(S,S_0)+\|f_0-\Id\|_{C^1(\bd(S_0))}\big)^2\,,\qquad\forall x\in K_\mu^+\,.
  \]
  By exploiting this last inequality, \eqref{formula fgamma} and \eqref{ai bi estesi e stime} we deduce that if $x\in K_\mu^+$, then
  \begin{eqnarray*}
    \hd(S,S_0)&\ge&\dist(f(x),S_0)=|f(x)-g(x)|\ge|(f(x)-g(x))\cdot\nu_0^{(i)}(x)|
    \\
    &\ge&|(x-g(x))\cdot\nu_0^{(i)}(x)+(a_i(x)+\zeta_i(x))|-|b(x)|\,\phi_\mu(x)
    \\
    &\ge&|\zeta_i(x)|-C\,\big(\hd(S,S_0)+\|f_0-\Id\|_{C^1(\bd(S_0))}\big)\,.
  \end{eqnarray*}
  By combining this estimate with \eqref{zetaman} we thus conclude that
  \begin{equation}
    \label{zetaman2}
      \|\zeta\|_{C^0(K_{\eta_0}^+)}\le C\,\big(\hd(S,S_0)+\|f_0-\Id\|_{C^1(\bd(S_0))}\big)\,.
  \end{equation}
  By combining \eqref{zetaman2}, \eqref{formula fgamma} and \eqref{stime G} we prove \eqref{curvette f va a zero in C0}.

  We now prove \eqref{f uguale a psi}. First, we claim that there exists a constant $M$ depending on $n$, $\a$, $k$ and $L$ only such that
  \begin{equation}
    \label{adamallert}
    f(x)\in[S]_{3\rho}\,,\qquad \forall x\in K_{\eta_0}^+\setminus K_{M\rho}\,.
  \end{equation}
  Indeed, let $x\in K_{\eta_0}^+\setminus K_{M\rho}$ and let $y\in\bd(S_0)$ be such that $|f(x)-f(y)|=\dist(f(x),\bd(S))$ (we can find such a point $y$ as $f_0$ is a bijection between $\bd(S_0)$ and $\bd(S)$ and since $f=f_0$ on $\bd(S_0)$). By \eqref{curvette f va a zero in C0}, we have
  \begin{eqnarray*}
  \dist(f(x),\bd(S))&=&|f(x)-f(y)|\ge|x-y|-|f(x)-x|-|f(y)-y|
  \\
  &\ge&\dist(x,\bd(S_0))-C\,\rho
  \ge(M-C)\,\rho\ge 3\rho\,,
  \end{eqnarray*}
  provided $M$ is large enough. This proves \eqref{adamallert}, which, combined with assumption (ii) and \eqref{intorno normale curvette}, gives in particular
  \begin{equation}
    \label{lanalisiarmonica}
      f(x)=g(x)+\psi(g(x))\qquad g(x)\in[S_0]_\rho\,,\qquad \forall x\in K_{\eta_0}^+\setminus K_{M\,\rho}\,.
  \end{equation}
  By \eqref{curvette attacco normale} and \eqref{lanalisiarmonica}, we find $g(x)=x$ for every $x\in K_{\eta_0}^+\setminus K_\mu$, so that, in particular,
  \begin{equation}
    \label{bella}
    f(x)=x+\psi(x)\,,\qquad\forall x\in K_{\eta_0}^+\setminus K_\mu\,,
  \end{equation}
  that is \eqref{f uguale a psi}. Note that this argument also gives $\psi_i=a_i+\zeta_i$ on $K_{\eta_0}^+\setminus K_\mu$, so that \eqref{ai bi estesi e stime} gives us
  \[
  \|\zeta\|_{C^1(K_{\eta_0}^+\setminus K_\mu)}\le C\,\big(\|f_0-\Id\|_{C^1(\bd(S_0))}+\|\psi\|_{C^1([S_0]_\mu)}\big)\,,
  \]
  and thus, by \eqref{formula fgamma} and \eqref{stime G}
  \begin{equation}
    \label{utilina}
      \|f-\Id\|_{C^1(K_{\eta_0}^+\setminus K_\mu)}\le \frac{C}\mu\,\rho\le C\,\mu_0\,,
  \end{equation}
  which will be useful in proving \eqref{curvette f va a zero in C1}, as we are now going to do. We first note that by \eqref{curvette grad f}, \eqref{stime G}, \eqref{curvette f va a zero in C0}, and \eqref{basta nu0 lipschitz}, it is enough to show that
  \begin{equation}
    \label{enoughhh}
    \|\nabla^{\widetilde S_0}\zeta\|_{C^0(K_{\eta_0}^+)}\le \frac{C}\mu\,\rho^\a\,.
  \end{equation}
  To this end, the natural starting point is differentiating $d_S(f)=0$ on $K_{\eta_0}$ at some fixed $x\in K_{\eta_0}$ along $\tau\in T_x\widetilde{S}_0$. By combining the resulting identity $\nabla d_S(f(x))[\nabla^{\widetilde{S}_0}f(x)[\tau]]=0$ with \eqref{curvette grad f}, \eqref{stime G} and \eqref{zetaman2} one finds that, if $x\in K_{\eta_0}^+$ and $\tau\in T_x S_0$ with $|\tau|=1$, then
  \[
  \Big|\nabla d_S(f(x))\Big[\tau+\sum_{i=1}^{n-k}\Big(\nabla^{\widetilde{S}_0}\zeta_{i}(x)[\tau]\Big)\,\nu_0^{(i)}(x)\Big]\Big|\le \frac{C}{\mu}\,\big(\hd(S,S_0)+\|f_0-\Id\|_{C^1(\bd(S_0))}\big)\le \frac{C}\mu\,\rho\,,
  \]
  that is
  \begin{equation}
    \label{lanalisiarmonica4}
      \Big|\nabla d_S(f(x))\Big[\sum_{i=1}^{n-k}\Big(\nabla^{\widetilde{S}_0}\zeta_{i}(x)[\tau]\Big)\,\nu_0^{(i)}(x)\Big]\Big|\le \frac{C}\mu\,\big(\rho+\big|\nabla d_S(f(x))[\tau]\big|\big)\,.
  \end{equation}
  We claim that
  \begin{equation}\label{quasi normale}
    |\nabla d_S(f(x))[v]|\ge\frac{|v|}2\,,\qquad\forall x\in K_{\eta_0}^+\,,v\in (T_xS_0)^\perp\,.
  \end{equation}
  Indeed, if $x\in\bd(S_0)$, then, by \eqref{basta ii k maggiore 1}, $\nu_0^{(i)}(x)\cdot\nu_S^{(i)}(f(x))\ge 1-C\,\rho$ for every $i=1,...,n-k$, that is, $|\pi^S_{f(x)}[v]|\le C\,\rho\,|v|$: thus by \eqref{dg 2} and provided $\mu_0$ is small enough
  \begin{equation}
    \label{exactly the same}
      |\nabla d_S(f(x))[v]|\ge\frac{2}3\,|v|\,,\qquad\forall x\in \bd(S_0)\,,v\in (T_xS_0)^\perp\,,
  \end{equation}
  which immediately gives us \eqref{quasi normale} for $x\in K_\mu^+$ provided $\mu_0$ is small enough depending on $C\ge\|d_S\|_{C^{1,\a}(\R^n)}$. If instead $x\in K_{\eta_0}^+\setminus K_\mu$, then by \eqref{utilina} we find that $|\pi_{f(x)}^S[v]|=|\pi_{f(x)}^S[v]-\pi_x^{S_0}[v]|\le C\,\mu_0\,|v|$. Thus we deduce that \eqref{quasi normale} holds for $x\in K_{\eta_0}^+\setminus K_\mu$ too, once again, thanks to \eqref{dg 2} and provided $\mu_0$ is small enough. By combining \eqref{quasi normale} with \eqref{lanalisiarmonica4} we thus find
  \begin{equation}
    \label{lanalisiarmonica4000}
      \big|\nabla^{\widetilde{S}_0}\zeta(x)[\tau]\big|\le \frac{C}\mu\,\big(\rho+|\nabla d_S(f(x))[\tau]|\big)\,,\qquad\forall x\in K_{\eta_0}^+\,,\tau\in T_xS_0\cap\SS^{n-1}\,.
  \end{equation}
  We are now going to show that
  \begin{equation}
    \label{alla fine dio bono}
    |\nabla d_S(f(x))[\tau]|\le C\,\rho^\a\,,\qquad\forall x\in K_{\eta_0}^+, \tau\in T_xS_0\cap\SS^{n-1}\,.
  \end{equation}
   Indeed, if $x\in\bd(S_0)$, then \eqref{alla fine dio bono} follows by exactly the same argument used to prove \eqref{exactly the same} (with $\rho$ in place of $\rho^\a$). By exploiting $\|\nabla d_S\|_{C^{1,\a}(\R^n)}\le C$, one deduces the validity of \eqref{alla fine dio bono} for every $x\in K_{M\rho}^+$ (here is the point where $\rho^\a$ appears in place of $\rho$). In order to prove \eqref{alla fine dio bono} on $K_{\eta_0}^+\setminus K_{M\,\rho}$ we first notice that if $x\in K_{\eta_0}^+$, then $|g(x)-f(x)|=\dist(f(x),S_0)\le|f(x)-x|$, so that \eqref{curvette f va a zero in C0} implies the following improvement of \eqref{nervous2}:
  \begin{equation}
    \label{nervous a mille}
    |g(x)-x|\le C\,\big(\hd(S,S_0)+\|f_0-\Id\|_{C^1(\bd(S_0))}\big)\le C\,\rho\,,\qquad\forall x\in K_{\eta_0}^+\,.
  \end{equation}
  At the same time, by $(\Id+\psi)([S_0]_\rho)\subset S$ and $\|\psi\|_{C^1([S_0]_\rho)}\le\rho$ one finds
  \[
  |\pi^S_{x+\psi(x)}[\tau]|\ge (1-C\,\rho)\,|\tau|,\qquad\forall x\in[S_0]_\rho\,,\tau\in T_xS_0\,,
  \]
  which, by \eqref{dg 2}, gives
  \[
  |\nabla d_S(x+\psi(x))[\tau]|\le C\,\rho\,|\tau|\,,\qquad\forall x\in[S_0]_\rho\,,\tau\in T_xS_0\,.
  \]
  By \eqref{basta dS}, \eqref{basta iii} and \eqref{nervous a mille}
  \[
  |\nabla d_S(g(x)+\psi(g(x)))[\tau]|\le C\,\rho^\a\,|\tau|\,,\qquad\forall x\in  K_{\eta_0}^+\setminus K_\rho\,,\tau\in T_xS_0\,,
  \]
  which implies \eqref{alla fine dio bono} for $x\in  K_{\eta_0}^+\setminus K_{M\rho}$ thanks to \eqref{lanalisiarmonica}. This completes the proof of \eqref{alla fine dio bono}, which combined with \eqref{lanalisiarmonica4000} gives us \eqref{enoughhh}. The claim, thus theorem, is then proved in the case $k\ge 2$. Concerning the case $k=1$, the main difference is that the extensions $a_i$ and $b$ of $\bar{a_i}$ and $\bar{b}$ satisfying \eqref{ai bi estesi e stime} can now be defined by elementary means by exploiting the assumption $|p_0-q_0|\ge 1/L$, with their $C^1(\R^n)$-norms controlled  in terms of $\|f_0-\Id\|_{C^0(\bd(S_0))}$ (see also Remark \ref{remark bello C0}). The rest of the proof carries on almost {\it verbatim}, and we thus omit the details.
\end{proof}

\subsection{A reformulation of Theorem \ref{thm main diffeo}}\label{subsection application of theorem 3.1} In the situations in which we plan to apply Theorem \ref{thm main diffeo} we are usually given a sequence of manifolds $\{S_j\}_j$ converging to a limit manifold $S_0$ rather than a pair of nearby manifolds $S$ and $S_0$. In order to apply Theorem \ref{thm main diffeo} one thus needs to pass from the former situation to the latter, and this can indeed be done by a simple argument. Instead of having to repeat this argument at each application of Theorem \ref{thm main diffeo}, it seems preferable to prove once and for all an alternative version Theorem \ref{thm main diffeo} which is already tailored for the case of sequences.

\begin{theorem}\label{thm main diffeo k}
Let $n\ge 2$, $1\le k\le n-1$, $\a\in(0,1]$, and $L>0$. Let $S_0$ be a compact connected $k$-dimensional $C^{2,1}$-manifold with boundary in $\R^n$ and let $\{\nu_{S_0}^{(i)}\}_{i=1}^{n-k}\subset C^{1,1}(S_0;\SS^{n-1})$ be such that $\{\nu_{S_0}^{(i)}\}_{i=1}^{n-k}$ is an orthonormal basis of $(T_xS_0)^\perp$ for every $x\in S_0$. Then there exist $\mu_0\in(0,1)$ and $C_0>0$ (depending on $n$, $k$, $\a$, $L$ and $S_0$ only) with the following property.

Let $\{S_j\}_{j\in\N}$ be a sequence of a compact connected $k$-dimensional $C^{1,\a}$-manifold with boundary in $\R^n$ such that
  \begin{equation}
    \label{basta S C1alphaL and hd rho j}
      \bd(S_j)\ne\emptyset\,,\qquad \|S_j\|_{C^{1,\a}}\le L\,,\qquad \lim_{j\to\infty}\hd(S_j,S_0)=0\,,
  \end{equation}
and assume in addition that:

  \medskip

  \noindent (i) if $k=1$, then, setting $\bd(S_0)=\{p_0,q_0\}$, $\bd(S_j)=\{p_j,q_j\}$, $f_{0,j}(p_0)=p_j$ and $f_{0,j}(q_0)=q_j$,
  \begin{equation}     \label{basta ii k uguale 1 j}
  \lim_{j\to\infty}\|f_{0,j}-\Id\|_{C^0(\bd(S_0))}+\|\nu^{co}_{S_j}(f_{0,j})-\nu^{co}_{S_0}\|_{C^0(\bd(S_0))}=0\,;
  \end{equation}
  if $k\ge 2$, then there exist $C^{1,\a}$-diffeomorphisms $f_{0,j}$ between $\bd(S_0)$ and $\bd(S_j)$ with
  \begin{equation}
    \label{basta ii k maggiore 1 j}
    \begin{split}
      &\sup_{j\in\N}\|f_{0,j}\|_{C^{1,\a}(\bd(S_0))}\le L\,,
      \\
      &\lim_{j\to\infty}\|f_{0,j}-\Id\|_{C^1(\bd(S_0))}=0\,,
      \\
      &\lim_{j\to\infty}\max_{1\le i\le n-k}\|\nu_{S_j}^{(i)}(f_{0,j})-\nu_0^{(i)}\|_{C^0(\bd(S_0))}=0\,,
      \\
      &\lim_{j\to\infty}\|\nu_{S_j}^{co}(f_{0,j})-\nu_{S_0}^{co}\|_{C^0(\bd(S_0))}=0\,,
    \end{split}
  \end{equation}
  where $\{\nu_{S_j}^{(i)}\}_{i=1}^{n-k}$ is satisfies \eqref{basta S C1alpha} with $S=S_j$;

  \medskip

  \noindent (ii) for every $\rho<\mu_0^2$ and $i=1,...,n-k$ there exist $j(\rho)\in\N$ and $\{\psi_{i,j}\}_{j\ge j(\rho)}\subset C^{1,\a}([S_0]_\rho)$ such that, setting $\psi_j=\sum_{i=1}^{n-k}\psi_{i,j}\,\nu_{S_0}^{(i)}$, one has
  \begin{equation}\label{basta iii j}
    \begin{split}
      &\hspace{0.9cm}[S_j]_{3\rho}\subset(\Id+\psi_j)([S_0]_\rho)\subset S\,,\qquad\forall j\ge j(\rho)\,,
      \\
      &\sup_{j\ge j(\rho)}\|\psi_j\|_{C^{1,\a}([S_0]_\rho)}\le L\,,\qquad \lim_{j\to\infty}\|\psi_j\|_{C^1([S_0]_\rho)}=0\,.
    \end{split}
  \end{equation}
  Then, for every $\mu\in(0,\mu_0)$ there exist $j(\mu)\in\N$ and, for each $j\ge j(\mu)$, a $C^{1,\a}$-diffeomorphisms $f_j$ between $S_0$ and $S_j$ such that
  \begin{equation}
    \label{jjj}
      \begin{split}
    &\hspace{2.5cm}\mbox{$f_j=f_{0,j}$ on $\bd(S_0)$}\,,\qquad
    \mbox{$f_j=\Id+\psi_j$ on $[S_0]_\mu$}\,,
    \\
    &\hspace{2cm}\sup_{j\ge j(\mu)}\|f_j\|_{C^{1,\a}(S_0)}\le C_0\,,\qquad \lim_{j\to\infty}\|f_j-\Id\|_{C^1(S_0)}=0\,,
    \\
    &\|\pi^{S_0}(f_j-\Id)\|_{C^1(S_0)}\le\frac{C_0}\mu\,\left\{
    \begin{split}
    \|(f_{0,j}-\Id)\cdot\nu_{S_0}^{co}\|_{C^0(\bd(S_0))}\,,\qquad\mbox{if $k=1$}\,,
    \\
    \|f_{0,j}-\Id\|_{C^1(\bd(S_0))}\,,\qquad\mbox{if $k\ge 2$}\,.
    \end{split}\right .
  \end{split}
  \end{equation}
\end{theorem}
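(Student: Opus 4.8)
The plan is to deduce Theorem \ref{thm main diffeo k} from Theorem \ref{thm main diffeo} by making, for each $j$, an appropriate choice of the auxiliary scale $\rho$ appearing in the latter. Since $S_0$ and $\{\nu_{S_0}^{(i)}\}_{i=1}^{n-k}$ satisfy \eqref{assumption a}, Remark \ref{remark assumption a} provides an extension $\widetilde S_0\supset S_0$ and a normal frame $\{\nu_0^{(i)}\}_{i=1}^{n-k}\subset C^{1,1}(\widetilde S_0;\SS^{n-1})$ for which assumption (a) of Theorem \ref{thm main diffeo} holds with some $L'$ depending on $n$, $k$ and $S_0$ only. Setting $L''=\max\{L,L'\}$ and applying Theorem \ref{thm main diffeo} with $L''$ in place of $L$, we obtain constants $\mu_0\in(0,1)$ and $C_0>0$ depending on $n$, $k$, $\a$, $L$ and $S_0$ only; these are the constants in the statement. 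In particular $\mu_0$ is fixed before the sequence $\{S_j\}_j$ is inspected, so that the quantifier ``for every $\rho<\mu_0^2$'' in assumption (ii) is meaningful.

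For each $\rho\in(0,\mu_0^2)$ let $N(\rho)\in\N$ be an integer such that $N(\rho)\ge j(\rho)$ (with $j(\rho)$ as in assumption (ii)) and such that for every $j\ge N(\rho)$ one has, simultaneously, $\hd(S_j,S_0)\le\rho$, $\|\psi_j\|_{C^1([S_0]_\rho)}\le\rho$, $\max_{1\le i\le n-k}\|\nu_{S_j}^{(i)}(f_{0,j})-\nu_0^{(i)}\|_{C^0(\bd(S_0))}\le\rho$, $\|\nu_{S_j}^{co}(f_{0,j})-\nu_{S_0}^{co}\|_{C^0(\bd(S_0))}\le\rho$, together with $\|f_{0,j}-\Id\|_{C^1(\bd(S_0))}\le\rho$ when $k\ge2$ and $\|f_{0,j}-\Id\|_{C^0(\bd(S_0))}\le\rho$ when $k=1$. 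Such an $N(\rho)$ exists since, for $\rho$ fixed, each of these quantities tends to $0$ as $j\to\infty$ by \eqref{basta S C1alphaL and hd rho j}, \eqref{basta ii k uguale 1 j}, \eqref{basta ii k maggiore 1 j} and \eqref{basta iii j}. Now fix $\mu\in(0,\mu_0)$ and choose a strictly decreasing sequence $\rho^{(m)}\downarrow0$ with $\rho^{(1)}<\mu^2$; replacing $N(\rho^{(m)})$ by $\max\{N(\rho^{(1)}),\dots,N(\rho^{(m)}),m\}$ we may assume that $m\mapsto N(\rho^{(m)})$ is nondecreasing and diverges to $\infty$. Set $j(\mu)=N(\rho^{(1)})$, and for $j\ge j(\mu)$ let $\rho_j=\rho^{(m(j))}$, where $m(j)$ is the largest $m$ with $N(\rho^{(m)})\le j$. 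Then $j\ge N(\rho_j)$, $\rho_j<\mu^2<\mu_0^2$, and $\rho_j\to0$ as $j\to\infty$.

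With these choices, for each $j\ge j(\mu)$ the manifolds $S_0$ and $S=S_j$, together with $f_0=f_{0,j}$, $\psi=\psi_j$ and $\rho=\rho_j$, satisfy all the hypotheses of Theorem \ref{thm main diffeo} (with $L''$ in place of $L$): indeed \eqref{basta S C1alphaL and hd rho}, \eqref{basta ii k=1} or \eqref{basta ii k maggiore 1}, and \eqref{basta iii} are exactly the inequalities collected in the definition of $N(\rho_j)$, while $\|S_j\|_{C^{1,\a}}\le L\le L''$ and $\sup_j\|f_{0,j}\|_{C^{1,\a}(\bd(S_0))}\le L\le L''$. Since $\mu\in(\sqrt{\rho_j},\mu_0)$, Theorem \ref{thm main diffeo} produces a $C^{1,\a}$-diffeomorphism $f_j$ between $S_0$ and $S_j$ with $f_j=f_{0,j}$ on $\bd(S_0)$, $f_j=\Id+\psi_j$ on $[S_0]_\mu$, $\|f_j\|_{C^{1,\a}(S_0)}\le C_0$, $\|f_j-\Id\|_{C^1(S_0)}\le(C_0/\mu)\,\rho_j^\a$, and with $\|\pi^{S_0}(f_j-\Id)\|_{C^1(S_0)}$ bounded as in \eqref{basta f-Id tangenziale}. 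The first three facts give the first line of \eqref{jjj} and the uniform $C^{1,\a}$-bound there; the tangential estimate gives the last line of \eqref{jjj}; and since $\rho_j\to0$ with $\mu$ fixed, $\|f_j-\Id\|_{C^1(S_0)}\le(C_0/\mu)\rho_j^\a\to0$, which is the remaining assertion.

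The proof is essentially bookkeeping, and the only slightly delicate point is the order of the quantifiers: $\mu_0$ has to be produced from $S_0$ alone before the sequence is examined, and then the admissible scales $\rho_j$ must be forced to tend to zero while remaining admissible at every index $j$ — this is precisely what the staircase function $j\mapsto\rho_j$ built from the thresholds $N(\rho^{(m)})$ delivers. All the genuine analytic work has already been carried out in Theorem \ref{thm main diffeo}.
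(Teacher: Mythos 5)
Your proposal is correct and follows essentially the same route as the paper's own proof: fix $\mu_0,C_0$ once and for all via Remark \ref{remark assumption a} and Theorem \ref{thm main diffeo} (with an $S_0$-dependent increase of $L$), verify assumption (b) of Theorem \ref{thm main diffeo} for $j$ large at each admissible scale, and then let the scale tend to zero through a staircase $j\mapsto\rho_j$ so that $\|f_j-\Id\|_{C^1(S_0)}\le (C_0/\mu)\rho_j^\a\to 0$ — the paper does exactly this with the explicit choice $\rho_\ell=\mu^{2/\a}/(2+\ell)$ and thresholds $j_\ell$. The only cosmetic differences (absorbing $1/|p_0-q_0|\le L$ for $k=1$ into the $S_0$-dependent constant, and the harmless factor $2$ when splitting the $k=1$ bound \eqref{basta ii k=1} into two terms of size $\rho$) do not affect the argument.
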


\begin{proof}
   By Remark \ref{remark assumption a}, up to increasing the value of $L$ depending on $S_0$, one can entail the existence of $\widetilde{S}_0$ such that assumption (a) in Theorem \ref{thm main diffeo} holds, and also that $|p_0-q_0|\ge 1/L$ in the case $k=1$. Now let $\mu_0$ and $C_0$ be determined as in Theorem \ref{thm main diffeo} by $n$, $k$, $\a$ and the increased $S_0$-depending value of $L$, and let us fix $\mu\in(0,\mu_0)$. Given $\rho\in(0,\mu^2)$, by \eqref{basta ii k uguale 1 j}, \eqref{basta ii k maggiore 1 j}, and \eqref{basta iii j}, and up to increasing the value of $j(\rho)$, then for each $j\ge j(\rho)$, $S_j$, $f_{0,j}$ and $\psi_j$ satisfy assumption (b) of Theorem \ref{thm main diffeo}, that is, referring from now on to the case $k\ge 2$, for every $j\ge j(\rho)$ one has $[S_j]_{3\rho}\subset(\Id+\psi_j)([S_0]_\rho)\subset S_j$ with
   \begin{eqnarray*}
   &&\max\big\{\hd(S_0,S_j),\|f_{0,j}-\Id\|_{C^1(\bd(S_0))},\|\nu_S^{(i)}(f_{0,j})-\nu_0^{(i)}\|_{C^1(\bd(S_0))},
   \\
   &&\hspace{6cm}\|\nu_S^{co}(f_{0,j})-\nu_0^{co}\|_{C^1(\bd(S_0))},\|\psi_j\|_{C^1([S_0]_\rho)}\big\}\le \rho\,,
   \\
   &&\max\big\{\|S_j\|_{C^{1,\a}},\|f_{0,j}\|_{C^{1,\a}(\bd(S_0))},\|\psi_j\|_{C^{1,\a}([S_0]_\rho)}\big\}\le L\,.
   \end{eqnarray*}
   Hence, by Theorem \ref{thm main diffeo}, for each $j\ge j(\rho)$ we can construct $C^{1,\a}$-diffeomorphisms $f_j^{\rho}$ between $S_0$ and $S_j$ such that
   \[\begin{split}
   &\hspace{2.5cm}\mbox{$f_j^\rho=f_{0,j}$ on $\bd(S_0)$}\,,\qquad\mbox{$f_j^\rho=\Id+\psi$ on $[S_0]_\mu$}\,,
   \\
   &\hspace{2cm}\|f_j^\rho\|_{C^{1,\a}(S_0)}\le C_0\,,\qquad \|f_j^\rho-\Id\|_{C^1(S_0)}\le \frac{C_0}\mu\,\rho^\a\,,
   \\
   &\|\pi^{S_0}(f_j^\rho-\Id)\|_{C^1(S_0)}\le\frac{C_0}\mu\,\left\{
    \begin{split}
    \|(f_{0,j}-\Id)\cdot\nu_{S_0}^{co}\|_{C^0(\bd(S_0))}\,,\qquad\mbox{if $k=1$}\,,
    \\
    \|f_{0,j}-\Id\|_{C^1(\bd(S_0))}\,,\qquad\mbox{if $k\ge 2$}\,.
    \end{split}\right .
   \end{split}
   \]
   Finally, let us set, for $\ell\ge 2$, $\rho_\ell=\mu^{2/\a}/(2+\ell)$. For each $\ell\ge 2$, $\rho_\ell\in(0,\mu^2)$. By iteratively applying the construction above we can find a strictly increasing sequence $\{j_{\ell}\}_{\ell\ge 2}\subset\N$ such that if $j_{\ell}\le j<j_{\ell+1}$, then $f_j=f_j^{\rho_\ell}$ defines a $C^{1,\a}$-diffeomorphism between $S_0$ and $S_j$ such that \eqref{jjj} holds with
   \[
   \|f_j-\Id\|_{C^1(S_0)}\le \frac{C_0}\mu\,\rho_\ell^\a=\frac{C_0\mu}{(2+\ell)^\a}\,.
   \]
   This completes the proof of the theorem.
\end{proof}

\section{Perimeter almost-minimizing clusters in $\R^n$}\label{section what is true in Rn} The goal of this section is preparing the ground for the application of Theorem \ref{thm main diffeo} to the proof of Theorem \ref{thm main planar}. Specifically, in this section we discuss those preliminary facts that we can prove in arbitrary dimension $n$. (In particular, these results shall also be used in part two \cite{CiLeMaIC2}.) For the most part the arguments of this section should be familiar to some readers, but we have nevertheless included some details of most of the proofs for the sake of clarity. In section \ref{section sofp} we gather some relevant definitions from Geometric Measure Theory. In section \ref{section reg criterion basic} we recall the classic regularity criterion for almost-minimizing sets (Theorem \ref{thm small excess criterion}) and derive from it a very useful technical statement (Lemma \ref{lemma fondamentale} -- which is well-known to experts, although, apparently, not explicitly stated in the literature). In section \ref{section infiltratio lemma} we exploit a simple ``infiltration lemma'' to construct normal diffeomorphisms away from the singular sets (Theorem \ref{thm normal representation part one}) and to prove Hausdorff convergence of the boundaries (Theorem \ref{thm boundary hausdorff}). Finally, in section \ref{section blowups clusters} we briefly discuss blow-up limits of clusters.

\subsection{Basic definitions and terminology}\label{section sofp} Here we gather various definitions from Geometric Measure Theory needed in the sequel.

\medskip

\noindent {\bf Rectifiable sets.} Let $\H^k$ denote the $k$-dimensional Hausdorff measure on $\R^n$. A set $S\subset\R^n$ is locally $k$-rectifiable in $A\subset\R^n$ open, if $\H^k\llcorner S$ is a Radon measure on $A$ and $S$ is contained, modulo an $\H^k$-null set, into a countable union of $k$-dimensional $C^1$-surfaces. If $S$ is locally $\H^k$-rectifiable in $A$ then for $\H^k$-a.e. $x\in S\cap A$ there exists a $k$-plane $T_xS$ in $\R^n$, the {\it approximate tangent space to $S$ at $x$}, with $\H^k\llcorner (S-x)/r\weak \H^k\llcorner T_xS$ when $r\to 0^+$ as Radon measures; see \cite[Theorem 10.2]{maggiBOOK}. Given such $x\in S$, $T\in C^1_c(\R^n;\R^n)$, and $\{\tau_i(x)\}_{i=1}^k$ an orthonormal basis of $T_xS$, the {\it tangential divergence} $\Div_ST$ of $T$ over $S$ at $x$ is defined by $\Div_S\,T(x)=\sum_{i=1}^k\,\tau_i(x)\cdot(\nabla T(x)\tau_i(x))$. One says that $S$ has {\it generalized mean curvature} $\HH_S\in L^1_{{\rm loc}}(\H^k\llcorner(A\cap S);\R^n)$ in  $A$, if
\begin{equation}
  \label{varifold curvature}
  \int_S\,\Div_S\,T\,d\H^k=\int_S\,T\cdot\,\HH_S\,d\H^k\,,\qquad\forall T\in C^1_c(A;\R^n)\,.
\end{equation}
If $\HH_S\in L^\infty(\H^k\llcorner(A\cap S);\R^n)$ one says that $S$ has bounded generalized mean curvature.

\medskip

\noindent {\bf Sets of finite perimeter.} A Lebesgue-measurable set $E\subset\R^n$ is a {\it set of locally finite perimeter} in an open set $A\subset\R^n$ if $\sup\{\int_E\Div\,T:T\in C^1_c(A;B)\}<\infty$, or, equivalently, if there exists a $\R^n$-valued Radon measure $\mu$ on $A$ with
\begin{equation}
  \label{distributonal gg}
  \int_E\nabla\vphi(x)\,dx=\int_{\R^n}\vphi(x)\,d\mu(x)\,,\qquad\forall \vphi\in C^1_c(A)\,.
\end{equation}
The Gauss--Green measure $\mu_E$ of $E$ is defined as the Radon measure appearing in \eqref{distributonal gg} for the largest open set $A$ such that $E$ is of locally finite perimeter in $A$. The {\it reduced boundary} $\pa^*E$ of $E$ is defined as the set of those $x\in \spt\,\mu_E\subset A$ such that
\begin{equation}
  \label{reduced boundary}
  \nu_E(x)=\lim_{r\to 0^+}\frac{\mu_E(B_{x,r})}{|\mu_E|(B_{x,r})}\qquad\mbox{exists and belongs to $\SS^{n-1}$}\,.
\end{equation}
It turns out that $\pa^*E$ is a locally $\H^{n-1}$-rectifiable set in $A$, and the Borel vector field $\nu_E:\pa^*E\to \SS^{n-1}$ (called the {\it measure-theoretic outer unit normal to $E$}) is such that $\mu_E=\nu_E\,\H^{n-1}\llcorner\pa^*E$ on bounded Borel subsets of $A$.
If $F\subset A$ is a Borel set, then the {\it perimeter of $E$ relative to the Borel set $F$} is defined as $P(E;F)=|\mu_E|(F)=\H^{n-1}(F\cap\pa^*E)$, and we set $P(E)=P(E;\R^n)$. One always has
\[
A\cap\cl(\pa^*E)=\spt\,\mu_E=\big\{x\in A:0<|E\cap B_{x,r}|<\om_n\,r^n\quad\forall r>0\big\}\subset A\cap\pa E\,,
\]
where $\om_n$ is the volume of the Euclidean unit ball in $\R^n$; moreover, $\mu_E$ is invariant by modifications of $E\cap A$ on and by a set of volume zero, and up to such modifications (see, for example, \cite[Proposition 12.19]{maggiBOOK}) we can assume that
\begin{equation}
  \label{spt normalization}
A\cap\cl(\pa^*E)=\spt\,\mu_E=A\cap\pa E\,.
\end{equation}
Throughout this paper, {\bf all sets of finite perimeter shall be normalized so to have identity \eqref{spt normalization} in force} (where $A$ denotes the largest open set such that $E$ is of locally finite perimeter in $A$).

Let us now recall from the introduction that a family $\E=\{\E(h)\}_{h=1}^N$ of Lebesgue-measurable sets in $\R^n$ with
$|\E(h)\cap\E(k)|=0$ for $1\le h<k\le N$ is an {\it $N$-cluster in $A$} if each $\E(h)$ is a set of locally finite perimeter in $A$ and $|\E(h)\cap A|>0$ for every $h=1,...,N$. If $A$ is the largest open set such that $\E$ is a cluster in $A$, then, according to \eqref{reduced boundary}, $\pa^*\E(h)$ is well-defined as a subset of $A$ and so are the {\it interfaces} $\E(h,k)=\pa^*\E(h)\cap\pa^*\E(k)$; thus $\pa^*\E$, as defined in \eqref{def bordi vari}, is automatically a subset of $A$, with
\[
\pa^*\E=\bigcup_{0\le h<k\le N}\E(h,k)\,.
\]
It will be useful to keep in mind that, by \eqref{spt normalization}, one has
\[
\cl(\pa^*\E)=A\cap\bigcup_{h=1}^N\spt\mu_{\E(h)}=\bigcup_{h=1}^N\Big\{x\in A:0<|\E(h)\cap B_{x,r}|<\om_n\,r^n\,\,\forall r>0\Big\}=A\cap\pa\E\,,
\]

\subsection{A regularity criterion for $(\Lambda,r_0)$-minimizing sets}\label{section reg criterion basic} Given $x\in\R^n$, $r>0$ and $\nu\in\SS^{n-1}$, let us set
\begin{eqnarray*}
&&\C^\nu_{x,r}=\big\{y\in\R^n:|(y-x)\cdot\nu|<r\,,|(y-x)-((y-x)\cdot\nu)\nu|<r\big\}\,,
  \\
  &&\D^\nu_{x,r}=\big\{y\in\R^n:|(y-x)\cdot\nu|=0\,,|(y-x)-((y-x)\cdot\nu)\nu|<r\big\}\,,
\end{eqnarray*}
and define the {\it cylindrical excess} of $E\subset\R^n$ at $x$, in direction $\nu$, and at scale $r$, as
\[
\exc^\nu_{x,r}(E)=\frac1{r^{n-1}}\int_{\C^\nu_{x,r}\cap\pa^*E}|\nu_E-\nu|^2\,d\H^{n-1}\,,
\]
provided $E$ is of finite perimeter on $\C^\nu_{x,r}$. When $\nu=e_n$ and $x=0$ we simply set
\[
\C_r=\C^{e_n}_{0,r}\,,\qquad \D_r=\D^{e_n}_{0,r}\,,\qquad \exc_r(E)=\exc^{e_n}_{0,r}(E)\,.
\]
The next result is a classical local regularity criterion for $(\Lambda,r_0)$-minimizing sets.


\begin{theorem}[Small excess regularity criterion]\label{thm small excess criterion}
  For every $n\ge 2$ and $\a\in(0,1)$ there exist positive constants $\e_*(n)$, $C(n)$ and $C(n,\a)$ with the following property. If $E$ is a $(\Lambda,r_0)$-minimizing set in $\C^\nu_{x_0,r}$ with $x_0\in\pa E$, $r<r_0$, and
  \begin{equation}
    \label{small exc 0}
      \exc^\nu_{x_0,r}(E)+\Lambda\,r\le\e_*(n)\,,
  \end{equation}
  then there exists a Lipschitz function $v:\D^\nu_{x_0,r/2}\to\R$ with $v(x_0)=0$,
  \begin{eqnarray}\label{small exc 1}
  \|v\|_{C^0(\D^\nu_{x_0,r/2})}&\le& C(n)\,r\,\exc^\nu_{x_0,r}(E)^{1/2(n-1)}\,,
  \\
  \label{small exc 1.5}
  \|\nabla v\|_{C^0(\D^\nu_{x_0,r/2})}&\le& C(n)\,\big(\exc^\nu_{x_0,r}(E)+\Lambda\,r\big)^{1/2(n-1)}\,,
  \\\label{small exc 2}
  r^\a\,[\nabla v]_{C^{0,\a}(\D^\nu_{x_0,r/2})}&\le& C(n,\a)\,\big(\exc^\nu_{x_0,r}(E)+\Lambda\,r\big)^{1/2(n-1)}\,,\qquad\forall\a\in(0,1)\,,
  \end{eqnarray}
and such that
\begin{eqnarray}
\label{small exc 3}
\C^\nu_{x_0,r/2}\cap \pa E=(\Id+v\,\nu)(\D^\nu_{x_0,r/2})
\,.
\end{eqnarray}
Moreover, if $n=2$ then one can replace \eqref{small exc 2} with $\|v''\|_{L^\infty(\D^\nu_{x_0,r/2})}\le C\,\Lambda$.
\end{theorem}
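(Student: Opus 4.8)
This is De Giorgi's $\e$-regularity theorem for $(\Lambda,r_0)$-minimizing sets, recast in terms of cylindrical excess, together with the sharp two-dimensional refinement; the plan is to run the classical scheme, for which I would follow \cite[Part III]{maggiBOOK} and \cite{tamanini}, and then bootstrap the case $n=2$. After translating and rotating we may assume $x_0=\00$ and $\nu=e_n$, so that \eqref{small exc 0} reads $\exc_r(E)+\Lambda\,r\le\e_*(n)$ and every quantity below is controlled by $\exc_r(E)$ and $\Lambda\,r$. The first step is the Lipschitz approximation: choosing $\e_*(n)$ small, the smallness of $\exc_r(E)$ alone --- with no use of $\Lambda$ --- produces, via the relative isoperimetric inequality and a slicing argument for sets of finite perimeter, a $1$-Lipschitz map $u\colon\D_{r/2}\to\R$ whose graph covers $\C_{r/2}\cap\pa E$ outside a set of small $\H^{n-1}$-measure, with $|\nabla u|^2$ of mean value at most $C\,\exc_r(E)$ on $\D_{r/2}$, and with the height bound
\[
\sup\{\,|y\cdot e_n|\colon y\in\C_{r/2}\cap\pa E\,\}\le C(n)\,r\,\exc_r(E)^{1/2(n-1)}\,;
\]
this is exactly \eqref{small exc 1}, and the exponent $1/2(n-1)$ together with the absence of a $\Lambda\,r$ term are intrinsic to this step.

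The core of the argument is the excess improvement. Comparing $E$, in a smaller cylinder, with the subgraph of the harmonic extension of $u|_{\pa\D_{\theta r}}$ (the harmonic approximation lemma), and combining this with the reverse Poincar\'e (Caccioppoli) inequality valid for $(\Lambda,r_0)$-minimizers, one obtains, for $z\in\C_{r/4}\cap\pa E$ and $\theta\in(0,1/4)$,
\[
\exc^{e_n}_{z,\theta r}(E)\le C_0\big(\theta^2+\theta^{-(n-1)}\omega(\e_*)\big)\exc_r(E)+C_0\,\Lambda\,\theta\,r\,,
\]
where $\omega(\e_*)\to0$ as $\e_*\to0$. Fixing first $\theta$ and then $\e_*$, and iterating over the dyadic scales $\theta^{j}r$ (the $\Lambda r$ contribution being absorbed into a geometric series), one reaches, for every $\beta\in(0,1)$, the Morrey--Campanato estimate $\exc^{e_n}_{z,s}(E)\le C(n,\beta)(s/r)^{2\beta}(\exc_r(E)+\Lambda r)$ for all $z\in\C_{r/4}\cap\pa E$ and $0<s\le r/4$. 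Campanato's criterion then upgrades $\nu_E$ to a $C^{0,\a}$ field on $\C_{r/2}\cap\pa E$ for every $\a\in(0,1)$, with $[\nu_E]_{C^{0,\a}(\C_{r/2}\cap\pa E)}\le C(n,\a)\,r^{-\a}(\exc_r(E)+\Lambda r)^{1/2}$; this (together with the density estimates for almost-minimizers, which rule out spurious components of $\pa E$) forces $\C_{r/2}\cap\pa E$ to be the graph of a function $v$, a posteriori coinciding with $u$, with $v(\00)=0$ since $\00\in\pa E$. Writing $\nu_E=(-\nabla v,1)/\sqrt{1+|\nabla v|^2}$ then turns the above into $\|\nabla v\|_{C^0}+r^\a[\nabla v]_{C^{0,\a}(\D_{r/2})}\le C(n,\a)(\exc_r(E)+\Lambda r)^{1/2}$, which yields \eqref{small exc 3} and, since $\exc_r(E)+\Lambda r\le\e_*\le1$ and $1/2(n-1)\le1/2$, also the (formally weaker) bounds \eqref{small exc 1.5} and \eqref{small exc 2}.

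For the two-dimensional refinement I would start from the first variation of perimeter: a $(\Lambda,r_0)$-minimizing set has reduced boundary with generalized mean curvature $\HH_{\pa^*E}\in L^\infty$ and $|\HH_{\pa^*E}|\le\Lambda$ $\H^{n-1}$-a.e., the signed volume swept by a normal perturbation being controlled by its $L^1$-norm on $\pa^*E$. Restricting to $\C_{r/2}$, where by the previous step $\pa E=\pa^*E$ is the $C^{1,\a}$ graph of $v$ over the interval $\D_{r/2}$, the generalized curvature of this graph equals $v''/(1+(v')^2)^{3/2}$ in the distributional sense; identifying it with the bounded function $\HH_{\pa^*E}$ shows first that $v'/\sqrt{1+(v')^2}$, hence $v'$, hence $v$, belong to $W^{2,\infty}$, and then that $|v''|=|\HH_{\pa^*E}|\,(1+(v')^2)^{3/2}\le 2^{3/2}\,\Lambda$ pointwise, using that \eqref{small exc 1.5} already gives $\|v'\|_{C^0}\le C\,\e_*^{1/2}\le1$ after shrinking $\e_*$ once more. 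This is the asserted bound $\|v''\|_{L^\infty(\D_{r/2})}\le C\,\Lambda$, and it degenerates to $v''\equiv0$ --- so $\pa E$ locally a segment --- when $\Lambda=0$.

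The part I expect to require the most care is bookkeeping rather than any new idea: cleanly carrying the $\Lambda r$-dependence through the iteration, and moving between cylindrical and spherical excess (which are comparable once the excess is small, provided the constants are kept purely dimensional). The single genuinely delicate ingredient, the harmonic approximation lemma underlying the excess improvement, I would import verbatim from the references cited above.
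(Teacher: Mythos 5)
Your scheme diverges from the paper at exactly one point, and that point contains a genuine gap. The one-step excess improvement you state,
\[
\exc^{e_n}_{z,\theta r}(E)\le C_0\big(\theta^2+\theta^{-(n-1)}\omega(\e_*)\big)\exc_r(E)+C_0\,\Lambda\,\theta\,r\,,
\]
carries an inhomogeneous term that is \emph{linear} in the scale, because the only information you feed into the comparison is $\Lambda|E\Delta F|\lesssim\Lambda s^n$. Iterating such an inequality, the forcing at scale $s_j=\theta^j r$ decays like $\theta^j$, so the best decay you can extract is $\exc_{z,s}\lesssim (s/r)^{2\beta}\exc_r+\Lambda\,s$, which for $\beta>1/2$ is strictly weaker than the Morrey--Campanato bound $\exc_{z,s}\le C(s/r)^{2\beta}(\exc_r+\Lambda r)$ you claim ``one reaches for every $\beta\in(0,1)$''. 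Consequently the Campanato upgrade of $\nu_E$ to $C^{0,\a}$ for \emph{every} $\a\in(0,1)$ with constant $r^{-\a}(\exc_r+\Lambda r)^{1/2}$ does not follow from your iteration: it caps at $\a=1/2$ (this is precisely the classical $C^{1,1/2}$ regularity of $(\Lambda,r_0)$-minimizers obtained from the volume-type perturbation alone). So \eqref{small exc 2}, which is the only part of the statement going beyond the standard criterion, is not proved by your argument. The paper gets it differently: it cites the standard criterion only to obtain the Lipschitz graph $v$ with a fixed small H\"older exponent ($\g=1/4$ in the notation of \cite[Theorem 26.3]{maggiBOOK}), then observes that the graphical almost-minimality inequality yields a bounded function $g$, $\|g\|_\infty\le\Lambda$, with $\mathrm{tr}(A(x)\nabla^2v)=g$ for the uniformly elliptic, H\"older-continuous matrix $A=F(\nabla v)$, and concludes by difference quotients, interior $W^{2,p}$ estimates for all $p$, and Morrey embedding. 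If you want to stay inside the excess-decay framework you must make the curvature forcing enter \emph{quadratically}: estimate $|E\Delta F|$ by the $L^1$ distance between $u$ and the harmonic competitor and absorb with Young's inequality, producing an error $C_\theta\Lambda^2 s^2$ instead of $C_0\Lambda\theta r$; only then does the iteration reach every $\beta<1$.

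The remaining ingredients of your proposal are sound and essentially match the paper: the height bound \eqref{small exc 1} with exponent $1/2(n-1)$ and no $\Lambda r$ term, the graph representation \eqref{small exc 3} and the gradient bound \eqref{small exc 1.5} (any positive H\"older exponent for $\nu_E$ suffices for these), and the two-dimensional refinement, where identifying the distributional curvature $v''/(1+(v')^2)^{3/2}$ of the graph with the $L^\infty$ generalized mean curvature (equivalently, as the paper does, reading the bounded distributional derivative of $v'/\sqrt{1+(v')^2}$ off the graphical minimality inequality) gives $\|v''\|_{L^\infty}\le(1+\|v'\|_{C^0}^2)^{3/2}\Lambda\le C\Lambda$ after shrinking $\e_*$. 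Also note, as a minor point, that in the decay step the reference direction must be tilted at each scale (or the excess minimized over directions); keeping $e_n$ fixed as written is not quite how the iteration closes.
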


\begin{proof} Without loss of generality we set $x_0=0$ and $\nu=e_n$. By \cite[Theorem 26.3]{maggiBOOK} (applied, in the notation of that theorem, with $\g=1/4$) there exist positive constants $\e_*(n)$ and $C(n)$ such that if \eqref{small exc 0} holds then \eqref{small exc 3} holds for a Lipschitz function $v:\D_{2r/3}\to\R$ with $v(0)=0$ and
\begin{eqnarray}\label{small exc x}
\frac{|v(x)|}r+|\nabla v(x)|+r^{1/4}\,\frac{|\nabla v(x)-\nabla v(y)|}{|x-y|^{1/4}}\le
C(n)\,\big(\exc_r(E)+\Lambda\,r\big)^{1/2(n-1)}\,,
\end{eqnarray}
for every $x\ne y\in \D_{2r/3}$. We now prove \eqref{small exc 2}. By \eqref{almost-minimizer set} and \eqref{small exc 3} one finds that
\begin{equation}
  \label{directlyfrom}
\int_{\D_{2r/3}}\sqrt{1+|\nabla v|^2}\le \int_{\D_{2r/3}}\sqrt{1+|\nabla (v+\vphi)|^2}+\Lambda\,\int_{\D_{2r/3}}|\vphi|\,,
\end{equation}
for every $\vphi\in C^1_c(\D_{2r/3})$. In particular, there exists $g\in L^\infty(\D_{2r/3})$ such that
\[
\|g\|_{L^\infty(\D_{2r/3})}\le \Lambda\,,\qquad-\int_{\D_{2r/3}}\frac{\nabla v}{\sqrt{1+|\nabla v|^2}}\cdot\nabla\vphi=\int_{\D_{2r/3}}g\,\vphi\qquad\forall \vphi\in C^1_c(\D_{2r/3})\,.
\]
By taking incremental ratios one sees that $v\in W^{2,2}_{loc}(\D_{2r/3})$ with
\[
\trace(A(x)\nabla^2v(x))=g(x)\,,\qquad\mbox{for a.e. $x\in\D_{2r/3}$}\,,
\]
where $A=(1+|\nabla v|^2)^{-3/2}\,[(1+|\nabla v|^2)\Id-\nabla v\otimes\nabla v]=F(\nabla v)$ for a Lipschitz map $F:\R^n\to\R^n\otimes\R^n$. Thanks to \eqref{small exc x},
\[
\frac{\Id}{C(n)}\le A(x)\le C(n)\,\Id\,,\qquad r^{1/4}\,|A(x)-A(y)|\le C(n)\,|x-y|^{1/4}\,,\qquad\forall x,y\in\D_{2r/3}\,.
\]
If we set $A^*(x)=A(r\,x)$, $v^*(x)=v(r\,x)$ and $g^*(x)=g(r\,x)$ for $x\in \D_{2/3}$, then
\[
\trace(A^*(x)\nabla^2v^*(x))
=r^2\,g^*(x)\,,\qquad\mbox{for a.e. $x\in \D_{2/3}$}\,,
\]
with $|A^*(x)-A^*(y)|\le C(n)\,|x-y|^{1/4}$ for every $x,y\in \D_{2/3}$. If $n\ge 3$, then by \cite[Theorem 9.11]{GTbook}, for every $p\in(1,\infty)$ one has
\begin{eqnarray*}
\|v^*\|_{W^{2,p}(\D_{1/2})}&\le& C(n,p)\big(\|v^*\|_{L^p(\D_{2/3})}+\|r^2\,g^*\|_{L^p(\D_{2/3})}\big)
\\
&\le&C(n,p)\,r\,\big(\exc_r(E)+\Lambda\,r\big)^{1/2(n-1)}
\end{eqnarray*}
thanks to \eqref{small exc x}. At the same time, by Morrey's inequality, if we pick $p>n-1$ such that $\a=1-(n-1)/p$ then
\[
C(n,p)\|v^*\|_{W^{2,p}(\D_{1/2})}\ge  \|v^*\|_{C^{1,\a}(\D_{1/2})}\ge  [\nabla v^*]_{C^{0,\a}(\D_{1/2})}=r^{1+\a}\,[\nabla v]_{C^{0,\a}(\D_{r/2})}\,,
\]
which gives us \eqref{small exc 2} in the case $n\ge 3$. If $n=2$, then \eqref{directlyfrom} directly implies that $(1+(v')^2)^{-1/2}v'$ has a bounded distributional derivative $g$ on the interval $\D_{2r/3}$. By the chain rule we immediately find $\|v''\|_{L^\infty(\D_{2r/3})}\le (1+\|v'\|_{C^0(\D_{2r/3})}^2)^{3/2}\Lambda\le C\,\Lambda$.
\end{proof}

\begin{remark}\label{remark reduced boundary}
  {\rm Recall that $\lim_{r\to 0^+}\inf_{\nu\in \SS^{n-1}}\exc^\nu_{x,r}(E)=0$ for every $x\in\pa^*E$; see, for example, \cite[Proposition 22.3]{maggiBOOK}. In particular, if $E$ is a $(\Lambda,r_0)$-minimizing set in $A$, then $A\cap\pa^*E$ is a $C^{1,\a}$-hypersurface for every $\a\in(0,1)$ ($C^{1,1}$ if $n=2$).}
\end{remark}

Theorem \ref{thm small excess criterion} can be used to locally represent the boundaries of $(\Lambda,r_0)$-minimizing sets $E_k$ converging to a set $E$ as graphs with respect to $\pa E$, at least provided $\pa E$ is smooth enough. This basic idea is made precise in Lemma \ref{lemma fondamentale} below. Before stating this lemma,  we prove the following technical statement where, given $u\in C^{k,\a}(\D_r)$ we set
\[
\|u\|_{C^{k,\a}(\D_r)}^*=\sum_{j=0}^k\,r^{j-1}\,\|D^j u\|_{C^0(\D_r)}+r^{k-1+\a}\,[D^ku]_{C^{0,\a}(\D_r)}\,.
\]
In this way, if we set $\l_r(u)(x)=r^{-1}\,u(r\,x)$ for $x\in\D$, then
\[
\|\l_r(u)\|_{C^{k,\a}(\D)}=\|\l_r(u)\|_{C^{k,\a}(\D)}^*=\|u\|_{C^{k,\a}(\D_r)}^*\,,\qquad\forall r>0\,.
\]
Moreover, given $u:\D_{4r}\to\R$ with $|u|< 4r$ on $\D_{4r}$ we set
\[
\Gamma_r(u)=(\Id+u\,e_n)(\D_{4r})\subset\C_{4r}\,,
\]
and let $\a\wedge\b=\min\{\a,\b\}$.

\begin{lemma}\label{lemma facile}
Given $n\ge2$, $L>0$ and $\a,\b\in[0,1]$ there exist positive constants $\s_0<1$ and $C_0$ with the following property. If $u_1\in C^{2,\a}(\D_{4r})$, $u_2\in C^{1,\b}(\D_{4r})$, and
\begin{equation}
    \label{pattymelt 1}
\max_{i=1,2}\|u_i\|_{C^1(\D_{4r})}^*\le \s_0\,,\qquad
\max\big\{\|u_1\|_{C^{2,\a}(\D_{4r})}^*,\|u_2\|_{C^{1,\b}(\D_{4r})}^*\big\}\le L\,,
\end{equation}
then there exists $\psi\in C^{1,\a\wedge\b}(\C_{2r}\cap\Gamma_r(u_1))$ such that
\begin{gather}\label{pattymelt fine 1}
\C_r\cap\Gamma_r(u_2)\subset (\Id+\psi\nu)(\C_{2r}\cap\Gamma_r(u_1))\subset \Gamma_r(u_2)\,,
\\
\label{pattymelt fine 2 C1alphabeta}
\frac{\|\psi\|_{C^0(\C_{2r}\cap\Gamma_r(u_1))}}r+\|\nabla\psi\|_{C^0(\C_{2r}\cap\Gamma_r(u_1))}
+r^{\a\wedge\b}\,[\nabla\psi]_{C^{0,\a\wedge\b}(\C_{2r}\cap\Gamma_r(u_1))}\le C_0\,,
\\
\label{pattymelt fine 2 C1}
\frac{\|\psi\|_{C^0(\C_{2r}\cap\Gamma_r(u_1))}}{r}+\|\nabla\psi\|_{C^0(\C_{2r}\cap\Gamma_r(u_1))}\le C_0\,\|u_1-u_2\|_{C^1(\D_{4r})}\,.
\end{gather}
Here, $\nu\in C^{1,\a}(\Gamma_r(u_1);\SS^{n-1})$ is the normal unit vector field to $\Gamma_r(u_1)$ defined by
\begin{equation}
  \label{pattymelt 3}
  \nu(z,u_1(z))=\frac{(-\nabla u_1(z),1)}{\sqrt{1+|\nabla u_1(z)|^2}}\,,\qquad \forall z\in\D_{4r}\,.
\end{equation}
\end{lemma}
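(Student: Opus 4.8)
The statement is scale-invariant in the sense made precise by the rescaling operator $\l_r$, so the first move is to reduce to $r=1$. Concretely, set $\tilde u_i=\l_r(u_i)=r^{-1}u_i(r\,\cdot)$ on $\D_4$; by the very definition of $\|\cdot\|^*_{C^{k,\a}}$ one has $\|\tilde u_i\|_{C^{k,\a}(\D_4)}=\|u_i\|^*_{C^{k,\a}(\D_{4r})}$, so \eqref{pattymelt 1} becomes the same bound with $r=1$, and a deformation $\tilde\psi$ on $\C_2\cap\Gamma_1(\tilde u_1)$ with the $r=1$ conclusions can be pulled back to $\psi=r\,\tilde\psi(\cdot/r)$ on $\C_{2r}\cap\Gamma_r(u_1)$, restoring the $r$-weights in \eqref{pattymelt fine 2 C1alphabeta}--\eqref{pattymelt fine 2 C1}. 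Thus from now on $r=1$.

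With $r=1$, the graph $\Gamma_1(u_1)$ is a $C^{2,\a}$-hypersurface whose normal field $\nu$ is given by \eqref{pattymelt 3}, and since $\|u_1\|^*_{C^1}\le\s_0$ is small, $\Gamma_1(u_1)$ is an almost-horizontal graph; in particular its principal curvatures are bounded by $CL$, so the normal map $(z,\tt)\mapsto (z,u_1(z))+\tt\,\nu(z,u_1(z))$ is a $C^{1,\a}$-diffeomorphism from a tube $\Gamma_1(u_1)\times(-c,c)$ onto a neighbourhood of $\Gamma_1(u_1)$ in $\R^n$, with $c=c(n,L)>0$. The plan is then the standard one: for a point $x=(z,u_1(z))\in\C_2\cap\Gamma_1(u_1)$ we want to define $\psi(x)$ as the (unique) small $t$ such that $x+t\,\nu(x)\in\Gamma_1(u_2)$, i.e. such that
\[
u_2\big(z-t\,\tfrac{\nabla u_1(z)}{\sqrt{1+|\nabla u_1(z)|^2}}\big)=u_1(z)+\tfrac{t}{\sqrt{1+|\nabla u_1(z)|^2}}\,.
\]
Writing this as $\Phi(z,t)=0$ with $\Phi\in C^{1,\a\wedge\b}$, one has $\pa_t\Phi(z,t)=\tfrac{1}{\sqrt{1+|\nabla u_1|^2}}+\nabla u_2(\,\cdot\,)\cdot\nabla u_1(z)/\sqrt{1+|\nabla u_1(z)|^2}\ge \tfrac12$ once $\s_0$ is small (using $|\nabla u_i|\le\s_0$), and $\Phi(z,0)=u_1(z)-u_2(z)$ is $O(\|u_1-u_2\|_{C^0})$, hence $O(\s_0)$. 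The uniform implicit function theorem (Theorem \ref{lemma implicit function}), applied with $S_0=\D_2$ and the manifold $\D_2\times(-1,1)$ — whose flatness hypotheses \eqref{geo-euc}--\eqref{vai ganzo1} are trivially satisfied — produces a $C^{1,\a\wedge\b}$ solution $t=\psi(z)$ on a ball of definite radius around each point; since $\Phi$ is globally defined on $\D_2\times(-c,c)$ with $\pa_t\Phi\ge\tfrac12$ there, $\psi$ is in fact determined on all of $\D_2$, and $\psi\in C^{1,\a\wedge\b}$ with $\|\psi\|_{C^0}+\|\nabla\psi\|_{C^0}\le C\|u_1-u_2\|_{C^1(\D_4)}$ and $[\nabla\psi]_{C^{0,\a\wedge\b}}\le C_0$ by differentiating the relation $\Phi(z,\psi(z))=0$ and using $\pa_t\Phi\ge\tfrac12$ together with the $C^{2,\a}$-bound on $u_1$ and the $C^{1,\b}$-bound on $u_2$. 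Regarded as a function on $\C_2\cap\Gamma_1(u_1)$ via $x=(z,u_1(z))$, this gives \eqref{pattymelt fine 2 C1alphabeta} and \eqref{pattymelt fine 2 C1}.

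It remains to check the inclusions \eqref{pattymelt fine 1}. That $(\Id+\psi\nu)(\C_2\cap\Gamma_1(u_1))\subset\Gamma_1(u_2)$ is immediate from the construction: each image point is of the form $x+\psi(x)\nu(x)$, which by definition of $\psi$ lies over $\D_4$ and on the graph of $u_2$, and one checks the horizontal coordinate stays in $\D_4$ because $\psi=O(\s_0)$. For the first inclusion $\C_1\cap\Gamma_1(u_2)\subset(\Id+\psi\nu)(\C_2\cap\Gamma_1(u_1))$, one takes $y=(w,u_2(w))$ with $|w|<1$, lets $x$ be the nearest point of $\Gamma_1(u_1)$ to $y$ (which exists and is unique, lying within distance $C\s_0$ of $y$ by the tube-neighbourhood property), so $y=x+t\,\nu(x)$ for some $|t|\le C\s_0$; then $x$ projects horizontally into $\D_2$ provided $\s_0$ is small, $\Phi(\pi(x),t)=0$, and uniqueness of the implicit-function solution forces $t=\psi(x)$, hence $y$ is in the image. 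The main technical point — and the only place one must be slightly careful — is precisely this bookkeeping of the horizontal coordinates: one needs the smallness threshold $\s_0$ chosen so that all the intermediate points stay inside $\D_4$ (where $u_1,u_2$ and their derivatives are controlled) and so that $C\s_0<c$, the radius of the tubular neighbourhood; everything else is a routine application of Theorem \ref{lemma implicit function} and the quantitative inverse/normal-coordinate estimates for an almost-flat $C^{2,\a}$ graph.
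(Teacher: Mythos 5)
Your proposal is correct and follows essentially the same route as the paper: after the rescaling to $r=1$, you set up the same scalar equation $\Phi(z,t)=0$ expressing that $(z,u_1(z))+t\,\nu$ lies on the graph of $u_2$, solve it by the (uniform) implicit function theorem using $|\pa_t\Phi|\ge 1/2$, obtain the $C^0$, $C^1$ and $C^{1,\a\wedge\b}$ bounds by differentiating the implicit relation (using $u_1\in C^{2,\a}$), and recover the first inclusion via the nearest-point projection onto $\Gamma_r(u_1)$ together with uniqueness of the normal intersection parameter, exactly as in the paper's argument. The only cosmetic differences are your sign convention for $\Phi$ and your explicit appeal to Theorem \ref{lemma implicit function}, where the paper constructs the solution directly from the monotonicity in $t$ and the boundary values $\Phi(z,\pm2)$.
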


\begin{proof} Up to replacing $u_i$ with $\l_r(u_i)$ we may directly assume that $r=1$. Correspondingly, we write $\Gamma(u_i)$ in place of $\Gamma_1(u_i)$ for the sake of simplicity. We define $F:\D_4\times\R\to\R^n$ and $\phi:\D_4\times\R\to\R$ by setting
\begin{eqnarray}
  F(z,t)&=&\Big(z-t\,\frac{\nabla u_1(z)}{\sqrt{1+|\nabla u_1(z)|^2}},u_1(z)+\frac{t}{\sqrt{1+|\nabla u_1(z)|^2}}\Big)\,,
  \\
  \phi(z,t)&=&u_2(z)-t\,,
\end{eqnarray}
for $(z,t)\in\D_4\times\R$. Notice that $F\in C^{1,\a}(\C_4)$ and $\phi\in C^{1,\b}(\C_4)$ with
\begin{equation}
  \label{pattymelt 2}
  \|F\|_{C^{1,\a}(\C_4)}\le C\,,\qquad\|\phi\|_{C^{1,\b}(\C_4)}\le C\,,
\end{equation}
where $C$ is a constant depending on $n$, $\a$, $\beta$ and $L$ only. Provided $\s_0$ is small enough we also find $F(\C_2)\subset \C_4$, so that we can define $\Phi:\C_2\to\R$ by setting
\[
\Phi(z,t)=\phi(F(z,t))=u_2\Big(z-t\,\frac{\nabla u_1(z)}{\sqrt{1+|\nabla u_1(z)|^2}}\Big)-
u_1(z)-\,\frac{t}{\sqrt{1+|\nabla u_1(z)|^2}}\,.
\]
By exploiting \eqref{pattymelt 1} and \eqref{pattymelt 2} we find that, provided $\s_0$ is small enough,
\begin{eqnarray*}
  \|\Phi\|_{C^{1,\a\wedge\b}(\C_2)}\le C\,,\quad \Phi(z,2)\le-1\,,\quad\Phi(z,-2)\ge1\,,\quad \frac{\pa\Phi}{\pa t}(z,t)\le-\frac12\,,
\end{eqnarray*}
for every $(z,t)\in\C_2$; hence there exists $\zeta\in C^{1,\a\wedge\b}(\D_2;(-1,1))$ with
\begin{equation}\label{pattymelt 7}
  \|\zeta\|_{C^{1,\a\wedge\b}(\D_2)}\le C\,,\qquad \Phi(z,\zeta(z))=0\,,\qquad\forall z\in\D_2\,.
\end{equation}
By \eqref{pattymelt 3} and \eqref{pattymelt 7} we find
\begin{equation}
  \label{pattymelt 4}
\big\{(z,u_1(z))+\zeta(z)\,\nu(z,u_1(z)):z\in\D_2\big\}\subset \Gamma(u_2)\,.
\end{equation}
Again by $\Phi(z,\zeta(z))=0$ we deduce that
\begin{equation}
  \label{pattymelt 5}
  \zeta(z)=\sqrt{1+|\nabla u_1(z)|^2}\bigg(u_2\Big(z-\zeta(z)\,\frac{\nabla u_1(z)}{\sqrt{1+|\nabla u_1(z)|^2}}\Big)-
u_1(z)\bigg)\,,
\end{equation}
so that, by \eqref{pattymelt 1},
\begin{eqnarray*}
  \|\zeta\|_{C^0(\D_2)}\le \sqrt{1+\s_0^2}\,\Big(\|u_2-u_1\|_{C^0(\D_2)}+\s_0^2\,\|\zeta\|_{C^0(\D_2)}\Big)
\end{eqnarray*}
and thus $\|\zeta\|_{C^0(\D_2)}\le C\,\|u_1-u_2\|_{C^0(\D_2)}$. Similarly, by differentiating \eqref{pattymelt 5}, by exploiting the fact that $u_1\in C^{2,\a}(\D_2)$ and thanks to \eqref{pattymelt 1}, one finds that
\begin{equation}
  \label{pattymelt 6}
\|\zeta\|_{C^1(\D_2)}\le C\,\|u_1-u_2\|_{C^1(\D_2)}\,.
\end{equation}
We finally define $\psi\in C^{1,\a\wedge\b}(\C_2\cap\Gamma(u_1))$ by the identity $\psi(z,u_1(z))=\zeta(z)$, $z\in\D_2$. In this way \eqref{pattymelt fine 2 C1alphabeta} and \eqref{pattymelt fine 2 C1} follow immediately from \eqref{pattymelt 1}, \eqref{pattymelt 7} and \eqref{pattymelt 6}, whereas \eqref{pattymelt 4} gives the second inclusion in \eqref{pattymelt fine 1}.
The first inclusion in \eqref{pattymelt fine 1} is obtained by noticing that: (i) up to further decreasing the value of $\s_0$ we have
\begin{equation}
  \label{pattymelt 9}
  \left\{\begin{array}
    {l}
    x\in\C_2\cap\Gamma(u_1)\,,
    \\
    x+t\,\nu(x)\,,x+s\,\nu(x)\in\Gamma(u_2)
  \end{array}\right .\qquad\Rightarrow\qquad t=s\,;
\end{equation}
(ii) there exists $\eta>0$ (depending on $L$ only) such that every $y\in N_\eta(\C_2\cap\Gamma(u_1))$ has a unique projection over $\C_2\cap\Gamma(u_1)$. Since (by \eqref{pattymelt 1} and provided $\s_0$ is small enough) we can entail
\[
\C_1\cap\Gamma(u_2)\subset N_\eta(\C_2\cap\Gamma(u_1))\,,
\]
by (ii) we find that for every $y\in \C_1\cap\Gamma(u_2)$ there exists a unique $\hat{y}\in \C_2\cap\Gamma(u_1)$ such that
\[
y=\hat{y}+\dist(y,\C_2\cap\Gamma(u_1))\,\nu(\hat{y})\,.
\]
By the second inclusion in \eqref{pattymelt fine 1}, $\hat{y}\in \C_2\cap\Gamma(u_1)$ implies that $\hat{y}+\psi(\hat{y})\,\nu(\hat{y})\in \Gamma(u_2)$. By \eqref{pattymelt 9} we thus find $\dist(y,\C_2\cap\Gamma(u_1))=\psi(\hat{y})$, and thus $y=\hat{y}+\psi(\hat{y})\,\nu(\hat{y})$. The first inclusion in \eqref{pattymelt fine 1} is thus proved.
\end{proof}

\begin{lemma}\label{lemma fondamentale}
  If $n\ge 2$, $\a\in[0,1]$, $\beta\in(0,1)$, $\Lambda\ge 0$, and $E$ is an open set with $0\in\pa E$ and
  \begin{equation}\label{wang 2.1}
  \C_1\cap E=\big\{z+s\,e_n:z\in\D_1\,,v(z)<s<1\big\}\,,
  \end{equation}
  for some $v\in C^{2,\a}(\D_1)$ with $v(0)=0$ and $\nabla v(0)=0$, then there exists $r\in(0,1/64)$ (depending on $n$, $\a$, $\beta$, $\Lambda$ and $\|v\|_{C^{2,\a}(\D_1)}$) with the following property.

  If $\{E_k\}_{k\in\N}$ is a sequence of $(\Lambda,r_0)$-minimizing sets in $B_{32\,r}$ with $|B_{32\,r}\cap (E_k\Delta E)|\to 0$ as $k\to\infty$, then there exist $k_0\in\N$ and $\{\psi_k\}_{k\ge k_0}\subset C^{1,\a\wedge\beta}(\C_{2\,r}\cap\pa E)$ such that
  \begin{gather}\label{tacodeli fine 1}
  \C_r\cap\pa E_k\subset (\Id+\psi_k\nu_{E})(\C_{2r}\cap\pa E)\subset \C_{4\,r}\cap\pa E_k\,,\qquad\forall k\ge k_0\,,
  \\
  \label{tacodeli fine 2}
  \sup_{k\ge k_0}\|\psi_k\|_{C^{1,\a\wedge\beta}(\C_{2r}\cap\pa E)}\le C\,,\qquad \lim_{k\to\infty}\|\psi_k\|_{C^1(\C_{2r}\cap\pa E)}=0\,,
  \end{gather}
  where $C=C(n,\a,\beta,\Lambda,\|v\|_{C^{2,\a}(\D_1)})$. Moreover, when $n=2$, one can take $\beta=1$.
\end{lemma}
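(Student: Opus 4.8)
The plan is to reduce everything to Lemma \ref{lemma facile}, taking $u_1=v$ and letting $u_2$ be a graphical parametrization of $\pa E_k$ near the origin produced by the small--excess regularity criterion, once $r$ has been chosen small enough to make all the rescaled $C^1$--norms tiny. To fix $r$: since $v\in C^{2,\a}(\D_1)$ with $v(0)=0$, $\nabla v(0)=0$, one has $|v|\le C\,\s^2$ and $|\nabla v|\le C\,\s$ on $\D_\s$ with $C=C(\|v\|_{C^{2,\a}(\D_1)})$, so $\|v\|^*_{C^1(\D_{4r})}\le C\,r$ and $\|v\|^*_{C^{2,\a}(\D_{4r})}\le C$; moreover, since on $\C_{33r}\cap\pa E$ the outer normal $\nu_E$ equals $(\nabla v,-1)/\sqrt{1+|\nabla v|^2}$ and $\H^{n-1}(\C_{33r}\cap\pa E)\le C\,r^{n-1}$, we get $\exc^{-e_n}_{0,33r}(E)\le C\,r^2$. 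I would then pick $r<1/64$ small enough, depending on $n,\a,\b,\Lambda,\|v\|_{C^{2,\a}(\D_1)}$, so that $32r<r_0$, $C\,r\le\s_0$ (the constant of Lemma \ref{lemma facile}, applied with a value of $L$ chosen large enough to dominate all the norms below), and $(33/32)^{n-1}\big(\exc^{-e_n}_{0,33r}(E)+33\,\Lambda\,r\big)\le\e_*(n)/2$, with $\e_*(n)$ as in Theorem \ref{thm small excess criterion}.

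Next I would parametrize $\pa E_k$. From $|B_{32r}\cap(E_k\Delta E)|\to0$, $0\in\pa E$, and the standard compactness and density estimates for $(\Lambda,r_0)$--minimizing sets, one gets points $x_k\in\pa E_k$ with $x_k\to0$, and (by a comparison argument against the competitor $(E_k\setminus\C_{33r})\cup(E\cap\C_{33r})$ for the upper bound on the perimeter, lower semicontinuity for the lower bound, plus Fubini for the slice integral $\int_{\C_{33r}\cap\pa^*E_k}\nu_{E_k}\cdot(-e_n)\,d\H^{n-1}$ — where one uses that the top and bottom lids of $\C_{33r}$ lie outside and inside $E$ respectively, since $\|v\|_{C^0(\D_{33r})}\ll r$) one obtains $\exc^{-e_n}_{0,33r}(E_k)\to\exc^{-e_n}_{0,33r}(E)$. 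Since $\C^{-e_n}_{x_k,32r}=\C^{e_n}_{x_k,32r}\subset\C^{-e_n}_{0,33r}$ for $k$ large, monotonicity of the unnormalized excess gives, for $k$ large,
\[
\exc^{-e_n}_{x_k,32r}(E_k)+32\,\Lambda\,r\le\Big(\tfrac{33}{32}\Big)^{n-1}\big(\exc^{-e_n}_{0,33r}(E_k)+33\,\Lambda\,r\big)\le\e_*(n)\,.
\]
Theorem \ref{thm small excess criterion} then applies to $E_k$ at $x_k$, in the direction $-e_n$, at scale $32r$, producing a $C^{1,\b}$--graph parametrization of $\C^{-e_n}_{x_k,16r}\cap\pa E_k$ whose $\|\cdot\|^*_{C^{1,\b}}$--norm is $\le C$ and whose $\|\cdot\|^*_{C^1}$--norm is $\le C\big(\exc^{-e_n}_{x_k,32r}(E_k)+32\,\Lambda\,r\big)^{1/2(n-1)}\to0$; when $n=2$ one has in addition an $L^\infty$--bound by $C\Lambda$ on the second derivative, so one may take $\b=1$. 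Because $x_k\to0$ and the $C^0$--norm of this graph tends to $0$, for $k$ large $\C^{-e_n}_{x_k,16r}\supset\C_{4r}$ and the parametrization can be rewritten as a graph $u_2\in C^{1,\b}(\D_{4r})$ (with $\b=1$ if $n=2$) for which $(\Id+u_2e_n)(\D_{4r})=\C_{4r}\cap\pa E_k$, $\|u_2\|^*_{C^{1,\b}(\D_{4r})}\le L$, and $\|u_2\|^*_{C^1(\D_{4r})}\le\s_0$ for $k$ large. Finally, since $E_k\cap\C_{4r}$ and $E\cap\C_{4r}$ are the regions above the graphs of $u_2$ and $v$ over $\D_{4r}$, one has $\int_{\D_{4r}}|u_2-v|=|(E_k\Delta E)\cap\C_{4r}|\to0$; together with the uniform $C^{1,\b}$--bound this forces $u_2\to v$ in $C^1(\D_{4r})$ (Arzel\`a--Ascoli along subsequences, the limit being pinned down by $L^1$--convergence).

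Then I would apply Lemma \ref{lemma facile} with $u_1=v$ and $u_2$ as above (which satisfy \eqref{pattymelt 1} for $k$ large). Since $\C_{2r}\cap\Gamma_r(v)=\C_{2r}\cap\pa E$ and $\Gamma_r(u_2)=\C_{4r}\cap\pa E_k$, and after reconciling the orientation of the normal $\nu$ in \eqref{pattymelt 3} with $\nu_E$ — which merely replaces $\psi_k$ by $-\psi_k$ — the lemma yields $\psi_k\in C^{1,\a\wedge\b}(\C_{2r}\cap\pa E)$ with $\C_r\cap\pa E_k\subset(\Id+\psi_k\nu_E)(\C_{2r}\cap\pa E)\subset\C_{4r}\cap\pa E_k$, i.e. \eqref{tacodeli fine 1}; estimate \eqref{pattymelt fine 2 C1alphabeta} gives, $r$ being a fixed constant, the uniform bound $\sup_{k\ge k_0}\|\psi_k\|_{C^{1,\a\wedge\b}(\C_{2r}\cap\pa E)}\le C$, while \eqref{pattymelt fine 2 C1} gives $\|\psi_k\|_{C^1(\C_{2r}\cap\pa E)}\le C\,\|v-u_2\|_{C^1(\D_{4r})}\to0$, which is \eqref{tacodeli fine 2}; when $n=2$ we took $\b=1$, so $\psi_k\in C^{1,\a}(\C_{2r}\cap\pa E)$, giving the last assertion. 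The main technical point — everything else being bookkeeping with the rescaled norms and with the base points $x_k$ — is the middle paragraph: showing that $L^1$--convergence of $(\Lambda,r_0)$--minimizers forces the cylindrical excess of $E_k$ at the nearby boundary points $x_k$ to converge to that of $E$ (no loss of perimeter, controlled behaviour of $E_k$ on the lids of the cylinder), so that the small--excess regularity criterion can be triggered uniformly in $k$.
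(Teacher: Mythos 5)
Your argument is correct and is essentially the paper's own proof: one fixes $r$ so that the excess of $E$ and $\Lambda\,r$ are small, finds $x_k\in\pa E_k$ with $x_k\to 0$, transfers the small-excess condition to $E_k$ at $x_k$ (the paper simply invokes the excess-convergence statement \cite[Proposition 22.6]{maggiBOOK} at a.e.\ radius, where you re-derive it by the competitor/lower-semicontinuity/flux argument and then pass from the cylinder $\C^{-e_n}_{0,33r}$ to $\C^{-e_n}_{x_k,32r}$ by inclusion), applies Theorem \ref{thm small excess criterion} to obtain uniformly bounded $C^{1,\b}$ graphs for $\pa E_k$, deduces their $C^1$-convergence to $v$ from $L^1$-convergence plus the uniform bounds, and concludes with Lemma \ref{lemma facile} (with $\b=1$ when $n=2$), exactly as in the paper. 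The only blemishes are inessential: the $C^1$-norm of the graph at $x_k$ does not tend to $0$ (it is only bounded by $C\,(\exc+\Lambda r)^{1/2(n-1)}$, small but fixed), which is harmless because the needed $C^1$-smallness of $u_2$ follows from $u_2\to v$ in $C^1$ together with $\|v\|^*_{C^1(\D_{4r})}\le C\,r\le\s_0$; and in your parenthetical about the lids of $\C_{33r}$ the roles are swapped, since by \eqref{wang 2.1} the top lid lies inside $E$ and the bottom lid outside.
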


\begin{proof}
 First, we note that by \eqref{wang 2.1} one has
 \begin{equation}
 \label{wang 2.2}
  \C_1\cap \pa E=\big\{z+v(z)\,e_n:z\in\D_1\big\}\,.
 \end{equation}
 Second, we set $M=\|v\|_{C^{2,\a}(\D_1)}$, and exploit $v(0)=0$ and $\nabla v(0)=0$ to find $r\in(0,1/64)$ (depending on $n$, $\Lambda$, and $M$) in such a way that
 \begin{equation}
  \label{all00}
  \exc_{64\,r}(E)+\Lambda\,(64r)\le\s\,,\qquad \|v\|_{C^1(\D_{4r})}^*\le \s\,,
 \end{equation}
 for a positive constant $\s$ to be chosen later depending on $n$, $\a$, $\beta$, $\Lambda$ and $M$. Since $0\in\pa E$, $E_k$ is a $(\Lambda,r_0)$-minimizing set in $B_{32\,r}$, and $|(E_k\Delta E)\cap B_{32r}|\to 0$ as $k\to\infty$, by \cite[Theorem 21.14-(ii)]{maggiBOOK} there exists $\{x_k\}_{k\in\N}$ with $x_k\in\pa E_k$ and $x_k\to 0$ as $k\to\infty$. By \cite[Proposition 22.6]{maggiBOOK}, for a.e. $t\in(16\,r,32\,r)$,
 \[
 \lim_{k\to\infty}\exc_{x_k,t}(E_k)=\lim_{k\to\infty}\exc_{t}(E_k-x_k)
 =\exc_t(E)\le C(n) \,\exc_{64\,r}(E)\,.
 \]
 By  \eqref{all00} there exists $k_0\in\N$ such that \begin{eqnarray}
  \label{epsstarrr}
  \exc_{x_k,t}(E_k)+\Lambda\,t\le C(n)\,\s\,,\qquad\forall  k\ge k_0\,.
 \end{eqnarray}
 Provided $\s$ is suitably small with respect to the constant $\e_*(n)$ introduced in Theorem \ref{thm small excess criterion}, one finds that for every $k\ge k_0$ there exists $w_k:\D_{x_k,t/2}\to\R$ such that
 \begin{eqnarray}\label{notethat}
  \C_{x_k,t/2}\cap E_k&=&\big\{z+s\,e_n:z\in\D_{x_k,t/2}\,,w_k(z)\le s\le \frac{t}2\big\}\,,
  \\
  \label{notethat2}
  \C_{x_k,t/2}\cap \pa E_k&=&\big\{z+w_k(z)\,e_n:z\in\D_{x_k,t/2}\big\}\,,
 \\
 \label{patty}
 \|w_k\|_{C^{1,\b}(\D_{x_k,t/2})}^*&\le&C(n,\b)\,.
 \end{eqnarray}
 (Note that \eqref{notethat} follows by \eqref{notethat2}, \eqref{wang 2.1} and the fact that $|B_{32\,r}\cap (E_k\Delta E)|\to 0$.) By composing the functions $w_k$ with vanishing horizontal and vertical translations, and since $t/2>8\,r$, we actually find that, up to further increasing the value of $k_0$, then for every $k\ge k_0$ there exists $v_k:\D_{8\,r}\to\R$ such that
\begin{eqnarray}
\label{wang 2.3}
  \C_{8\,r}\cap E_k&=&\big\{z+s\,e_n:z\in\D_{8\,r}\,,v_k(z)\le s\le 8\,r\big\}\,,
  \\
  \label{wang 2.4}
  \C_{8\,r}\cap \pa E_k &=&\big\{z+v_k(z)\,e_n:z\in\D_{8\,r}\big\}\,,
\\
  \label{wang 3.1}
  \|v_k\|_{C^{1,\b}(\D_{8\,r})}^*&\le& C(n,\b)\,.
\end{eqnarray}
If we set $L=\max\{M/r,C(n,\b)\}$ with $C(n,\b)$ as in \eqref{wang 3.1}, then
by \eqref{wang 3.1} and by definition of $M$ we have
\[
\max\big\{\|v\|_{C^{2,\a}(\D_{4\,r})}^*,\|v\|_{C^{1,\b}(\D_{4\,r})}^*\big\}\le L\,,\qquad\forall k\ge k_0\,,
\]
Let $\s_0=\s_0(n,\a,\b,L)=\s_0(n,\a,\b,\Lambda,M)$ be determined as in Lemma \ref{lemma facile}. By \eqref{wang 2.1}, \eqref{wang 2.3} and $|B_{32\,r}\cap (E_k\Delta E)|\to 0$ we have $v_k\to v$ in $L^1(\D_{8\,r})$, thus by \eqref{wang 3.1} we find $v_k\to v$ in $C^1(\D_{8\,r})$, so that, up to further increasing $k_0$, decreasing $\s$ in terms of $\s_0$, and thanks to \eqref{all00},
\[
\max\big\{\|v\|_{C^1(\D_{4\,r})}^*,\|v_k\|_{C^1(\D_{4\,r})}^*\big\}\le\s_0\,,\qquad\forall k\ge k_0\,.
\]
We thus apply Lemma \ref{lemma facile} and find $\psi_k\in C^{1,\a\wedge\b}(\C_{2\,r}\cap\pa E)$ with the required properties.
\end{proof}

\subsection{Infiltration lemma and consequences}\label{section infiltratio lemma} In this section we exploit an infiltration lemma (Lemma \ref{lemma infiltration} -- which is a special case of \cite[Lemma 4.6]{LeonardiTamanini}, see also \cite[Theorem 3.1]{leonardifluid} for a similar result in the context of immiscible fluids) together with Theorem \ref{thm small excess criterion} to address various regularity properties of $(\Lambda,r_0)$-minimizing clusters, and to prove some basic convergence properties, see Theorem \ref{thm boundary hausdorff} and Theorem \ref{thm normal representation part one}.

\begin{lemma}[Infiltration lemma]\label{lemma infiltration}
  There exists a positive constant $\eta_0=\eta_0(n)<\om_n$ with the following property: if $\E$ is a $(\Lambda,r_0)$-minimizing cluster in $A$, then there exists a positive constant $r_1\le r_0$ (depending on $\Lambda$ and $r_0$ only) such that, if
  \begin{equation}
    \label{inf1}
      \sum_{h\in H}|\,\E(h)\cap B_{x,r}|\le \eta_0\,r^n\,,
  \end{equation}
  for some $r\le r_1$, $H\subset\{0,\dots,N\}$, and $x\in\R^n$ with $B_{x,r}\cc A$, then
  \begin{equation}
    \label{inf2}
      \sum_{h\in H}|\,\E(h)\cap B_{x,r/2}|=0\,.
  \end{equation}
\end{lemma}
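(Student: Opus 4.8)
The plan is to run a standard density--estimate argument on the volume function
\[
g(t):=\sum_{h\in H}|\E(h)\cap B_{x,t}|=\Big|B_{x,t}\cap\textstyle\bigcup_{h\in H}\E(h)\Big|=:|G_t|\,,\qquad t\in(0,r)\,,
\]
and to show that, for an appropriate choice of the absolute constant $\eta_0$, $g$ must vanish before it reaches $t=r/2$. Since $\eta_0<\om_n$, hypothesis \eqref{inf1} forces $H\neq\{0,\dots,N\}$, so one may fix once and for all a chamber index $h_1\in H':=\{0,\dots,N\}\setminus H$; this is the chamber into which the (tiny) mass of the chambers in $H$ will be poured. As $\bigcup_{h\in H}\E(h)$ is a set of locally finite perimeter in $A$, the function $g$ is nondecreasing and locally absolutely continuous on $(0,r_1)$, with $g'(t)=\H^{n-1}\big(\pa B_{x,t}\cap(\bigcup_{h\in H}\E(h))^{(1)}\big)$ for a.e.\ $t$ (coarea/slicing). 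We also record the two slicing facts, valid for a.e.\ $t$: $\H^{n-1}(\pa B_{x,t}\cap\pa^*\E)=0$ (so $\E$ carries no perimeter on the sphere $\pa B_{x,t}$), and the $\H^{n-1}$-a.e.\ identification, on both sides of $\pa B_{x,t}$, of chambers of $\E$ with those of any modification supported in $\overline{B_{x,t}}$.

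For a.e.\ $t\in(0,r)$ I would compare $\E$ with the competitor $\F=\F^t$ that empties the chambers of $H$ out of $B_{x,t}$ and reassigns their trace to $h_1$: $\F(h)=\E(h)\setminus B_{x,t}$ for $h\in H$, $\F(h_1)=\E(h_1)\cup G_t$, and $\F(k)=\E(k)$ for $k\in H'\setminus\{h_1\}$. Then $\E(h)\Delta\F(h)\subset\overline{B_{x,t}}$ for every $h$, and a direct count gives $\d(\E,\F^t)=g(t)$. The heart of the proof is the interface bookkeeping: comparing $\pa^*\F^t$ with $\pa^*\E$, the modification destroys, inside $B_{x,t}$, all interfaces between two chambers of $H$ and all interfaces between a chamber of $H$ and $h_1$, keeps all the remaining interfaces, and creates a new piece of boundary of $\H^{n-1}$-measure $g'(t)$ on $\pa B_{x,t}$ (using the first slicing fact, $\E$ contributed nothing there). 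Since the destroyed interfaces together make up $\pa^*\big(\bigcup_{h\in H}\E(h)\big)\cap B_{x,t}$ up to the $H$--$(H'\setminus\{h_1\})$ part, a judicious choice of $h_1$ (e.g.\ the one maximizing the measure of the $H$--$h_1$ interface, together with the fact that these interfaces exhaust $\pa^*\big(\bigcup_{h\in H}\E(h)\big)\cap B_{x,t}$) yields
\[
P(\F^t)\ \le\ P(\E)\ +\ g'(t)\ -\ c_0\,P(G_t;B_{x,t})
\]
for a positive constant $c_0$ (depending only on $n$). \emph{This is the step I expect to be the main obstacle:} one must be scrupulous about which reduced boundaries survive the reassignment, about the disjoint-union perimeter identity $P(A\cup B)=P(A)+P(B)-2\H^{n-1}(\pa^*A\cap\pa^*B)$ used to compute $P(\F^t(h_1))$, and about every a.e.-in-$t$ statement.

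Plugging $\F^t$ into the $(\Lambda,r_0)$-minimality inequality \eqref{almost-minimizer cluster} on a ball $B_{x,\rho}$ with $t<\rho<\min\{r_0,r\}$ and cancelling the contributions outside $\overline{B_{x,t}}$ gives $P(\E)\le P(\F^t)+\Lambda\,g(t)$, hence, with the displayed bound,
\[
c_0\,P(G_t;B_{x,t})\ \le\ g'(t)+\Lambda\,g(t)\,,\qquad\text{for a.e. }t\in(0,r)\,.
\]
Now take $\eta_0=\eta_0(n)$ small enough that $\eta_0\le\om_n/2^{\,n+1}$, so that (monotonicity of $g$) $g(t)\le g(r)\le\eta_0 r^n\le\tfrac12|B_{x,t}|$ for every $t\in[r/2,r]$; the relative isoperimetric inequality in the ball then yields $\kappa(n)\,g(t)^{(n-1)/n}\le P(G_t;B_{x,t})$ there. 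Writing $\Lambda\,g(t)\le \Lambda\,(\eta_0 r^n)^{1/n}g(t)^{(n-1)/n}=\Lambda\,\eta_0^{1/n}\,r\,g(t)^{(n-1)/n}$ and choosing $r_1\le r_0$ with $\Lambda\,\eta_0^{1/n}\,r_1$ small enough (this is the only place $r_1$ depends on $\Lambda$; if $\Lambda=0$ take $r_1=r_0$), the lower-order term is absorbed and one obtains $c_1(n)\,g(t)^{(n-1)/n}\le g'(t)$ on $\{t\in[r/2,r]:g(t)>0\}$. Finally, if $g(r/2)>0$ then $g>0$ on $[r/2,r]$, so dividing by $g^{(n-1)/n}$ and integrating from $r/2$ to $r$ gives $n\big(g(r)^{1/n}-g(r/2)^{1/n}\big)\ge c_1(n)\,\tfrac r2$, whence $g(r)\ge\big(c_1(n)/(4n)\big)^n r^n$; shrinking $\eta_0$ once more so that $\eta_0<\big(c_1(n)/(4n)\big)^n$ contradicts \eqref{inf1}. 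Therefore $g(r/2)=0$, which is \eqref{inf2}. The constant $\eta_0$ is the smallest of the finitely many smallness requirements above and depends only on $n$, while $r_1\le r_0$ depends only on $\Lambda$ and $r_0$; no $C^1_c$-competitor subtlety arises since $\E$ and $\F^t$ are only ever compared on compactly contained balls.
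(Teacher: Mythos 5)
Your overall scheme -- coarea for $g$, the ``emptying'' competitor $\F^t$ compared via \eqref{almost-minimizer cluster} on a slightly larger ball, the isoperimetric inequality, absorption of the $\Lambda$-term for $r\le r_1\sim 1/\Lambda$, and integration of the resulting differential inequality -- is exactly the classical infiltration argument that the paper relies on: its own proof is just a reduction to \cite[Lemma 30.2]{maggiBOOK} (which treats clusters satisfying the perturbed inequality \eqref{inf3} with the vector volume distance), combined with the remark that the competitors used there are nested, so that $\d(\E,\F)\le\sqrt N\,|\vol(\E)-\vol(\F)|$ and \eqref{almost-minimizer cluster} yields \eqref{inf3} with $C_0=\sqrt N\Lambda$. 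Your direct use of $\d(\E,\F^t)=g(t)$ is a legitimate shortcut of that reduction, and the peripheral steps (a.e.-$t$ slicing, $g(t)\le\tfrac12|B_{x,t}|$ on $[r/2,r]$, the ODE integration) are fine.

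The step you yourself flag is, however, where the proposal breaks. The display $P(\F^t)\le P(\E)+g'(t)-c_0\,P(G_t;B_{x,t})$ with $c_0=c_0(n)$ is not established, and your mechanism cannot give it. Reassigning $G_t$ to a single chamber $h_1$ destroys, inside $B_{x,t}$, only the interfaces between two $H$-chambers and the $H$--$h_1$ interfaces; every interface $\E(h,k)$ with $h\in H$ and $k\in H':=\{0,\dots,N\}\setminus(H\cup\{h_1\})$ survives verbatim as an $h_1$--$k$ interface of $\F^t$. Since, up to $\H^{n-1}$-null sets, $\pa^*G_t\cap B_{x,t}$ is the union over \emph{all} complementary indices $k$ of the $H$--$k$ interfaces (the $H$--$H$ interfaces are interior to $G_t$, so they do not help the isoperimetric step), the ``judicious choice'' of $h_1$ only guarantees that the destroyed part of $\pa^* G_t$ has measure at least $P(G_t;B_{x,t})/\#(\{0,\dots,N\}\setminus H)$, i.e. $c_0\ge 1/(N+1)$, a constant depending on $N$ and not only on $n$; for a general cluster this fraction is sharp (picture $G_t$ surrounded by many chambers, each sharing a comparable piece of $\pa^*G_t$), and excluding such configurations for $(\Lambda,r_0)$-minimizers would require regularity information not available at this stage. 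Propagating the corrected constant, your $c_1$, $\eta_0$ and $r_1$ all acquire an $N$-dependence, so what you actually prove is the lemma with $\eta_0=\eta_0(n,N)$ and $r_1$ depending also on $N$ -- sufficient for the applications in this paper, where $N$ is fixed, but weaker than the stated $\eta_0=\eta_0(n)$. To get the dimensional constant asserted in the statement you should either quote \cite[Lemma 30.2]{maggiBOOK} as the paper does, or replace the bound ``destroyed $\ge c_0(n)\,P(G_t;B_{x,t})$'' by a genuinely different argument; as written, that inequality is the missing (and false-as-justified) ingredient.
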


\begin{proof}
  By arguing as in \cite[Lemma 30.2]{maggiBOOK} one sees that if $\E$ is a $N$-cluster in $A$ such that
  \begin{equation}
    \label{inf3}
      P(\E;B_{x,r})\le P(\F;B_{x,r})+C_0\,|\,\vol(\E)-\vol(\F)|\,,
  \end{equation}
  whenever $\E(h)\Delta \F(h)\cc B_{x,r}\cc A$ for some $x\in\R^n$, $r<r_0$ and every $h=1,...,N$, then
  \eqref{inf1} implies \eqref{inf2} with $r_1=\min\{r_0,1/8C_0\}$. This is achieved by exploiting the perturbed minimality inequality \eqref{inf3} on comparison clusters $\F$ having the property that, if $0\le h\le N$, then either $\F(h)\subset\E(h)$ or $\E(h)\subset\F(h)$. We now notice that, on such clusters $\F$ one has
  \[
  \d(\E,\F)=\sum_{h=1}^N|\,|\E(h)|-|\F(h)|\,|\le \sqrt{N}\,|\,\vol(\E)-\vol(\F)\,|\,.
  \]
  Therefore, if $\E$ is a $(\Lambda,r_0)$-minimizing cluster in $A$, then \eqref{inf3} holds on every comparison cluster $\F$ as above with $C_0=\sqrt{N}\Lambda$, and we can argue as in \cite[Lemma 30.2]{maggiBOOK} to prove the lemma (with $r_1=\min\{r_0,1/8\sqrt{N}\Lambda\}$).
\end{proof}


\begin{corollary}[Almost everywhere regularity]\label{corollary interfaces}
  If $\E$ is a $(\Lambda,r_0)$-minimizing cluster in $A$, then $\pa^*\E$ is a $C^{1,\a}$-hypersurface for every $\a\in(0,1)$ ($C^{1,1}$ if $n=2$), it is relatively open inside $A\cap\pa\E$, and $\H^{n-1}(\S_A(\E))=0$. Moreover, if $n=2$, then we can replace $C^{1,\a}$ with $C^{1,1}$.
\end{corollary}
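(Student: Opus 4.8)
The plan is to localize around reduced boundary points, collapse the cluster to a single $(\Lambda,r_0)$-minimizing set there by means of the infiltration lemma, and then invoke the small excess criterion; the measure estimate on $\S_A(\E)$ will follow from two-sided volume density bounds (again via infiltration) together with Federer's theorem.

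\emph{Local reduction near $\pa^*\E$.} First I would fix $x\in\pa^*\E$; since $\pa^*\E=\bigcup_{0\le h<k\le N}\E(h,k)$, there are $h\ne k$ with $x\in\pa^*\E(h)\cap\pa^*\E(k)$, so by De Giorgi's structure theorem $|\E(h)\cap B_{x,r}|/(\om_n r^n)\to\frac12$ and $|\E(k)\cap B_{x,r}|/(\om_n r^n)\to\frac12$ as $r\to0^+$, whence $\sum_{j\ne h,k}|\E(j)\cap B_{x,r}|=o(r^n)$. Choosing $r\le r_1$ with $B_{x,r}\cc A$ small enough that $\sum_{j\ne h,k}|\E(j)\cap B_{x,r}|\le\eta_0\,r^n$, Lemma \ref{lemma infiltration} gives $|\E(j)\cap B_{x,r/2}|=0$ for every $j\ne h,k$. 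Thus, modulo null sets, $B_{x,r/2}$ is partitioned by $\E(h)$ and $\E(k)=B_{x,r/2}\setminus\E(h)$ only, and therefore $P(\E;B_{x',r'})=P(\E(h);B_{x',r'})$ and $\pa\E\cap B_{x,r/2}=\pa^*\E(h)\cap B_{x,r/2}$-related objects coincide for balls $B_{x',r'}\subset B_{x,r/2}$.

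\emph{The relevant chamber is $(\Lambda,r_0)$-minimizing, and conclusion of the first two claims.} Given $F$ with $F\,\Delta\,\E(h)\cc B_{x',r'}\subset B_{x,r/2}$ and $r'<r_0$, the comparison cluster $\F$ obtained by replacing $\E(h)\cap B_{x',r'}$ by $F\cap B_{x',r'}$, replacing $\E(k)\cap B_{x',r'}$ by $B_{x',r'}\setminus F$, and leaving all other chambers unchanged satisfies $\F(j)\,\Delta\,\E(j)\cc B_{x',r'}$ for all $j$, $P(\F;B_{x',r'})=P(F;B_{x',r'})$ (using that $\E(k)=B_{x,r/2}\setminus\E(h)$ mod null, so complementary sets have the same reduced boundary inside $B_{x,r/2}$), and $\d(\E,\F)\le|\E(h)\,\Delta\,F|$; hence \eqref{almost-minimizer cluster} yields $P(\E(h);B_{x',r'})\le P(F;B_{x',r'})+\Lambda\,|\E(h)\,\Delta\,F|$, i.e.\ $\E(h)$ is a $(\Lambda,r_0)$-minimizing set in $B_{x,r/2}$. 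Since $x\in\pa^*\E(h)$, there is a scale at which the cylindrical excess of $\E(h)$ at $x$ is below $\e_*(n)$, so Theorem \ref{thm small excess criterion} represents $\pa\E(h)$ in a neighborhood of $x$ as a $C^{1,\a}$-graph for every $\a\in(0,1)$ (a $C^{1,1}$-graph when $n=2$, by the final assertion of that theorem), all of whose points are reduced boundary points. Because in $B_{x,r/2}$ one has $\pa\E=\pa\E(h)$ and $\pa^*\E=\pa^*\E(h)$, this shows at once that $\pa^*\E$ is relatively open in $A\cap\pa\E$ and that it is a $C^{1,\a}$-hypersurface ($C^{1,1}$ if $n=2$).

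\emph{The set $\S_A(\E)$ is $\H^{n-1}$-negligible.} Since $\pa\E=\bigcup_h\spt\mu_{\E(h)}$ and $\pa^*\E=\bigcup_h\pa^*\E(h)$, one has $\S_A(\E)\subset\bigcup_h\big(A\cap\spt\mu_{\E(h)}\setminus\pa^*\E(h)\big)$, so it suffices to show each of these sets is $\H^{n-1}$-null. Fix $h$ and $x\in A\cap\spt\mu_{\E(h)}$, so that $|\E(h)\cap B_{x,r}|>0$ and $|B_{x,r}\setminus\E(h)|=\sum_{j\ne h}|\E(j)\cap B_{x,r}|>0$ for all $r>0$. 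Applying Lemma \ref{lemma infiltration} with $H=\{h\}$ forces $|\E(h)\cap B_{x,r}|>\eta_0\,r^n$ for $r\le r_1$ with $B_{x,r}\cc A$, while applying it with $H=\{0,\dots,N\}\setminus\{h\}$ forces $|\E(h)\cap B_{x,r}|<(\om_n-\eta_0)\,r^n$; since $\eta_0<\om_n$, the upper and lower densities of $\E(h)$ at $x$ lie strictly between $0$ and $1$, so $x$ belongs to the essential boundary $\pa^e\E(h)$. By Federer's theorem $\H^{n-1}(\pa^e\E(h)\setminus\pa^*\E(h))=0$, and the claim follows.

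The arguments are essentially routine given Lemma \ref{lemma infiltration} and Theorem \ref{thm small excess criterion}; the points requiring some care — which I regard as the only mild obstacles — are the verification that the comparison cluster $\F$ indeed satisfies $P(\F;B_{x',r'})=P(F;B_{x',r'})$ and $\d(\E,\F)\le|\E(h)\,\Delta\,F|$ (so that cluster minimality transfers to set minimality with the \emph{same} constants $\Lambda,r_0$), and the bookkeeping identifying $\pa\E$, $\pa^*\E$ with the corresponding objects of the locally surviving chambers in $B_{x,r/2}$.
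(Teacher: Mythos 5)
Your argument is correct, and for the main part it is the same as the paper's: the paper also starts from $x\in\E(h,k)$, uses the density of $\E(h)\cup\E(k)$ at $x$ together with Lemma \ref{lemma infiltration} to empty the remaining chambers in a smaller ball, observes that \eqref{almost-minimizer cluster} then makes $\E(h)$ a $(\Lambda,r_0)$-minimizing \emph{set} there (you spell out the comparison-cluster bookkeeping that the paper leaves implicit, and it checks out: in fact $\d(\E,\F)=|\E(h)\Delta F|$), and concludes via Theorem \ref{thm small excess criterion} and Remark \ref{remark reduced boundary}. Where you genuinely diverge is the claim $\H^{n-1}(\S_A(\E))=0$: the paper first proves, again by infiltration, two-sided volume density bounds and then a \emph{lower perimeter density} bound \eqref{density n-1dim} via the relative isoperimetric inequality, and gets negligibility of $\pa\E(h)\setminus\pa^*\E(h)$ from the classical upper-density argument for the Radon measure $\H^{n-1}\llcorner\pa^*\E(h)$ (cf.\ \cite[Theorem 16.14]{maggiBOOK}); you instead stop at the volume density bounds, deduce that every point of $A\cap\spt\mu_{\E(h)}$ lies in the essential boundary, and invoke Federer's theorem. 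Both routes are standard and correct; Federer's theorem is the slightly more economical tool here, while the paper's choice has the side benefit of producing the uniform perimeter density estimates \eqref{density ndim}--\eqref{density n-1dim}, which are reused elsewhere (e.g.\ in the compactness and Hausdorff-convergence arguments), so you lose nothing for the corollary itself but would have to re-derive those estimates when they are needed later.
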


\begin{proof}
  {\it Step one}: We prove that there exists $c(n)\in(0,1)$ and $r_1\le r_0$ (depending on $\E$), such that, if $0\le h\le N$, $x\in\pa\E(h)$, and $r<r_1$ is such that $B_{x,r}\cc A$, then
  \begin{eqnarray}\label{density ndim}
    c(n)\le \frac{|\E(h)\cap B_{x,r}|}{\om_n\,r^n}\le (1-c(n))\,,
    \\\label{density n-1dim}
    c(n)\,\le\frac{P(\E(h);B_{x,r})}{r^{n-1}}\le C(n,\Lambda)\,(1+r)\,.
  \end{eqnarray}
  Indeed, Lemma \ref{lemma infiltration} implies \eqref{density ndim} with $c(n)=\eta_0(n)/\om_n$; see \cite[Section 30.2]{maggiBOOK}. Up to further decreasing the value of $c(n)$, the lower bound in \eqref{density n-1dim} follows from \eqref{density ndim} and the relative isoperimetric inequality on balls, see \cite[Proposition 12.37]{maggiBOOK}. Finally, by testing \eqref{almost-minimizer cluster} on $\F(h)=\E(h)\setminus B_{x,r}$, $1\le h\le N$, we find that $P(\E(h);B_{x,r})\le n\om_n\,r^{n-1}+\Lambda\,\om_n\,r^n$, whence the upper bound in \eqref{density n-1dim}.

  \medskip

  \noindent {\it Step two}: We show that if $x\in\E(h,k)=\pa^*\E(h)\cap\pa^*\E(k)$, then there exists $r_x\in(0,r_0)$ such that $|\E(j)\cap B_{x,r_x}|=0$ if $j\ne h,k$ and $B_{x,r_x}\cc A$. Indeed, by standard density estimates (see \cite[Exercise 29.6]{maggiBOOK}), we have
  \[
  \lim_{r\to 0^+}\frac{|\,\E(h)\cap B_{x,r}|}{\om_n\,r^n}+\frac{|\,\E(k)\cap B_{x,r}|}{\om_n\,r^n}=1\,,
  \]
  so that the existence of $r_x$ follows from Lemma \ref{lemma infiltration}. As a consequence, \eqref{almost-minimizer cluster} implies that both $\E(h)$ and $\E(k)$ are $(\Lambda,r_0)$-minimizing sets on $B_{x,r_x}$. By Theorem \ref{thm small excess criterion} and Remark \ref{remark reduced boundary}, $\pa^*\E$ is a $C^{1,\a}$-hypersurface for every $\a\in(0,1)$ ($C^{1,1}$ if $n=2$) and it is relatively open inside $A\cap \pa\E$. The  lower $(n-1)$-dimensional estimate in \eqref{density n-1dim} implies $\H^{n-1}(\S_A(\E))=0$ by a classical argument (see for example \cite[Theorem 16.14]{maggiBOOK}).
\end{proof}


\begin{corollary}[Local finiteness away from the singular set]\label{corollary local finiteness}
  If $\E$ is a $(\Lambda,r_0)$-minimizing $N$-cluster in $A$, $\rho>0$, and $A'\cc A$ is open, then $(A'\cap\pa\E)\setminus \cl(I_\rho(\S_A(\E)))$ is the union of finitely many disjoint connected hypersurfaces.
\end{corollary}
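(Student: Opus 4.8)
The plan is to show that the compact set $K = \cl(A' \cap \pa\E) \setminus I_\rho(\S_A(\E))$ — or more precisely $(A' \cap \pa\E) \setminus \cl(I_\rho(\S_A(\E)))$ — can be covered by finitely many coordinate cylinders in each of which $\pa\E$ is a single $C^1$ graph, and then to run a standard connectedness/covering argument to conclude that there are only finitely many connected components, each a hypersurface. First I would fix a point $x$ in this set. Since $x \notin \S_A(\E)$, we have $x \in \pa^*\E$, so $x \in \E(h,k)$ for some $0 \le h < k \le N$. By Step two in the proof of Corollary \ref{corollary interfaces}, there is $r_x \in (0,r_0)$ with $B_{x,r_x} \cc A$, $|\E(j) \cap B_{x,r_x}| = 0$ for $j \ne h,k$, and both $\E(h)$, $\E(k)$ are $(\Lambda,r_0)$-minimizing sets in $B_{x,r_x}$. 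By Theorem \ref{thm small excess criterion} (applicable after shrinking $r_x$ so that the excess plus $\Lambda r_x$ is below $\e_*(n)$, using that $x \in \pa^*\E(h)$ so the excess tends to $0$), there is a radius $s_x \in (0, r_x)$ and a $C^{1}$ function whose graph coincides with $B_{x,s_x} \cap \pa\E$; in particular $B_{x,s_x} \cap \pa\E = B_{x,s_x} \cap \pa^*\E(h) = B_{x,s_x} \cap \pa^*\E(k)$ is a connected $C^1$-hypersurface which is a graph over a hyperplane, hence intersects every line in the normal direction in at most one point.

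Next I would extract a finite subcover. The set $T := \cl\big((A' \cap \pa\E) \setminus \cl(I_\rho(\S_A(\E)))\big)$ is contained in $\cl(A') \setminus I_\rho(\S_A(\E))$, which is a closed subset of the (relatively) compact set $\cl(A')$, hence $T$ is compact; moreover, since $\cl(A') \cc A$, at every point $x$ of $T$ (which still lies in $\pa^*\E$, because the relatively open set $\pa^*\E$ contains $(A'\cap\pa\E)\setminus\cl(I_\rho(\S_A(\E)))$ and $\S_A(\E)$ is closed, so $T$ stays away from $\S_A(\E)$) the graph representation above is available. By compactness, finitely many balls $B_{x_1,s_{x_1}/2}, \dots, B_{x_M,s_{x_M}/2}$ cover $T$, and in each $B_{x_m, s_{x_m}}$ the set $\pa\E$ is a connected $C^1$-graph.

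Finally, I would deduce the structure statement. Let $\Sigma = (A' \cap \pa\E) \setminus \cl(I_\rho(\S_A(\E)))$. Since $\Sigma \subset \pa^*\E$ and $\pa^*\E$ is a $C^{1,\a}$-hypersurface (Corollary \ref{corollary interfaces}), $\Sigma$ is an embedded $C^{1,\a}$-hypersurface, hence a topological manifold; each of its connected components is therefore a connected hypersurface, and they are pairwise disjoint. It remains to bound the number of components. Each component $\Gamma$ meets at least one of the finitely many balls $B_{x_m, s_{x_m}/2}$; within such a ball $\pa\E$, and a fortiori $\Gamma$, is connected, so $\Gamma$ is determined by the union of the balls it meets and any two components meeting the same ball would be connected through it — a contradiction. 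Hence the number of components is at most the number of balls, i.e. finite. The main obstacle I anticipate is bookkeeping rather than substance: one must be careful that the closure operations do not push points into $\S_A(\E)$ (handled by the strict inequality $A' \cc A$ and the closedness of $\S_A(\E)$ combined with the $\cl(I_\rho(\cdot))$ in the statement), and one must verify that the graph balls can be chosen with a uniform lower bound is \emph{not} needed — a plain compactness argument suffices once the local graph representation is in hand.
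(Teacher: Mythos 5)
Your preparatory steps (the infiltration argument from step two of Corollary \ref{corollary interfaces} plus the small-excess criterion to get local graph balls, and compactness of $\cl\big((A'\cap\pa\E)\setminus\cl(I_\rho(\S_A(\E)))\big)$, which indeed stays at distance $\ge\rho$ from $\S_A(\E)$ and inside $A$) are sound and are the same ingredients the paper uses. The gap is in the final counting step. You bound the number of connected components of $\Sigma=(A'\cap\pa\E)\setminus\cl(I_\rho(\S_A(\E)))$ by the number of covering balls, arguing that two components of $\Sigma$ meeting the same ball $B_{x_m,s_{x_m}/2}$ ``would be connected through it''. But the connectedness you have inside the ball is connectedness of $\pa\E\cap B_{x_m,s_{x_m}}$, not of $\Sigma\cap B_{x_m,s_{x_m}}$: the ball is centered at a point of $\cl(\Sigma)$, which may lie on $\pa A'$ or at distance exactly $\rho$ from $\S_A(\E)$, so nothing guarantees $\pa\E\cap B_{x_m,s_{x_m}}\subset\Sigma$. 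The arc joining the two components inside the graph may exit $A'$ or enter $\cl(I_\rho(\S_A(\E)))$, and then the two pieces remain in different components of $\Sigma$. This is not a repairable bookkeeping issue: for a general open $A'\cc A$ the number of connected components of $\Sigma$ itself can genuinely be infinite (take $\pa\E$ containing a flat segment and $\pa A'$ oscillating across it infinitely often near a boundary point of $A'$; the resulting slivers of $\Sigma$ accumulate at a point of $\pa A'$), so the quantity you are trying to bound by the number of balls is the wrong one.

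The paper counts something different, and this is what makes its argument work: it decomposes the global reduced boundary $\pa^*\E=\bigcup_i S_i$ into its (at most countably many) disjoint connected components and shows that only finitely many of the sets $S_i^\rho=(A'\cap S_i)\setminus\cl(I_\rho(\S_A(\E)))$ are nonempty. The proof is by contradiction and accumulation: points $x_i\in S_i^\rho$ taken from infinitely many distinct $S_i$ would subconverge to some $x\in(\cl(A')\cap\pa\E)\setminus I_\rho(\S_A(\E))\subset\pa^*\E$, around which $\pa\E\cap\C^\nu_{x,r_x}$ is a single connected graph contained in $\pa^*\E$; such a connected set cannot meet two distinct components $S_i\ne S_j$ of $\pa^*\E$. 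Note that this uses only disjointness of the $S_i$ as components of the global manifold $\pa^*\E$, and never connectivity inside $\Sigma$, which is exactly the point where your argument fails. If you want to salvage your covering approach, you should likewise count the components of $\pa^*\E$ (or of $\pa\E\setminus\S_A(\E)$) that meet $\Sigma$, rather than the components of $\Sigma$.
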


\begin{proof}
  By Corollary \ref{corollary interfaces}, we can directly assume that $\pa^*\E=\bigcup_{i\in\N}S_i$, where each $S_i$ is a nonempty connected $C^1$-hypersurface with $S_i\cap S_j=\emptyset$ for $i\ne j$. If we set $S_i^\rho=(A'\cap S_i)\setminus \cl(I_\rho(\S_A(\E)))$ then $\{S_i^\rho\}_{i\in\N}$  is a disjoint family of connected $C^1$-hypersurfaces whose union is equal to $(A'\cap\pa\E)\setminus \cl(I_\rho(\S_A(\E)))$. We claim that only finitely many elements of $\{S_i^\rho\}_{i\in\N}$ are nonempty. If this were not the case, then, up to extracting subsequences, we could find $\{x_i\}_{i\in\N}\subset(A'\cap\pa\E)\setminus \cl(I_\rho(\S_A(\E)))$ with $x_i\in S_i$ for every $i\in \N$ and $x_i\to x$ for some $x\in (\cl(A')\cap\pa\E)\setminus I_\rho(\S_A(\E))$. Since $x\in\pa^*\E$, by Theorem \ref{thm small excess criterion} and step two in the proof of Corollary \ref{corollary interfaces}, there exists $r_x>0$ and $\nu\in\SS^{n-1}$ such that $\pa\E\cap \C^\nu_{x,r_x}=\pa^*\E\cap\C^\nu_{x,r_x}=(\Id+v\,\nu)(\D^\nu_{x,r_x})$ for some $v\in C^1(\D^\nu_{x,r_x})$. By connectedness, we infer that $S_{i}\cap \C_{x,r_{x}}^{\nu} = S_{j}\cap \C_{x,r_{x}}^{\nu}$, which contradicts the assumption on $S_{i}$ and $S_{j}$.
\end{proof}

\begin{corollary}[Bounded mean curvature]\label{corollary mean curvature}
  If $\E$ is a $(\Lambda,r_0)$-minimizing cluster in $A$, then $A\cap\pa\E$ is a locally $\H^{n-1}$-rectifiable set with bounded mean curvature in $A$, and
  \begin{equation}
    \label{Lambda bounded mean curvature}
      \|{\bf H}_{\pa\E}\|_{L^\infty(\H^{n-1}\llcorner(A\cap\pa\E))}\le \Lambda\,.
  \end{equation}
\end{corollary}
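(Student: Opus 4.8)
The plan is to verify directly the integration-by-parts identity \eqref{varifold curvature} for $A\cap\pa\E$ with the bound $\|\HH_{\pa\E}\|_{L^\infty}\le\Lambda$, by testing the almost-minimality inequality \eqref{almost-minimizer cluster} against the flow of an arbitrary compactly supported vector field. First I would fix $T\in C^1_c(A;\R^n)$ and, covering $\spt\,T$ by finitely many balls $B_{x_i,r_i}$ with $r_i<r_0$ and $B_{x_i,r_i}\cc A$ and using a subordinate partition of unity together with the linearity of $\Div_{\pa^*\E}$, reduce to proving
\[
\Big|\int_{\pa^*\E}\Div_{\pa^*\E}T\,d\H^{n-1}\Big|\le\Lambda\int_{\pa^*\E}|T|\,d\H^{n-1}
\]
under the additional assumption that $T\in C^1_c(B_{x_0,r};\R^n)$ for some $x_0\in A$ and $r<r_0$ with $B_{x_0,r}\cc A$. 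Granting this, the functional $T\mapsto\int_{\pa^*\E}\Div_{\pa^*\E}T\,d\H^{n-1}$ extends to a bounded linear functional of norm at most $\Lambda$ on $L^1(\H^{n-1}\llcorner(A\cap\pa^*\E);\R^n)$, hence is represented by some $\HH_{\pa\E}\in L^\infty(\H^{n-1}\llcorner(A\cap\pa^*\E);\R^n)$ with $\|\HH_{\pa\E}\|_{L^\infty}\le\Lambda$; this is \eqref{varifold curvature} for $\pa^*\E$, and since $\H^{n-1}(\S_A(\E))=0$ by Corollary \ref{corollary interfaces}, $A\cap\pa\E$ agrees with $\pa^*\E$ up to an $\H^{n-1}$-null set and is therefore locally $\H^{n-1}$-rectifiable with the same bounded generalized mean curvature, which is the assertion of the corollary.

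For the local inequality I would let $\{\Phi_t\}_{t\in\R}$ be the one-parameter group of diffeomorphisms of $\R^n$ generated by $T$, so that $\Phi_t=\Id$ outside $\spt\,T\cc B_{x_0,r}$ and $\Phi_t(B_{x_0,r})=B_{x_0,r}$. For $|t|$ small, $\Phi_t(\E)=\{\Phi_t(\E(h))\}_{h=1}^N$ is again an $N$-cluster with $\E(h)\Delta\Phi_t(\E(h))\cc B_{x_0,r}\cc A$ for every $h$, so $\F=\Phi_t(\E)$ is admissible in \eqref{almost-minimizer cluster}:
\[
P(\E;B_{x_0,r})\le P(\Phi_t(\E);B_{x_0,r})+\Lambda\,\d(\E,\Phi_t(\E))\,.
\]
Then I would invoke the two standard first-variation formulae. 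By the area formula for the tangential Jacobian $J^{\pa^*\E}\Phi_t=1+t\,\Div_{\pa^*\E}T+o(t)$ along the locally $\H^{n-1}$-finite rectifiable set $\pa^*\E$ (using that $T$ is supported in $B_{x_0,r}$ and that the two tangent planes along each interface coincide, so that the contributions of the chambers assemble into $\Div_{\pa^*\E}T$),
\[
\frac{d}{dt}\Big|_{t=0}P(\Phi_t(\E);B_{x_0,r})=\int_{\pa^*\E}\Div_{\pa^*\E}T\,d\H^{n-1}\,;
\]
and from the first-order expansion $|\E(h)\Delta\Phi_t(\E(h))|=\int_{\R^n}|\mathbf{1}_{\E(h)}(\Phi_{-t}(y))-\mathbf{1}_{\E(h)}(y)|\,dy=|t|\int_{\pa^*\E(h)}|T\cdot\nu_{\E(h)}|\,d\H^{n-1}+o(|t|)$ (a perturbation of the $BV$ translation inequality), summing over $h$,
\[
\lim_{t\to0^+}\frac{\d(\E,\Phi_t(\E))}{t}=\int_{\pa^*\E}|T\cdot\nu_\E|\,d\H^{n-1}\le\int_{\pa^*\E}|T|\,d\H^{n-1}\,.
\]
Dividing the competitor inequality by $t>0$, letting $t\to0^+$, and repeating the argument with $-T$ in place of $T$ (i.e.\ using $\F=\Phi_{-t}(\E)$), I would obtain $\pm\int_{\pa^*\E}\Div_{\pa^*\E}T\,d\H^{n-1}\le\Lambda\int_{\pa^*\E}|T|\,d\H^{n-1}$, which is the desired local inequality.

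The hard part will be nothing deep but rather the careful bookkeeping in the two first-variation formulae: since $\pa^*\E$ is only rectifiable (not smooth), one must use the varifold/area-formula version of the first variation of area instead of a classical computation, and one must check that $\Phi_t(\E)$ is genuinely admissible in \eqref{almost-minimizer cluster}, which is guaranteed by the compact support of $T$ inside $B_{x_0,r}\cc A$. An alternative that sidesteps the global first-variation computation would be to localize via step two in the proof of Corollary \ref{corollary interfaces}: away from the $\H^{n-1}$-null singular set $\S_A(\E)$, each interface locally bounds a pair of $(\Lambda,r_0)$-minimizing sets, for which the bound \eqref{Lambda bounded mean curvature} is classical; but the direct variational argument above settles $A\cap\pa\E$ in one stroke.
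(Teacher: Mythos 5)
Your proposal is correct, and its core is the same as the paper's: test the $(\Lambda,r_0)$-minimality inequality \eqref{almost-minimizer cluster} against the flow $\{\Phi_t\}$ of a vector field $T\in C^1_c(B_{x_0,r};\R^n)$, use the first variation of perimeter $P(\Phi_t(\E);B_{x_0,r})=P(\E;B_{x_0,r})+t\int_{\pa^*\E}\Div_{\pa^*\E}T\,d\H^{n-1}+O(t^2)$, and absorb the volume term, finally using $\H^{n-1}(\S_A(\E))=0$ from Corollary \ref{corollary interfaces} to pass from $\pa^*\E$ to $A\cap\pa\E$. Where you diverge is in the two auxiliary steps. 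The paper estimates the volume term crudely via Lemma \ref{lemma distanza L1}, namely $\d(\E,\Phi_t(\E))\le(1+\eta)\,\|T\|_{C^0}\,P(\E;B_{x_0,r})\,|t|$, hence only obtains the test inequality $\int_{\pa\E}\Div_{\pa\E}T\le(1+\eta)\Lambda\,P(\E;B_{x_0,r})$ for $|T|\le1$, and then extracts the pointwise bound \eqref{Lambda bounded mean curvature} by Riesz representation plus Lebesgue--Besicovitch differentiation. You instead assert the sharp kinematic expansion $\d(\E,\Phi_t(\E))=t\int_{\pa^*\E}|T\cdot\nu_\E|\,d\H^{n-1}+o(t)$, which yields directly $|\int_{\pa^*\E}\Div_{\pa^*\E}T|\le\Lambda\int_{\pa^*\E}|T|$ and lets you conclude by $L^1$--$L^\infty$ duality (legitimately so: that very bound shows the functional depends only on $T|_{\pa^*\E}$, so it extends by density to $L^1(\H^{n-1}\llcorner\pa^*\E;\R^n)$, giving \eqref{varifold curvature} with $\|\HH_{\pa\E}\|_{L^\infty}\le\Lambda$). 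Both routes work; yours trades the differentiation step for the sharper expansion of the symmetric difference, and that expansion is the one claim you must actually prove, since it is \emph{not} what Lemma \ref{lemma distanza L1} provides and ``a perturbation of the BV translation inequality'' is not a proof. Note, however, that you only need the upper bound $\limsup_{t\to0^+}t^{-1}|E\Delta\Phi_t(E)|\le\int_{\pa^*E}|T\cdot\nu_E|\,d\H^{n-1}$, which follows by a convolution argument in the spirit of the proof of Lemma \ref{lemma distanza L1}: writing $u_h=1_E*\rho_h$, one has $|\nabla u_h\cdot T|\le\rho_h*(|T\cdot\nu_E|\,|D1_E|)$ up to the modulus of continuity of $T$, so $\limsup_h\int|\nabla u_h\cdot T|\le\int_{\pa^*E}|T\cdot\nu_E|\,d\H^{n-1}$; alternatively you can simply keep the paper's weaker lemma and add the Riesz/differentiation step, at the cost of giving up the clean duality finish.
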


\begin{proof} Since $\pa^*\E$ is locally $\H^{n-1}$-rectifiable in $A$ and $\H^{n-1}(\S_A(\E))=0$, one finds immediately that $A\cap\pa\E$ is a locally $\H^{n-1}$-rectifiable set in $A$. By Riesz theorem and Lebesgue--Besicovitch differentiation theorem, in order to prove \eqref{Lambda bounded mean curvature} it suffices to show that
\begin{equation}
  \label{ovvia}
    \int_{\pa\E}\Div_{\pa\E}T\,d\H^{n-1}\le (1+\eta)\,\Lambda\,P(\E;B_{x,r})\,,
\end{equation}
whenever $B_{x,r}\cc A$, $r<r_0$, $T\in C^1_c(B_{x,r};\R^n)$ with $|T|\le 1$. To this end, let $\{f_t\}_{|t|<\e}$ be the flow with initial velocity $T$, so that (see, e.g., \cite[Theorem 17.5]{maggiBOOK})
\[
P(f_t(E);B_{x,r})=P(E;B_{x,r})+t\,\int_{\pa^*E}\Div_{\pa E}T\,d\H^{n-1}+O(t^2)\,,
\]
for every set $E$ of finite perimeter in $B_{x,r}$. By Lemma \ref{lemma distanza L1} (see Appendix \ref{section volume fixing}) one sees that for every $\eta>0$ it is possible to decrease $\e>0$ in such a way that
\[
|f_t(E)\Delta E|\le(1+\eta)\,P(E;B_{x,r})\,|t|\,,\qquad\forall |t|<\e\,,
\]
for every Borel set $E\subset\R^n$. Up to further decreasing the value of $\e$ we have $f_t(\E(h))\Delta\E(h)\cc B_{x,r}$ for every $h=1,...,N$, so that by \eqref{almost-minimizer cluster} one finds
\begin{eqnarray*}
  P(\E;B_{x,r})&\le& P(f_t(\E);B_{x,r})+  \frac{\Lambda}2\,\sum_{h=0}^N|\E(h)\Delta f_t(\E(h))|
  \\
  &=& P(\E;B_{x,r})+t\,\int_{\pa\E}\Div_{\pa\E}T\,d\H^{n-1}+O(t^2)+(1+\eta)\,\Lambda\,|t|\,P(\E;B_{x,r})\,,
\end{eqnarray*}
and immediately deduces \eqref{ovvia}.
\end{proof}

We now start to consider the situation when
\begin{equation}\label{ipotesi ripetuta mille volte}
  \begin{split}
    &\mbox{$\{\E_k\}_{k\in\N}$ are $(\Lambda,r_0)$-minimizing $N$-clusters in $A$}
     \\
    &\mbox{and $\E$ is a $N$-cluster in $A$ with $\d_A(\E_k,\E)\to 0$ as $k\to\infty$.}
  \end{split}
\end{equation}
Note that in this situation, by arguing exactly, say, as in the proof of \cite[Theorem 21.14]{maggiBOOK}, one has that $\E$ is also a $(\Lambda,r_0)$-minimizing cluster in $A$. As a further corollary of the infiltration lemma and of Theorem \ref{thm small excess criterion} we have the following theorem.

\begin{theorem}[Hausdorff convergence of boundaries]
  \label{thm boundary hausdorff}
  If \eqref{ipotesi ripetuta mille volte} holds, then for every $A'\cc A$ one has $\hd_{A'}(\pa\E_k,\pa\E)\to 0$ as $k\to\infty$, and actually
  \begin{equation}
    \label{boundary haudorff interfaces}
    \lim_{k\to\infty}\hd_{A'}\Big(\pa\E_k(i)\cap\pa\E_k(j),\pa\E(i)\cap\pa\E(j)\Big)=0\,,
    \qquad\mbox{for every $0\le i<j\le N$}\,.
  \end{equation}
  Moreover, for every $\e>0$ there exist $k_0\in\N$ such that
  \begin{eqnarray}
    \label{inclusions bordi}
      \S_{A'}(\E_k)\subset I_\e(\S_A(\E))\,,\qquad\forall k\ge k_0\,.
  \end{eqnarray}
\end{theorem}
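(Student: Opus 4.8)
\textbf{Proof plan for Theorem \ref{thm boundary hausdorff}.} The plan is to deduce all three conclusions from the infiltration lemma (Lemma \ref{lemma infiltration}) together with the density estimates already established in Corollary \ref{corollary interfaces} and the small-excess criterion (Theorem \ref{thm small excess criterion}). The first step is to prove the one-sided inclusion $\pa\E_k\cap A'\subset I_\e(\pa\E)$ for $k$ large. Suppose not: then along a subsequence there are points $x_k\in\pa\E_k\cap A'$ with $\dist(x_k,\pa\E)\ge\e$, and up to a further subsequence $x_k\to x\in\cl(A')$ with $\dist(x,\pa\E)\ge\e$, so $B_{x,\e/2}\cap\pa\E=\emptyset$ and hence (by the normalization \eqref{spt normalization}) $B_{x,\e/2}$ meets a single chamber $\E(h)$ in a set of full measure. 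Since $\d_A(\E_k,\E)\to0$, for $k$ large $\sum_{j\ne h}|\E_k(j)\cap B_{x,r}|$ is as small as we like for a fixed $r\le r_1$ with $B_{x,r}\cc A$; applying Lemma \ref{lemma infiltration} with $H=\{0,\dots,N\}\setminus\{h\}$ gives $|\E_k(j)\cap B_{x,r/2}|=0$ for $j\ne h$, i.e. $B_{x,r/2}\cap\pa\E_k=\emptyset$, contradicting $x_k\to x$ with $x_k\in\pa\E_k$. For the reverse inclusion $\pa\E\cap A'\subset I_\e(\pa\E_k)$, if $x\in\pa\E\cap A'$ then by the density estimate \eqref{density ndim} (applied to the limit cluster, which is $(\Lambda,r_0)$-minimizing by the remark preceding the theorem) there are at least two chambers $\E(h),\E(k)$ with $|\E(h)\cap B_{x,\rho}|\ge c(n)\om_n\rho^n$ for all small $\rho$; by $L^1$-convergence the same lower bounds hold for $\E_k$ with $\rho$ fixed and $k$ large, so $B_{x,\rho}$ cannot lie entirely in one chamber of $\E_k$, forcing $B_{x,\rho}\cap\pa\E_k\ne\emptyset$. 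Combining the two inclusions yields $\hd_{A'}(\pa\E_k,\pa\E)\to0$.

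For the refinement \eqref{boundary haudorff interfaces} one argues similarly but chamber by chamber. Fix $0\le i<j\le N$ and $x\in\E(i,j)=\pa^*\E(i)\cap\pa^*\E(j)$ (a relatively open dense subset of $\cl(\E(i)\cap\pa\E(j))$ by Corollary \ref{corollary interfaces}); by the two-sided density estimate \eqref{density ndim} for both $\E(i)$ and $\E(j)$, for $k$ large both $|\E_k(i)\cap B_{x,\rho}|$ and $|\E_k(j)\cap B_{x,\rho}|$ are bounded below, hence $B_{x,\rho}$ meets $\pa^*\E_k(i)\cap\pa^*\E_k(j)$; letting $\rho\to0$ along a diagonal argument gives that interface points of $\E$ are limits of interface points of $\E_k$. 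Conversely, if $y_k\in\pa\E_k(i)\cap\pa\E_k(j)\cap A'$, $y_k\to y$, then by the same argument (together with the upper density bound in \eqref{density ndim}, which prevents either chamber from filling $B_{y,\rho}$) one gets that both $\E(i)$ and $\E(j)$ have positive density at $y$ from below, forcing $y\in\pa\E(i)\cap\pa\E(j)$; here one uses a small amount of care because $\pa\E_k(i)\cap\pa\E_k(j)$ is not assumed reduced, but since all the sets are normalized via \eqref{spt normalization} the topological boundary agrees with the closure of the reduced boundary.

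For \eqref{inclusions bordi}, suppose there are $x_k\in\S_{A'}(\E_k)=A'\cap(\pa\E_k\setminus\pa^*\E_k)$ with $\dist(x_k,\S_A(\E))\ge\e$ along a subsequence, and $x_k\to x\in\cl(A')\cap\pa\E$ (using the Hausdorff convergence just proved) with $\dist(x,\S_A(\E))\ge\e$. Then $x\in\pa^*\E$, so by step two in the proof of Corollary \ref{corollary interfaces} there is a cylinder $\C^\nu_{x,r}\cc A$ in which $\E$ has exactly two chambers and $\pa\E\cap\C^\nu_{x,r}$ is a $C^1$ graph with small excess; by the excess-continuity argument of Lemma \ref{lemma fondamentale} (or directly \cite[Proposition 22.6]{maggiBOOK}), $\exc^\nu_{x_k,r/2}(\E_k(h))+\Lambda(r/2)\le\e_*(n)$ for the relevant chamber and $k$ large, so Theorem \ref{thm small excess criterion} gives that $\pa\E_k$ is a $C^{1,\a}$ graph in $\C^\nu_{x_k,r/4}$, whence $x_k\in\pa^*\E_k$, a contradiction.

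The main obstacle is the reverse Hausdorff inclusion and the interface-convergence statement \eqref{boundary haudorff interfaces}: one must rule out that an interface point of the limit is approximated only by boundary points of $\E_k$ separating a \emph{different} pair of chambers. This is handled precisely by the two-sided (not merely lower) density estimate \eqref{density ndim} applied to \emph{both} chambers simultaneously — the lower bound forces both chambers of $\E_k$ to be present near $x$, and the upper bound forbids either from dominating — so any boundary point of $\E_k$ near $x$ must, after passing to the limit, lie on $\pa\E(i)\cap\pa\E(j)$. The remaining steps are routine consequences of the infiltration lemma and the small-excess criterion.
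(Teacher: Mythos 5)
Your treatment of the two inclusions giving $\hd_{A'}(\pa\E_k,\pa\E)\to0$, of the direction ``$y_k\in\pa\E_k(i)\cap\pa\E_k(j)$, $y_k\to y$ implies $y\in\pa\E(i)\cap\pa\E(j)$'' (which only needs chamber-wise intermediate measure for the limit, so the density estimates \eqref{density ndim} for $\E_k$ plus $L^1$-convergence do suffice), and of \eqref{inclusions bordi} is essentially sound and close in spirit to the paper's proof; in the last step you should, as the paper does, first invoke Lemma \ref{lemma infiltration} at $x_k$ to remove all chambers other than $h,h'$ from a fixed ball $B_{x_k,r_*}$, so that $\E_k(h)$ is a $(\Lambda,r_0)$-minimizing \emph{set} there and Theorem \ref{thm small excess criterion} becomes applicable, but this is a routine repair.

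The genuine gap is in the inclusion $A'\cap\pa\E(i)\cap\pa\E(j)\subset I_\e\big(\pa\E_k(i)\cap\pa\E_k(j)\big)$, exactly the point you single out as ``the main obstacle''. Knowing that $|\E_k(i)\cap B_{x,\rho}|$ and $|\E_k(j)\cap B_{x,\rho}|$ are both bounded below (and above) only shows that each of $\pa\E_k(i)$ and $\pa\E_k(j)$ meets $B_{x,\rho}$; it does not produce a point of $\pa\E_k(i)\cap\pa\E_k(j)$, since a region of a third chamber $\E_k(m)$ could a priori separate the two chambers inside the ball — the density estimate carries no information about \emph{which} interfaces are realized there. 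At a point $x\in\E(i,j)$ of the reduced interface this can be repaired by infiltration: since $|\E(m)\cap B_{x,r}|=0$ for $m\ne i,j$ (step two of Corollary \ref{corollary interfaces}), $L^1$-convergence and Lemma \ref{lemma infiltration} give $|\E_k(m)\cap B_{x,r/2}|=0$ for large $k$, hence $\pa\E_k(i)\cap B_{x,r/2}=\pa\E_k(j)\cap B_{x,r/2}$, which is nonempty by your intermediate-measure argument. But \eqref{boundary haudorff interfaces} is a statement about the \emph{topological} interfaces, and $\pa\E(i)\cap\pa\E(j)$ also contains points (e.g.\ singular points where a third chamber of $\E$ has density bounded away from zero) at which infiltration cannot remove the third chamber; your ``diagonal argument'' over $x\in\E(i,j)$ never reaches such points unless one separately proves $A\cap\pa\E(i)\cap\pa\E(j)\subset\cl(\E(i,j))$, which is not available at this stage. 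The paper's proof takes a different route here: it argues by contradiction at an arbitrary $x\in A'\cap\pa\E(i)\cap\pa\E(j)$, deducing from $B_{x,\e}\cap\pa\E_k(i)\cap\pa\E_k(j)=\emptyset$ and \eqref{spt normalization} that one of the two chambers $\E_k(i),\E_k(j)$ must have measure $0$ or $\om_n\e^n$ in the ball, and then lets $k\to\infty$ to contradict $x\in\pa\E(i)\cap\pa\E(j)$; no reduction to reduced-interface points and no density lower bound is involved. So the assertion in your closing paragraph that the two-sided density estimate ``handles precisely'' this obstacle is where the argument breaks down.
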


\begin{remark}
  {\rm We are not able, in general, to prove the inclusion $\S_{A'}(\E)\subset I_\e(\S_A(\E_k))$ for $k$ large, and thus infer the full Hausdorff convergence $\S_A(\E_k)$ to $\S_A(\E)$ in every $A'\cc A$. We can achieve this if $n=2$, see Theorem \ref{thm singular hausdorff} below, and if $n=3$, see \cite{CiLeMaIC2}.}
\end{remark}

\begin{remark}
  {\rm Note that if $A'\cap\pa\E(i)\cap\pa\E(j)=\emptyset$, then \eqref{boundary haudorff interfaces} forces $A'\cap\pa\E_k(i)\cap\pa\E_k(j)=\emptyset$ for every $k$ large enough. Indeed, $\hd_{A'}(\emptyset,T)=0$ if $T\cap A'=\emptyset$, with $\hd_{A'}(\emptyset,T)=+\infty$ whenever $T\cap A'\ne\emptyset$.}
\end{remark}

\begin{proof}[Proof of Theorem \ref{thm boundary hausdorff}] {\it Step one}: We prove \eqref{boundary haudorff interfaces}. To this end, let us fix $0\le i<j\le N$, set
\[
S^k_{i,j}=\pa\E_k(i)\cap\pa\E_k(j)\,,\qquad S_{i,j}=\pa\E(i)\cap\pa\E(j)\,,
\]
and show that for every $\e>0$ there exists $k_0\in\N$ such that
\begin{eqnarray}
  \label{taco1}
  A'\cap S^k_{i,j}\subset I_\e(S_{i,j})\,,\qquad A'\cap S_{i,j}\subset I_\e(S^k_{i,j})\,,\qquad\forall k\ge k_0\,.
\end{eqnarray}
We prove the first inclusion in \eqref{taco1} by contradiction. Let us consider $x_k\in A'\cap S^k_{i,j}$ with $\dist(x_k,S_{i,j})>\e$ for every $k\in\N$. (Note that if $S_{i,j}=\emptyset$, then $\dist(x,S_{i,j})=+\infty$ for every $x\in\R^n$ and contradicting the first inclusion in \eqref{taco1} exactly amounts in saying that $A'\cap S^k_{i,j}\ne\emptyset$.)
Up to extracting subsequences, we may assume that $x_k\to x$ for some $x\in\cl(A')\subset A$. Since $\dist(x,S_{i,j})\ge \e$, by \eqref{spt normalization} there exists $r_x<\dist(x,\pa A)$ such that
\begin{eqnarray*}
  \mbox{either}\qquad|B_{x,r_x}\cap\E(i)|=0\,,\qquad\mbox{or}\qquad|B_{x,r_x}\cap\E(i)|=\om_n\,r_x^n\,,
  \\
  \mbox{or}\qquad|B_{x,r_x}\cap\E(j)|=0\,,\qquad\mbox{or}\qquad|B_{x,r_x}\cap\E(j)|=\om_n\,r_x^n\,.
\end{eqnarray*}
For $r_1$ as in Lemma \ref{lemma infiltration}, let $s_x=\min\{r_x,r_1\}/2$, then for $k\ge k_0$ one has
\begin{eqnarray*}
  \mbox{either}\qquad|B_{x_k,2s_x}\cap\E_k(i)|<\eta_0\,(2s_x)^n\,,\qquad\mbox{or}\qquad|B_{x_k,2s_x}\cap\E_k(i)|>(\om_n-\eta_0)\,(2s_x)^n\,,
  \\
  \mbox{or}\qquad|B_{x_k,2s_x}\cap\E_k(j)|<\eta_0\,(2s_x)^n\,,\qquad\mbox{or}\qquad|B_{x_k,2s_x}\cap\E_k(j)|>(\om_n-\eta_0)\,(2s_x)^n\,,
\end{eqnarray*}
and thus, by Lemma \ref{lemma infiltration},
\begin{eqnarray*}
  \mbox{either}\qquad|B_{x_k,s_x}\cap\E_k(i)|=0\,,\qquad\mbox{or}\qquad|B_{x_k,s_x}\cap\E_k(i)|=\om_n\,s_x^n\,,
  \\
  \mbox{or}\qquad|B_{x_k,s_x}\cap\E_k(j)|=0\,,\qquad\mbox{or}\qquad|B_{x_k,s_x}\cap\E_k(j)|=\om_n\,s_x^n\,.
\end{eqnarray*}
By \eqref{spt normalization}, $x_k\in A'\setminus S^k_{i,j}$ for $k$ large, a contradiction. We now prove the second inclusion in \eqref{taco1}: by contradiction, there exist $x\in A'\cap S_{i,j}$ and $\e>0$ such that $B_{x,\e}\cap S^k_{i,j}=\emptyset$, i.e., by \eqref{spt normalization},
\begin{eqnarray*}
  \mbox{either}\qquad|B_{x,\e}\cap\E_k(i)|=0\,,\qquad\mbox{or}\qquad|B_{x,\e}\cap\E_k(i)|=\om_n\,\e^n\,,
  \\
  \mbox{or}\qquad|B_{x,\e}\cap\E_k(j)|=0\,,\qquad\mbox{or}\qquad|B_{x,\e}\cap\E_k(j)|=\om_n\,\e^n\,,
\end{eqnarray*}
for infinitely many values of $k$; by letting $k\to\infty$ along such values we thus find that $x\not\in S_{i,j}$.

\medskip

\noindent {\it Step two}: We prove \eqref{inclusions bordi}. Should \eqref{inclusions bordi} fail, we could find $\e>0$ and $x_k\in \S_{A'}(\E_k)$ with $\dist(x_k,\S(\E))>\e$ for infinitely many $k\in\N$. By step one, up to extracting subsequences, $x_k\to x$ as $k\to\infty$ for some $x\in A\cap\pa\E$. Since $\dist(x,\S(\E))\ge\e$, we have $x\in\pa^*\E$. By step two in the proof of Corollary \ref{corollary interfaces}, there exist $0\le h<h'\le N$ and $2\,r_*<\min\{r_1,\dist(x,\pa A)\}$ such that $x\in\E(h,h')$ and $B_{x,2\,r_*}\subset \E(h)\cup\E(h')$. Hence, for some $k_0\in\N$ we have
  \[
  |\E_k(h)\cap B_{x_k,2\,r_*}|+|\E_k(h')\cap B_{x_k,2\,r_*}|\ge(\om_n-\eta_0)\,r_*^n\,,\qquad\forall k\ge k_0\,.
  \]
By Lemma \ref{lemma infiltration}, $\E_k(j)\cap B_{x_k,r_*}=\emptyset$ for every $k\ge k_0$ and $j\ne h,h'$, so that $\E_k(h)$ is a $(\Lambda,r_0)$-minimizing set in $B_{x_k,r_*}$. By arguing as in Lemma \ref{lemma fondamentale} we find that
\begin{equation}
  \label{birthday1}
  \exc^\nu_{x,r}(\E(h))=\lim_{k\to\infty}\exc^\nu_{x_k,r}(\E_k(h))\,,\qquad\mbox{for a.e. $r<r_*$}\,.
\end{equation}
Since $x\in\E(h,h')$, by Remark \ref{remark reduced boundary} there exist $r_{**}<\min\{r_*,r_0\}$ and $\nu\in \SS^{n-1}$ such that
\begin{equation}
  \label{birthday2}
  \exc_{x,r_{**}}^\nu(\E(h))+\Lambda\,r_{**}\le\frac{\e_*(n)}{2^n}\,,
\end{equation}
where $\e_*(n)$ is defined as in Theorem \ref{thm small excess criterion}.
By \eqref{birthday1} and \eqref{birthday2} we conclude that, for some $r\in(r_{**}/2,r_{**})$ and up to increasing $k_0$, $\exc^\nu_{x_k,r}(\E_k(h))+\Lambda\,r<\e_*$ for every $k\ge k_0$. By Theorem \ref{thm small excess criterion}, $B_{x_k,r/2}\cap\pa^*\E_k(h)$ is a $C^{1,\a}$-hypersurface, against $x_k\in\S_{A'}(\E_k)$.
\end{proof}

We now set
\[
[\pa\E]_\rho=(A\cap\pa\E)\setminus I_\rho(\S_A(\E))\,,
\]
recall the definition \eqref{Nespilon} of normal $\e$-neighborhood $N_\e(S)$ of a manifold $S\subset\R^n$, and then combine Theorem \ref{thm small excess criterion} and Theorem \ref{thm boundary hausdorff} to obtain the following weak improved convergence theorem.

\begin{theorem}[Normal representation theorem]\label{thm normal representation part one}
  If $\Lambda\ge0$, $\a\in(0,1)$, and $\E$ is a $N$-cluster in $A\subset\R^n$ such that $\pa^*\E$ is a $C^{2,1}$-hypersurface, then there exist positive constants $\rho_0$ (depending on $\E$) and $C$ (depending on $\a$, $\Lambda$, and $\E$) with the following property.

  If \eqref{ipotesi ripetuta mille volte} holds, then for every $A'\cc A$ and $\rho<\rho_0$ there exist $k_0\in\N$, $\e\in(0,\rho)$, $\Om$ open with $A'\cc\Om\cc A$, and $\{\psi_k\}_{k\ge k_0}\subset C^{1,\a}(\Om\cap[\pa\E]_\rho)$ such that
  \begin{equation}
    \label{zetak parametrizzano}
      (A'\cap\pa\E_k)\setminus I_{2\rho}(\S_A(\E))\subset (\Id+\psi_k\nu_\E)(\Om\cap[\pa\E]_\rho) \subset\pa^*\E_k\,,
  \end{equation}
  \begin{equation}
    \label{zetak normale para}
      N_\e(A'\cap[\pa\E]_{\rho})\cap\pa\E_k=(\Id+ \psi_k\,\nu_\E)(A'\cap[\pa\E]_\rho)\,,
  \end{equation}
  with
  \begin{equation}
    \label{zetak tendono a zero}
      \lim_{k\to\infty}\|\psi_k\|_{C^1(\Om\cap[\pa\E]_\rho)}=0\,,\qquad \sup_{k\ge k_0}\|\psi_k\|_{C^{1,\a}(\Om\cap[\pa\E]_\rho)}\le C\,.
  \end{equation}
  Moreover, when $n=2$ one can set $\a=1$ in this statement.
\end{theorem}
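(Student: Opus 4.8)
The plan is to glue together the local normal-graph representations provided by Lemma \ref{lemma fondamentale} into a single function $\psi_k$ on a neighbourhood of $A'\cap[\pa\E]_\rho$, using Theorem \ref{thm boundary hausdorff} to control where $\pa\E_k$ sits and to make the local pieces consistent. Concretely I would fix $\rho<\rho_0$ (with $\rho_0=\rho_0(\E)$ to be chosen) and $A'\cc A$, pick a bounded open $\Om_0$ with $A'\cc\Om_0\cc A$, and work on the compact set $K:=\cl(\Om_0)\cap[\pa\E]_\rho\subset\pa^*\E$. For each $x\in K$, Step two in the proof of Corollary \ref{corollary interfaces} gives indices $0\le h<h'\le N$ and a ball $B$ centred at $x$ with $B\cc A$, $x\in\E(h,h')$, $|\E(j)\cap B|=0$ for $j\ne h,h'$, and $B\cap\pa\E=B\cap\pa\E(h)=B\cap\pa^*\E$; in particular $\E(h)$ is $(\Lambda,r_0)$-minimizing in $B$. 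Since $\pa^*\E$ is $C^{2,1}$, after rotating $\nu_\E(x)$ onto $e_n$, translating $x$ to the origin and rescaling, $\pa\E\cap B$ is the graph of some $v\in C^{2,1}(\D_1)$ with $v(0)=0$, $\nabla v(0)=0$ and $\|v\|_{C^{2,\a}(\D_1)}$ bounded uniformly in $x\in K$ in terms of $\E$ (this uses the $C^{2,1}$-regularity of $\pa^*\E$). Taking $\beta=\a$ (and $\beta=\a=1$ when $n=2$, legitimate because $v\in C^{2,1}$), and noting that $\d_A(\E_k,\E)\to0$ gives $|B\cap(\E_k(h)\,\Delta\,\E(h))|\to0$, Lemma \ref{lemma fondamentale} applies with $E=\E(h)$, $E_k=\E_k(h)$. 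It furnishes a radius $r_x>0$ — which by the uniform bound on $\|v\|_{C^{2,\a}(\D_1)}$ may be taken equal to a single $r_*=r_*(\a,\Lambda,\E)>0$ — together with $k_x\in\N$ and, for $k\ge k_x$, a function $\psi_k^x\in C^{1,\a}(\C^{\nu_\E(x)}_{x,2r_*}\cap\pa\E)$ satisfying
\[
\C^{\nu_\E(x)}_{x,r_*}\cap\pa\E_k\subset(\Id+\psi_k^x\,\nu_\E)(\C^{\nu_\E(x)}_{x,2r_*}\cap\pa\E)\subset\pa\E_k\,,
\]
with $\sup_{k\ge k_x}\|\psi_k^x\|_{C^{1,\a}}\le C$ and $\lim_{k\to\infty}\|\psi_k^x\|_{C^1}=0$, $C=C(\a,\Lambda,\E)$.

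Next I would extract a finite subcover $K\subset\bigcup_{i=1}^M\C^{\nu_\E(x_i)}_{x_i,r_*/2}$ and use Theorem \ref{thm boundary hausdorff} — Hausdorff convergence of $\pa\E_k$ to $\pa\E$ on $\cl(\Om_0)$, and $\S_{A''}(\E_k)\subset I_{\rho/2}(\S_A(\E))$ for $k$ large, with $A'\cc A''\cc\Om_0$ — to fix $k_0\ge\max_ik_{x_i}$ so that for $k\ge k_0$: (a) $(A'\cap\pa\E_k)\setminus I_{2\rho}(\S_A(\E))\subset\bigcup_i\C^{\nu_\E(x_i)}_{x_i,r_*}$ and this set lies in $\pa^*\E_k$; (b) all the graphs $(\Id+\psi_k^{x_i}\nu_\E)(\,\cdot\,)$ lie in a fixed thin normal tube $N:=N_{\e_1}(K')$ over a slight enlargement $K'$ of $K$, with $\e_1<\rho$ small enough (depending on the $C^2$-geometry of $\pa^*\E$ and on $\rho$) that the nearest-point projection $\pi$ onto $\pa\E$ is single-valued and $C^{1,1}$ on $N$, and $N$ avoids $\S_A(\E)$. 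The crucial gluing claim is $\psi_k^{x_i}=\psi_k^{x_j}$ on $\C^{\nu_\E(x_i)}_{x_i,2r_*}\cap\C^{\nu_\E(x_j)}_{x_j,2r_*}\cap[\pa\E]_\rho$ for $k\ge k_0$: given $z$ in this overlap, both $z+\psi_k^{x_i}(z)\nu_\E(z)$ and $z+\psi_k^{x_j}(z)\nu_\E(z)$ lie in $\pa\E_k\cap N$, and since $\|\psi_k^{x_i}\|_{C^1}\to0$ the first is, by the inclusion $\C^{\nu_\E(x_i)}_{x_i,r_*}\cap\pa\E_k\subset(\Id+\psi_k^{x_i}\nu_\E)(\cdots)$ and the uniqueness of normal coordinates in a thin tube, the only point of $\pa\E_k$ near $z$ lying in $\C^{\nu_\E(x_i)}_{x_i,r_*}$; for $k$ large the second point lies in that cylinder too, forcing equality. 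This is the same principle used at the end of the proof of Lemma \ref{lemma facile}. Hence $\psi_k:=\psi_k^{x_i}$ on $\C^{\nu_\E(x_i)}_{x_i,r_*/2}$ is a well-defined $C^{1,\a}$ function on $\Om\cap[\pa\E]_\rho$, where $\Om$ is any open set with $A'\cc\Om\cc A$, $\Om\cap[\pa\E]_\rho\subset\bigcup_i\C^{\nu_\E(x_i)}_{x_i,r_*/2}$, and $\Om$ containing a neighbourhood of $\cl(A')\cap[\pa\E]_\rho$ (such $\Om$ exists by routine point-set topology).

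Finally I would read off the three conclusions. For \eqref{zetak parametrizzano}: a point $y\in(A'\cap\pa\E_k)\setminus I_{2\rho}(\S_A(\E))$ lies in some $\C^{\nu_\E(x_i)}_{x_i,r_*}$ by (a), hence $y=w+\psi_k^{x_i}(w)\nu_\E(w)$ for some $w\in\pa\E$ which, being close to $y$, lies in $\Om\cap[\pa\E]_\rho$, and $\psi_k(w)=\psi_k^{x_i}(w)$ by gluing, so $y$ belongs to the middle set; the second inclusion into $\pa^*\E_k$ follows from the local inclusions into $\pa\E_k$ together with the fact that the image points are $C^{1,\a}$-points staying $\ge\rho/2$ from $\S_A(\E)$, hence outside $\S_{A''}(\E_k)$. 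For \eqref{zetak normale para}: pick $\e\in(0,\rho)$ small so that $N_\e(A'\cap[\pa\E]_\rho)$ is a genuine tubular neighbourhood contained in $N$; the inclusion $\supset$ holds once $\|\psi_k\|_{C^0}<\e$ (true for $k$ large), and for $\subset$ a point $y\in N_\e(A'\cap[\pa\E]_\rho)\cap\pa\E_k$ has $\pi(y)=:z\in A'\cap[\pa\E]_\rho$ and, for $k$ large, lies in some $\C^{\nu_\E(x_i)}_{x_i,r_*}$, so $y=w+\psi_k^{x_i}(w)\nu_\E(w)$ with $w\in\pa\E$, and thinness of the tube forces $w=z$, i.e. $y=z+\psi_k(z)\nu_\E(z)$. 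Bound \eqref{zetak tendono a zero}: the $C^1$-limit is the maximum over the finite cover of $\|\psi_k^{x_i}\|_{C^1}\to0$, while for the $C^{1,\a}$-bound one splits pairs $x,y\in\Om\cap[\pa\E]_\rho$ into those with $|x-y|<r_*/4$ (which lie in a common cylinder $\C^{\nu_\E(x_i)}_{x_i,2r_*}$, where $\psi_k=\psi_k^{x_i}$ by gluing and the uniform $C^{1,\a}$-bound applies) and those with $|x-y|\ge r_*/4$ (bounded crudely by $2\|\nabla\psi_k\|_{C^0}(r_*/4)^{-\a}$), both controlled by a constant $C=C(\a,\Lambda,\E)$; the case $n=2$, $\a=1$ runs identically since Lemma \ref{lemma fondamentale} applies with $\a=\beta=1$.

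The step I expect to be the main obstacle is the gluing/thin-tube argument underpinning the consistency of the $\psi_k^{x_i}$ and the exact normal representation \eqref{zetak normale para}: it must be fed simultaneously by the $C^1$-smallness of the $\psi_k^{x_i}$ (Lemma \ref{lemma fondamentale}), by the Hausdorff convergence and the localization $\S_{A''}(\E_k)\subset I_{\rho/2}(\S_A(\E))$ (Theorem \ref{thm boundary hausdorff}), and by the $C^2$-regularity of $\pa^*\E$ (which provides the tubular neighbourhood), and the delicate part is ordering the choices of $\rho_0$, $\e$, $k_0$ and $\Om$ correctly while keeping the cylinder radius $r_*$ and the final constant $C$ controlled in terms of $\a$, $\Lambda$ and $\E$ only — for which the uniform $C^{2,1}$-control of $\pa^*\E$ is what makes the finite-cover bookkeeping harmless.
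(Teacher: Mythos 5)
Your proposal follows essentially the same route as the paper: local two-chamber graph representation around points of the compact set $\cl(\Om_0)\cap[\pa\E]_\rho$, application of Lemma \ref{lemma fondamentale} to produce local normal graphs $\psi_k^{x_i}$, a finite subcover, consistency of the local pieces by the same uniqueness-in-a-thin-normal-tube principle used at the end of the proof of Lemma \ref{lemma facile}, and then \eqref{zetak parametrizzano}, \eqref{zetak normale para} and the bounds via Theorem \ref{thm boundary hausdorff} and the tubular-neighbourhood choice of $\e(\rho)$; this is exactly the paper's argument, including the case $n=2$, $\a=\b=1$.

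The one step you glossed over is the verification of the hypothesis of Lemma \ref{lemma fondamentale} for the \emph{approximating} clusters: the lemma requires $\{E_k\}$ to be $(\Lambda,r_0)$-minimizing \emph{sets} in $B_{32r}$, and the almost-minimality of the cluster $\E_k$ does not by itself make the single chamber $\E_k(h)$ an almost-minimizing set in the ball. Your argument only establishes the two-chamber structure (hence set-minimality) for the limit $\E$. The paper fills this in by applying the infiltration Lemma \ref{lemma infiltration} to $\E_k$: since $|\E_k(j)\cap B_{x,64r_x}|\to|\E(j)\cap B_{x,64r_x}|=0$ for $j\ne h_x,h_x'$, one gets $|\E_k(j)\cap B_{x,32r_x}|=0$ for $k$ large, so that $\E_k(h_x)$ is indeed a $(\Lambda,r_0)$-minimizing set in $B_{x,32r_x}$ and Lemma \ref{lemma fondamentale} applies. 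This is a one-line fix, but without it the key lemma cannot be invoked, so it should be stated explicitly.
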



\begin{proof}
 Since $\pa^*\E$ is a $C^{2,1}$-hypersurface, for every $x\in\pa^*\E$ there exist $r_x>0$, $\nu_x\in\SS^{n-1}$ and $v_x\in C^{2,1}(\D^{\nu_x}_{x,64\,r_x})$ with $v_x(x)=0$, $\nabla v_x(x)=0$, and
 \begin{equation}
   \label{vai2}
    \pa\E\cap \C^{\nu_x}_{x,64\,r_x}=(\Id+v_x\,\nu_x)(\D^{\nu_x}_{x,64\,r_x})\,,\qquad \C^{\nu_x}_{x,64\,r_x}\cc A\,.
 \end{equation}
 By Theorem \ref{thm boundary hausdorff}, $\E$ is a $(\Lambda,r_0)$-minimizing cluster in $A$, so that by step two in the proof of Corollary \ref{corollary interfaces} there also exist $0\le h_x<h'_x\le N$ such that, up to further decreasing $r_x$, one has
 \begin{equation}
   \label{vai1}
    |\E(j)\cap \C^{\nu_x}_{x,64\,r_x}|=0\,,\qquad\forall j\ne h_x,h_x'\,,
 \end{equation}
 and thus, taking \eqref{vai2} into account and without loss of generality,
 \begin{equation}
   \label{vai3}
    \C^{\nu_x}_{x,64\,r_x}\cap\E(h_x)=\Big\{z+s\,\nu_x:z\in \D^{\nu_x}_{x,64\,r_x}\,,v_x(z)<s<64\,r_x\Big\}\,.
 \end{equation}
 By Lemma \ref{lemma infiltration} and by \eqref{vai1} there exists $k_x\in\N$ such that
 \begin{equation}
   \label{vai4}
     |\E_k(j)\cap B_{x,32\,r_x}|=0\,,\qquad\forall j\ne h_x,h_x'\,,\qquad\forall k\ge k_x\,,
 \end{equation}
 so that $\E_k(h_x)$ is a $(\Lambda,r_0)$-minimizing set in $B_{x,32\,r_x}$ for $k\ge k_x$. By Lemma \ref{lemma fondamentale} there exist $s_x\in(0,r_x)$ and, up to increasing $k_x$, functions $\psi_{x,k}\in C^{1,\a}(\C^{\nu_x}_{x,2\,s_x}\cap\pa\E(h_x))$ with
 \begin{equation}
   \C^{\nu_x}_{x,s_x}\cap\pa\E_k(h_x)\subset(\Id+\psi_{x,k}\,\nu_{\E_k(h_x)})(\C^{\nu_x}_{x,2\,s_x}\cap\pa\E(h_x))\subset\C^{\nu_x}_{x,4\,s_x}\cap\pa\E_k(h_x)
 \end{equation}
 \begin{equation}
   \sup_{k\ge k_0}\|\psi_{x,k}\|_{C^{1,\a}(\C^{\nu_x}_{x,2\,s_x}\cap\pa\E(h_x))}\le C\,,\qquad\lim_{k\to\infty}
   \|\psi_{x,k}\|_{C^1(\C^{\nu_x}_{x,2\,s_x}\cap\pa\E(h_x))}=0\,,
 \end{equation}
 where $C$ depends on $\a$, $\Lambda$ and $\E$.

 Let $\rho_0>0$ be such that $[\pa\E]_{\rho_0}\ne\emptyset$. For every $\rho\in(0,\rho_0)$ we can find $\{x_i\}_{i=1}^M\subset A'\cap[\pa\E]_{\rho}\subset\pa^*\E$ such that (for $s_i=s_{x_i}$ and $\nu_i=\nu_{x_i}$) one has
 \begin{eqnarray}
  \label{wang 0}
 A'\cap [\pa\E]_\rho\subset\subset\bigcup_{i=1}^M\,\C^{\nu_i}_{x_i,s_i}\,,\qquad \C^{\nu_i}_{x_i,64\,s_i}\cc A\,.
 \end{eqnarray}
 Since $\pa^*\E$ is a $C^2$-hypersurface we can find $\e(\rho)\in(0,\rho)$ such that every point in $N_{\e(\rho)}(A'\cap[\pa\E]_{\rho})$ has a unique projection onto $A'\cap[\pa\E]_\rho$ and
 \begin{equation}
  \label{all1}
  N_{\e(\rho)}(A'\cap [\pa\E]_{\rho})\subset I_{\e(\rho)}(A'\cap[\pa\E]_{\rho})\subset \bigcup_{i=1}^M\,\C^{\nu_i}_{x_i,s_i}\,.
\end{equation}
 By arguing as in the proof of Lemma \ref{lemma facile}, we see that $\psi_{i,k}=\psi_{j,k}$ on $\C^{\nu_i}_{x_i,2\,s_i}\cap\C^{\nu_j}_{x_j,2\,s_j}\cap \pa \E$ for every $i,j$. In particular, if we set
 \[
 \Omega=\bigcup_{i=1}^M\C^{\nu_i}_{x_i,2\,s_i}\,,
 \]
 then we can define $\psi_k\in C^{1,\a}(\Omega\cap\pa\E)$ for every $k\ge k_0=\max\{k_{x_i}:1\le i\le M\}$ by letting $\psi_k=\psi_{x_i,k}$ on $\C^{\nu_i}_{x_i,2\,s_i}\cap\pa\E$. In this way,
 \begin{eqnarray}
  \label{ziozio cantante2}
  \pa\E_k\cap\bigcup_{i=1}^M\C^{\nu_i}_{x_i,s_i}\subset (\Id+\psi_k\nu_\E)(\Omega\cap\pa\E)\subset \pa^*\E_k\,,\qquad\forall k\ge k_0\,,
 \end{eqnarray}
 \begin{equation}
  \label{ziozio cantante}
  \sup_{k\ge k_0}\|\psi_k\|_{C^{1,\a}(\Omega\cap\pa\E)}\le C\,,\qquad\lim_{k\to\infty}\|\psi_k\|_{C^1(\Omega\cap\pa\E)}=0\,.
 \end{equation}
By \eqref{all1}, \eqref{ziozio cantante2}, $A'\cap[\pa\E]_\rho\subset\Omega$, and since $\Id+\psi_k\nu_\E$ is a normal deformation of $\Omega\cap\pa\E$,
\begin{eqnarray*}
N_{\e(\rho)}(A'\cap[\pa\E]_\rho)\cap\pa\E_k&\subset&(\Id+\psi_k\nu_\E)(\Omega\cap\pa\E)\cap N_{\e(\rho)}(A'\cap[\pa\E]_\rho)
\\
&=&(\Id+\psi_k\nu_\E)(A'\cap[\pa\E]_\rho)\subset N_{\e(\rho)}(A'\cap [\pa\E]_\rho)\cap\pa\E_k\,,
\end{eqnarray*}
where the last inclusion follows by the second inclusion in \eqref{ziozio cantante2} provided $\|\psi_k\|_{C^0(\Omega\cap\pa\E)}<\e(\rho)$ for every $k\ge k_0$. This proves \eqref{zetak normale para}. Finally, by Theorem \ref{thm boundary hausdorff}, up to increasing $k_0$, $A'\cap\pa\E_k\subset I_{\e(\rho)}(\pa\E)$ for every $k\ge k_0$, so that $\e(\rho)<\rho$ gives us
\[
(A'\cap\pa\E_k)\setminus I_{2\rho}(\S_A(\E))\subset A'\cap\Big(I_{\e(\rho)}(\pa\E)\setminus I_{2\rho}(\S_A(\E))\Big)\subset A'\cap I_{\e(\rho)}([\pa\E]_\rho)\subset I_{\e(\rho)}(A'\cap[\pa\E]_\rho)\,.
\]
By combining this last inclusion with \eqref{all1} we find that
\[
(A'\cap\pa\E_k)\setminus I_{2\rho}(\S_A(\E))\subset \pa\E_k\cap \bigcup_{i=1}^M\C^{\nu_i}_{x_i,s_i}\,,
\]
and thus deduce \eqref{zetak parametrizzano} from \eqref{ziozio cantante2}.
\end{proof}

\subsection{Blow-ups of $(\Lambda,r_0)$-minimizing clusters}\label{section blowups clusters} If $\E$ is a $N$-cluster in $A$ and $x\in A$, then the blow-up of $\E$ at $x$ at scale $r>0$ is the $N$-cluster $\E_{x,r}$ in $(A-x)/r$
defined by setting
\[
\E_{x,r}(h)=\frac{\E(h)-x}r\,,\qquad 1\le h\le N\,.
\]
We set
\[
\theta(\pa\E,x,r)=\frac{P(\E;B_{x,r})}{r^{n-1}}=\theta(\pa\E_{x,r},0,1)\,,\qquad \theta(\pa\E,x)=\lim_{r\to 0^+}\theta(\pa\E,x,r)\,,
\]
provided this last limit exists. By a classical argument based on comparison with cones (see, for example \cite[Theorem 28.4]{maggiBOOK}), one sees that if $\E$ is a $(\Lambda,r_0)$-minimizing $N$-cluster in $A$, $x\in A\cap\pa\E$, and $r_*\in(0,r_0)$ is such that $\om_n\,r_*^n<\min\{|\E(h)\cap A|:1\le h\le N\}$, then
  \begin{equation}
  \label{monotonicity}
  \theta(\pa\E,x,r)\,e^{(n-1)\,\om_n\,\Lambda\,r}\qquad \mbox{ is increasing on $(0,r_*)$}\,,
  \end{equation}
so that $\theta(\pa\E,x)$ is defined for every $x\in A\cap\pa\E$. Moreover, the same argument shows that if $\Lambda=0$ and $\theta(\pa\E,x,r)$ is constant on $r\in(0,r_*)$, then $B_{x,r_*}\cap \pa\E$ is a cone with vertex at $x$. Now let us say that a $M$-cluster $\K$ in $\R^n$ is a {\it cone-like minimizing cluster} if $\K(i)$ is an open cone with vertex at the origin for each $i=1,...,M$, $|\K(0)|=|\R^n\setminus\bigcup_{i=1}^M\K(i)|=0$, and
\begin{equation}
  \label{puppa min}
  P(\K;B_R)\le P(\F;B_R)\,,
\end{equation}
whenever $R>0$ and $\F$ is an $M$-cluster in $\R^n$ with $\F(i)\Delta\K(i)\cc B_R$ for every $i=1,...,M$. Moreover, given a $N$-cluster $\E$ in $A$ and an injective map $\s:\{1,...,M\}\to\{0,...,N\}$, let us denote by $\s(\E)$ the $M$-cluster in $A$ defined by setting
\[
\s\E(i)=\E(\s(i))\,,\qquad i=1,...,M\,.
\]

\begin{theorem}[Tangent cone-like minimizing clusters]\label{theorem tangent clusters}
  If $\E$ is a $(\Lambda,r_0)$-minimizing $N$-cluster in $A$, $x\in A\cap\pa\E$, and $s_j\to 0$ as $j\to\infty$, then there exist a subsequence $\{s_j'\}_{j\in\N}$ and a cone-like minimizing $M$-cluster $\K$ (with  $2\le M\le N$) such that $\theta(\pa\E,x)=\theta(\pa\K,0)$ and
  \begin{eqnarray}\label{blowup limit}
  \lim_{j\to\infty}\d_{B_R}\big(\s\E_{x,s_j'},\K\big)=0\,,\qquad\forall R>0\,.
  \end{eqnarray}
  for some injective map $\s:\{1,\dots,M\}\to\{0,\dots,N\}$. (Note that given $R>0$ one has $B_R\cc(A-x)/r$ as soon as $r$ is small enough.) Moreover, $x\in\S_A(\E)$ if and only if $0\in\S(\K)$.
\end{theorem}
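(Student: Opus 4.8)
The plan is to prove Theorem \ref{theorem tangent clusters} by a compactness argument on the rescaled clusters $\s\E_{x,s_j}$, the usual machinery for blow-ups of almost-minimizers, combined with the monotonicity formula \eqref{monotonicity} to identify the limit as a cone. First I would normalize, setting $x=0$ for notational ease, and observe that since $x\in A\cap\pa\E$, by \eqref{spt normalization} the densities $\theta(\pa\E,0,r)$ stay bounded above for $r<r_*$ (from $P(\E;B_r)\le n\om_n r^{n-1}+\Lambda\om_n r^n$, as in the proof of Corollary \ref{corollary interfaces}) and bounded below by the lower density estimate \eqref{density n-1dim}; together with \eqref{monotonicity} this shows $\theta(\pa\E,0):=\lim_{r\to0^+}\theta(\pa\E,0,r)$ exists and is a positive finite number. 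The rescaled clusters $\E_{0,s_j}$ are $(\Lambda s_j, r_0/s_j)$-minimizing clusters in $(A)/s_j$ with uniformly bounded perimeter on each fixed ball $B_R$; by the standard compactness theorem for almost-minimizing clusters (the Almgren-type compactness recalled in \cite[Chapter 29]{maggiBOOK}), along a subsequence $\{s_j'\}$ each chamber $\E_{0,s_j'}(h)$ converges in $L^1_{loc}$, and since $\E$ has only $N$ chambers some of them may disappear in the limit (their relative density near $0$ going to zero): retaining the chambers with positive limiting density and relabeling them by an injective $\s:\{1,\dots,M\}\to\{0,\dots,N\}$ with $2\le M\le N$ (at least two, since $0\in\pa\E$ forces two chambers to have positive density there by the density estimates in step two of Corollary \ref{corollary interfaces}), I obtain a limit $M$-cluster $\K$ with \eqref{blowup limit}.

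Next I would verify $\K$ is cone-like minimizing. Minimality \eqref{puppa min} follows from passing to the limit in the almost-minimality inequality \eqref{almost-minimizer cluster} for $\E_{0,s_j'}$: the error term $\Lambda s_j' \,\d(\cdot,\cdot)$ vanishes as $j\to\infty$ on any fixed competitor supported in $B_R$, and perimeters pass to the limit correctly (lower semicontinuity for $\K$, and convergence $P(\E_{0,s_j'};B_R)\to P(\K;B_R)$ for a.e. $R$ because there is no perimeter loss — this is where one uses that $\E$ is genuinely almost-minimizing and the density lower bounds prevent collapse). That $\K(i)$ is a cone with vertex at the origin is the classical consequence of the $\Lambda=0$ case of the monotonicity formula: since $\theta(\pa\E_{0,s_j'},0,t)\,e^{(n-1)\om_n\Lambda s_j' t}$ is increasing and converges pointwise to $\theta(\pa\E,0)$ for every $t>0$, the limit $\K$ satisfies $\theta(\pa\K,0,t)\equiv\theta(\pa\E,0)$, constant in $t$, hence $\pa\K$ is a cone by the rigidity in \eqref{monotonicity} (the $\Lambda=0$ statement following it); and $|\K(0)|=|\R^n\setminus\bigcup_i\K(i)|=0$ because this property is preserved under $L^1_{loc}$ convergence of clusters.

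For the final equivalence, "$0\in\S_A(\E)$ iff $0\in\S(\K)$", the forward direction is the contrapositive: if $0\in\pa^*\E$, then by step two of Corollary \ref{corollary interfaces} and Theorem \ref{thm small excess criterion}, $\pa\E$ is a $C^{1,\a}$-graph near $0$ with a well-defined tangent plane, so the only possible blow-up limit $\K$ is the half-hyperplane configuration (two half-spaces meeting along $T_0\pa\E$), which has $0\notin\S(\K)$; here one uses that $\theta(\pa\E,0)=1$ exactly when $0\in\pa^*\E$, and $\theta(\pa\K,0)=1$ forces $\K$ to be a hyperplane-type cluster. Conversely, if $0\notin\S(\K)$ then $\theta(\pa\K,0)=1$, whence $\theta(\pa\E,0)=1$, and by the small-excess criterion (applied to the two chambers of positive density, whose excess at scale $r$ tends to $0$ as $r\to0$ because the rescaled boundaries converge to a hyperplane) one concludes $0\in\pa^*\E$.

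The main obstacle I expect is the no-perimeter-loss / identification-of-$M$ step: ensuring that along the blow-up sequence the perimeters converge (not merely lower-semicontinuously) so that $\K$ inherits both the minimality \eqref{puppa min} and the exact density $\theta(\pa\K,0)=\theta(\pa\E,0)$, and simultaneously keeping careful track of which chambers survive with positive density so that the relabeling map $\s$ is well-defined and $M\ge 2$. This is precisely the delicate point handled by Almgren's restoration-of-compactness techniques and the density estimates from Lemma \ref{lemma infiltration}; the rest of the argument is the standard monotonicity-plus-regularity-criterion package already assembled in the preceding subsections.
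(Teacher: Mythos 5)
Your argument is correct and is exactly the classical blow-up package (compactness of the rescaled almost-minimizing clusters, no-perimeter-loss to transfer minimality and the density $\theta$, monotonicity rigidity to get the cone, and the density/small-excess characterization of singular points) that the paper itself invokes: its proof of Theorem \ref{theorem tangent clusters} is a one-line citation of the analogous statement for $(\Lambda,r_0)$-minimizing sets in \cite[Theorem 28.6]{maggiBOOK}. So you have written out, at the level of a sketch, the same route the authors take by reference, and the delicate points you flag (perimeter convergence via cut-and-paste comparison, survival of at least two chambers via the infiltration/density estimates) are precisely the ones that cited classical argument handles.
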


\begin{proof}
  Once again this follows by a classical argument. We refer to \cite[Theorem 28.6]{maggiBOOK} for a proof in the case of $(\Lambda,r_0)$-minimizing sets.
\end{proof}

We conclude this section with a technical lemma, which is the starting point in showing (under the situation described in \eqref{ipotesi ripetuta mille volte}) the Hausdorff convergence of $\S(\E_k)$ to $\S(\E)$ when $n=2,3$.

\begin{lemma}\label{lemma tecnico utile}
  Let $n\ge2$ be fixed. Either $\hd_{A'}(\S(\E_k),\S(\E))\to 0$ as $k\to\infty$ whenever \eqref{ipotesi ripetuta mille volte} holds and $A'\cc A$, or there exist a cone-like minimizing $M$-cluster $\K$ in $\R^n$ and a sequence $\{\F_j\}_{j\in\N}$ of $(\de_j,\de_j^{-1})$-minimizing $M$-clusters $\F_j$ in $B_2$ with
  \[
  0\in\S(\K)\,,\qquad \S_{B_2}(\F_j)=\emptyset\qquad\forall j\in\N\,,\qquad \lim_{j\to\infty}\max\big\{\de_j,\d_{B_2}(\F_j,\K)\big\}=0\,.
  \]
\end{lemma}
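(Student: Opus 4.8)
The statement is a dichotomy, so the natural approach is a contrapositive/compactness argument: assume that the first alternative fails, and manufacture the objects promised in the second alternative. Failure of the first alternative means there exist $A'\cc A$, a value $\e>0$, sequences $\{\E_k\}$ and $\E$ as in \eqref{ipotesi ripetuta mille volte}, and (by Theorem \ref{thm boundary hausdorff}, specifically \eqref{inclusions bordi}) the failure can only come from the \emph{missing} inclusion $\S_{A'}(\E)\subset I_\e(\S_{A'}(\E_k))$. Thus, after passing to a subsequence in $k$, one finds points $x_k\in\S_{A'}(\E)$ (one may take them all equal to a single $x\in\S_A(\E)$ by a further diagonal/compactness step, using local finiteness of the singular set from Corollary \ref{corollary local finiteness} or simply compactness of $\cl(A')$) such that $\dist(x_k,\S_{A'}(\E_k))>\e$, i.e.\ there is a ball $B_{x,\e'}$ on which $\pa\E_k$ is entirely regular (no singular points of $\E_k$) for infinitely many $k$, while $x\in\S_A(\E)$.

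\textbf{Key steps.} First, fix such an $x\in\S_A(\E)$ and a radius $r_*\in(0,r_0)$ with $\ov{B_{x,2r_*}}\subset A$ and $\om_n(2r_*)^n<\min\{|\E(h)\cap A|:1\le h\le N\}$, so the monotonicity formula \eqref{monotonicity} applies to both $\E$ and each $\E_k$ on balls centered near $x$. Second, apply Theorem \ref{theorem tangent clusters} to $\E$ at $x$ with a vanishing sequence of scales to extract a cone-like minimizing $M$-cluster $\K$ ($2\le M\le N$) with $0\in\S(\K)$ and $\s\E_{x,s_j'}\to\K$ in $\d_{B_R}$ for every $R$; in particular, choosing $s=s_{j_0}'$ small enough, $\d_{B_2}(\s\E_{x,s},\K)$ is as small as we like. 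Third — this is the heart of the construction — for each fixed small scale $s$ one passes from $\E$ to the nearby clusters $\E_k$: since $\d_A(\E_k,\E)\to0$, for $k$ large one has $\d_{B_{x,2s}}(\E_k,\E)$ small, hence $\d_{B_2}(\s(\E_k)_{x,s},\s\E_{x,s})$ small, hence $\d_{B_2}(\s(\E_k)_{x,s},\K)$ small. The rescaled cluster $\F:=\s(\E_k)_{x,s}$ is, by \eqref{almost-minimizer cluster} and a scaling computation, a $(\Lambda s,r_0/s)$-minimizing cluster in $(A-x)/s\supset B_2$; writing $\de=\de(s,k)=\max\{\Lambda s,\,\d_{B_2}(\s(\E_k)_{x,s},\K)\}$ and noting that $r_0/s\ge\de^{-1}$ once $s$ is small, this $\F$ is $(\de,\de^{-1})$-minimizing in $B_2$. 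Crucially, $\S_{B_2}(\F)=\emptyset$: indeed $\F$ differs from $\E_k$ only by the rescaling/relabeling by $\s$, and $\pa\E_k$ has no singular points in $B_{x,\e'}\supset B_{x,s}$ (for the chosen subsequence of $k$), which rescales to $\S_{B_2}(\F)=\emptyset$ provided $s<\e'$. Finally, run this for a diagonal sequence: pick $s=s_\ell\to0$, and for each $\ell$ a $k=k_\ell$ large enough that $\de(s_\ell,k_\ell)<1/\ell$; then $\F_j:=\s(\E_{k_j})_{x,s_j}$ and $\de_j:=\de(s_j,k_j)$ are the desired objects, with $0\in\S(\K)$, $\S_{B_2}(\F_j)=\emptyset$, and $\de_j+\d_{B_2}(\F_j,\K)\to0$.

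\textbf{Main obstacle.} The delicate point is the order of limits: we must choose the blow-up scale $s$ \emph{first} (so that $\s\E_{x,s}$ is close to the tangent cone $\K$), and only then let $k\to\infty$ with $s$ frozen, so that $\E_k$ is close to $\E$ at that fixed scale. One needs to verify that the $L^1$-closeness $\d_{B_2}(\s(\E_k)_{x,s},\K)\le\d_{B_2}(\s(\E_k)_{x,s},\s\E_{x,s})+\d_{B_2}(\s\E_{x,s},\K)$ can be made smaller than any prescribed threshold — the first term goes to $0$ as $k\to\infty$ (with $s$ fixed) because $\d_A(\E_k,\E)\to0$, and the second goes to $0$ as $s\to0$ by Theorem \ref{theorem tangent clusters} — and that the relabeling map $\s$ can be taken the same for $\E_k$ and $\E$ on $B_{x,2s}$ for $k$ large (this follows from $L^1$-convergence: only the chambers with positive density near $x$ survive the blow-up, and these stabilize). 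A secondary technical check is the scaling of the almost-minimality condition: if $\E_k$ is $(\Lambda,r_0)$-minimizing in $A$, then $(\E_k)_{x,s}$ is $(\Lambda s,r_0/s)$-minimizing in $(A-x)/s$, which is immediate from \eqref{almost-minimizer cluster} and the identities $P((\E_k)_{x,s};B_{y,\rho})=s^{1-n}P(\E_k;B_{x+sy,s\rho})$, $\d((\E_k)_{x,s},\cdot)=s^{-n}\d(\E_k,\cdot)$, so that $\Lambda\,\d(\E_k,\F')=\Lambda s\cdot s^{1-n}\d(\E_k,\F')$ matches after rescaling the perimeter; hence the minimality deficit rescales to $\Lambda s$, and for $s<\Lambda^{-1}\wedge r_0$ small the cluster is $(\de,\de^{-1})$-minimizing with $\de=\max\{\Lambda s,\d_{B_2}(\F,\K)\}$ as claimed. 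None of these steps is deep individually; the care lies entirely in sequencing the two limits and in the bookkeeping of the relabeling $\s$.
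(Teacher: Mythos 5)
Your proposal is correct and follows essentially the paper's own argument: negate the first alternative, use \eqref{inclusions bordi} to reduce the failure to the missing inclusion $\S_{A'}(\E)\subset I_\e(\S(\E_k))$, blow up $\E$ at the resulting singular point $x$ via Theorem \ref{theorem tangent clusters} to get $\K$ with $0\in\S(\K)$, and then couple the scales $s_j\to0$ with indices $k_j\to\infty$ diagonally (the paper phrases your two-step limit as choosing $k(j)$ with $\d_{B_{x,\e}}(\E_{k(j)},\E)=o(s_j^n)$), using the scaling of $(\Lambda,r_0)$-minimality and the singularity-free ball around $x$ to get $\S_{B_2}(\F_j)=\emptyset$. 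The minor bookkeeping point about whether $r_0/s\ge\de^{-1}$ is immaterial, since minimality in $B_2$ only involves radii below $2$, and is glossed over at the same level in the paper.
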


\begin{proof}
  Let us assume that for some $\E_k$, $\E$ and $A$ as in \eqref{ipotesi ripetuta mille volte} there exists $A'\subset A$ such that $\limsup_{k\to\infty}\hd_{A'}(\S(\E_k),\S(\E))>0$. By Theorem \ref{thm boundary hausdorff} and by \eqref{inclusions bordi} in Theorem \ref{thm boundary hausdorff} and up to extracting subsequences, we may directly assume the existence of $x \in\S_{A'}(\E)$ and $\e>0$ such that $B_{x ,\e}\cc A$,
  \begin{equation}
  \label{niente}
  B_{x ,\e}\cap\S_A(\E_k)=\emptyset\qquad\forall  k\in\N\,,
  \end{equation}
  and such that $x_k\to x $ for some $x_k\in A\cap\pa\E_k$. In particular, up to discarding finitely many values of $k$, we may assume that $x_k\in A'\cap\pa^*\E_k$ for every $k$, and finally, up to translating $\E_k$, that $x_k=x $ for every $k$. Summarizing, we have $\E_k$ and $\E$ as in \eqref{ipotesi ripetuta mille volte} such that there exists
  \[
  x \in\S_A(\E)\cap\bigcap_{k\in\N} \pa^*\E_k\,.
  \]
  By Theorem \ref{theorem tangent clusters} we can find a cone-like minimizing $M$-cluster $\K$ in $\R^n$ with $\theta(\pa\E,x)=\theta(\pa\K,0)$ (so that $0\in\S(\K)$ by $x\in\S_A(\E)$), an injective map $\s:\{1,...,M\}\to\{0,...,N\}$, and a sequence $s_j\to 0^+$ as $j\to\infty$ such that \eqref{blowup limit} holds (with $s_j$ directly in place of $s_j'$). Correspondingly, we consider $\{k(j)\}_{j\in\N}$ such that
  \begin{equation}
  \label{veloce}
  \d_{B_{x ,\e}}(\E_{k(j)},\E)= o(s_j^n)\qquad\mbox{as $j\to\infty$}\,,
  \end{equation}
  and finally define $(s_j\,\Lambda,r_0/s_j)$-minimizing $M$-clusters $\F_j$ in $(A-x)/s_j$ by setting
  \[
  \F_j(i)=\frac{\E_{k(j)}(\s(i))-x}{s_j}\,,\qquad\mbox{that is}\qquad \F_j=\s\big( (\E_{k(j)})_{x,s_j}\big)\,.
  \]
  By \eqref{blowup limit} and \eqref{veloce} for every fixed $R>0$ one has $\d_{B_R}(\F_j,\K)\to 0$, while \eqref{niente} implies that $\S_{B_R}(\F_j)=\emptyset$ provided $j$ is large enough.
\end{proof}

\section{Improved convergence for planar clusters}\label{section planar stuff} In this section we finally prove Theorem \ref{thm main planar}. First, in section \ref{section planar clusters}, we address the structure of $(\Lambda,r_0)$-minimizing clusters in $\R^2$, and deduce from this structure result and Lemma \ref{lemma tecnico utile} the Hausdorff convergence of singular sets. Next, in section \ref{section proof of main theorem}, and specifically in Theorem \ref{thm key planar}, we complete the preparations needed to exploit Theorem \ref{thm main diffeo k} in the proof of Theorem \ref{thm main planar}. This last argument is then presented at the end of the section.

\subsection{$(\Lambda,r_0)$-minimizing clusters in $\R^2$}\label{section planar clusters} In view of Theorem \ref{theorem tangent clusters}, the starting point in the analysis of almost-minimizing clusters near their singular sets is the classification of cone-like minimizing clusters. Such a classification is currently known only in $\R^2$ and $\R^3$. Referring to \cite{CiLeMaIC2} for the latter case, we work from now on in $\R^2$. Let us denote by $\Y_2$ the cone-like minimizing $3$-cluster in $\R^2$ defined by
\begin{equation}
  \label{steiner partition}
  \Y_2(i)=\Big\{(t\cos\theta,t\sin\theta):\ t>0\,,(i-1)\,\frac{2\pi}3<\theta< i\,\frac{2\pi}3\Big\}\,,\qquad i=1,2,3\,.
\end{equation}
Up to rotations around the origin, $\Y_2$ is the only cone-like minimizing cluster in $\R^2$ (other than the one defined by a pair of complementary half-planes, of course); see, for example, \cite[Proposition 30.9]{maggiBOOK}. As a consequence, by Theorem \ref{theorem tangent clusters} one has that if $\E$ is a $(\Lambda,r_0)$-minimizing cluster in $A\subset\R^2$, then $\pa^*\E=\{x\in A\cap\pa\E:\theta(\pa\E,x)=2\}$ and
\begin{equation}\label{sigma E n=2}
    \S_A(\E)=\Big\{x\in A\cap\pa\E:\theta(\pa\E,x)=\theta(\Y_2,0)=3\Big\}\,.
\end{equation}
We now localize Definition \ref{cluster cka}, and then, in Theorem \ref{thm planar clusters}, describe the structure of planar almost-minimizing clusters.

\begin{definition}\label{def cluster cka local}
  {\rm Let $\E$ be a cluster in $A\subset\R^2$ open. One says that $\E$ is a {\it $C^{k,\a}$-cluster in $A$} if there exist at most countable families $\{\g_i\}_{i\in I}$ of connected $C^{k,\a}$-curves with boundary relatively closed in $A$, and $\{p_j\}_{j\in J}$ of points of $A$, which are both locally finite in $A$ (that is, given $A'\cc A$ we have $\g_i\cap A'\ne\emptyset$ and $p_j\in A'$ only for finitely many $i\in I$ and $j\in J$), and such that
\begin{equation}
  \label{struttura E planar A'}
  \begin{split}
  A\cap\pa\E=\bigcup_{i\in I}\g_i\,,\qquad \pa^*\E=\bigcup_{i\in I}\INT(\g_i)\,,
  \\
  \S_{A}(\E)=A\cap\bigcup_{i\in I}\bd(\g_i)=A\cap\bigcup_{j\in J}\{p_j\}\,.
  \end{split}
\end{equation}
}
\end{definition}

\begin{theorem}\label{thm planar clusters}
If $\E$ is a $(\Lambda,r_0)$-minimizing cluster in $A\subset\R^2$, then $\E$ is a $C^{1,1}$-cluster in $A$. Moreover, each $\g_i$ has distributional curvature bounded by $\Lambda$ and each $p_j$ is a common boundary point of exactly three different curves from $\{\g_i\}_{i\in I}$ which form three 120 degrees angles at $p_j$. Finally, $\diam(\g_i)\ge1/2\Lambda$ for every $i\in I$ such that $\g_i\cc A$ and $\bd(\g_i)=\emptyset$. (In particular, if $\Lambda=0$, then $\bd(\g_i)\ne\emptyset$ for every $i\in I$.)
\end{theorem}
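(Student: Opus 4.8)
The plan is to deduce the structure of $(\Lambda,r_0)$-minimizing planar clusters from the three facts already established in the excerpt: the almost-everywhere $C^{1,1}$ regularity of $\partial^*\E$ (Corollary \ref{corollary interfaces}), the density bounds and local finiteness of interfaces away from the singular set (Corollary \ref{corollary local finiteness}), and the blow-up analysis combined with the classification of cone-like minimizing clusters in $\R^2$ (Theorem \ref{theorem tangent clusters} together with the fact, quoted from \cite[Proposition 30.9]{maggiBOOK}, that $\Y_2$ and the pair of complementary half-planes are the only such cones). First I would fix $x\in\S_A(\E)$. By Theorem \ref{theorem tangent clusters} every blow-up of $\E$ at $x$ is $\Y_2$ (up to rotation), since $\theta(\partial\E,x)=3$ by \eqref{sigma E n=2}; a standard argument upgrading convergence of blow-ups to an actual local description — using the excess-regularity criterion Theorem \ref{thm small excess criterion} away from $x$ on each of the three chambers involved, exactly as in step two of the proof of Corollary \ref{corollary interfaces} — shows that there is $r_x>0$ such that $B_{x,r_x}\cap\partial\E$ consists of exactly three $C^{1,1}$-arcs emanating from $x$, pairwise meeting at $120^\circ$, and $x$ is the only point of $\S_A(\E)$ in $B_{x,r_x}$. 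This is essentially the content of \cite[Section 30.3]{maggiBOOK} in the case $\Lambda=0$; the only change for $\Lambda>0$ is that the arcs have distributional curvature bounded by $\Lambda$ rather than being segments/circular arcs — this curvature bound comes directly from Corollary \ref{corollary mean curvature}, which gives $\|{\bf H}_{\partial\E}\|_{L^\infty}\le\Lambda$, applied on the (smooth, after removing the finitely many singular points) interfaces.

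Next I would globalize. The collection of the arcs produced above, as $x$ ranges over $\S_A(\E)$, together with the connected components of $\partial^*\E$ away from $I_\rho(\S_A(\E))$ furnished by Corollary \ref{corollary local finiteness}, can be glued into a family $\{\g_i\}_{i\in I}$ of maximal connected $C^{1,1}$-curves with boundary, relatively closed in $A$. The key local finiteness statement — that only finitely many $\g_i$ and finitely many $p_j=$ points of $\S_A(\E)$ meet a given $A'\cc A$ — follows because: the $p_j$ are isolated in $A$ by the previous paragraph (hence locally finite, using also \eqref{sigma E n=2} and upper semicontinuity of density from \eqref{monotonicity}, which prevents accumulation with density $3$); and the $\g_i$ are locally finite by Corollary \ref{corollary local finiteness} applied with $\rho\to0$, together with the fact that each $\g_i$ meeting $A'$ either has an endpoint at some $p_j\in$ a slightly larger $A''$ (of which there are finitely many, each the endpoint of exactly three arcs) or is a closed curve without boundary contained in $A''$, and such closed arcs cannot be too small. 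This last point gives the diameter bound: if $\g_i\cc A$ with $\bd(\g_i)=\emptyset$, then $\g_i$ is a closed $C^{1,1}$ curve which is an interface between two chambers, with curvature $\le\Lambda$; a closed curve of curvature bounded by $\Lambda$ has diameter at least $1/(2\Lambda)$ — one way to see it is that such a curve cannot be enclosed in a ball of radius $<1/(2\Lambda)$, by comparing with the circle through an extremal point, or equivalently by a first-variation/monotonicity argument as in \eqref{monotonicity}. When $\Lambda=0$ this forces $\bd(\g_i)\ne\emptyset$ for every $i$, since a closed geodesic (straight) curve is impossible.

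Finally I would verify the three displayed identities of Definition \ref{def cluster cka local}: $A\cap\partial\E=\bigcup_i\g_i$ is immediate from the construction; $\partial^*\E=\bigcup_i\INT(\g_i)$ follows because the relative interiors of the arcs are exactly the $C^{1,1}$ points, i.e. the points of density $2$, while the endpoints are the density-$3$ points; and $\S_A(\E)=A\cap\bigcup_i\bd(\g_i)=A\cap\bigcup_j\{p_j\}$ follows from \eqref{sigma E n=2} together with the local picture at each singular point. The main obstacle, I expect, is not any single estimate but the careful bookkeeping in the globalization step — ensuring that the locally defined arcs patch together consistently into globally well-defined maximal curves, that the indexing families are genuinely locally finite, and that no pathology (such as a sequence of singular points accumulating at a non-singular boundary point, or an interface spiralling) is overlooked. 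All the needed analytic input, however, is already in place: the excess-regularity criterion, the infiltration lemma and its corollaries, the monotonicity formula, and the classification of planar cone-like minimizers; so the proof is, as the statement says, ``a simple variant of the $\Lambda=0$ case'' treated in \cite{bleicher,MorganPlanar} and \cite[Section 30.3]{maggiBOOK}.
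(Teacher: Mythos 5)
Your proposal is correct and follows essentially the same route as the paper: the structural decomposition into $C^{1,1}$-curves meeting in threes at 120 degrees is delegated to the planar argument of \cite[Theorem 30.7]{maggiBOOK} (which, as you note, only uses the almost-minimality inequality \eqref{almost-minimizer cluster} and localizes), the curvature bound on each $\g_i$ comes from Corollary \ref{corollary mean curvature}, and the diameter lower bound for closed components is deduced from that curvature bound. The only (harmless) difference is in the last step: the paper tests the first-variation identity \eqref{fico davvero x} with $T(x)=\zeta(x)(x-x_0)$ to get $\H^1(\g_i)\le 2\Lambda\,\diam(\g_i)\,\H^1(\g_i)$, while you invoke the equivalent geometric fact that a closed $C^{1,1}$-curve with curvature at most $\Lambda$ cannot be enclosed in a small disk (comparison at a touching point of the smallest enclosing circle), mentioning the first-variation argument as an alternative.
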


\begin{proof}
 By exploiting the argument of \cite[Theorem 30.7]{maggiBOOK} (which addresses the case of planar isoperimetric clusters, but actually uses only a minimality condition of the form \eqref{almost-minimizer cluster}, and that can be easily localized to a given open set) we just need to prove that the curves $\g_i$ have distributional curvature bounded by $\Lambda$ and the diameter lower bound when $\g_i\cc A$ with $\bd(\g_i)=\emptyset$. By Corollary \ref{corollary mean curvature} we have that
 \begin{equation}
   \label{fico davvero}
    \int_{\pa\E}\Div_{\pa\E}T\,d\H^1=\int_{\pa\E}T\cdot{\bf H}_{\pa\E}\,d\H^1\,,\qquad\forall T\in C^1_c(A;\R^2)\,,
 \end{equation}
 where $|{\bf H}_{\pa\E}|\le \Lambda$. In particular,
 \begin{equation}
   \label{fico davvero x}
    \int_{\g_i}\Div_{\g_i}T\,d\H^1=\int_{\g_i}T\cdot{\bf H}_{\pa\E}\,d\H^1\,,
 \end{equation}
 for every $T\in C^1_c(A';\R^2)$ such that $\spt\,T\cap\pa\E=\spt\,T\cap\INT(\g_i)$. Since $|{\bf H}_{\pa\E}|\le \Lambda$ this proves that each $A'\cap\g_i$ has distributional mean curvature bounded by $\Lambda$. If, in addition, $\g_i\cc A'\cc A$ and $\bd(\g_i)=\emptyset$, then we can test \eqref{fico davvero x} with $T(x)=\zeta(x)(x-x_0)$ where $x_0\in \R^2$ is such that $\g_i\subset B_{x_0,2\,\diam(\g_i)}$ and $\zeta\in C^1_c(A')$ with $\zeta=1$ on $\g_i$ and $\spt\zeta\cap\pa\E=\spt\zeta\cap\g_i$, to find that $\H^1(\g_i)\le 2\Lambda\,\diam(\g_i)\,\H^1(\g_i)$, as required.
\end{proof}

\begin{remark}[Topology of boundaries of planar $(\Lambda,r_0)$-minimizing clusters]\label{remark M}
  {\rm If $\E$ is a bounded $(\Lambda,r_0)$-minimizing cluster in $\R^2$, then Theorem \ref{thm planar clusters} implies the existence of {\it finite} families of closed connected $C^{1,1}$-curves with boundary $\{\g_i\}_{i\in I}$ (whose distributional curvature is bounded by $\Lambda$) and of finitely many points $\{p_j\}_{j\in J}$ such that each $p_j$ is the common end-point of three different curves from $\{\g_i\}_{i\in I}$, which form three 120 degrees angles at $p_j$. Moreover, \eqref{struttura E planar A'} takes the form
  \begin{equation}
  \label{struttura E planar}
  \pa\E=\bigcup_{i\in I}\g_i\,,\qquad \pa^*\E=\bigcup_{i\in I}\INT(\g_i)\,,\qquad
  \S(\E)=\bigcup_{i\in I}\bd(\g_i)=\bigcup_{j\in J}\{p_j\}\,.
  \end{equation}
  Let $I''$ denotes the set of those $i\in I$ such that $\g_i$ is diffeomorphic to $[0,1]$ (so that $\g_i$ is diffeomorphic to $\SS^1$ for every $i\in I'=I\setminus I''$, this will be the notation used in the proof of Theorem \ref{thm key planar}). For each $i\in I''$, $\g_i$ has exactly two end-points, both belonging to $\S(\E)$, and for every $x\in\S(\E)$ there exist three curves from $\{\g_i\}_{i\in I''}$ sharing $x$ as a common end-point: therefore we find that
  \[
  \#(I'')=\frac32\,\H^0(\S(\E))\,.
  \]}
\end{remark}

\begin{remark}\label{remark topologia 2d E0}
  {\rm With the notation of the previous remark, we claim that $I''=I$ whenever $\E$ is a planar isoperimetric cluster (that is, $\E$ is a minimizer in \eqref{partitioning problem} with $N\ge 2$ and $n=2$; notice that $\E$ is necessarily bounded). Indeed, arguing by contradiction, let us assume there exists $i\in I$ such that $\g_i$ is $C^1$-diffeomorphic to $\SS^1$. Since $\g_i\cap\S(\E)=\emptyset$, the constant curvature condition on interfaces of $\E$ implies that $\g_i$ is, in fact, a circle. Moreover, since $N\ge 2$, we must have $\#(I)\ge 2$. Since $\#(I)\ge 2$, we can translate $\g_i$ along a suitable direction until it intersects for the first time $\pa\E\setminus\g_i$ at some point $x$. Denoting by $\E'$ the resulting cluster, we have that $P(\E')=P(\E)$ and $\vol(\E')=\vol(\E)$, so that $\E'$ is a minimizing cluster in $\R^2$. The fact that, in a neighborhood of $x$, $\pa\E'$ is the union of two tangent circular arcs, leads to a contradiction with Theorem \ref{thm planar clusters} (applied to $\E'$).}
%
\end{remark}

We now upgrade \eqref{inclusions bordi} to the full Hausdorff convergence of singular sets.

\begin{theorem}[Hausdorff convergence of singular sets]
  \label{thm singular hausdorff}
  If $\{\E_k\}_{k\in\N}$ is a sequence of $(\Lambda,r_0)$-minimizing clusters in $A\subset\R^2$ with $\d_A(\E_k,\E)\to 0$ as $k\to\infty$, then
  \[
  \lim_{k\to\infty}\hd_{A'}(\S_A(\E_k),\S_A(\E))=0\qquad\forall A'\cc A\,.
  \]
\end{theorem}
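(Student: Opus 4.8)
The plan is to deduce the statement from Lemma~\ref{lemma tecnico utile}, which reduces it to excluding its second alternative; so I would argue by contradiction. Suppose there exist a cone-like minimizing $M$-cluster $\K$ in $\R^2$ and a sequence of $(\de_j,\de_j^{-1})$-minimizing $M$-clusters $\F_j$ in $B_2$ with $0\in\S(\K)$, $\S_{B_2}(\F_j)=\emptyset$ for every $j$, and $\max\{\de_j,\d_{B_2}(\F_j,\K)\}\to0$. Since, up to a rotation about the origin, the only cone-like minimizing clusters in $\R^2$ are $\Y_2$ and the pair of complementary half-planes (see section~\ref{section planar clusters}), and the latter has empty singular set, the condition $0\in\S(\K)$ forces $\K$ to be a rotation of $\Y_2$. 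In particular $\pa\K\cap B_1$ is the union of three segments issuing from the origin, with unit directions $w_1,w_2,w_3\in\SS^1$ at mutual angle $120^\circ$, so that $|w_m+w_{m'}|\ge1$ for all $m,m'\in\{1,2,3\}$.

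Next I would set up a quantitative contradiction. Since $\de_j\to0$, for $j$ large each $\F_j$ is a $(1,1)$-minimizing cluster in $B_2$, so \eqref{ipotesi ripetuta mille volte} holds with $A=B_2$ and $\E=\K$; Theorem~\ref{thm boundary hausdorff} then gives $\e_j:=\hd_{B_1}(\pa\F_j,\pa\K)\to0$, and since $0\in\pa\K$ I may select $y_j\in\pa\F_j$ with $|y_j|\le\e_j\to0$. By Theorem~\ref{thm planar clusters}, $\F_j$ is a $C^{1,1}$-cluster in $B_2$ whose curves have distributional curvature $\le\de_j$; because $\S_{B_2}(\F_j)=\emptyset$, none of these curves has an endpoint in $B_2$, so the curve $\g^j$ of $\F_j$ through $y_j$ may be followed by arclength, say by $\gamma^j$ with $\gamma^j(0)=y_j$, $|\dot\gamma^j|\equiv1$, and $|\ddot\gamma^j|\le\de_j$ a.e., for as long as it remains inside $B_2$.

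Finally I would run the geometric argument, which I expect to be the only real obstacle: making precise that a boundary of small curvature and with no triple points cannot be Hausdorff-close to a $\Y$-shaped cone. Fix $\rho\in(0,1/2)$. Writing $v_j:=\dot\gamma^j(0)$ and $\gamma^j(s)=y_j+s\,v_j+e^j(s)$ with $|e^j(s)|\le\de_j s^2/2$, and using $|y_j|,\de_j\to0$, one gets $|\gamma^j(s)|\to|s|$ uniformly on $[-2\rho,2\rho]$; in particular $|\gamma^j(s)|<2$ there, so the whole arc $\{\gamma^j(s):|s|\le2\rho\}$ lies in $B_2$ and there are parameters $s^j_\pm$ with $\pm s^j_\pm>0$, $\gamma^j(s^j_\pm)\in\pa B_{0,\rho}$, and $s^j_\pm\to\pm\rho$, hence $|\gamma^j(s^j_+)-\rho\,v_j|\to0$ and $|\gamma^j(s^j_-)+\rho\,v_j|\to0$. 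On the other hand, $\gamma^j(s^j_\pm)\in\pa\F_j\cap B_1$ lies within $\e_j$ of $\pa\K$; since it also lies on $\pa B_{0,\rho}$, and $\pa\K$ meets that sphere only at the three points $\rho\,w_1,\rho\,w_2,\rho\,w_3$, this forces $|\gamma^j(s^j_+)-\rho\,w_{m_j}|\to0$ and $|\gamma^j(s^j_-)-\rho\,w_{m'_j}|\to0$ for suitable indices $m_j,m'_j\in\{1,2,3\}$. Comparing the two expressions for $\gamma^j(s^j_\pm)$ yields $|w_{m_j}+w_{m'_j}|\to0$, which contradicts $|w_m+w_{m'}|\ge1$ once $j$ is large. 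Thus the second alternative in Lemma~\ref{lemma tecnico utile} cannot occur, and the desired Hausdorff convergence follows.
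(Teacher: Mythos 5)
Your proposal is correct, and its skeleton matches the paper's up to the final step: both argue by contradiction through Lemma \ref{lemma tecnico utile}, and both use the planar classification of cone-like minimizing clusters to conclude that $0\in\S(\K)$ forces $\K$ to be a rotation of $\Y_2$. Where you genuinely diverge is in how the contradiction is extracted. The paper invokes Theorem \ref{thm normal representation part one} to cover $\pa\F_j$ away from $B_{2\de}$ by normal graphs over the three arms $[\pa\Y_2(i)\cap\pa\Y_2(\ell)]_\de$, and then argues by connectedness that the curve of $\pa\F_j$ containing the graph over one arm must have an endpoint inside $B_{2\de}$, contradicting $\S_{B_2}(\F_j)=\emptyset$ (which, by Theorem \ref{thm planar clusters}, means no curve of $\pa\F_j$ has an endpoint in $B_2$). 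You instead use only the Hausdorff convergence of boundaries from Theorem \ref{thm boundary hausdorff} plus the quantitative curvature bound from Theorem \ref{thm planar clusters}: following by arclength the endpoint-free $C^{1,1}$ curve of $\pa\F_j$ through a point $y_j\to0$, the bound $|\ddot\gamma^j|\le\de_j\to0$ makes it an almost-straight chord at the fixed scale $\rho$, so its two crossings of $\pa B_{0,\rho}$ are nearly antipodal, while Hausdorff proximity to $\pa\K$ forces each crossing to lie near one of the three points $\rho w_m$, and no two of those (even with repetition) are nearly antipodal since $|w_m+w_{m'}|\ge1$. Your route trades the normal representation theorem for a quantitative use of $\de_j\to0$; both rest on the same structural input (no singular points means no curve endpoints in $B_2$). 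The steps you leave implicit are routine to fill: the continuation of the arclength parametrization up to $|s|\le2\rho$ (continue until first exit from a slightly smaller ball; the a priori estimate $|\gamma^j(s)|\le|y_j|+|s|+\de_j s^2/2$ shows this exit time exceeds $2\rho$), the fact that for a $C^{1,1}$ curve of distributional curvature at most $\de_j$ the arclength parametrization satisfies $|\ddot\gamma^j|\le\de_j$ a.e., and the observation that a point of $\pa B_{0,\rho}$ at distance at most $\e_j$ from the union of the three rays is within $2\e_j$ of one of $\rho w_1,\rho w_2,\rho w_3$.
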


\begin{proof}
 We argue by contradiction. In this way, by Lemma \ref{lemma tecnico utile} there exists a sequence $\{\F_j\}_{j\in\N}$ of $(\de_j,\de_j^{-1})$-minimizing $M$-clusters in $B_2\subset\R^2$ such that
 \begin{equation}
     \label{ilre}
 \S_{B_2}(\F_j)=\emptyset\qquad\forall j\in\N\,,\qquad \lim_{j\to\infty}\max\big\{\de_j,\d_{B_2}(\F_j,\Y_2)\big\}=0\,,
 \end{equation}
 where $\Y_2$ is defined as in \eqref{steiner partition}. By Theorem \ref{thm boundary hausdorff},
\begin{equation}
  \label{puppa a}
  \lim_{j\to\infty}\max_{1\le i<\ell\le 3}\hd_B\Big(\pa\F_j(i)\cap \pa\F_j(\ell),\pa\Y_2(i)\cap\pa\Y_2(\ell)\Big)=0\,,
\end{equation}
while, by Theorem \ref{thm normal representation part one}, for every $\de$ small enough one can find $\{\psi_j\}_{j\ge j_0}\subset C^1(B\cap[\pa\Y_2]_\de)$ such that (on taking into account that $I_{2\de}(\Sigma(\Y_2))=B_{2\de}$)
\begin{equation}
  \label{puppa b}
  \pa\F_j\cap (B\setminus B_{2\de})\subset(\Id+\psi_j\nu)(B\cap [\pa\Y_2]_\de)\,,\qquad\forall j\ge j_0\,,
\end{equation}
where $\nu$ denotes a continuous normal vector field to $\pa^*\Y_2$.
 By Theorem \ref{thm planar clusters} there exists a finite family of connected $C^{1,1}$-curves with boundary $\{\g_i\}_{i\in I}$, relatively closed in $B$, such that $B\cap\pa\F_j=B\cap\bigcup_{i\in I}\g_i$ and $\S_B(\F_j)=\bigcup_{i\in I}B\cap\bd(\g_i)$, so that, by \eqref{ilre}, $B\cap\bd(\g_i)=\emptyset$ for every $i\in I$.
Let $\g_{i\ell}$ denote the connected curve in $\pa\F_{j}$ that contains $(\Id+\psi_j\nu)(B\cap [\pa\Y_2(i)\cap \pa\Y_{2}(\ell)]_\de)$, for $1\leq i<\ell \leq 3$. By \eqref{puppa b} we notice that
 \begin{equation}\label{solotrecurveGP}
 \pa\F_{j} \cap (B\setminus B_{2\de}) = \bigcup_{1\le i<\ell\le 3} \g_{i\ell}\cap (B\setminus B_{2\de})
 \end{equation}
 while by \eqref{puppa a} we get $\g_{i\ell}\cap B\subset I_\de(\pa\Y_2(i)\cap\pa \Y_2(\ell))$ for all $1\le i<\ell\le 3$. By combining this last fact with \eqref{solotrecurveGP}, we deduce that $\bd(\g_{i\ell})\cap B_{2\de}\neq\emptyset$, against the fact that $B\cap \bd(\g_{i})=\emptyset$ for every $i\in I$.
 \end{proof}

\subsection{Proof of the improved convergence theorem for planar clusters}\label{section proof of main theorem}  We now prove Theorem \ref{thm main planar}. We start by setting some notation. Let us consider $\Lambda$, $r_0$, $\E$ and $\E_k$ as in Theorem \ref{thm main planar}. Since $\pa\E$ is bounded, by Theorem \ref{thm boundary hausdorff} also $\pa\E_k$ is bounded, and thus according to \eqref{struttura E planar} there exist {\it finite} families of $C^{2,1}$-curves $\{\g_i\}_{i\in I}$ and $C^{1,1}$-curves $\{\g_i^k\}_{i\in I_k}$, and finite families of points $\{p_j\}_{j\in J}$ and $\{p_j^k\}_{j\in J_k}$ such that
\begin{eqnarray*}
    \pa\E=\bigcup_{i\in I}\g_i\,,\qquad \pa^*\E=\bigcup_{i\in I}\INT(\g_i)\,,\qquad
  \S(\E)=\bigcup_{i\in I}\bd(\g_i)=\bigcup_{j\in J}\{p_j\}\,,
  \\
    \pa\E_k=\bigcup_{i\in I_k}\g_i^k\,,\qquad \pa^*\E_k=\bigcup_{i\in I_k}\INT(\g_i^k)\,,\qquad
  \S(\E_k)=\bigcup_{i\in I_k}\bd(\g_i^k)=\bigcup_{j\in J_k}\{p_j^k\}\,.
\end{eqnarray*}
Moreover, each $p_j$ is the common boundary point of exactly three curves from $\{\g_i\}_{i\in I}$, and an analogous assertion holds for $p_j^k$ and $\{\g_i^k\}_{i\in I}$.

\begin{theorem}
  \label{thm key planar} Under the assumptions of Theorem \ref{thm main planar}, there exist positive constants $\rho_0$ and $L$, depending on $\Lambda$ and $\E$ only, such that the following properties hold:
  \begin{enumerate}
    \item[(i)] there exists $k_0\in\N$ such that for each $k\ge k_0$, up to a relabeling of $I_k$ and $J_k$, one has $I=I_k$ and $J=J_k$, with $\bd(\g_i)\ne\emptyset$ if and only if $\bd(\g_i^k)\ne\emptyset$ for every $i\in I$, and
        \begin{equation}
          \label{heybeckner}
        \lim_{k\to\infty}|p_j^k-p_j|+\hd(\g_i^k,\g_i)=0\,,\qquad\forall i\in I,j\in J\,;
        \end{equation}
        moreover
        \begin{equation}\label{ext by foliation planar case k}
        \|\g_i^k\|_{C^{1,1}}\le L\,,\qquad\forall i\in I_k\,,
        \end{equation}
        and if $p_j\in\bd(\g_i)$ then $p_j^k\in\bd(\g_i^k)$ with
        \begin{equation}
          \label{heybeckner4}
          \lim_{k\to\infty}|\nu_{\g_i}^{co}(p_j)-\nu_{\g_i^k}^{co}(p_j^k)|=0\,;
        \end{equation}
    \item[(ii)] for every $\rho< \rho_0$ there exist $k(\rho)\in\N$ and $\{\psi_k\}_{k\ge k(\rho)}\subset C^{1,1}([\pa\E]_\rho)$ such that
        \begin{equation}
        \label{zetak parametrizzano PLUS}
        [\pa\E_k]_{3\rho} \subset (\Id+\psi_k\nu)([\pa\E]_\rho)\subset\pa^*\E_k\,,\qquad\forall k\ge k(\rho)\,,
        \end{equation}
        where $\nu$ is a $C^{1,1}$-normal unit vector field to $\pa^*\E$ and
        \begin{equation}
        \label{zetak tendono a zero PLUS}
        \lim_{k\to\infty}\|\psi_k\|_{C^1([\pa\E]_\rho)}=0\,,\qquad \sup_{k\ge k(\rho)}\|\psi_k\|_{C^{1,1}([\pa\E]_\rho)}\le L\,.
        \end{equation}
  \end{enumerate}
\end{theorem}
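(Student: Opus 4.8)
\textbf{Proof strategy for Theorem \ref{thm key planar}.}

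The plan is to obtain all the conclusions by combining the structural description of planar $(\Lambda,r_0)$-minimizing clusters (Theorem \ref{thm planar clusters} and Remark \ref{remark M}), the Hausdorff convergence of boundaries and singular sets (Theorem \ref{thm boundary hausdorff} and Theorem \ref{thm singular hausdorff}), and the normal representation theorem away from singularities (Theorem \ref{thm normal representation part one}). The two parts of the statement are essentially independent: part (ii) is a direct repackaging of Theorem \ref{thm normal representation part one} once one knows $[\pa\E_k]_{3\rho}$ sits in the region covered by the normal chart, which follows from the Hausdorff convergence of the singular sets; part (i) is a more delicate bookkeeping exercise matching up curves and triple points.

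First I would prove part (i). Since $\E$ is bounded and $C^{2,1}$, its singular set $\S(\E)=\{p_1,\dots,p_m\}$ is a finite set, and by Theorem \ref{thm singular hausdorff} we have $\hd(\S(\E_k),\S(\E))\to 0$; choosing $k_0$ so that the ambient balls $B_{p_j,\delta}$ are pairwise disjoint and $\delta$ smaller than all the relevant geometric scales of $\E$, the infiltration lemma (Lemma \ref{lemma infiltration}) combined with the density estimates \eqref{density ndim}--\eqref{density n-1dim} forces, for $k$ large, exactly one point of $\S(\E_k)$ in each $B_{p_j,\delta}$ and none outside their union. This gives the bijection $J=J_k$ and the first limit in \eqref{heybeckner}. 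For the curves: by \eqref{boundary haudorff interfaces} each interface $\E(h,\ell)$ of the limit cluster, which is a finite union of arcs $\g_i$, is approximated in Hausdorff distance by $\pa\E_k(h)\cap\pa\E_k(\ell)$; away from the $B_{p_j,\delta}$ the normal representation theorem expresses $\pa\E_k$ as a $C^1$-small normal graph over $\pa^*\E$, so each connected component $\g_i$ of $\pa^*\E$ (closed off at its two endpoints in $\S(\E)$, or a whole circle if $\bd(\g_i)=\emptyset$) is matched by a unique connected component $\g_i^k$ of $\pa\E_k$ with the same combinatorial incidences, using that near each $p_j^k$ the tangent cone is close to $\Y_2$ (this is where Theorem \ref{theorem tangent clusters} and the rigidity of $\Y_2$ enter — near $p_j^k$ exactly three arcs meet at $120^\circ$, and by the Hausdorff closeness these are the continuations of the three arcs meeting at $p_j$). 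This yields $I=I_k$, the bijection preserving $\bd(\g_i)\ne\emptyset \Leftrightarrow \bd(\g_i^k)\ne\emptyset$, and the Hausdorff convergence $\hd(\g_i^k,\g_i)\to 0$. The uniform bound \eqref{ext by foliation planar case k} is just the statement that $\g_i^k$ has distributional curvature $\le\Lambda$ together with a uniform length bound (from the density estimate \eqref{density n-1dim} and boundedness), which gives a uniform $C^{1,1}$ norm in the sense of \eqref{basta S C1alpha} via arclength parametrization. Finally \eqref{heybeckner4}: the conormal $\nu_{\g_i^k}^{co}(p_j^k)$ is the unit tangent to $\g_i^k$ at its endpoint; since the three arcs at $p_j^k$ make $120^\circ$ angles and the triple of arcs converges in Hausdorff distance (hence, being curvature-bounded, in $C^1$ on compact subsets bounded away from endpoints, and then up to the endpoint by the uniform $C^{1,1}$ bound) to the triple at $p_j$, the conormals converge.

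Part (ii) is then immediate: apply Theorem \ref{thm normal representation part one} with $A=\R^2$, $A'$ a large ball containing all the $\g_i$ and $\g_i^k$, and $\a=1$ (permitted since $\pa^*\E$ is $C^{2,1}$ and $n=2$), to get, for $\rho<\rho_0$, maps $\psi_k$ on a set containing $[\pa\E]_\rho$ with $(\pa\E_k)\setminus I_{2\rho}(\S(\E))\subset(\Id+\psi_k\nu)([\pa\E]_\rho)\subset\pa^*\E_k$ and the norm bounds \eqref{zetak tendono a zero PLUS}. To upgrade $(\pa\E_k)\setminus I_{2\rho}(\S(\E))$ to $[\pa\E_k]_{3\rho}=\pa\E_k\setminus I_{3\rho}(\S(\E_k))$ one uses part (i): for $k$ large $\S(\E_k)\subset I_{\rho}(\S(\E))$, hence $I_{3\rho}(\S(\E))\supset I_{2\rho}(\S(\E_k))\supset$ (again for $k$ large) a neighborhood of $\S(\E_k)$ of radius comparable to $3\rho$; choosing constants carefully, $[\pa\E_k]_{3\rho}\subset \pa\E_k\setminus I_{2\rho}(\S(\E))$, which is what is needed. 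Relabeling $\rho_0$ if necessary absorbs the constant mismatches.

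\textbf{Main obstacle.} The technical heart is the curve-matching in part (i): one must show that the combinatorial/topological type of $\pa\E_k$ stabilizes, i.e. that for large $k$ there is a canonical bijection $I\to I_k$ respecting endpoints and triple-point incidences. The Hausdorff convergence of boundaries and of singular sets gives this set-theoretically, but the genuinely geometric input is the local rigidity near triple points — that a $(\Lambda,r_0)$-minimizing cluster which is $L^1$-close to one whose tangent cone at $p_j$ is $\Y_2$ must itself have exactly one singular point near $p_j$, of the same type, with the three arcs emanating from it being precisely the perturbations of the three limit arcs. This uses the classification of planar cone-like minimizing clusters (only $\Y_2$ up to rotation, besides the half-plane pair) together with the excess-decay/$\e$-regularity machinery of Theorem \ref{thm small excess criterion} applied on small balls around $p_j^k$, exactly as in step four of the proof described in Remark \ref{remark proof}; making the conormal convergence \eqref{heybeckner4} quantitative-enough to feed into Theorem \ref{thm main diffeo k} is the final subtlety.
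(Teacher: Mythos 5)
Your overall architecture matches the paper's (normal representation theorem plus Hausdorff convergence of boundaries/singular sets for (ii); curve--triple-point bookkeeping based on the structure theorem for (i)), but two steps at the heart of part (i) are asserted with tools that do not deliver them. First, you claim that the infiltration lemma together with the density estimates forces \emph{exactly one} point of $\S(\E_k)$ in each ball $B_{p_j,\de}$. Those tools cannot rule out two distinct triple points of $\E_k$ collapsing onto the same $p_j$: the paper's proof of this uniqueness is a rigidity argument by blow-up at the scale $\e_k=|x_k-y_k|$ of two such points, producing in the limit a Steiner cone $\Y_2$ with a spurious singular point at unit distance from the origin, a contradiction with the classification of cone-like minimizing clusters. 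You flag ``local rigidity near triple points'' as the main obstacle, but the mechanism you invoke (infiltration/density estimates, or generic ``excess-decay machinery'') does not substitute for this blow-up argument, so the step as written would fail. Second, your matching of curves only produces an injection of $I$ into $I_k$ (each limit curve is shadowed by a component of $\pa\E_k$); you never exclude \emph{extra} components of $\pa\E_k$ not matched to any $\g_i$. Hausdorff closeness of boundaries allows, a priori, small spurious closed loops of $\pa\E_k$ hiding near the singular balls, or extra arcs between triple points. The paper disposes of these by two further inputs you omit: the counting identity $\#(I'')=\tfrac32\,\H^0(\S(\E))$ of Remark \ref{remark M} (which, once $J=J_k$, forces the map to be a bijection between arcs), and the diameter lower bound $\diam(\g)\ge 1/2\Lambda$ for boundaryless curves from Theorem \ref{thm planar clusters} (which kills small unmatched loops, since any such loop would have to fit inside a ball of radius $2\rho_0$).

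Two smaller points. In part (ii) you quote the inclusion $\S(\E_k)\subset I_\rho(\S(\E))$, but the upgrade from $\pa\E_k\setminus I_{2\rho}(\S(\E))$ to $[\pa\E_k]_{3\rho}=\pa\E_k\setminus I_{3\rho}(\S(\E_k))$ needs the \emph{opposite} inclusion $\S(\E)\subset I_\rho(\S(\E_k))$, i.e.\ precisely the hard direction supplied by Theorem \ref{thm singular hausdorff}; the ingredient is available, but your inequality chain is inverted. Finally, the bound \eqref{ext by foliation planar case k} is not ``just'' the curvature bound plus a length bound: the definition \eqref{basta S C1alpha} is in terms of Euclidean distance, so one needs a chord-arc estimate $|s-t|\le C|\a_i^k(s)-\a_i^k(t)|$ uniform in $k$. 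At small parameter separation this follows from $\Lip((\a_i^k)')\le\Lambda$, but at large separation a curvature-and-length bound alone does not prevent the curve from nearly self-touching; the paper gets the uniform constant by compactness of the embedded limit curves $\g_i$ together with $\hd(\g_i^k,\g_i)\to0$, an argument your proposal skips.
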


\begin{proof}
{\it Step one}: We prove statement (ii). By Theorem \ref{thm normal representation part one} (applied with $A=\R^2$ and $A'$ equal to an open ball such that $\E(h)\cc A'$ for every $h=1,...,N$) there exist $\rho_0,L>0$ such that for every $\rho\le\rho_0$ one can find $k(\rho)\in\N$, $\e(\rho)>0$ and $\{\psi_k\}_{k\ge k(\rho)}\subset C^{1,1}([\pa\E]_\rho)$ such that \eqref{zetak tendono a zero PLUS} holds, with
\begin{equation}
  \label{Neps0}
  \pa\E_k\setminus I_{2\rho}(\S(\E)) \subset (\Id+\psi_k\nu)([\pa\E]_\rho)\subset\pa^*\E_k\,,
\end{equation}
\begin{equation}
  \label{Neps}
  N_{\e(\rho)}([\pa\E]_\rho)\cap\pa\E_k=(\Id+\psi_k\nu)([\pa\E]_\rho)\,,
\end{equation}
for all $k\ge k(\rho)$. In turn, by Theorem \ref{thm singular hausdorff} (applied with $A=\R^2$ and $A'$ as above), we have $\hd(\S(\E_k),\S(\E))\to 0$ as $k\to\infty$. Hence, up to increasing the value of $k(\rho)$ we find $\S(\E)\subset I_\rho(\S(\E_k))$ for  $k\ge k(\rho)$, and thus $[\pa\E_k]_{3\rho}=\pa\E_k\setminus I_{3\rho}(\S(\E_k))\subset \pa\E_k\setminus I_{2\rho}(\S(\E))$. Thus \eqref{zetak parametrizzano PLUS} follows from \eqref{Neps0}.

\medskip

\noindent {\it Step two}: We prove (i) up to \eqref{heybeckner}. We first note that if $x_k,y_k\in\S(\E_k)$ with $x_k\ne y_k$ and $x_k\to x$ and $y_k\to y$ (so that $x,y\in\S(\E)$ by $\hd(\S(\E_k),\S(\E))\to 0$), then it must be $x\ne y$. Indeed, if $x=y$, then $\e_k=|x_k-y_k|\to 0$, and the sequence of clusters $\F_k=(\E_k-x_k)/\e_k$ would converge (up to subsequences and in the sense explained in Theorem \ref{theorem tangent clusters}) to a Steiner partition of $\R^2$. At the same time, this Steiner partition should have a singular point at unit distance from the origin, arising as the limit of of some subsequence of $(y_k-x_k)/\e_k$. This contradiction proves our remark, which coupled with the Hausdorff convergence of $\S(\E_k)$ to $\S(\E)$ allows us to assume without loss of generality that $J=J_k$ with
\begin{equation}
  \label{pkell va a pell}
  \lim_{k\to\infty}|p^k_j-p_j|=0\,,\qquad\forall j\in J\,.
\end{equation}
Let now $I'$ and $I''$ be the sets of those $i\in I$ such that $\g_i$ is homeomorphic, respectively, either to $\SS^1$ or to $[0,1]$, and similarly define $I'_k$ and $I''_k$ starting from $I_k$. By intersecting with $N_{\e(\rho_0)}([\g_i]_{\rho_0})$ in \eqref{Neps} and by directly assuming that $\|\psi_k\|_{C^0([\pa\E]_{\rho_0})}\le\e(\rho_0)\le\rho_0$ for every $k\ge k_0=k(\rho_0)$ we find
\[
N_{\e(\rho_0)}([\g_i]_{\rho_0})\cap\pa\E_k=(\Id+\psi_k\nu)([\g_i]_{\rho_0})\,,\qquad\forall i\in I\,,k\ge k_0\,.
\]
In particular, by exploiting the connectedness of the curves $\{\g_k^i\}_{i\in I_k}$, one defines for every $k\ge k_0$ a map $\s_k:I\to I_k$ in such a way that
\begin{eqnarray*}
&&(\Id+\psi_k\nu)([\g_i]_{\rho_0})\subset\g_{\s_k(i)}^k\,,
\\
&&(\Id+\psi_k\nu)([\g_i]_{\rho_0})\cap\g_{i'}^k=\emptyset\,,\qquad\forall i\in I\,,\forall i'\in I_k\setminus\{\s_k(i)\}\,;
\end{eqnarray*}
hence,
\begin{equation}
  \label{injective maps}
  (\Id+\psi_k\nu)([\g_i]_{\rho_0})=N_{\e(\rho_0)}([\g_i]_{\rho_0})\cap\pa\E_k=N_{\e(\rho_0)}([\g_i]_{\rho_0})\cap\g_{\s_k(i)}^k\,,\qquad\forall k\ge k_0\,, i\in I\,.
\end{equation}
To complete the proof of \eqref{heybeckner} it will suffice to show that
\begin{equation}
  \label{heybeckner what0}
  \mbox{$\s_k$ is a bijection with $\s_k(I')=I'_k$ and $\s_k(I'')=I''_k$}\,,
\end{equation}
\begin{equation}
  \label{heybeckner what}
  \lim_{k\to\infty}\hd(\g_i,\g_{\s_k(i)}^k)=0\,,\qquad\forall i\in I\,.
\end{equation}
We start by choosing $\eta>0$ such that
\begin{equation}
  \label{tutti per i cazzi loro}
  I_\eta(\g_i)\cap I_\eta(\g_{i'})=\emptyset\,,\qquad\forall i,i'\in I'\,.
\end{equation}
If $i\in I'$, then $[\g_i]_\rho=\g_i$ and $N_{\e(\rho)}(\g_i)=I_{\e(\rho)}(\g_i)$ for every $\rho>0$, so that \eqref{injective maps} gives
\begin{equation}
  \label{injective maps I'}
  (\Id+\psi_k\nu)(\g_i)=I_{\e(\rho_0)}(\g_i)\cap\pa\E_k=I_{\e(\rho_0)}(\g_i)\cap\g_{\s_k(i)}^k\,,\qquad\forall k\ge k_0\,, i\in I'\,.
\end{equation}
Since $(\Id+\psi_k\nu)(\g_i)$ is homeomorphic to $\SS^1$ and is contained in $\g_{\s_k(i)}^k$, by connectedness of $\g_{\s_k(i)}^k$ we conclude that $\s_k(i)\in I'_k$ with
\begin{eqnarray}
  \label{injective maps I' x}
  &&(\Id+\psi_k\nu)(\g_i)=I_{\e(\rho_0)}(\g_i)\cap\pa\E_k=\g_{\s_k(i)}^k\,,
  \\
  \label{injective maps I' xx}
  &&\hd(\g_i,\g_{\s_k(i)}^k)\le\|\psi_k\|_{C^0([\pa\E]_{\rho_0})}\le\rho_0\,,\qquad\forall k\ge k_0\,, i\in I'\,.
\end{eqnarray}
By combining \eqref{tutti per i cazzi loro}, \eqref{injective maps I' x} and \eqref{zetak tendono a zero PLUS} and up to requiring $\rho_0\le\eta$ we conclude that
\begin{eqnarray}\label{sabbath1}
\mbox{\eqref{heybeckner what} holds for every $i\in I'$, $\s_k(I')\subset I'_k$, $\s_k$ is injective on $I'$}\,.
\end{eqnarray}
Before showing that $\s_{k}(I') = I_{k}'$, we first prove that
\begin{equation}
  \label{sabbath2}
  \mbox{$\s_k$ is a bijection between $I''$ and $I''_k$}\,.
\end{equation}
To this end, we shall first need to prove \eqref{higgs} and \eqref{higgs bordo} below. In order to formulate \eqref{higgs} we introduce the following notation: given $j\in J$, let us denote by $a_j(1)$, $a_j(2)$, and $a_j(3)$ the three distinct elements in $I''$ such that the curves $\{\g_{a_j(\ell)}\}_{\ell=1}^3$ share $p_j$ as a common boundary point (as described in Theorem \ref{thm planar clusters}), and let $\{a_j^k(\ell)\}_{\ell=1}^3\subset I_k''$ be defined analogously starting from $p_j^k$. We claim that, up to permutations in the index $\ell\in\{1,2,3\}$, one has
\begin{equation}
  \label{higgs}
a_j^k(\ell)=\s_k(a_j(\ell))\,,\qquad\forall j\in J\,,k\ge k(\rho)\,,\ell\in\{1,2,3\}\,.
\end{equation}
Indeed, by Theorem \ref{thm planar clusters}, up to decrease the value of $\eta>0$, we find that, for every $j\in J$,
\begin{equation}
  \label{def eta}
  \pa\E\cap B_{p_j,\eta}=\bigcup_{\ell=1}^3\g_{a_j(\ell)}\cap B_{p_j,\eta}\,,
\qquad
\{p_j\}=\S(\E)\cap B_{p_j,\eta}=\bigcup_{\ell=1}^3\bd(\g_{a_j(\ell)})\cap B_{p_j,\eta}\,.
\end{equation}
Since $\e(\rho_0)\le\rho_0$, up to further decreasing $\rho_0$ depending on $\eta$, we can entail by Theorem \ref{thm boundary hausdorff} and \eqref{pkell va a pell} that
\begin{equation}
  \label{def eta k}
  \pa\E_k\subset I_{\e(\rho_0)}(\pa\E)\,,\qquad \S(\E_k)\cap B_{p_j,\eta}=\{p_j^k\}\subset B_{p_j,\e(\rho_0)}\,,\qquad\forall j\in J\,,k\ge k_0\,.
\end{equation}
By \eqref{def eta} and provided $\rho_0$ is small enough,
\begin{eqnarray*}
  I_{\e(\rho_0)}(\pa\E)\cap B_{p_j,\eta}&=&\bigcup_{\ell=1}^3I_{\e(\rho_0)}(\g_{a_j(\ell)})\cap B_{p_j,\eta}
  \\
  &\subset& B_{p_j,2\rho_0}\cup\bigcup_{\ell=1}^3\Big(N_{\e(\rho_0)}([\g_{a_j(\ell)}]_{\rho_0})\cap B_{p_j,\eta}\Big)\,,\qquad\forall j\in J\,.
\end{eqnarray*}
By $\pa\E_k\subset I_{\e(\rho_0)}(\pa\E)$ and by \eqref{injective maps} one thus finds
\begin{eqnarray}\label{def eta 10}
\pa\E_k\cap B_{p_j,\eta}\subset\Big(\pa\E_k\cap B_{p_j,2\rho_0}\Big)\cup\bigcup_{\ell=1}^3\Big(
\g_{\s_k(a_j(\ell))}^k\cap B_{p_j,\eta}\Big)\,.
\end{eqnarray}
Let now $\om$ be the connected component of $\g_{a_j^k(1)}^k\cap\cl(B_{p_j,\eta})$ which contains $p_j^k$. In this way, $\om$ is a connected $C^{1,1}$-curve with boundary, homeomorphic to $[0,1]$, with $p_j^k\in\bd(\om)\cap B_{p_j,\eta}$. It cannot be $\om\cc B_{p_j,\eta}$, because otherwise it would be $\om=\g^k_{a_j^k(1)}\cc B_{p_j,\eta}$, and thus $\S(\E_k)\cap B_{p_j,\eta}\setminus\{p_j^k\}\ne\emptyset$, against \eqref{def eta k}. Hence $\om\cap\pa B_{p_j,\eta}\ne\emptyset$. At the same time, by \eqref{def eta 10},
\[
\om\cap B_{p_j,\eta}\subset\Big(\om\cap B_{p_j,2\rho_0}\Big)\cup\bigcup_{\ell=1}^3\Big(
\om\cap\g_{\s_k(a_j(\ell))}^k\cap B_{p_j,\eta}\Big)\,,
\]
and since $\om$ is connected with $\om\cap\pa B_{p_j,\eta}\ne\emptyset$, it must be $\om\cap\g_{\s_k(a_j(\ell))}^k\ne\emptyset$ for some $\ell\in\{1,2,3\}$, thus $\g_{a_j^k(1)}^k\cap\g_{\s_k(a_j(\ell))}^k\ne\emptyset$. Up to relabeling $\ell\in\{1,2,3\}$, we have thus proved that
\[
\g^k_{a_j^k(\ell)}\cap\g^k_{\s_k(a_j(\ell))}\ne\emptyset\,,\qquad\forall j\in J\,,k\ge k_0\,,\ell\in\{1,2,3\}\,,
\]
from which \eqref{higgs} follows by the connectedness of the curves $\{\g_i^k\}_{i\in I}$. Having proved \eqref{higgs}, we now introduce the notation needed to formulate \eqref{higgs bordo}: given $i\in I''$, let $b_i(1)$ and $b_i(2)$ denote the two distinct elements of $J$ such that $\bd(\g_i)=\{b_i(1),b_i(2)\}$, and define similarly $b_i^k(m)$ ($m=1,2$) for each $i\in I''_k$. Then, up to permutations in the index $m\in\{1,2\}$,
\begin{equation}
  \label{higgs bordo}
  b_{\s_k(i)}^k(m)=b_i(m)\,,\qquad\forall i\in I''\,,k\ge k_0\,,m=1,2\,.
\end{equation}
Indeed, if $i\in I''$ then $i=a_{b_i(1)}(\ell)$ for some $\ell\in\{1,2,3\}$, therefore, by \eqref{higgs},
\[
\s_k(i)=\s_k(a_{b_i(1)}(\ell))=a_{b_i(1)}^k(\ell)\,,
\]
that is,
\[
p_{b_i(1)}^k\in\bd(\g_{\s_k(i)})=\{p_{b_{\s_k(i)}^k(1)},p_{b_{\s_k(i)}^k(2)}\}\,,\qquad\mbox{thus}\qquad
b_i(1)\in\{b_{\s_k(i)}^k(1),b_{\s_k(i)}^k(2)\}\,,
\]
as required. With \eqref{higgs} and \eqref{higgs bordo} in force, we now prove \eqref{sabbath2}. The fact that $\s_k(I'')\subset I''_k$ is immediate from $I''=\{a_j(\ell):j\in J\,,\ell\in\{1,2,3\}\}$ and \eqref{higgs}. If now $i,i'\in I''$ are such that $\s_k(i)=\s_k(i')$ then by \eqref{higgs bordo}
\begin{eqnarray*}
\{j\in J:p_j\in\bd(\g_i)\}&=&\{b_i(m)\}_{m=1}^2=\{b_{\s_k(i)}^k(m)\}_{m=1}^2=\{b_{\s_k(i')}^k(m)\}_{m=1}^2
\\
&=&\{b_{i'}(m)\}_{m=1}^2=\{j\in J:p_j\in\bd(\g_{i'})\}\,,
\end{eqnarray*}
so that $\bd(\g_i)=\bd(\g_{i'})$, and thus $i=i'$; this proves that $\s_k$ is injective on $I''$. Finally, by Remark \ref{remark M}, it must be $\#\,I''=(3/2)\,\#\,J=(3/2)\,\#\,J_k=\#\,I_k''$, so that $\s_k$ is actually a bijection between $I''$ and $I''_k$, and \eqref{sabbath2} is proved.

Let us now show that
\begin{equation}
  \label{bd xx 1}
\lim_{k\to\infty}\hd(\g_i,\g_{\s_k(i)}^k)=0\,,\qquad\forall i\in I''\,.
\end{equation}
We first notice that, by \eqref{higgs bordo},
\[
\{j\in J:p_j^k\in\bd(\g_{\s_k(i)}^k)\}=\{b_{\s_k(i)}^k(m)\}_{m=1}^2=\{b_i(m)\}_{m=1}^2
=\{j\in J:p_j\in\bd(\g_i)\}\,,
\]
so that \eqref{pkell va a pell} gives
\begin{equation}
  \label{bd xx}
  \lim_{k\to\infty}\hd(\bd(\g_i),\bd(\g_{\s_k(i)}^k))=0\,,\qquad\forall i\in I''\,.
\end{equation}
Next, if $i\in I''$, then by \eqref{injective maps I'} one has $\g_{\s_k(i)}^k\cap I_{\e(\rho_0)}(\g_{i'})=\emptyset$ for every $i'\in I'$, while \eqref{injective maps} gives $\g_{\s_k(i)}^k\cap N_{\e(\rho_0)}([\g_{i'}]_{\rho_0})=\emptyset$ for every $i'\in I''\setminus\{i\}$; since $\pa\E_k\subset I_{\e(\rho_0)}(\pa\E)$ for $k\ge k_0$, we thus find
\[
\g_{\s_k(i)}^k\subset I_{2\,\rho_0}(\g_i)\cup\bigcup_{i'\in I''}I_{2\rho_0}(\bd(\g_{i'}))\,,\qquad\forall i\in I''\,,k\ge k_0\,.
\]
Since $I_{2\,\rho_0}(\g_i)$ is disjoint from $\bigcup_{i'\in I''}I_{2\rho_0}(\bd(\g_{i'}))$ thanks to \eqref{def eta}, we conclude that $\g_{\s_k(i)}^k\subset I_{2\,\rho_0}(\g_i)$ for every $i\in I''$ and $k\ge k_0$. At the same time, by \eqref{injective maps}, \eqref{zetak tendono a zero PLUS}, and \eqref{bd xx}
\[
[\g_i]_{\rho_0}\subset I_{\rho_0}(\g_{\s_k(i)}^k)\,,\qquad I_{\rho_0}(\bd(\g_i))\subset I_{2\rho_0}(\g_{\s_k(i)}^k)\,,\qquad\forall i\in I''\,,k\ge k_0\,,
\]
that is, $\g_i\subset I_{2\rho_0}(\g_{\s_k(i)}^k)$ for every $i\in I''$ and $k\ge k_0$. We have thus proved \eqref{bd xx 1}.

In order to complete the proof of \eqref{heybeckner what0} and \eqref{heybeckner what} we are thus left to show that $\s_k(I')=I'_k$. We argue by contradiction, and assume the existence of $i_*\in I'_k\setminus \s_k(I')$. Since $I_{\e(\rho_0)}(\g_i)\cap\pa\E_k=\g_{\s_k(i)}^k$ for every $i\in I'$ (recall \eqref{injective maps I' x}), by connectedness we deduce that
\begin{equation}
  \label{zapata 1}
  \g_{i_*}^k\cap \bigcup_{i\in I'}I_{\e(\rho_0)}(\g_i)=\emptyset\,.
\end{equation}
Since $N_{\e(\rho_0)}([\g_i]_{\rho_0})\cap\pa\E_k=N_{\e(\rho_0)}([\g_i]_{\rho_0})\cap\g_{\s_k(i)}^k$ for every $i\in I$ (recall \eqref{injective maps}), if $\g_{i_*}^k\cap N_{\e(\rho_0)}([\g_i]_{\rho_0})\ne\emptyset$, then, by connectedness of $\g_{\s_k(i)}^k$, one finds $i_*=\s_k(i)\in\s_k(I)$, a contradiction: hence,
\begin{equation}
  \label{zapata2}
 \g_{i_*}^k\cap \bigcup_{i\in I''}N_{\e(\rho_0)}([\g_i]_{{\rho_0}})=\emptyset\,.
\end{equation}
Since $\pa\E_k\subset I_{\e(\rho_0)}(\pa\E)=\bigcup_{i\in I}I_{\e(\rho_0)}(\g_i)$, by \eqref{zapata 1} and \eqref{zapata2} we find
\[
\g_{i_*}^k\subset
\bigcup_{i\in I''}I_{\e(\rho_0)}(\g_i)\setminus\bigcup_{i\in I''}N_{\e(\rho_0)}([\g_i]_{\rho_{0}})\subset \bigcup_{j\in J}B_{p_j,\eta}\,,
\]
and since the balls $\{B_{p_j,\eta}\}_{j\in J}$ are disjoint by \eqref{def eta}, we conclude that for every $i_*\in I'_k\setminus\s_k(I')$ there exists a unique $j\in J$ such that $\g_{i_*}^k\subset B_{p_j,2\rho_0}$; however, by Theorem \ref{thm planar clusters},
\[
\frac1{\Lambda}\le\diam(\g_{i_*}^k)<2\rho_0\,,
\]
which leads to a contradiction if $\rho_0$ is sufficiently small.
\medskip

\noindent {\it Step three}: We prove \eqref{ext by foliation planar case k}. We directly consider the case when $\bd(\g_{i}^{k})\neq \emptyset$, and omit the (analogous) details for the case $\bd(\g_{i}^{k})= \emptyset$. Let us set $\ell_i^k=\H^1(\g_i^k)$, consider $\a_i^k\in C^{1,1}([0,\ell_i^k];\R^2)$ to be an arc-length parametrization of $\g_i^k$, and define unit normal vector fields $\nu_i^k\in C^{0,1}(\g_k;\SS^1)$ by setting $\nu_i^k(\a_i^k(t))=(\a_i^k)'(t)^\perp$, with the convention that $v^\perp=(v_2,-v_1)$ for every $v=(v_1,v_2)\in\R^2$. According to Definition \ref{basta S C1alpha}, we just need to show that, up to further increasing the value of $L$
\begin{equation}
  \label{bellastoriafrate}
  |\nu_i^k(x)\cdot(y-x)|\le L\,|x-y|^2\,,\qquad|\nu_i^k(x)-\nu_i^k(y)|\le L\,|x-y|\,,\qquad\forall x,y\in\g_i^k\,.
\end{equation}
Indeed, if $x,y\in\g_i^k$ with $s,t\in[0,\ell_i^k]$ such that $x=\a_i^k(s)$ and $y=\a_i^k(t)$, then, by  $\Lip((\a_i^k)')\le\Lambda$,
\[
|\nu_i^k(x)\cdot(y-x)|\le C\,|s-t|^2\,,\qquad|\nu_i^k(x)-\nu_i^k(y)|\le C\,|s-t|\,;
\]
we are thus left to show that
\begin{equation}
  \label{maguardate}
  |s-t|\le C\,|\a_i^k(s)-\a_i^k(t)|\,,\qquad \forall s,t\in[0,\ell_i^k]\,.
\end{equation}
If $|s-t|\le1/\Lambda$, then \eqref{maguardate} follows with $C\ge 2$ by noticing that
\[
|\a_i^k(s)-\a_i^k(t)|=\Big|\int_t^s(\a_i^k)'(r)\,dr\Big|\ge|t-s|-\Lambda\,\frac{|t-s|^2}2\,,
\]
once again thanks to $\Lip((\a_i^k)')\le\Lambda$. If $\g_i[x,y]$ denote the arc of $\g_i$ with end-points $x,y\in\g_i$, then by compactness
\[
\min_{i\in I}\,\inf\Big\{|x-y|:x,y\in \g_i\,,\H^1(\g_i[x,y])\ge\frac1{2\Lambda}\Big\}\ge c\,,
\]
where $c>0$ depends on $\E$ and $\Lambda$ only. Since for every $i\in I$ we have $\hd(\g_i^k,\g_i)\to 0$ as $k\to\infty$, we can thus entail
\[
\min_{i\in I}\,\inf\Big\{|x-y|:x=\a_i^k(s)\,, y=\a_i^k(t)\,,|s-t|\ge\frac1{\Lambda}\Big\}\ge \frac{c}2\,,
\]
so that \eqref{maguardate} holds on $|s-t|>1/\Lambda$ provided $C\ge 2\Lambda/c$. This completes the proof of \eqref{bellastoriafrate}, thus of \eqref{ext by foliation planar case k}.

\medskip

\noindent {\it Step four}: We prove \eqref{heybeckner4}. Let us fix $j\in J$, and consider $p_j^k\in\S(\E_k)$ and $i_1,i_2,i_3\in I$ such that $\{p_j^k\}=\bd(\g_{i_1}^k)\cap \bd(\g_{i_2}^k)\cap \bd(\g_{i_3}^k)$. Since each $\g_i^k$ is a compact connected $C^{1,1}$-curve with distributional curvature bounded by $\Lambda$ one finds that, for every $i=i_1,i_2,i_3$,
\begin{equation}
  \label{heybeckner3}
\lim_{r\to0^+}\sup_{k\in\N}\hd_B\Big(\frac{\g_i^k-p_j^k}r,\R_+\,[\tau_i^k(p_j^k)]\Big)=0\,,
\end{equation}
where we have set $\R_+[\tau]=\{t\,\tau:t\ge0\}$ for every $\tau\in\SS^1$, and we have set
\[
\tau_i=-\nu_{\g_i}^{co}\,,\qquad \tau_i^k=-\nu_{\g_i^k}^{co}\,,
\]
for the sake of brevity. We thus find
\begin{eqnarray}\label{heybeckner2}
  \hd_B\Big(\R_+\,[\tau_i(p_j)],\R_+\,[\tau_i^k(p_j^k)]\Big)&\le&
  \sup_{k\in\N}\hd_B\Big(\frac{\g_i^k-p_j^k}r,\R_+\,[\tau_i^k(p_j^k)]\Big)
  \\\nonumber
  &&+\hd_B\Big(\frac{\g_i-p_j}r,\R_+\,[\tau_i(p_j)]\Big)+2\,\frac{\hd(\g_i^k,\g_i+(p_j^k-p_j))}r\,,
\end{eqnarray}
where we have also used the fact that, for $k$ large enough,
\begin{equation}\label{heyjoe3}
\hd_B\Big(\frac{\g_i^k-p_j^k}r,\frac{\g_i-p_j}r\Big)\le 2\,\frac{\hd(\g_i^k,\g_i+(p_j^k-p_j))}r\,.
\end{equation}
At this point we can choose a sequence $r_{k}\to 0^+$, such that the right-hand side of \eqref{heyjoe3} with $r = r_{k}$ is infinitesimal as $k\to\infty$. By also exploiting \eqref{heybeckner} and \eqref{heybeckner3}, this gives $\hd_B(\R_+\,[\tau_i(p_j)],\R_+\,[\tau_i^k(p_j^k)])\to 0$ as $k\to\infty$, that is \eqref{heybeckner4}.
\end{proof}

\begin{proof}
  [Proof of Theorem \ref{thm main planar}] Let $\E$ be a $C^{2,1}$-cluster in $\R^2$, $\{\E_k\}_{k\in\N}$ be a sequence of $(\Lambda,r_0)$-minimizing clusters such that $\d(\E_k,\E)\to 0$ as $k\to\infty$, and let $L$, $\rho_0$ and, for each $\rho\le\rho_0$, $k(\rho)\in\N$, be the constants given by Theorem \ref{thm key planar}. Denote by $\mu_0$ and $C_0$ the smallest and the largest constants, respectively, associated by Theorem \ref{thm main diffeo k} to some $\g_i$ such that $\bd(\g_i)\ne\emptyset$. In this way, $\mu_0$ and $C_0$ depend on $\Lambda$ and $\E$ only. Up to further decreasing the value of $\mu_0$, we can also assume that $\mu_0^2\le\rho_0$. Given $\mu<\mu_0$, we now want to find $k(\mu)\in\N$ such that for every $k\ge k(\mu)$ there exists a $C^{1,1}$-diffeomorphism $f_k$ between $\pa\E$ and $\pa\E_k$ with
  \begin{eqnarray}\label{reso1}
  \|f_k\|_{C^{1,1}(\pa\E)}&\le& C_0\,,
  \\\label{reso2}
  \lim_{k\to\infty}\|f_k-\Id\|_{C^1(\pa\E)}&=&0\,,
  \\\label{reso3}
  \|\ttau_{\E}(f_k-\Id)\|_{C^1(\pa^*\E)}&\le&\frac{C_0}\mu\,\|f_k-\Id\|_{C^0(\S(\E))}\,,
  \\\label{reso4}
  \ttau_{\E}(f_k-\Id)&=&0\,,\quad\mbox{on $[\pa\E]_\mu$}\,.
  \end{eqnarray}
  {\it Let us fix $i\in I$ such that $\bd(\g_i)\ne\emptyset$.} Since $\mu^{2}<\mu_0^2\le\rho_0$, Theorem \ref{thm key planar} ensures that $\{\g_i^k\}_{k\ge k_0}$ satisfies the assumptions (i) and (ii) of Theorem \ref{thm main diffeo k}. By Theorem \ref{thm main diffeo k} for every $k\ge k(\mu)$ one finds a $C^{1,1}$-diffeomorphism $f_i^k$ between $\g_i$ and $\g_i^k$ with $f_i^k(p_j)=p_j^k$, $f_i^k(p_{j'})=p_{j'}^k$ ($j$ and $j'$ as in statement (ii) of Theorem \ref{thm key planar}) and
  \begin{eqnarray}
    \label{diffeo curve4 z}
    \|f_i^k\|_{C^{1,1}(\g_i)}&\le&C_0\,,
    \\
    \label{diffeo curve5 z}
    \|(f_i^k-\Id)\cdot\tau_i\|_{C^1(\g_i)}&\le& \frac{C_0}\mu\,\|f_i^k-\Id\|_{C^0(\bd(\g_i))}\,,
    \\
    \label{diffeo curve1 z}
    (f_i^k-\Id)\cdot\tau_i&=&0\qquad\mbox{on $[\g_i]_{\mu}$}\,;
\\
    \label{diffeo curve3 zz}
    \lim_{k\to\infty}\|f_i^k-\Id\|_{C^1(\g_i)}&=&0\,.
  \end{eqnarray}
  {\it Let us now fix $i\in I$ such that $\bd(\g_i)=\emptyset$.} Up to further decreasing $\mu_0$, $\g_i$ is a connected component of $[\pa\E]_\mu$, and thus by statement (ii) in Theorem \ref{thm key planar}, $\{\psi_k\}_{k\ge k(\rho)}\subset C^{1,1}([\pa\E_0]_\rho)$ are such that
  \begin{equation}
  \label{resonance}
  \g_i^k=(\Id+\psi_k\nu)(\g_i)\,,\qquad \lim_{k\to\infty}\|\psi_k\|_{C^1(\g_i)}=0\,,\qquad \sup_{k\in\N}\|\psi_k\|_{C^{1,1}(\g_i)}\le C_0\,.
  \end{equation}
  We set $f_i^k=\Id+\psi_k\,\nu$ for every $i\in I$ such that $\bd(\g_i)=\emptyset$, and finally define $f_k(x)=f_i^k(x)$ for $x\in \g_i$. The resulting map $f_k$ defines a $C^{1,1}$-diffeomorphism between $\pa\E$ and $\pa\E_k$ (see Definition \ref{def 1}) with \eqref{reso1}--\eqref{reso4} in force.
\end{proof}

\section{Some applications of the improved convergence theorem}\label{section applications} We now prove Theorem \ref{thm finitely many types} and Theorem \ref{thm potenziale}. To this end, let us notice that if $\{\E_k\}_{k\in\N}$ is a sequence of planar isoperimetric clusters with $\sup_{k\in\N}P(\E_k)<\infty$, then there exist $x_k\in\R^2$ and a planar $N$-cluster $\E_0$ such that, up to extracting subsequences, $x_k+\E_k\to\E_0$. This is a simple consequence of (i) the inequality $2\,\diam(E)\le P(E)$, which holds for every indecomposable set of finite perimeter $E$ in $\R^2$ (this, of course, after the normalization \eqref{spt normalization}); (ii) the fact that $\R^2\setminus\E(0)$ is indecomposable whenever $\E$ is an isoperimetric cluster (as it can be easily inferred by arguing as in Remark \ref{remark topologia 2d E0}).

\begin{proof}
  [Proof of Theorem \ref{thm finitely many types}] We argue by contradiction, and assume that there exists a sequence $\{\E_k\}_{k\in\N}$ of isoperimetric $N$-clusters with $\vol(\E_k)\to m_0$ such that $[\E_k]_\approx\ne[\E_j]_\approx$ whenever $k\ne j$. Let $\phi:\R^N_+\to(0,\infty)$ denote the infimum in \eqref{partitioning problem}, then it is easily seen that $\phi$ is locally bounded. In particular, $\sup_{k\in\N}P(\E_k)<\infty$, and thus there exists a $N$-cluster $\E_0$ and $x_k\in\R^2$ such that, up to extracting subsequences, $x_k+\E_k\to\E_0$ as $k\to\infty$. We claim that, for $k$ large enough, $x_k+\E_k$ is a $(\Lambda,r_0)$-minimizing cluster in $\R^2$, where $\Lambda$ and $r_0$ are independent from $k$. To this end, let $\e_0$, $r_0$, and $C_0$ be the constants associated with $\E_0$ by Theorem \ref{thm volume fixing} and let $k_0$ be such that $\d(x_k+\E_k,\E_0)<\e_0$ for $k\ge k_0$. Given $\F$ with $\F(h)\Delta (x_k+\E_k(h))\cc B_{x,r_0}$ for $h=1,...,N$, by applying Theorem \ref{thm volume fixing} with $\E=x_k+\E_k$ we find $\F'_k$ such that
  \[
  \vol(\F'_k)=\vol(x_k+\E_k)=\vol(\E_k)\,,\qquad P(\F'_k)\le P(\F)+C_0\,\d(x_k+\E_k,\F)\,.
  \]
  so that, by the isoperimetric property of $\E_k$, $P(x_k+\E_k)\le P(\F'_k)\le P(\F)+C_0\,\d(x_k+\E_k,\F)$. Thus $x_k+\E_k$ is a $(\Lambda,r_0)$-minimizing cluster in $\R^2$ for $k$ large enough. By Theorem \ref{thm boundary hausdorff} we infer that $\E$ is also a $(\Lambda,r_0)$-minimizing cluster in $\R^2$, and thus conclude by Theorem \ref{thm main planar} that $x_k+\E_k\approx\E$ for $k$ large enough. Since $x_k+\E_k\approx\E_k$, we have found a contradiction to $[\E_k]_\approx\ne[\E_j]_\approx$ for $k\ne j$.
\end{proof}

\begin{proof}
  [Proof of Theorem \ref{thm potenziale}] {\it Step one}: We first prove that, if $\E$ is a minimizer in \eqref{partitioning problem with potential delta} with $\de\in(0,\de_0)$ and $|m-m_0|<\de_0$, then $\E\approx\E_0$. We argue by contradiction, and consider a sequence $\{\E_k\}_{k\in\N}$ of minimizers in
  \begin{equation}
  \label{partitioning problem with potential k}
  \lambda_k=\inf\Big\{P(\E)+\de_k\sum_{h=1}^N\int_{\E(h)}g(x)\,dx:\vol(\E)=m_k\Big\}\,,\qquad k\in\N\,,
  \end{equation}
  where $\de_k\to 0$ and $m_k\to m_0$ as $k\to\infty$, and $[\E_k]_\approx\ne[\E_0]_\approx$ for every $k\in\N$. Let $\{\F_k\}_{k\in\N}$ be a sequence of isoperimetric clusters with $\vol(\F_k)=m_k$. Since $m_k\to m_0$ implies $\sup_{k\in\N}P(\F_k)<\infty$, by the argument presented at the beginning of this section there exists $R>0$ such that, up to translations,  $\F_k(h)\cc B_R$ for every $h=1,...,N$ and $k\in\N$. By comparing $\E_k$ and $\F_k$ in \eqref{partitioning problem with potential k} we find
  \begin{equation}
    \label{viacolvento}
      P(\E_k)+\de_k\sum_{h=1}^N\int_{\E_k(h)}g\le P(\F_k)+\de_k\sum_{h=1}^N\int_{\F_k(h)}g\le P(\F_k)+\de_k\,|m_k|\,\sup_{B_R}g
  \end{equation}
  and since $P(\F_k)\le P(\E_k)$ we thus find that for every $r>0$
  \[
  \inf_{\R^2\setminus B_r}g\,\sum_{h=1}^N|\E_k(h)\setminus B_r|\le|m_k|\,\sup_{B_R}g\,.
  \]
  By $g(x)\to\infty$ as $|x|\to\infty$, we conclude that
  \begin{equation}
    \label{viacolvento2}
      \lim_{r\to\infty}\sup_{k\in\N}\sum_{h=1}^N|\E_k(h)\setminus B_r|=0\,.
  \end{equation}
  Since \eqref{viacolvento} also implies $\sup_{k\in\N}P(\E_k)<\infty$, by \eqref{viacolvento2} we conclude that up to extracting subsequences, $\d(\E_k,\E)\to 0$ as $k\to\infty$, where $\E$ is a planar cluster with $\vol(\E)=m_0$. In particular, recalling that $\E_0$ denotes the unique isoperimetric cluster with $\vol(\E_0)=m_0$, we have
  \begin{equation}
    \label{viacolvento3}
      P(\E_0)\le P(\E)\le\liminf_{k\to\infty}P(\E_k)\,.
  \end{equation}
  Now, by \cite[Theorem 29.14]{maggiBOOK} there exist positive constants $\e$, $\eta$ and $C$, a smooth map $\Phi\in C^1((-\eta,\eta)^N\times\R^2;\R^2)$, and a disjoint family of balls $\{B_{z_i,\e}\}_{i=1}^M$ such that, for every $v\in (-\eta,\eta)^N$, the $N$-cluster defined by $\E_{0,v}(h):=\Phi(v,\E_0(h))$, $h=1,...,N$, satisfies
  \begin{eqnarray*}
    \E_{0,v}(h)\Delta\E_0(h)\cc A=\bigcup_{i=1}^MB_{z_i,\e}\,,
    \quad
    P(\E_{0,v})\le P(\E_0)+C\,|v|\,,
    \quad
    \vol(\E_{0,v})=\vol(\E_0)+v\,.
  \end{eqnarray*}
  For $k$ large, $v_k=\vol(\E_k)-\vol(\E_0)\in (-\eta,\eta)^N$, so that $\vol(\E_{0,v_k})=m_k$ and, by $g\ge0$
  \begin{eqnarray*}
  P(\E_k)+\de_k\sum_{h=1}^N\int_{\E_k(h)}g \le P(\E_{0,v_k})+\de_k\,\sum_{h=1}^N\int_{\E_{0,v_k}(h)}\,g\le P(\E_0)+C\,|v_k|+\de_k\,\sup_{B_{2S}}\,g
  \end{eqnarray*}
  where $S$ is such that $\bigcup_{h=1}^N\E_0(h)\cup A\cc B_S$. Letting $k\to\infty$ we find that
  \[
  \limsup_{k\to\infty}P(\E_k)\le P(\E_0)\,,
  \]
  so that, by \eqref{viacolvento3}, $P(\E)=P(\E_0)$. Since $\vol(\E)=m_0$, we find $\E\approx\E_0$ (through an isometry), and we may thus assume, without loss of generality, that $\E=\E_0$. By arguing as in the previous proof (with some minor modification because of the presence of the potential), we see that, for $k$ large enough, $\E_k$ is a $(\Lambda,r_0)$-minimizer with $\Lambda$ and $r_0$ uniform in $k$. Since $\d(\E_k,\E_0)\to 0$ as $k\to\infty$, by Theorem \ref{thm main planar} we find that $\E_k\approx\E_0$ for $k$ large enough, a contradiction.

  \bigskip

  \noindent {\it Step two}: The argument of step one can be easily adapted to show the existence of minimizers in \eqref{partitioning problem with potential delta}, together with the existence of $R_0$ (depending on $\E_0$, $\de_0$ and $g$ only) such that $\E(h)\subset B_{R_{0}}$ for every $h=1,...,N$ and every minimizer $\E$. In particular, there exists $C_0$ depending on $g$ and $R_0$ only such that
  \begin{equation}
    \label{tensorsss}
      P(\E)\le P(\F)+C_0\,\de\,\d(\E,\F)\,,
  \end{equation}
  whenever $\vol(\E)=\vol(\F)$ and $\F(h)\subset B_{2R_{0}}$. Let us fix $x_1,x_2\in\E(h,k)$, $T_i\in C^1_c(B_{x_i,r};\R^n)$ ($i=1,2$) with $|\E(j)\cap B_{x_i,r}|=0$ if $i\ne h,k$ and $r<|x_1-x_2|$, and with
  \[
  \int_{\pa^*\E(h)}T_i\cdot\nu_{\E(h)}\,d\H^{n-1}=\eta_i>0\,,\qquad \sup_{\R^n}|T_i|\le 1\,.
  \]
  By a standard argument we can construct a one-parameter family of diffeomorphisms $f_t$ with $f_t(x)=x+t\,(T_1(x)-(\eta_1/\eta_2)T_2(x))+O(t^2)$ such that $\vol(f_t(\E))=\vol(\E)$. For $t$ small enough $\F=f_t(\E)$ is admissible in \eqref{tensorsss}, with
  \[
  \d(\E,f_t(\E))\le 2|f_t(\E(h))\Delta\E(h)|\le 2\,P(\E(h);B_{x_1,r}\cup B_{x_2,r})\,|t|\,,
  \]
  by Lemma \ref{lemma distanza L1}. Since
  \[
  P(f_t(\E))=P(\E)+t\,\int_{\pa^*\E(h)}(T_1-(\eta_1/\eta_2)T_2)\cdot\nu_{\E(h)}\,H_{\E(h,k)}+O(t^2)\,,
  \]
  and $P(\E(h);B_{x_1,s}\cup B_{x_2,s})=\om_{n-1}\,s^{n-1}(1+O(1))$ as $s\to 0^+$, by \eqref{tensorsss} we conclude that
  \[
  \int_{\pa^*\E(h)}(T_1-(\eta_1/\eta_2)T_2)\cdot\nu_{\E(h)}\,H_{\E(h,k)}\le 2\,C_0\,\de\,\om_{n-1}r^{n-1}(1+O(1))\,.
  \]
  Let now $T_i=T_i^j\to 1_{B_{x_i,r}}\nu_{\E(h)}$ in $L^1(\H^1\llcorner\pa\E(h))$ as $j\to\infty$, so that
  \[
  \int_{B_{x_1,r}\cap\pa^*\E(h)}H_{\E(h,k)}- \frac{\eta_1}{\eta_2}\int_{B_{x_1,r}\cap\pa^*\E(h)}H_{\E(h,k)}
  \le 2\,C_0\,\de\,\om_{n-1}r^{n-1}(1+O(1))\,.
  \]
  By the mean value theorem, as $r\to 0^+$, we find that $H_{\E(h,k)}(x_1)-H_{\E(h,k)}(x_2)\le 2\,C_0\,\de$, that is,
  \[
   \max_{0\le h<k\le N}\|H_{\E(h,k)}-H_{h,k}^\de\|_{C^0(H_{\E(h,k)})}\le C\,\de\,,
  \]
  for some $H_{h,k}^\de\in\R$. At the same time, by arguing for example as in \cite[Lemma 3.7(ii)]{CicaleseLeonardi}, one see that $H_{\E(h,k)}$ has to converge in the sense of distributions to $H_{\E_0(h,k)}$ as $\de\to 0^+$, and thus prove \eqref{convessita}.
\end{proof}

\appendix

\section{Proof of Theorem \ref{thm inverse fnct uniform}}\label{section implicit}

\begin{proof}[Proof of Theorem \ref{thm inverse fnct uniform}] In the following, we denote by $C$ a generic constant depending on $n$, $k$, $\a$, and $L$ only. Let us set $\l_{\rm min},\l_{\rm max}:S_0\to\R$ as $\l_{\rm min}(x)=\inf\{|\nabla^{S_0} f(x)v|:v\in T_{x}S_{0},|v|=1\}$ and $\l_{\rm max}(x)=\|\nabla^{S_0} f(x)\|$. By \eqref{inverse function1} we find that
  \[
  \frac1{L}\le J^{S_0}f(x)\le\l_{\rm min}(x)\,\l_{{\rm max}}(x)^{k-1}\le \l_{\rm min}(x)\,L^{k-1}\,,
  \]
  that is $\l_{\rm min}(x)\ge L^{-k}$ for every $x\in S_0$. In particular, by also using \eqref{vai ganzo} we find that
\begin{equation}\label{xysotto}
|\nabla^{S_0} f(x)(y-x)|=|\nabla^{S_0} f(x)\pi^{S_0}_x(y-x)|\geq \frac{|\pi^{S_0}_x(y-x)|}{L^k}\geq \frac{|y-x|}{2L^{k}}\,,\qquad\forall y\in B_{x,1/L}\cap S_0\,.
\end{equation}
We now assume $\e_0<1/L$ and fix $y\in B_{x,\e_0}\cap S_0\setminus\{x\}$. Since $\textstyle\dist_{S_{0}}(x,y)>0$  we can find $\gamma\in C^1([0,1];S_{0})$ such that $\gamma(0)=x$, $\gamma(1)=y$ and
 \begin{equation}\label{distinmezzo}
 \textstyle\dist_{S_{0}}(x,y)\le\displaystyle\int_0^1|\dot\g(t)|\,dt\le \textstyle2\,\dist_{S_0}(x,y)\,.
 \end{equation}
By \eqref{xysotto},
  \begin{align}
 \nonumber |f(y)-f(x)|&
 =\Big|\nabla^{S_0} f(x)(y-x)-\int_0^1(\nabla^{S_0} f(\gamma(t))-\nabla^{S_0} f(x))\dot\gamma(t)\,dt\Big|
 \\
 \nonumber  &\geq\frac{|y-x|}{2L^{k}}-\int_0^1\|\nabla^{S_0} f(\gamma(t))-\nabla^{S_0} f(x)\||\dot\gamma(t)|\, dt\,.
 \end{align}
 By \eqref{inverse function1}, \eqref{distinmezzo}, and \eqref{geo-euc}
 \begin{eqnarray*}
   \int_0^1\|\nabla^{S_0} f(\gamma(t))-\nabla^{S_0} f(x)\||\dot\gamma(t)|\, dt&\le&
    L\,\int_0^1\,|x-\g(t)|^\a\,|\dot\gamma(t)|\, dt
   \\
   &\le& L\,\int_0^1\,\Big(\int_0^t|\dot\g(s)|\,ds\Big)^\a\,|\dot\gamma(t)|\, dt
   \\
   &\le& L\,2^{1+\a}\,\textstyle\dist_{S_{0}}(x,y)^{1+\a}\le L\,(2L)^{1+\a}\,|x-y|^{1+\a}\,.
 \end{eqnarray*}
 We thus conclude (up to further decreasing the value of $\e_0$) that if $x\in S_0$ and $y\in B_{x,\e_0}\cap S_0$, then
 \begin{equation}\label{iniettiva-1}
   |f(x)-f(y)|\ge|y-x|\left(\frac{1}{2L^{k}}-L\,(2L)^{1+\alpha}\e_{0}^{\alpha}\right)\geq\frac{|y-x|}{4L^{k}}\,.
 \end{equation}
 This shows that $f$ is injective on $B_{x,\e_0}\cap S_0$ for every $x\in S_0$. If now \eqref{inverse function2} is in force with $\rho_0\le\e_0/4$, then by $\diam(S_0)\le L$ one finds that for every $x,y\in S_0$ with $|x-y|\ge\e_0$
  \[
  |f(x)-f(y)|\ge|x-y|-|f(x)-x|-|f(y)-y|\ge\e_0-2\,\rho_0\ge\frac{\e_0}2\ge\frac{\e_0}{2\,L}\,|x-y|\,,
  \]
  so that, in conclusion, $f$ is injective on $S_0$ with
  \begin{equation}
    \label{chefreddo0}
    |f^{-1}(p_1)-f^{-1}(p_2)|\le C\,|p_1-p_2|\,,\qquad\forall p_1,p_2\in S=f(S_0)\,.
  \end{equation}
  We are thus left to prove that
  \begin{equation}
    \label{chefreddo1}
    \|\nabla^{S}f^{-1}(p_1)-\nabla^{S}f^{-1}(p_2)\|\le C\,|p_1-p_2|^\a\,,\qquad\forall p_1,p_2\in S\,.
  \end{equation}
  Indeed, by \eqref{vai ganzo1}, \eqref{inverse function1} and \eqref{chefreddo0} we can entail
  \begin{equation}
    \label{vai ganzo2}
    \|\pi^S_p-\pi^S_q\|\le C\,|p-q|^\a\,,\qquad\forall p,q\in S\,.
  \end{equation}
  Let us now fix $p_1,p_2\in S$ and set
  \[
  M_i=\nabla^Sf^{-1}(p_i)\,,\quad \pi_i=\pi^S_{p_i}\,,\quad x_i=f^{-1}(p_i)\,,\quad N_i=\nabla^{S_0}f(x_i)\,,\quad \pi_i^0=\pi_{x_i}^{S_0}\,.
  \]
  By exploiting the relations
  \begin{gather}
  \pi_i^0M_i=M_i=M_i\pi_i\,,\qquad  \pi_iN_i=N_i=N_i\pi_i^0\,,
   \\
  N_1M_1\pi_1=\pi_1\,,\qquad N_2M_2\pi_2=\pi_2\,,\qquad M_1N_1\pi_1^0=\pi_1^0\,,\qquad M_2N_2\pi_2^0=\pi_2^0\,,
  \end{gather}
  one finds that
  \begin{eqnarray*}
    &&M_1(N_2-N_1)M_2+M_2(N_2-N_1)M_1
    \\
    &=&M_1N_2M_2-M_1N_1M_2+M_2N_2M_1-M_2N_1M_1
    \\
    &=&M_1N_2M_2\pi_2-M_1N_1\pi_1^0M_2+M_2N_2\pi_2^0M_1-M_2N_1M_1\pi_1
    \\
    &=&M_1\pi_2-\pi_1^0M_2+\pi_2^0M_1-M_2\pi_1
    \\
    &=&2(M_1-M_2)+(M_1+M_2)(\pi_2-\pi_1)+(\pi_2^0-\pi_1^0)(M_1+M_2)\,.
  \end{eqnarray*}
  By \eqref{vai ganzo1} and \eqref{vai ganzo2}, and since $\|M_i\|\le C$ by \eqref{chefreddo0}, we thus find
  \begin{eqnarray*}
    2\|M_2-M_1\|&\le&2\,\|M_1\|\|M_2\|\|N_2-N_1\|+\|M_1+M_2\|\Big(\|\pi_2-\pi_1\|+\|\pi_2^0-\pi_1^0\|\Big)
    \\
    &\le& C\,\Big(\|N_2-N_1\|+|p_2-p_1|^\a+|x_2-x_1|^\a\Big)
    \\
    &\le& C\,\Big((1+L)\,|x_2-x_1|^\a+|p_2-p_1|^\a\Big)\le C\,|p_2-p_1|^\a\,,
  \end{eqnarray*}
  where in the last line we have first used $[\nabla^{S_0}f]_{C^{0,\a}(S_0)}\le L$ and then \eqref{chefreddo0}. This completes the proof of \eqref{chefreddo1}, thus of the theorem.
\end{proof}

\section{Volume-fixing variations}\label{section volume fixing} Comparison sets used in variational arguments usually arise as compactly supported perturbations of the considered minimizer. In order to use these constructions in volume constrained variational problems, one needs to restore changes in volume due to such local variations. In the study of minimizing clusters, this kind of tool is provided in \cite[Proposition VI.12]{Almgren76}; see also \cite[Section 29.6]{maggiBOOK}. The following theorem is a version of Almgren's result which is suitably adapted to the problems considered in here. In particular, it adds to \cite[Corollary 29.17]{maggiBOOK} the conclusions \eqref{volumefix thesis 4} and \eqref{volumefix thesis V}.

\begin{theorem}[Volume-fixing variations]\label{thm volume fixing}
  If $\E_0$ is a $N$-cluster in $\R^n$, then there exist positive constants $r_0$, $\e_0$, $R_0$ and $C_0$ (depending on $\E_0$) with the following property: if $\E$ and $\F$ are $N$-clusters in $\R^n$ with
  \begin{eqnarray}
    \label{volumefix hp 2}
    \d(\E,\E_0)&\le& \e_0\,,
    \\
    \label{volumefix hp 1}
    \F(h)\Delta\E(h)&\cc& B_{x,r_0}\,,\qquad\forall h=1,...,N\,,
  \end{eqnarray}
  for some $x\in\R^n$, then there exists a $N$-cluster $\F'$ such that
  \begin{eqnarray}\label{volumefix thesis 1}
    \F'(h)\Delta\F(h)&\cc& B_{R_0}\setminus\ov{B_{x,r_0}}\,,\qquad\forall h=1,...,N\,,
    \\\label{volumefix thesis 2}
    \vol(\F')&=&\vol(\E)\,,
    \\\label{volumefix thesis 3}
    |P(\F')-P(\F)|&\le& C_0\,P(\E)\,|\vol(\F)-\vol(\E)|\,,
    \\\label{volumefix thesis 4}
    |\d(\F',\E)- \d(\F,\E)|&\le&C_0\,P(\E)\,|\vol(\F)-\vol(\E)|\,.
  \end{eqnarray}
  Moreover, if $g:\R^n\to[0,\infty)$ is locally bounded, then
  \begin{eqnarray}
    \label{volumefix thesis V}
    \sum_{h=0}^N\int_{\F'(h)\Delta\F(h)}g&\le&C_0\,\|g\|_{L^\infty(B_R)}\,P(\E)\,\,|\vol(\F)-\vol(\E)|\,.
  \end{eqnarray}
\end{theorem}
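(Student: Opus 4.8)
The plan is to derive conclusions \eqref{volumefix thesis 1}--\eqref{volumefix thesis 3} directly from \cite[Corollary 29.17]{maggiBOOK} (which rests on Almgren's volume-fixing construction, \cite[Proposition VI.12]{Almgren76}, see also \cite[Section 29.6]{maggiBOOK}), and then to obtain the two additional estimates \eqref{volumefix thesis 4} and \eqref{volumefix thesis V} by exploiting the explicit form of the cluster $\F'$ produced there. Recall that this construction attaches to the reference cluster $\E_0$ a finite family of pairwise disjoint open balls $\{B(y_i,s)\}_{i=1}^M$ --- taken in larger number than strictly needed, so that the ones meeting the small ball $B_{x,r_0}$ can be discarded while keeping a subfamily lying in $B_{R_0}\setminus\ov{B_{x,r_0}}$ and still able to adjust the $N$ volumes independently --- together with a smooth family of diffeomorphisms $\{\Phi^t\}_{t\in U}$ of $\R^n$ ($U$ a neighbourhood of $0$ in $\R^N$), with $\Phi^0=\Id$, each $\Phi^t$ equal to the identity outside $U^\ast:=\bigcup_{i}B(y_i,s)$, $\|\Phi^t-\Id\|_{C^1(\R^n)}\le C_0\,|t|$, and such that for every cluster $\E$ with $\d(\E,\E_0)\le\e_0$ the map $t\mapsto \vol(\Phi^t(\E))-\vol(\E)$ is a $C^1$-diffeomorphism of a neighbourhood of $0$ onto a neighbourhood of $0$, with inverse of Lipschitz constant $\le C_0$ uniformly in $\E$. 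Since $\F(h)\Delta\E(h)\cc B_{x,r_0}$ forces $|\vol(\F)-\vol(\E)|\le N\,\om_n\,r_0^n$, a quantitative inverse function argument on the volume map provides $t^\ast\in U$ with $\vol(\Phi^{t^\ast}(\F))=\vol(\E)$ and $|t^\ast|\le C_0\,|\vol(\F)-\vol(\E)|$; one sets $\F'=\Phi^{t^\ast}(\F)$ (componentwise, $h=0,\dots,N$), which makes \eqref{volumefix thesis 1} and \eqref{volumefix thesis 2} automatic and yields \eqref{volumefix thesis 3} from $\|\Phi^{t^\ast}-\Id\|_{C^1}\le C_0|t^\ast|$ and the first variation of perimeter.

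From here the argument is short. First I would note that, since $U^\ast$ is disjoint from $\ov{B_{x,r_0}}$ and $\F(h)\Delta\E(h)\cc B_{x,r_0}$, the sets $\F(h)$ and $\E(h)$ coincide on $U^\ast$, so $P(\F(h);U^\ast)=P(\E(h);U^\ast)\le P(\E(h))\le 2\,P(\E)$ for every $h$. Next, because $\F'(h)\Delta\F(h)=\Phi^{t^\ast}(\F(h))\Delta\F(h)\subset U^\ast$, I would apply Lemma \ref{lemma distanza L1} to the flow $\{\Phi^t\}$ (after rescaling its generating vector field to unit size, so the relevant elapsed time is $\le C_0|t^\ast|$) to get $|\F'(h)\Delta\F(h)|\le C\,P(\F(h);U^\ast)\,|t^\ast|\le C_0\,P(\E)\,|\vol(\F)-\vol(\E)|$ for each $h=0,\dots,N$. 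Then \eqref{volumefix thesis 4} follows since, for each $h$, $\E(h)\Delta\F'(h)$ and $\E(h)\Delta\F(h)$ differ (modulo null sets) only on a subset of $\F(h)\Delta\F'(h)$, whence $\big|\,|\E(h)\Delta\F'(h)|-|\E(h)\Delta\F(h)|\,\big|\le|\F(h)\Delta\F'(h)|$; summing over $h$, halving, and recalling the definition \eqref{distance clusters} of $\d$ gives the claim (after renaming $C_0$). Finally \eqref{volumefix thesis V} is immediate: $\F'(h)\Delta\F(h)\subset U^\ast\subset B_{R_0}$, so $\int_{\F'(h)\Delta\F(h)}g\le\|g\|_{L^\infty(B_{R_0})}|\F'(h)\Delta\F(h)|$, and one sums and invokes the previous bound (with $R=R_0$ in the statement).

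The only real point of attention --- more a bookkeeping matter than a genuine obstacle --- is to confirm that the construction behind \cite[Corollary 29.17]{maggiBOOK} indeed delivers $\F'$ precisely as the image $\Phi^{t^\ast}(\F)$ of a diffeomorphism whose $C^0$-displacement is $O(|\vol(\F)-\vol(\E)|)$ and which is the identity outside a region disjoint from $B_{x,r_0}$: only with this structural information can the perturbation be fed into Lemma \ref{lemma distanza L1}. This is transparent from Almgren's argument, where $\Phi^t$ is the time-one flow of a fixed linear combination of compactly supported vector fields and $t^\ast$ arises from a contraction on the volume map; if one prefers, the same construction can simply be re-run in the present two-level setting ($\E$ close to the reference $\E_0$, $\F$ a local perturbation of $\E$), all relevant bounds --- on $P(\F(h);U^\ast)$, on the Jacobian of the volume map, and on the first variation of perimeter --- being uniform once $\d(\E,\E_0)\le\e_0$ and $\F=\E$ outside $B_{x,r_0}$. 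No geometric input beyond Almgren's construction and Lemma \ref{lemma distanza L1} is needed.
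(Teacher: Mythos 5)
Your proposal is correct and follows essentially the same route as the paper: the paper's proof simply repeats the argument of \cite[Corollary 29.17]{maggiBOOK} (Almgren's volume-fixing construction, with the adjusting diffeomorphisms supported in balls disjoint from $B_{x,r_0}$), substituting Lemma \ref{lemma distanza L1} for \cite[Lemma 17.9]{maggiBOOK} to control $|\F'(h)\Delta\F(h)|$ and the weighted volume of $\F'(h)\Delta\F(h)$, which is exactly how you obtain \eqref{volumefix thesis 4} and \eqref{volumefix thesis V}. Your elementary triangle-inequality step $\big|\,|\E(h)\Delta\F'(h)|-|\E(h)\Delta\F(h)|\,\big|\le|\F(h)\Delta\F'(h)|$ and the $L^\infty$ bound for the $g$-integral are fine and match the intended details the paper omits.
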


We shall need the following slight refinement of \cite[Lemma 17.9]{maggiBOOK}.

\begin{lemma}
  \label{lemma distanza L1} If $g:\R^n\to[0,\infty)$ is locally bounded, $E$ is a set of locally finite perimeter in an open set $A$ and $T\in C^1_c(A;\R^n)$, then for every $\eta>0$ there exist $K\subset A$ compact and $\e>0$ (depending on $T$) such that if $\{f_t\}_{|t|<\e}$ is a flow with initial velocity $T$, then
  \begin{equation}
    \label{siamo precisi}
    \int_{f_t(E)\Delta E}g\,\le (1+\eta)\,\|T\|_{C^0(\R^n)}\,\|g\|_{L^\infty(K)}\,P(E;K)\,|t|\,,\qquad\forall |t|<\e\,.
  \end{equation}
\end{lemma}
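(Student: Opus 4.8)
\textbf{Proof proposal for Lemma \ref{lemma distanza L1}.}

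The plan is to reduce the statement to the quantitative estimate on the symmetric difference $|f_t(E)\Delta E|$ already available in \cite[Lemma 17.9]{maggiBOOK}, and then to absorb the integrand $g$ by a trivial $L^\infty$-bound localized to a compact set. First I would fix a compact set $K\subset A$ large enough to contain a neighborhood of $\spt\,T$; since $T\in C^1_c(A;\R^n)$ and any flow $\{f_t\}_{|t|<\e}$ with initial velocity $T$ satisfies $f_t=\Id$ outside $\spt\,T$ (for $|t|$ small, by the standard theory of flows of compactly supported vector fields, see \cite[Theorem 17.5]{maggiBOOK}), we have $f_t(E)\Delta E\subset K$ for every $|t|<\e$ once $\e$ is chosen small enough in terms of $T$. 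Hence $g$ restricted to $f_t(E)\Delta E$ is bounded by $\|g\|_{L^\infty(K)}$, and
\[
\int_{f_t(E)\Delta E}g\le \|g\|_{L^\infty(K)}\,|f_t(E)\Delta E|\,.
\]

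Next I would recall the area-type estimate from \cite[Lemma 17.9]{maggiBOOK}: for every $\eta>0$ there exists $\e>0$ (depending on $T$ and $\eta$) such that
\[
|f_t(E)\Delta E|\le(1+\eta)\,\|T\|_{C^0(\R^n)}\,P(E;K)\,|t|\,,\qquad\forall |t|<\e\,.
\]
Strictly speaking, the cited lemma gives $|f_t(E)\Delta E|\le(1+\eta)\,P(E;K)\,|t|$ when $\|T\|_{C^0}\le 1$; in general one homogenizes in $T$ (replacing $T$ by $T/\|T\|_{C^0}$ and rescaling time) to recover the displayed form with the extra factor $\|T\|_{C^0(\R^n)}$. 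The key geometric input behind this estimate is the coarea/divergence-theorem identity controlling the measure of the region swept by the flow in terms of the flux of $T$ across $\pa^*E$, which is $\le\|T\|_{C^0}\,P(E;K)$ per unit time, plus a higher-order correction absorbed into the $(1+\eta)$ factor by taking $\e$ small.

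Combining the two displays yields \eqref{siamo precisi} with the stated constants. The only point requiring a little care — and the one I expect to be the main (minor) obstacle — is making sure that the choice of $\e$ and $K$ depends only on $T$ (and $\eta$), not on $E$ or $g$: this is fine because $\spt\,T$ and the $C^1$-size of $T$ alone control both the support and the Lipschitz constants of the flow maps $f_t$, uniformly in the set $E$ and independently of the weight $g$. No constraint on $E$ beyond local finite perimeter in $A$ is used. This completes the proof.
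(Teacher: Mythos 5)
Your reduction of the weighted estimate to the unweighted one is fine as far as it goes: once one knows that $f_t(E)\Delta E\subset K$ and that $|f_t(E)\Delta E|\le(1+\eta)\,\|T\|_{C^0(\R^n)}\,P(E;K)\,|t|$ with the same compact set $K$, bounding $g$ by $\|g\|_{L^\infty(K)}$ gives \eqref{siamo precisi} at once, and the time-rescaling remark is harmless. The gap is that the estimate you quote is stronger than what \cite[Lemma 17.9]{maggiBOOK} actually provides: there the symmetric difference is controlled by $C\,|t|\,P(E;K)$ with a constant depending on the variation, and the asymptotically sharp factor $(1+\eta)\,\|T\|_{C^0(\R^n)}$ -- uniform in $E$ and valid for an arbitrary one-parameter family with initial velocity $T$ -- is precisely the ``slight refinement'' this lemma is meant to supply (it is what later yields the exact bound $\|\HH_{\pa\E}\|\le\Lambda$, rather than $C\Lambda$, in Corollary \ref{corollary mean curvature}). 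Your substitute for a proof of that sharp bound, the ``coarea/divergence-theorem identity controlling the measure of the region swept by the flow in terms of the flux of $T$ across $\pa^*E$'', is a heuristic rather than an argument valid for a general set of locally finite perimeter; making it rigorous is essentially the content of the lemma. The paper does this by introducing the interpolating maps $\Phi_{s,t}(x)=s\,x+(1-s)(f_t)^{-1}(x)$, using $\inf J\Phi_{s,t}\ge 1-\eta$ and $\|\Id-(f_t)^{-1}\|_{C^0(\R^n)}\le(1+\eta)\,|t|\,\|T\|_{C^0(\R^n)}$ for $|t|$ small, applying Fubini and the area formula to smooth $u$, and then approximating $1_E$ by $C^1$ functions $u_h$ with $\limsup_h\int_K|\nabla u_h|\le P(E;\ov K)$ and concluding by Fatou. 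None of this appears in your proposal, so the key quantitative step is assumed rather than proved.

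A secondary inaccuracy: ``flow with initial velocity $T$'' here means any one-parameter family of diffeomorphisms with $f_0=\Id$ and $\partial_t f_t|_{t=0}=T$; in the proof of Theorem \ref{thm potenziale} the lemma is applied to volume-fixing families of the form $f_t(x)=x+t\,T(x)+O(t^2)$, which are not ODE flows of $T$. Hence $f_t=\Id$ outside $\spt T$ is not automatic, and the paper's proof instead chooses $K$ compact with $\{f_t\ne\Id\}\subset K$ for all $|t|<\e$ (which is also what you need for the inclusion $f_t(E)\Delta E\subset K$, since a diffeomorphism equal to the identity off $K$ maps $K$ onto $K$). This is minor, but it undercuts your claim that $K$ and $\e$ are controlled by $\spt T$ and the $C^1$ size of $T$ alone.
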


\begin{proof}
  Since $(d(f_t)^{-1}/dt)|_{t=0}=-T$, if we set $\Phi_{s,t}(x)=s\,x+(1-s)\,(f_t)^{-1}(x)$ for $x\in\R^n$ and $s\in(0,1)$, then  for every $\eta>0$ there exists $\e>0$ such that $\{\Phi_{s,t}\}_{|t|<\e}$ is a family of diffeomorphism on $\R^n$ with
  \[
  \inf_{x\in\R^n}\,J\Phi_{s,t}(x)\ge1-\eta\,,\qquad\|\Id-(f_t)^{-1}\|_{C^0(\R^n)}\le (1+\eta)\,|t|\,\|T\|_{C^0(\R^n)}\,,\qquad\forall |t|<\e\,.
  \]
  Let $K\subset A$ compact be such that $\{f_t\ne\Id\}\subset K$ for every $|t|<\e$. By Fubini's theorem and by the area formula, if $u\in C^1(\R^n)$, then
  \begin{eqnarray*}
    \int_{\R^n}g\,|u-u((f_t)^{-1})|&\le&  (1+\eta)\,|t|\,\|T\|_{C^0(\R^n)}\int_{K}\,g(x)\,dx\int_0^1\,|\nabla u(\Phi_{s,t}(x))|\,ds
    \\
    &=& (1+\eta)\,|t|\,\|T\|_{C^0(\R^n)}\|g\|_{L^\infty(K)}\,\int_0^1\,ds\int_{K}\,\frac{|\nabla u(y)|}{J\Phi_{s,t}(\Phi_{s,t}^{-1}(y))}\,dy
    \\
    &\le& \frac{1+\eta}{1-\eta}\,|t|\,\|T\|_{C^0(\R^n)}\|g\|_{L^\infty(K)}\,\int_{K}|\nabla u|\,.
  \end{eqnarray*}
  By \cite[Theorem 13.8]{maggiBOOK} there exists $\{u_h\}_{h\in\N}\subset C^1(\R^n)$ such that $u_h\to 1_E$ a.e. on $A$ and $\limsup_{h\to\infty}\int_{K}|\nabla u_h|\le P(E;\ov{K})$. Since $|u_h-u_h((f_t)^{-1})|\to 1_{E\Delta f_t(E)}$ a.e. on $A$, we conclude the proof by Fatou's lemma.
\end{proof}

\begin{proof}
  [Proof of Theorem \ref{thm volume fixing}] One repeats the proof of \cite[Corollary 29.17]{maggiBOOK}, exploiting Lemma \ref{lemma distanza L1}  in place of \cite[Lemma 17.9]{maggiBOOK} in order to obtain \eqref{volumefix thesis 4} and \eqref{volumefix thesis V}. We thus omit the details.
\end{proof}

\bibliography{references}

\newcommand{\etalchar}[1]{$^{#1}$}
\begin{thebibliography}{BDGG69}

\bibitem[AFM13]{acerbifuscomorini}
E.~Acerbi, N.~Fusco, and M.~Morini.
\newblock Minimality via second variation for a nonlocal isoperimetric problem.
\newblock {\em Comm. Math. Phys.}, 322\penalty0 (2):\penalty0 515--557, 2013.

\bibitem[Alm76]{Almgren76}
F.~J.~Jr. Almgren.
\newblock Existence and regularity almost everywhere of solutions to elliptic
  variational problems with constraints.
\newblock {\em Mem. Amer. Math. Soc.}, 4\penalty0 (165):\penalty0 viii+199 pp,
  1976.

\bibitem[BC13]{bonacinicristo}
M.~Bonacini and R.~Cristoferi.
\newblock Local and global minimality results for a nonlocal isoperimetric
  problem on $\mathbb{R}^n$.
\newblock http://cvgmt.sns.it/paper/2205/, 2013.

\bibitem[BDF12]{bogelainduzaarfusco}
V.~B\"ogelein, F.~Duzaar, and N.~Fusco.
\newblock A sharp quantitative isoperimetric inequality in higher codimension.
\newblock 2012.
\newblock http://cvgmt.sns.it/paper/1865/.

\bibitem[BDF13]{bogelainduzaarfusco2}
V.~B\"ogelein, F.~Duzaar, and N.~Fusco.
\newblock A sharp quantitative isoperimetric inequality on the sphere.
\newblock 2013.
\newblock Preprint http://cvgmt.sns.it/paper/2146/.

\bibitem[BDGG69]{bombieridegiorgigiusti}
E.~Bombieri, E.~De~Giorgi, and E.~Giusti.
\newblock Minimal cones and the {B}ernstein problem.
\newblock {\em Invent. Math.}, 7:\penalty0 243--268, 1969.

\bibitem[BDPV13]{brascoguidovelichov}
L.~Brasco, G.~De~Philippis, and B.~Velichkov.
\newblock Faber-krahn inequalities in sharp quantitative form.
\newblock 2013.
\newblock arXiv:1306.0392.

\bibitem[Bie80]{bierstone}
E.~Bierstone.
\newblock Differentiable functions.
\newblock {\em Bol. Soc. Brasil. Mat.}, 11\penalty0 (2):\penalty0 139--189,
  1980.

\bibitem[Ble87]{bleicher}
M.~Bleicher.
\newblock Isoperimetric division into a finite number of cells in the plane.
\newblock {\em Studia Sci. Math. Hungar.}, 22:\penalty0 123--137, 1987.

\bibitem[CL12]{CicaleseLeonardi}
M.~Cicalese and G.~P. Leonardi.
\newblock A selection principle for the sharp quantitative isoperimetric
  inequality.
\newblock {\em Arch. Rat. Mech. Anal.}, 206\penalty0 (2):\penalty0 617--643,
  2012.

\bibitem[CLM12]{CiLeMaFUGLEDE}
M.~Cicalese, G.~P. Leonardi, and F.~Maggi.
\newblock Sharp stability inequalities for planar double bubbles.
\newblock Preprint arXiv:1211.3698, 2012, 2012.

\bibitem[CM14]{carocciamaggi}
M.~Caroccia and F.~Maggi.
\newblock A sharp quantitative version of {H}ales' isoperimetric honeycomb
  theorem.
\newblock 2014.
\newblock preprint arXiv:1410.6128.

\bibitem[CS13]{cicalesespadaro}
M.~Cicalese and E.~Spadaro.
\newblock Droplet minimizers of an isoperimetric problem with long-range
  interactions.
\newblock {\em Comm. Pure Appl. Math.}, 66\penalty0 (8):\penalty0 1298--1333,
  2013.

\bibitem[DG60]{DeGiorgiREG}
E.~De~Giorgi.
\newblock {\em Frontiere orientate di misura minima}.
\newblock Seminario di Matematica della Scuola Normale Superiore di Pisa.
  Editrice Tecnico Scientifica, Pisa, 1960.

\bibitem[DPM14a]{dephilippismaggiCAP-ARMA}
G.~De~Philippis and F.~Maggi.
\newblock Regularity of free boundaries in anisotropic capillarity problems and
  the validity of young's law.
\newblock {\em Arch. Rat. Mech. Anal.}, 2014.
\newblock accepted paper.

\bibitem[DPM14b]{dephilippismaggi}
G.~De~Philippis and F.~Maggi.
\newblock Sharp stability inequalities for the {P}lateau problem.
\newblock {\em J. Differential Geom.}, 96\penalty0 (3):\penalty0 399--456,
  2014.

\bibitem[FFM{\etalchar{+}}]{F2M3}
A.~Figalli, N.~Fusco, F.~Maggi, V.~Millot, and M.~Morini.
\newblock Isoperimetry and stability properties of balls with respect to
  nonlocal energies.
\newblock Comm. Math. Phys., to appear.

\bibitem[FJ14]{fuscojulin}
N.~Fusco and V.~Julin.
\newblock A strong form of the quantitative isoperimetric inequality.
\newblock {\em Calc. Var. Partial Differential Equations}, 50\penalty0
  (3--4):\penalty0 925--937, 2014.

\bibitem[FM11]{FigalliMaggiARMA}
A.~Figalli and F.~Maggi.
\newblock On the shape of liquid drops and crystals in the small mass regime.
\newblock {\em Arch. Rat. Mech. Anal.}, 201:\penalty0 143--207, 2011.

\bibitem[FM12]{FigalliMaggiLOGCONV}
A~Figalli and F.~Maggi.
\newblock Some new results on the log convex density conjecture.
\newblock 2012.

\bibitem[FMP08]{fuscomaggipratelli}
N.~Fusco, F.~Maggi, and A.~Pratelli.
\newblock The sharp quantitative isoperimetric inequality.
\newblock {\em Ann. Math.}, 168:\penalty0 941--980, 2008.

\bibitem[Fug89]{Fuglede}
B.~Fuglede.
\newblock Stability in the isoperimetric problem for convex or nearly spherical
  domains in $\mathbb{R}^n$.
\newblock {\em Trans. Amer. Math. Soc.}, 314:\penalty0 619--638, 1989.

\bibitem[GT01]{GTbook}
David Gilbarg and Neil~S. Trudinger.
\newblock {\em Elliptic partial differential equations of second order}.
\newblock Classics in Mathematics. Springer-Verlag, Berlin, 2001.
\newblock ISBN 3-540-41160-7.
\newblock xiv+517 pp.
\newblock Reprint of the 1998 edition.

\bibitem[Hal01]{hales}
T.~C. Hales.
\newblock The honeycomb conjecture.
\newblock {\em Discrete Comput. Geom.}, 25\penalty0 (1):\penalty0 1--22, 2001.

\bibitem[KM13]{knupfermuratov}
Hans Kn{\"u}pfer and Cyrill~B. Muratov.
\newblock On an isoperimetric problem with a competing nonlocal term {I}: {T}he
  planar case.
\newblock {\em Comm. Pure Appl. Math.}, 66\penalty0 (7):\penalty0 1129--1162,
  2013.

\bibitem[KM14]{knupfermuratov2}
Hans Kn{\"u}pfer and Cyrill~B. Muratov.
\newblock On an isoperimetric problem with a competing nonlocal term {II}:
  {T}he general case.
\newblock {\em Comm. Pure Appl. Math.}, 67\penalty0 (12):\penalty0 1974--1994,
  2014.

\bibitem[Leo01]{leonardifluid}
G.~P. Leonardi.
\newblock Infiltrations in immiscible fluids systems.
\newblock {\em Proc. Roy. Soc. Edinburgh Sect. A}, 131\penalty0 (2):\penalty0
  425--436, 2001.

\bibitem[LM15]{CiLeMaIC2}
G.~P. Leonardi and F.~Maggi.
\newblock Improved convergence theorems for bubble clusters {II}. {T}he
  three-dimensional case.
\newblock arXiv:1505.06709, 2015.

\bibitem[LT02]{LeonardiTamanini}
G.~P. Leonardi and I.~Tamanini.
\newblock Metric spaces of partitions, and caccioppoli partitions.
\newblock {\em Adv. Math. Sci. Appl.}, 12\penalty0 (2):\penalty0 725--753,
  2002.

\bibitem[Mag12]{maggiBOOK}
F.~Maggi.
\newblock {\em Sets of finite perimeter and geometric variational problems: an
  introduction to Geometric Measure Theory}, volume 135 of {\em Cambridge
  Studies in Advanced Mathematics}.
\newblock Cambridge University Press, 2012.

\bibitem[Mir67]{mirandasucc}
M.~Miranda.
\newblock Comportamento delle successioni convergenti di frontiere minimali.
\newblock {\em Rend. Sem. Mat. Univ. Padova}, 38:\penalty0 238--257, 1967.

\bibitem[MM15]{maggimihaila}
F.~Maggi and C.~Mihaila.
\newblock On the shape of capillarity droplets in a container.
\newblock 2015.

\bibitem[Mor94]{MorganPlanar}
F.~Morgan.
\newblock Soap bubbles in $\mathbb{R}^2$ and in surfaces.
\newblock {\em Pacific J. Math.}, 165\penalty0 (2):\penalty0 347--361, 1994.

\bibitem[MR10]{MorganRos}
F.~Morgan and A.~Ros.
\newblock Stable constant-mean-curvature hypersurfaces are area minimizing in
  small {$L^1$} neighborhoods.
\newblock {\em Interfaces Free Bound.}, 12\penalty0 (2):\penalty0 151--155,
  2010.

\bibitem[Neu14]{neumayer}
R.~Neumayer.
\newblock A strong form of the quantitative {W}ulff inequality.
\newblock 2014.

\bibitem[Spi65]{spivak}
M.~Spivak.
\newblock {\em Calculus on manifolds. A modern approach to classical theorems
  of advanced calculus}.
\newblock W. A. Benjamin, Inc., New York-Amsterdam, 1965.

\bibitem[Ste70]{steinbook}
E.~M. Stein.
\newblock {\em Singular integrals and differentiability properties of
  functions}.
\newblock Princeton Mathematical Series, No. 30. Princeton University Press,
  Princeton, N.J., 1970.

\bibitem[Tam84]{tamanini}
I.~Tamanini.
\newblock {\em Regularity results for almost minimal oriented hypersurfaces in
  $\mathbb{R}^N$}.
\newblock Quaderni del Dipartimento di Matematica dell' Universit\`a di Lecce,
  1984.
\newblock Available for download at http://cvgmt.sns.it/paper/1807/.

\bibitem[Tay76]{taylor76}
J.~E. Taylor.
\newblock The structure of singularities in soap-bubble-like and soap-film-like
  minimal surfaces.
\newblock {\em Ann. of Math. (2)}, 103\penalty0 (3):\penalty0 489--539, 1976.

\bibitem[Whi94]{whiteminimax}
B.~White.
\newblock A strong minimax property of nondegenerate minimal submanifolds.
\newblock {\em J. Reine Angew. Math.}, 457:\penalty0 203--218, 1994.

\end{thebibliography}
\bibliographystyle{is-alpha}
\end{document}